\theoremstyle{plain}
\newtheorem{theorem}{Theorem}[section]
\newtheorem{proposition}[theorem]{Proposition}
\newtheorem{lemma}[theorem]{Lemma}
\newtheorem{corollary}[theorem]{Corollary}
\theoremstyle{definition}
\newtheorem{definition}[theorem]{Definition}
\newtheorem{remark}[theorem]{Remark}
\newtheorem{example}[theorem]{Example}
\newcommand\E{\mathbb{E}}
\newcommand\Z{\mathbb{Z}}
\newcommand\R{\mathbb{R}}
\newcommand\T{\mathbb{T}}
\newcommand\C{\mathbb{C}}
\newcommand\N{\mathbb{N}}
\newcommand\F{\mathbb{F}}
\newcommand\X{\mathcal{X}}
\newcommand\Y{\mathcal{Y}}
\newcommand\M{\M}
\newcommand\ZZ{\mathrm{Z}}
\newcommand\YY{\mathrm{Y}}
\newcommand\XX{\mathrm{X}}
\newcommand\Hom{\operatorname{Hom}}
\newcommand\Aut{\operatorname{Aut}}
\newcommand\id{\operatorname{id}}
\newcommand\eps{\varepsilon}
\newcommand\Poly{\mathrm{Poly}}
\newcommand{\weight}{\mathrm{wt}}
\renewcommand{\AA}{\mathrm{A}}
\newcommand{\BB}{\mathrm{B}}
\newcommand{\CC}{\mathrm{C}}
\begin{document}


\baselineskip=17pt


\title[Polynomial towers]{Polynomial towers and inverse Gowers theory for bounded-exponent groups}

\author{Asgar Jamneshan}
\address{Institute of Mathematics\\ University of Bonn\\ 53113 Bonn, Germany}
\email{ajamnesh@math.uni-bonn.de}

\author{Or Shalom}
\address{Department of Mathematics\\ Bar Ilan University\\ Ramat Gan\\ 5290002, Israel}
\email{Or.Shalom@math.biu.ac.il}

\author{Terence Tao}
\address{Department of Mathematics\\ University of California\\ Los Angeles\\ CA 90095-1555, USA}
\email{tao@math.ucla.edu}

\date{\today}

\begin{abstract} 
In this paper we develop Host--Kra and inverse Gowers theory for abelian groups of bounded exponent. We show that the Host--Kra factors $\ZZ^{\le k}(\XX)$ associated with actions of such groups admit extensions with the structure of \emph{polynomial towers}. This new notion is a system obtained as a finite iteration of abelian extensions of the trivial system by polynomial cocycles; crucially, the intermediate extensions in this system are not required to agree with the Host--Kra factors. We prove that all such extensions are Abramov (generalizing a recent result of Candela, Gonz\'alez-S\'anchez, and Szegedy), but not necessarily Weyl, and have the structure of $k$-step translational systems. 

Combining this structure theorem with a correspondence principle due to the first and third authors, we derive an inverse theorem for the Gowers norms on finite abelian groups of bounded exponent: large $U^{k+1}$-norm implies large correlation with a polynomial of degree $\le k$ (on the same group), even when the exponent is not square-free or is divisible by small primes. This resolves a conjecture of the first and third authors for such groups, and also answers a question of Candela, Gonz\'alez-S\'anchez, and Szegedy. 
\end{abstract}

\subjclass[2020]{Primary 37A15, 11B30; Secondary 28D15, 37A35.}

\keywords{}

\maketitle
\setcounter{tocdepth}{1} 

\tableofcontents

\section{Introduction}
In this paper, we introduce a new type of dynamical system tower, which we call a \emph{polynomial tower} (see Definition \ref{tower-def}). We show that the systems in such towers extend the Host--Kra factors associated with actions of bounded-exponent groups. This ergodic-theoretical result is then used to prove an inverse theorem for the Gowers norms over finite abelian groups of bounded exponent that does not require extending the underlying group. 

For the convenience of the reader and in order to keep the exposition self-contained, we recall the basic notation we will use in the sequel and collect in the appendices the key definitions and technical results from the existing literature.

\subsection{Measure-preserving systems}

Throughout this paper, $\Gamma = (\Gamma,+)$ is understood to be a discrete countably infinite abelian group (and later we will impose the further condition that $\Gamma$ be of bounded exponent).  We begin by recalling the notion of a measure-preserving $\Gamma$-system, or $\Gamma$-system for short.

\begin{definition}[Measure preserving systems]\ 
    \begin{itemize}
        \item A $\Gamma$-system is a quadruple $\XX=(X,\mathcal{X},\mu,T)$ where $(X,\mathcal{X},\mu)$ is a Lebesgue probability space and $T:\Gamma\times \XX\rightarrow \XX$ is a near-action\footnote{We work with near-actions rather than actions due to our identification of cocycles that agree almost everywhere, but the reader is advised to ignore the technical distinction between near-actions and actions on a first reading.} of $\Gamma$ on $\XX$ by measure-preserving transformations. Thus, $T^{\gamma_1+\gamma_2}(x) = T^{\gamma_1}\circ T^{\gamma_2}(x)$ and $T^0(x) = x$ for all $\gamma_1,\gamma_2 \in\Gamma$ and almost all $x \in X$, and $T^\gamma:X\rightarrow X$ is measure-preserving for each $\gamma\in \Gamma$. $\XX$ is called \emph{ergodic} if the only functions $f\in L^2(\XX)$ satisfying $f\circ T^\gamma  = f$ $\mu$-a.e. for all $\gamma\in \Gamma$ are the constants.
        \item We say that one $\Gamma$-system $\YY = (Y,\Y,\nu,S)$ is a \emph{factor} of another $\Gamma$-system $\XX=(X,\X,\mu,T)$ (or equivalently, that $\XX$ is an \emph{extension} of $\YY$) and write 
\begin{equation}\label{extension}
\begin{tikzcd}
    \XX \arrow[d, two heads] \\
    \YY
\end{tikzcd}
\end{equation}
if there is a measure-preserving map $\pi \colon X\rightarrow Y$ such that $\pi\circ T^\gamma(x) = S^\gamma\circ \pi(x)$ for $\mu$-a.e. $x\in X$ and every $\gamma\in \Gamma$. If $\XX$ and $\YY$ are factors of each other (with the factor maps inverting each other outside of a null set) we say that $\XX$ and $\YY$ are \emph{isomorphic} or \emph{equivalent}, and write
$$
\begin{tikzcd}
    \XX \arrow[r, leftrightarrow] & \YY
\end{tikzcd}
$$
    \end{itemize}
\end{definition}

Traditionally, ergodic structure theory has been primarily interested in the case when the group $\Gamma$ is a finitely generated abelian group (see \cite{host2005nonconventional} and \cite{ziegler2007universal} for $\Gamma=\mathbb{Z}$, \cite{gutman-lian} for $\Gamma=\mathbb{Z}^k$ and also \cite{gmv,gmvII,gmvIII} for compactly generated group actions via nilspace theory).  However, the focus of this paper will be on the case when $\Gamma$ is of bounded exponent, that is to say there exists a natural number $m$ such that the subgroup $m\Gamma \coloneqq \{m \gamma: \gamma \in \Gamma\}$ of $\Gamma$ is trivial.  A particularly well-studied example of such a group with bounded exponent in this context is the countable vector space $\F_p^\omega$ over a finite field $\F_p$ of some prime order $p$ (cf. \cite{btz,CGSS}). 

Suppose that $\Gamma$ is of bounded exponent. By Pr\"ufer's first theorem (cf.~\cite[Chapter 5, Theorem 18]{morris}), $\Gamma$ is isomorphic to a direct sum of cyclic groups of order dividing $m$. 
 It will be convenient to ``work in coordinates'' and exploit this direct sum  representation explicitly, so we shall henceforth assume that $\Gamma$ is of the concrete form
\begin{equation}\label{gamma-basis}
\Gamma = \bigoplus_{i=1}^\infty \Z/m_i\Z
\end{equation}
where each $m_i>1$ divides $m$.  In particular, with this basis, the Pontryagin dual $\hat \Gamma \coloneqq \Hom(\Gamma,\T)$ (with $\T = \R/\Z$ the standard unit circle) can be naturally expressed as an infinite product
\begin{equation}\label{gamma-basis-dual}
\hat \Gamma = \prod_{i=1}^\infty \frac{1}{m_i}\Z/\Z.
\end{equation}
This abelian group is compact\footnote{All compact or locally compact abelian groups in this paper will be understood to be metrizable, and additive rather than multiplicative.}, but uncountable.  It will be convenient\footnote{This is because we have chosen to restrict our measure-preserving systems to be Lebesgue spaces, hence countably generated, and similarly restricted our compact abelian groups to be metrizable.  One could in principle work with ``uncountable'' systems and structure groups (in the spirit of \cite{jt19}, \cite{jt20}), in which case one would not need to introduce the non-canonical countable dense subgroup $\tilde \Gamma$, but this would require adapting a significant portion of the literature to the uncountable setting (in particular adopting a more ``point-free'' approach to ergodic theory, based more upon measure algebras and von Neumann algebras than concrete representations of systems), which we have not attempted to accomplish here.} to also work with the countable dense subgroup
\begin{equation}\label{gamma-basis-dense}
\tilde \Gamma = \bigoplus_{i=1}^\infty \frac{1}{m_i}\Z/\Z \leq \hat \Gamma
\end{equation}
consisting of frequencies $\xi \in \hat \Gamma$ with only finitely many non-zero components; this group is isomorphic to $\Gamma$ itself, but is dependent on the specific coordinate system \eqref{gamma-basis} used to define $\Gamma$ and so should not be viewed as a ``canonical'' object attached to $\Gamma$ (in contrast to the Pontryagin dual $\hat \Gamma$, which has excellent functoriality properties).  

In this paper we will make particular use of a certain type of extension \eqref{extension}, namely an \emph{abelian (group) extension}.  We first give some general notation for cocycles, which we will rely heavily on in this paper.

\begin{definition}[Cocycles of abelian group actions]\label{gen-cocycle}  Let $U$ be a locally compact abelian group.  A \emph{$U$-group} is a Polish abelian group $A$ equipped with an action $u \mapsto V_u$ of $U$ on $A$ (that is to say, a continuous homomorphism $V \colon U \to \Aut(A)$ from $U$ to the automorphism group $\Aut(A)$ of $A$, which we equip with the compact-open topology).  We abbreviate $V_u a$ as $ua$ for $u \in U$ and $a \in A$.
\begin{itemize}
    \item[(i)] If $A$ is a $U$-group, we define the difference operators $\partial_{u} \colon A \to A$ as $\partial_{u} a \coloneqq u a - a$.
    \item[(ii)] If $A$ is a $U$-group, we define the invariant subgroup $A^U \leq A$ of $A$ by the formula $A^U \coloneqq \{ a \in A : u a = a\ \forall u \in U \}$.
    \item[(iii)]  If $A$ is a $U$-group, we define $C(U;A)$ to be the $U$-group of tuples $(a_u)_{u \in U}$ with $u \mapsto a_u$ taking values in $A$ depending continuously on $u$, equipped with the compact-open topology. We define the derivative operator $d_U \colon A \to C(U;A)$ by $d_U a \coloneqq (\partial_{u} a)_{u \in U}$, thus $(d_U a)_u = \partial_{u} a$ for all $u \in U$.  We abbreviate $d_U$ as $d$ if the acting group $U$ is clear from context.
    \item[(iv)] A continuous map $T \colon A \to B$ between two $U$-groups $A,B$ is said to be \emph{equivariant} if $T( u a) = u T a$ for all $a \in A$ and $u \in U$.  We write $T^{\oplus U} \colon C(U;A) \to C(U;B)$ for the map $T^{\oplus U} (a_u)_{u \in U} \coloneqq (T a_u)_{u \in U}$.
    \item[(v)]  We let $d_U A \leq C(U;A)$ denote the image of $d_U$, thus we have the short exact sequence
    \begin{equation}\label{b1-short}
    0 \to A^U \to A \to d_U A \to 0
    \end{equation}
   of abelian groups. Elements $d_U a$ of $d_U A$ will be called \emph{$U$-coboundaries}, or simply \emph{coboundaries} when the acting group $U$ is clear from context.  We will informally refer to $a$ as an ``antiderivative'' or ``integral'' of the coboundary $d_U a$.
    \item[(vi)]  An element $a \colon u \to a_u$ of $C(U;A)$ will be called a \emph{$U$-cocycle} (or simply a \emph{cocycle} when the acting group $U$ is clear from context) if one has the \emph{$U$-cocycle equation}
    \begin{equation}\label{cocycle-eq}
    a_{u+u'} = a_u + u a_{u'}
    \end{equation}
    for all $u,u' \in U$; the group of such $U$-cocycles will be denoted $Z^1(U;A)$.  One easily checks that $d_U A \leq Z^1(U;A)$, thus every $U$-coboundary is a $U$-cocycle.  We let $H^1(U;A)$ denote the quotient group $Z^1(U;A)/d_U A$, thus we have the short exact sequence
    \begin{equation}\label{cohom-short}
    0 \to d_U A \to Z^1(U;A) \to H^1(U;A) \to 0
    \end{equation}
    of abelian groups. When $H^1(U;A)$ is trivial (i.e., every $U$-cocycle is a $U$-coboundary), we say that $A$ has \emph{trivial $U$-cohomology}.  Two $U$-cocycles will be said to be \emph{$U$-cohomologous} if they differ by a $U$-coboundary.
\end{itemize}
\end{definition}

Later on we shall also need a weighted version of the above notation, in which the acting group $U$ has a ``weight filtration'' on it, and the group $A$ being acted on also has a ``polynomial filtration'' that is compatible with the weight filtration; see \Cref{weighted-cocycle}.

\begin{example}\label{trivialcohomolgy} If $U,A$ are locally compact abelian groups, we can give $C(U;A)$ the ``regular'' translation $U$-action $u_0 (a_u)_{u \in U} \coloneqq (a_{u+u_0})_{u \in U}$.  This $U$-action has trivial $U$-cohomology. Indeed, if $(b_u)_{u\in U}$ is a $U$-cocycle in $C(U;A)$, setting $u\mapsto a_u\coloneqq b_u(0)$ to be the antiderivative, it follows from the $U$-cocycle property and the translation $U$-action for every $v\in U$ that 
\begin{align*}
    d_v((a_u)_{u\in U}) &= (a_{u+v})_{u\in U} - (a_u)_{u\in U} \\
    &= (b_{u+v}(0))_{u\in U}-(b_u(0))_{u\in U} \\
    &= (b_{u}(0)+b_v(u))_{u\in U}-(b_u(0))_{u\in U}\\
    &=(b_v(u))_{u\in U}.
\end{align*}
\end{example}

\begin{remark} By default, the acting group $U$ will be the discrete group $\Gamma$, but we will frequently need to consider other locally compact acting groups (such as the compact structure groups of a tower, or various subgroups of $\Gamma$) in our arguments.  
\end{remark}

\begin{definition}[Cocycles and skew products]\label{cocycle-def}  Let $\XX=(X,\mathcal{X},\mu,T)$ be a $\Gamma$-system, and $U$ a compact abelian group.
\begin{itemize}
    \item[(i)]  We let $\mathcal{M}(\XX,U)$ be the space of measurable functions $f \colon X \to U$, up to almost everywhere equivalence; this is a $\Gamma$-group with action $T^\gamma f \coloneqq f \circ T^\gamma$ and the topology of convergence in measure.  As per \Cref{gen-cocycle}, this gives notions of derivatives, cocycles, coboundaries, etc. on $\mathcal{M}(\XX,U)$.  Note that if $\XX$ is ergodic then $\mathcal{M}(\XX,U)^\Gamma = U$, as the only $\Gamma$-invariant elements of $\mathcal{M}(\XX,U)$ are the constants.
    \item[(ii)]  We abbreviate $Z^1(\Gamma; \mathcal{M}(\XX,U))$ as $Z^1(\Gamma,\XX,U)$, which is then the space of tuples $\rho = (\rho_\gamma)_{\gamma \in \Gamma}$ with $\rho_\gamma \in \mathcal{M}(\XX,U)$ obeying the \emph{$\Gamma$-cocycle equation}
    \begin{equation}\label{cocycle-equation}
    \rho_{\gamma_1+\gamma_2} = \rho_{\gamma_1} + \gamma_1 \rho_{\gamma_2}
    \end{equation}
    for all $\gamma_1, \gamma_2 \in \Gamma$.  We similarly abbreviate $H^1(\Gamma; \mathcal{M}(\XX,U))$ as $H^1(\Gamma,X,U)$. 
    \item[(iii)]  If $\rho \in Z^1(\Gamma,\XX,U)$ is a $\Gamma$-cocycle, we define the \emph{skew product} $\XX \times_\rho U$ to be the probability space $\XX \times U$ (endowed with the product of $\mu$ and the Haar measure on $U$) with near-action
    $$ T^\gamma (x, u) \coloneqq (T^\gamma x, u + \tilde \rho_\gamma(x))$$
    where for each $\gamma$, we arbitrarily select a measurable representative $\tilde{\rho}_\gamma \colon X \to U$ of  $\rho_\gamma$.  (Thus $\XX \times_\rho U$ is only defined up to equivalence.)  We also depict the relationship between $\XX$ and $\XX \times_\rho U$ by the diagram
\begin{equation}\label{abelian-extension}
\begin{tikzcd}
    \XX \times_\rho U \arrow[d, Rightarrow, "\rho; U"] \\ 
    \XX   
\end{tikzcd}
\end{equation}
which one can view as a special case of \eqref{extension}.  
   \item[(iv)] We set the range group $U$ to be $\T$ by default unless otherwise specified, thus we abbreviate $\mathcal{M}(\XX)=\mathcal{M}(\XX,\T)$, $Z^1(\Gamma,\XX)=Z^1(\Gamma,\XX,\T)$, etc..
\end{itemize}
\end{definition}

We caution that a skew product $\XX\times_\rho U$ of an ergodic system $\XX$ need not be ergodic; by Mackey--Zimmer theory this failure of ergodicity occurs if and only if $\rho$ is $\Gamma$-cohomologous to a cocycle taking values in a proper closed subgroup of $U$ (see \Cref{zimmer}).

\begin{example}
    If $\rho \colon \Gamma \to U$ is a homomorphism into a compact abelian group $U$ (or equivalently, a $\Gamma$-cocycle using the trivial action on $U = \mathcal{M}(\mathrm{pt},U)$), then we have the extension
\begin{equation}\label{pt}
\begin{tikzcd}
    U \arrow[d, Rightarrow, "\rho; U"] \\ 
    \mathrm{pt}
\end{tikzcd}
\end{equation}
where $\mathrm{pt}$ is the trivial system consisting of a single point, and $U$ is the rotational system with $T^\gamma \colon U \to U$ given by $T^\gamma x \coloneqq x + \rho(\gamma)$.  
\end{example}

\begin{remark} As is well known, two $\Gamma$-cocycles $\rho, \rho'$ which are $\Gamma$-cohomologous will give rise to equivalent $\Gamma$-systems $\XX \times_\rho U$, thus one often has the freedom to add or subtract a $\Gamma$-coboundary to a given $\Gamma$-cocycle $\rho$.  In our arguments, though, it will frequently be important to select ``good'' representation of a cocycle that obeys some additional properties, such as being a polynomial of a given degree, which are not necessarily preserved by the addition or subtraction of a coboundary.
\end{remark}

We will be particularly concerned here with \emph{towers} 
of abelian group extensions for various cocycles $\rho_i \colon \Gamma \times X_{i-1} \to U_i$ taking values in various compact abelian groups $U_i$ (which we call the \emph{structure groups} of the tower), thus $\XX_i = \XX_{i-1} \times_{\rho_i} U_i$, or more compactly
$$ \XX_j = \mathrm{pt} \times_{\rho_1} U_1 \times_{\rho_2} U_2 \dots \times_{\rho_j} U_j$$
where the skew product should be applied from left to right; see \Cref{tower-gen}.   Abelian group extension towers are somewhat analogous to the notion of a solvable series in group theory. We refer to $j$ as the \emph{height} of the tower.

\begin{figure}
    \centering
\begin{tikzcd}
    \XX_j \arrow[d, Rightarrow, "\rho_{j}; U_j"] \\ 
    \XX_{j-1} \arrow[d, Rightarrow, "\rho_{j-1}; U_{j-1}"]  \\
    \vdots \\
    \XX_1 \arrow[d, Rightarrow, "\rho_1; U_1"] \\
    \XX_0 = \mathrm{pt}
\end{tikzcd}
    \caption{A tower of height $j$.}
    \label{tower-gen}
\end{figure}

Every ergodic $\Gamma$-system $\XX$ gives rise to a canonical tower of a given height $k \geq 1$, namely the \emph{Host--Kra tower}
of the \emph{Host--Kra factors}\footnote{The factor $\ZZ^{\leq k}(\XX)$ is sometimes also denoted $\ZZ^{<k+1}(\XX)$ in the literature.  The Kronecker factor $\ZZ^{\leq 1}(\XX)$ should not be confused with the space of cocycles $Z^1(\Gamma,\XX,U)$; we hope that this unfortunate collision of notation does not cause confusion.} $\ZZ^{\leq k}(\XX)$.  We review the formal definitions of these factors in Appendix \ref{host-kra-theory}, including the construction of the cubic measure $\mu^{[k]}$, the $\Gamma$-cocycles $\rho_k$ and structure groups $U_k$ and various related notions and facts. We remark that while the factors $\ZZ^{\leq j}(\XX)$ are canonically defined, and the structure groups $U_j$ are also unique up to isomorphism, the $\Gamma$-cocycles $\rho_j$ are only unique up to coboundaries (even after fixing the structure groups $U_j$), as per the usual Mackey--Zimmer theory of group extensions.  The smallest factor $\ZZ^{\leq 0}(\XX)$ is the invariant factor of $\XX$, which is trivial as we are assuming ergodicity. The factor $\ZZ^{\leq 1}(\XX)$ is known as the \emph{Kronecker factor}, and is generated by the eigenfunctions of $\XX$; the factor $\ZZ^{\leq 2}(\XX)$ is also known as the \emph{Conze--Lesigne factor} (cf. \cite{cl1,cl2,cl3,jst}). Roughly speaking, the $k^\mathrm{th}$ Host--Kra factor $\ZZ^{\leq k}(\XX)$ controls the distribution of $k+1$-dimensional cubes in $\XX$, and plays an important role in understanding other patterns in $\XX$, such as $k+2$-term arithmetic progressions; intuitively speaking, functions measurable in $\ZZ^{\leq k}(\XX)$ should be thought of as having ``generalized degree $\leq k$'' in some sense.  

The $\Gamma$-cocycles arising in the Host--Kra tower have the important additional property of being \emph{type\footnote{This concept was denoted ``type $<j$'' in \cite{btz}, and ``type $j$'' in most other literature.} $\leq j$}; see \Cref{type:def}.  For any ergodic $\Gamma$-system $\XX$ and any compact abelian group $U$, the collections $Z^1_{\leq j}(\Gamma,\XX,U)$ of type $\leq j$ $\Gamma$-cocycles in $Z^1(\Gamma,\XX,U)$ form a nested chain of subgroups
\begin{equation}\label{type-chain}
d_\Gamma \mathcal{M}(\XX,U) = Z^1_{\leq 0}(\Gamma,\XX,U) \leq Z^1_{\leq 1}(\Gamma,\XX,U) \leq Z^1_{\leq 2}(\Gamma,\XX,U) \leq \dots \leq Z^1(\Gamma,\XX,U)
\end{equation}
which will play an important role in our analysis.

We say that a system $\XX$ is of order\footnote{The symbol $\leq$ is often omitted in the literature.} $\leq k$ if $\XX$ is isomorphic to $\ZZ^{\leq k}(\XX)$, such that $\XX$ itself appears at the top of the height $k$ Host--Kra tower.  The basic properties of systems of order $\leq k$ are recalled in \Cref{prop-functoriality}.  For instance, if a system is of order $\leq k$, it is of order $\leq k'$ for any $k' \geq k$. However, as we shall see in this paper, it will be important to consider alternate representations of $\XX$ by towers, even though this will complicate the type structure of each level of the tower.

The Host--Kra tower interacts (in a somewhat complicated fashion) with the concept of an \emph{Abramov system}.

\begin{definition}[Polynomials]
    Let $\Gamma$ be a countable abelian group, let $\XX=(X,\X,\mu,T)$ be an ergodic $\Gamma$-system, $U$ be a compact abelian group, and let $k$ be an integer. 
    \begin{itemize}
    \item[(i)]  We let $\Poly_{\leq k}(\XX,U)$ be the subgroup of $\mathcal{M}(\XX,U)$ consisting of $f$ which are \emph{polynomials of degree at most $k$} in the sense that $\partial_{\gamma_1}\dots \partial_{\gamma_{k+1}} f = 0$ for all $\gamma_1,\dots,\gamma_{k+1}\in \Gamma$; thus for instance $\Poly_{\leq 0}(\XX,U) = U$ (by ergodicity) and $\Poly_{\leq -1}(\XX,U) = \{0\}$. More generally we adopt the convention $\Poly_{\leq k}(\XX,U) = \{0\}$ if $k \leq -1$, including if $k = -\infty$.   We similarly let $\Poly_{\leq k}^1(\Gamma, \XX, U)$ be the subgroup of $Z^1(\Gamma,\XX,U)$ consisting of cocycles $(\rho_\gamma)_{\gamma \in \Gamma}$ such that each $\rho_\gamma$ is a polynomial of degree $\leq k$.   We also define $\Poly_{\leq k}(\Gamma,U)$ and $\Poly^1_{\leq k}(\Gamma,\Gamma,U)$ in the same fashion, using the translation action of $\Gamma$ on itself. As before, we abbreviate $\Poly_{\leq k}(\XX) = \Poly_{\leq k}(\XX,\T)$, $\Poly_{\leq k}^1(\Gamma,\XX) = \Poly_{\leq k}^1(\Gamma,\XX,\T)$, and $\Poly_{\leq k}(\Gamma) = \Poly_{\leq k}(\Gamma,\T)$. 
    \item[(ii)] We say that $\XX$ is \emph{Abramov of order $\leq k$} if it is generated (as a measure algebra\footnote{That is to say, the $\sigma$-algebra of $\XX$, quotiened out by null sets, is generated by the $\sigma$-algebra associated to (arbitrary measurable representatives) of the polynomials in $\Poly_{\leq k}(\XX)$.)}) by $\Poly_{\leq k}(\XX)$.  In particular if a system is Abramov of order $\leq k$, it is also Abramov of order $\leq k'$ for any $k' \geq k$.
\end{itemize}
\end{definition}

We have nested sequences of abelian groups
\begin{equation}\label{poly-chain}
\begin{split}
0 = \Poly_{\leq -1}(\XX,U) &\leq \Poly_{\leq 0}(\XX,U) = U  \\
&\leq \Poly_{\leq 1}(\XX,U) \\
&\leq \dots \\
&\leq \mathcal{M}(\XX,U)
\end{split}
\end{equation}
and
\begin{align*}
0 = \Poly^1_{\leq -1}(\Gamma,\XX,U) &\leq \Poly^1_{\leq 0}(\Gamma,\XX,U) = \Hom(\Gamma,U) \\
&\leq \Poly^1_{\leq 1}(\Gamma,\XX,U) \\
&\leq \dots \\
&\leq Z^1(\Gamma,\XX,U)   
\end{align*}
which are related to the chain \eqref{type-chain} by the subgroup relation
$$ \Poly^1_{\leq k-1}(\Gamma,\XX,U) \leq Z^1_{\leq k}(\Gamma,\XX,U)$$
for any $k \geq 1$ (see \cite[Lemma 4.3(iii)]{btz}); we also clearly have the short exact sequence
$$ 0 \to U \to \Poly_{\leq k}(\XX,U) \stackrel{d_\Gamma}{\to} \Poly^1_{\leq k-1}(\Gamma,\XX,U) \cap d_\Gamma \mathcal{M}(\XX,U) \to 0$$
of abelian groups for any $k \geq 0$.

We recall some previously known relationships between order and Abramov systems:

\begin{proposition}[Order and Abramov systems]\label{order-prop}  Let $\XX=(X,\mathcal{X},\mu,T)$ be an ergodic $\Gamma$-system, and let $k \geq 1$.
\begin{itemize}
    \item[(i)] If $\XX$ is Abramov of order $\leq k$, then it is of order $\leq k$. (Equivalently: polynomials of degree $\leq k$ are $\ZZ^{\leq k}(\XX)$-measurable.)
    \item[(ii)] If $\XX$ is of order $\leq 1$ (i.e., a Kronecker system), then it is Abramov of order $\leq 1$.
    \item[(iii)] For $k \geq 2$, there exist $\Z$-systems of order $\leq k$ that are not Abramov of order $\leq k$.
    \item[(iv)] If $\Gamma = \F_p^\omega$ and $p \geq k-1$, and $\XX$ is of order $\leq k$, then it is Abramov of order $\leq k$.
    \item[(v)] If $\Gamma = \F_p^\omega$ and $\XX$ is of order $\leq k$, then it is Abramov of order $\leq C(p,k)$ for some quantity $C(p,k)$ depending only on $p,k$.
    \item[(vi)]  If $\XX$ is of order $\leq k$, then so is any factor of $\XX$.
    \item[(vii)] If $k=5$, then there exists a $\F_2^\omega$-system of order $\leq 5$ that is not Abramov of order $\leq 5$, but is a factor of a system that is Abramov of order $\leq 5$. 
\end{itemize}
\end{proposition}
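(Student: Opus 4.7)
The plan is to dispatch each of the seven items in turn, reducing most of them to either the Host--Kra material reviewed in \Cref{host-kra-theory} or to known results already in the literature.

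For (i), I would show that every polynomial $f \in \Poly_{\leq k}(\XX)$ is $\ZZ^{\leq k}(\XX)$-measurable; the Abramov hypothesis then identifies the measure algebra of $\XX$ with that of $\ZZ^{\leq k}(\XX)$. The cleanest argument within the cocycle-theoretic framework of the paper is by induction on $k$. The base case $k=0$ is ergodicity. For the inductive step, if $f \in \Poly_{\leq k}(\XX)$ then $d_\Gamma f = (\partial_\gamma f)_{\gamma \in \Gamma}$ lies in $\Poly^1_{\leq k-1}(\Gamma,\XX)$, and hence, by the inclusion $\Poly^1_{\leq k-1}(\Gamma,\XX) \leq Z^1_{\leq k}(\Gamma,\XX)$ recorded just after \eqref{type-chain}, $d_\Gamma f$ is a $\Gamma$-cocycle of type $\leq k$. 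The characterisation of $\ZZ^{\leq k}(\XX)$ as the factor generated by type-$\leq k$ cocycles then forces $f$ itself to be $\ZZ^{\leq k}(\XX)$-measurable, modulo a constant that is absorbed by the inductive hypothesis at level $k-1$.

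For (ii), the Kronecker factor $\ZZ^{\leq 1}(\XX)$ is (isomorphic to) a compact abelian group rotation and so is generated by its characters; each character is an eigenfunction, hence a polynomial of degree $\leq 1$, so $\Poly_{\leq 1}(\XX)$ generates the full measure algebra. Item (vi) follows immediately from the functoriality of the Host--Kra factors recorded in \Cref{prop-functoriality}: a factor map $\pi \colon \XX \to \YY$ intertwines the $\ZZ^{\leq k}$ operation, so $\XX = \ZZ^{\leq k}(\XX)$ forces $\YY = \ZZ^{\leq k}(\YY)$. The remaining four items are citations to the literature: (iii) is the classical observation that a $2$-step nilsystem (e.g.\ a Heisenberg nilsystem) can be of order $\leq 2$ while its Conze--Lesigne cocycle fails to be cohomologous to a polynomial of degree $\leq 2$; (iv) is the main structural theorem of \cite{btz}; (v) is the main structural theorem of \cite{CGSS}; and (vii) is an explicit $\F_2^\omega$-system that appears in the $p=2$ literature, which one would point to directly.

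The hard part is really just (i), and even there the difficulty is organisational rather than conceptual: within the cocycle/type formalism of this paper one has to be careful to distinguish between $f$ being a polynomial and $d_\Gamma f$ being a polynomial-valued cocycle, and to chase the definitions through Appendix~\ref{host-kra-theory} without smuggling in hypotheses (such as the existence of polynomial sections of $Z^1_{\leq k}(\Gamma,\XX,U)$) which are precisely what the main theorems of this paper are designed to clarify. Once that bookkeeping is honestly carried out, no genuinely new ergodic-theoretic input is required beyond the standard Host--Kra theory.
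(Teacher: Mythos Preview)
The paper treats this proposition as a pure list of citations: (i) is \cite[Lemma A.35]{btz} (also restated in the appendix as \Cref{ppfacts}(ii)); (ii) is the one-line observation about eigenfunctions; (iii) is \cite{fw}; (iv) is \cite{btz}, \cite{CGSS}; (v) is \cite{btz}; (vi) is \Cref{prop-functoriality}(i); (vii) is \cite{jstcounterexample} and \cite{CGSSextensions}. Your treatments of (ii) and (vi) match exactly.

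Your sketch of (i), however, has a genuine gap. There is no ``characterisation of $\ZZ^{\leq k}(\XX)$ as the factor generated by type-$\leq k$ cocycles'' available to invoke; knowing that $d_\Gamma f \in Z^1_{\leq k}(\Gamma,\XX)$ tells you (via \Cref{7.9}) only that $d_\Gamma f$ is \emph{cohomologous} to a $\ZZ^{\leq k}(\XX)$-measurable cocycle, not that $f$ itself is $\ZZ^{\leq k}(\XX)$-measurable. The actual argument (in \cite{btz} and reproduced as \Cref{ppfacts}(ii)) proceeds differently: one uses \Cref{ppfacts}(v) to get $\Delta^{[k+1]} f = 0$, whence $\|e(f)\|_{U^{k+1}} = 1$, and then a structural argument (e.g.\ via \Cref{ppfacts}(iii), showing $f$ is invariant under the structure group above $\ZZ^{\leq k}$) pushes $f$ down to $\ZZ^{\leq k}(\XX)$. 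Your inductive scheme does not supply this step.

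Several of your citations are also off. Item (v)---the bound $C(p,k)$ in low characteristic---is already in \cite{btz}, not \cite{CGSS}. Item (iii) requires the specific Furstenberg--Weiss construction \cite{fw}; a generic Heisenberg $\Z$-nilsystem is typically Abramov of order $\leq 2$ (it is generated by generalized eigenfunctions), so your proposed example does not work. Item (vii) needs the explicit constructions in \cite{jstcounterexample} and \cite{CGSSextensions}; ``the $p=2$ literature'' is too vague.
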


\begin{proof}  For (i), see \cite[Lemma A.35]{btz}.  The claim (ii) is immediate from the fact that Kronecker systems are generated by eigenfunctions.  For (iii), see \cite{furstenbergexample}.  For (iv), see  \cite{btz}, \cite{CGSS}.  For (v), see \cite{btz}. For (vi), see \Cref{prop-functoriality}(i). For (vii), see \cite{jstcounterexample} and \cite{CGSSextensions}.
\end{proof}

\Cref{order-prop}(vii) disproves a conjecture from \cite{btz} that an $\F_p^\omega$-system of order $\leq k$ was necessarily Abramov of order $\leq k$ even in low characteristic cases.  One of the main results of this paper is to salvage a weaker form of this conjecture (which we proposed previously in \cite{jst-tdsystems}, and which is also suggested by the second part of \Cref{order-prop}(vii)):

\begin{theorem}[Abramov extension in the bounded-exponent case]\label{main-thm}  Every ergodic $\Gamma$-system of order $\leq k$ is a \emph{factor} of an Abramov system of order $\leq k$. 
\end{theorem}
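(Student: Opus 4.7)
The plan is induction on $k$, carried out with the strengthened hypothesis that every ergodic $\Gamma$-system of order $\leq k$ is the factor of a polynomial tower of height $\leq k$ in the sense of \Cref{tower-def}; this suffices because, by the paper's structure theorem on polynomial towers asserted in the abstract, polynomial towers are Abramov. The base case $k=1$ follows from \Cref{order-prop}(ii): a Kronecker system is already a height-one polynomial tower, since its Kronecker cocycle lies in $\Hom(\Gamma,U_1) = \Poly^1_{\leq 0}(\Gamma,\mathrm{pt},U_1)$ and is tautologically polynomial. For the inductive step, let $\XX$ be ergodic of order $\leq k$, so that $\XX \cong \ZZ^{\leq k-1}(\XX) \times_{\rho_k} U_k$ with $\rho_k \in Z^1_{\leq k}(\Gamma,\ZZ^{\leq k-1}(\XX),U_k)$. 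The factor $\ZZ^{\leq k-1}(\XX)$ is of order $\leq k-1$, and by induction there is a polynomial tower $\YY$ of height $\leq k-1$ and a factor map $\pi\colon \YY\to\ZZ^{\leq k-1}(\XX)$.

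Pull $\rho_k$ back to $\tilde\rho_k \coloneqq \rho_k\circ\pi \in Z^1_{\leq k}(\Gamma,\YY,U_k)$. The skew product $\YY\times_{\tilde\rho_k}U_k$ is an abelian extension that dominates $\XX$, but $\tilde\rho_k$ need not be polynomial, so this is not yet a polynomial tower. The heart of the proof is therefore the following cohomological statement, which I would formulate and prove as a separate lemma: for any polynomial tower $\YY$ of height $\leq k-1$, any compact abelian $U_k$, and any type $\leq k$ cocycle $\tilde\rho_k \in Z^1_{\leq k}(\Gamma,\YY,U_k)$, there is a polynomial tower extension $\YY'\to\YY$ (obtained by successively adjoining abelian extensions by polynomial cocycles of degrees $\leq k-1$) such that the pullback of $\tilde\rho_k$ to $\YY'$ is $\Gamma$-cohomologous on $\YY'$ to a polynomial cocycle $\sigma_k \in \Poly^1_{\leq k-1}(\Gamma,\YY',U_k)$. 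Granting this, $\YY' \times_{\sigma_k} U_k$ is a polynomial tower of height $\leq k$ factoring onto $\XX$, and the induction closes.

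For the lemma itself, I would reduce via Pontryagin duality on $U_k$ to the scalar cases $U_k = \T$ and $U_k$ a finite cyclic group. In the circle case, the type $\leq k$ hypothesis (\Cref{type:def}) produces, after $k$-fold differentiation in $\Gamma$, identities expressing $\partial_{\gamma_1}\dots\partial_{\gamma_{k+1}}\tilde\rho_k$ as coboundaries; these identities would be antidifferentiated one level at a time through the polynomial tower $\YY$, at each stage invoking the triviality of the relevant weighted cocycle cohomology (as foreshadowed by the weighted cocycle setup of \Cref{weighted-cocycle}) on an appropriate polynomial-cocycle extension. Each antidifferentiation may enlarge $\YY$ by a further polynomial abelian extension whose cocycle has degree strictly less than $k$, which is exactly the flexibility polynomial towers were designed to provide and why the intermediate layers need not agree with the Host--Kra factors. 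The finite cyclic case additionally requires root-extractions to absorb $m$-torsion, realized by further abelian extensions with finite polynomial cocycles.

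The main obstacle is precisely this cohomological reduction in low characteristic. When $p\mid m$ is small, the map $d_\Gamma$ is not surjective onto the relevant spaces of polynomials, because multiplication by $m$ on $U_k$ or on the polynomial groups has nontrivial kernel; one cannot freely invert derivatives, and the required antiderivatives only exist after passing to root extensions of $\YY$. The delicate point is to arrange these root extensions to themselves be polynomial-cocycle extensions of controlled degree, so that the entire construction stays within the category of polynomial towers. This is the bounded-exponent analogue of Conze--Lesigne root extraction, and is where \Cref{order-prop}(vii) forces one to abandon any attempt to realize the construction directly at the level of the Host--Kra tower.
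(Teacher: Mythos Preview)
Your high-level strategy matches the paper's (which deduces the theorem from \Cref{technical} via \Cref{tower-abramov}): induct, pull back the top Host--Kra cocycle to a polynomial-tower extension of the penultimate factor, then straighten it to a polynomial cocycle. But the ``key lemma'' as you state it has a real gap. The weighted cocycle cohomology you invoke is \emph{not} trivial for an arbitrary polynomial tower. The paper's polynomial integration lemma (\Cref{poly-integ}) requires the extension cocycle to be \emph{exact}; the cocycle-level integration (\Cref{cocycle-integ}), which is what your vertical antidifferentiation actually needs, additionally requires the base to have \emph{large spectrum}. The linearization of the higher-order Conze--Lesigne equations in the finite-cyclic case---your root-extraction step---is \Cref{cycliclinearization}, and this further requires \emph{$k$-purity} and \emph{relative $k$-purity} of the tower, conditions that supply the equivariant filtered retractions (\Cref{EDPretraction}) used to solve the linear systems that arise. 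These four conditions (exactness, large spectrum, $k$-purity, relative $k$-purity) are the paper's principal innovations and must be carried as part of the induction hypothesis; this is precisely \Cref{technical}, and it is why the paper inducts on the height $j$ with the degree bound $k$ held fixed rather than on $k$ itself. None of these conditions holds for a generic polynomial tower, so your unstrengthened hypothesis cannot close the induction; and once you do build them in, you must also show they can be re-established after each new extension, which is \Cref{relativepure} and the surrounding argument in \Cref{proof}.

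A secondary issue: your strengthened hypothesis records height $\leq k$, but adjoining further abelian extensions to $\YY$ (as you propose) increases the height, potentially beyond $k$. The paper avoids this by collecting all the required extensions into the top structure group via an inverse-limit construction (\Cref{relativepure}), so that the final height is exactly $j$.
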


In light of Proposition \ref{order-prop} and the example constructed in Section \ref{sec-example} below (which shows that finite order systems for the action of groups of bounded exponent do not admit Weyl tower extensions in general (see the next section for the definition)), Theorem \ref{main-thm} provides an essentially sharp structural description of the Host--Kra factors for bounded-exponent actions.  

\Cref{main-thm} had previously been established in the model case $\Gamma = \F_p^\omega$ by Candela--Gonz\'alez-S\'anchez--Szegedy \cite{CGSSextensions} using methods from nilspace theory \cite{candela-szegedy-cubic}; by combining this result  with the ergodic Sylow decomposition from \cite{jst-tdsystems}, one can then also establish \Cref{main-thm} in the case when the exponent $m$ is square-free (as in this case the $p$-Sylow components of $\Gamma$ are either finite or isomorphic to $\F_p^\omega$). 

The idea to use extensions to simplify the structural description of factors of measure-preserving systems characteristic for certain seminorms first appeared in the work of Austin \cite{austin-norm,austin2015pleasant}. 

By combining \Cref{main-thm} with a now-standard correspondence principle argument, combined with some additional technical ingredients, we can obtain an inverse theorem for Gowers norms in finite groups of bounded exponent, confirming a conjecture of two of the authors \cite[Conjecture 1.11]{jt21-1} for that class of groups.

\begin{theorem}[Inverse Gowers theorem for groups of bounded exponent]\label{inversegowers}
    Let $m\geq 1$ be an integer, let $k\geq 1$ and let $\delta >0$. Then there exists some $\varepsilon = \varepsilon(k,m,\delta)$ such that for every finite, $m$-exponent, abelian group $G$ and any $1$-bounded\footnote{A function $f \colon G \to \C$ is \emph{$1$-bounded} if $|f(x)| \leq 1$ for all $x \in G$.} function $f \colon G \rightarrow \mathbb{C}$ with $\|f\|_{U^{k+1}(G)}>\delta$, there exists a polynomial $P\in\Poly_{\leq k}(G)$ (viewing $G$ as a translational $G$-system in the obvious fashion) such that $$\left|\E_{x\in G} f(x) e(-P(x))\right|>\varepsilon,$$
where $e(\theta) \coloneqq e^{2\pi i \theta}$.
\end{theorem}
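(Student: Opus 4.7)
The plan is to argue by contradiction, combining the ergodic-theoretic \Cref{main-thm} with the correspondence principle of the first and third authors~\cite{jt21-1}. Suppose \Cref{inversegowers} fails for some choice of $k$, $m$, $\delta$: then one can find a sequence of finite $m$-exponent abelian groups $G_n$ and $1$-bounded functions $f_n \colon G_n \to \C$ with $\|f_n\|_{U^{k+1}(G_n)} > \delta$, and yet $\sup_{P \in \Poly_{\leq k}(G_n)} |\E_{x \in G_n} f_n(x)\, e(-P(x))| \to 0$ as $n \to \infty$. Applying the correspondence principle (via an ultraproduct or Furstenberg-style limit construction) produces an ergodic $\Gamma$-system $\XX=(X,\mathcal{X},\mu,T)$, where $\Gamma$ is a countable abelian group of exponent dividing $m$, together with a $1$-bounded $F \in L^\infty(\XX)$ satisfying $\|F\|_{U^{k+1}(\XX)} \geq \delta$ but with $\int_X F\,\overline{e(P)}\,d\mu = 0$ for every $P \in \Poly_{\leq k}(\XX)$.

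Next, since $\ZZ^{\leq k}(\XX)$ is characteristic for the $U^{k+1}$-seminorm, $F' \coloneqq \E(F \mid \ZZ^{\leq k}(\XX))$ still satisfies $\|F'\|_{U^{k+1}} \geq \delta$, and in particular $F'$ is nonzero in $L^2$. The Host--Kra factor $\ZZ^{\leq k}(\XX)$ is an ergodic $\Gamma$-system of order $\leq k$, so \Cref{main-thm} furnishes an Abramov extension $\pi \colon \tilde\ZZ \to \ZZ^{\leq k}(\XX)$ of order $\leq k$. The pullback $F' \circ \pi$ is then nonzero in $L^2(\tilde\ZZ)$, and by the Abramov property the measure algebra of $\tilde\ZZ$ is generated by $\Poly_{\leq k}(\tilde\ZZ)$. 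The polynomial characters $\{e(P) : P \in \Poly_{\leq k}(\tilde\ZZ)\}$ form a unital $*$-algebra of $L^\infty$-functions (since $\Poly_{\leq k}(\tilde\ZZ)$ is a group under addition) that separates points a.e., and by the measure-algebra form of Stone--Weierstrass their linear span is dense in $L^2(\tilde\ZZ)$. Hence some $P \in \Poly_{\leq k}(\tilde\ZZ)$ satisfies $\int_{\tilde\ZZ} (F'\circ\pi)\,\overline{e(P)} \neq 0$.

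The principal obstacle is now to convert this correlation on the Abramov extension $\tilde\ZZ$ into correlation of $F$ with a \emph{genuine} polynomial on $\XX$, and from there (via the reverse direction of the correspondence principle) into correlation of $f_n$ with a polynomial in $\Poly_{\leq k}(G_n)$, contradicting the standing assumption. A priori the polynomial $P$ lives on $\tilde\ZZ$ rather than on $\XX$, so it cannot be transferred directly back to the finite groups. The intended route, suggested by the paper's title, is to formulate the correspondence principle itself at the level of \emph{polynomial towers}, so that the Abramov extension $\tilde\ZZ$ arises already as a limit of polynomial tower data on the $G_n$, and the polynomial $P \in \Poly_{\leq k}(\tilde\ZZ)$ descends to $P_n \in \Poly_{\leq k}(G_n)$ asymptotically correlated with $f_n$. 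Alternatively, one may attempt a direct Fourier descent of $e(P)$ along the abelian steps of the tower $\tilde\ZZ \to \ZZ^{\leq k}(\XX)$, exploiting that each step is built from a polynomial cocycle of controlled degree and that leading Fourier modes can be absorbed into the polynomial data on $\XX$. This last step is precisely the ``additional technical ingredients'' alluded to in the introduction and is where the main technical labor beyond \Cref{main-thm} will lie.
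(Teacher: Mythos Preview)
Your overall architecture is correct and matches the paper: contradiction, correspondence principle, pass to the Host--Kra factor, apply \Cref{main-thm} to obtain an Abramov extension, correlate with a polynomial character there, and transfer back. You also correctly locate the crux: the polynomial lives on the extension $\tilde\ZZ$, not on $\XX$, and this is the step that requires real work.

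However, the two approaches you sketch for this transfer (\emph{reformulating the correspondence principle at the tower level}, or a \emph{Fourier descent of $e(P)$ along the tower back to $\XX$}) are not what the paper does, and the second one is unlikely to succeed: the example in \Cref{sec-example} (and \Cref{facts}(v)) shows precisely that one cannot in general push polynomials on the extension down to polynomials on $\XX$ of the same degree. The paper's resolution is different and is the genuine new ingredient in this section. Rather than descending to $\XX$, one shows that the extension $\YY=\tilde\ZZ$ is itself still a \emph{factor of the Loeb ultraproduct} $G=\prod_{n\to\alpha}G_n$ (\Cref{Gabelianextension}). This relies on a splitting of the short exact sequence $0\to\Gamma_\omega\to G\to G/\Gamma_\omega\to 0$ (\Cref{Gsplit}), which in turn requires a non-degeneracy hypothesis on the sequence $G_n$ (namely that $[G_n:m'G_n]$ is unbounded for every proper divisor $m'$ of $m$). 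That non-degeneracy is arranged by a preliminary induction on the exponent $m$, handling the degenerate case by Fourier decomposing over a bounded finite factor and invoking the theorem for smaller exponent. None of this appears in your outline.

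Once $\YY$ is known to be a factor of $G$, the final passage from correlation on $\YY$ to correlation on the finite groups $G_n$ is done via the almost-polynomial / nilspace machinery of \cite[Lemmas~7.2--7.3]{jt21-1}: one checks the factor map $G\to\YY$ is an almost polynomial map into the nilspace structure on $\YY$ coming from the tower, composes with the character $\xi$, and then approximates by an internal nilspace morphism, which unwinds to genuine polynomials $g_n\in\Poly_{\leq k}(G_n)$ correlated with $f_n$. Your proposal does not mention this mechanism either. So while your skeleton is right, the two technical pillars that make the argument go through---the induction on $m$ plus \Cref{Gsplit}/\Cref{Gabelianextension}, and the nilspace transfer from \cite{jt21-1}---are missing.
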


We define the Gowers norms $U^{k+1}(G)$ and derive this theorem from \Cref{main-thm} in \Cref{proofgowers}.

This result was previously known for $k \leq 2$ \cite{jt21-1}. For vector spaces over finite fields $\F_p$, the high-characteristic case $p>k$ was established by the third author and Ziegler in \cite{taozieglerhigh}. Their proof used a correspondence principle together with a structure theorem from ergodic theory for $\F_p^\omega$-actions, established jointly with Bergelson in \cite{btz}. One of the main goals of the present paper is to develop a general framework in which their strategy applies in both high and low characteristic, and more generally to arbitrary groups of bounded exponent.

The low-characteristic case $p\leq k$ was established in \cite{tz-lowchar} by studying various notions of rank and the equidistribution of polynomials. A crucial ingredient in that proof was the exact roots property of non-classical polynomials on vector spaces over finite fields. This property fails for general groups of bounded torsion; see the beginning of \Cref{roots-sec} for a discussion. This failure is a principal motivation for the machinery introduced in this paper.

See also \cite{milicevic}, \cite{milicevic-2}, \cite{milicevic-u56}, \cite{BSST}, \cite{milicevicquasi}, and \cite{CGSS} for other proofs of, and results related to, the inverse theorem for vector spaces over finite fields, including several proofs that give quantitative control of $\eps$. 

More recently, \Cref{inversegowers} was established for groups of square-free exponent $m$ in \cite{CGSS-squarefree}. In parallel with and independently of the present work, Mili\'cevi\'c \cite[Theorem~1.3]{lukau4general} established a quantitative version of \Cref{inversegowers} for $k=3$ and bounded-exponent groups of the form $G=(\mathbb{Z}/2^d\mathbb{Z})^n$. We note, in particular, that \Cref{inversegowers} appears to be new for groups of exponent $4$ and arbitrary $k$.

In \cite[Theorem~1.12]{jst-tdsystems}, we proved a weaker version of \Cref{inversegowers}, in which the correlating polynomial is allowed to have degree greater than $k$, with the degree depending on $m$ and $k$. Candela, Gonz\'alez-S\'anchez, and Szegedy subsequently established another weaker form \cite[Theorem~1.12]{CGSS-squarefree}, in which the correlation is with a so-called projected phase polynomial of degree at most $k$; see \cite{CGSS-squarefree} for the definition.\footnote{Very recently, these authors extended this result to the unbounded-exponent setting \cite{CGSS-proj}, where projected phase polynomials are replaced by projected nilsequences.} They further showed that projected phase polynomials are averages of higher-degree phase polynomials, thereby recovering our earlier weaker inverse theorem.

They asked in \cite[Question~6.1]{CGSS-squarefree} whether projected phase polynomials of degree at most $k$ can be approximated by a bounded number of genuine phase polynomials of degree at most $k$, and proved that this question is equivalent to establishing \Cref{inversegowers}; see \cite[Proposition~6.2]{CGSS-squarefree}. Thus, the proof of \Cref{inversegowers} also answers their question affirmatively.

In order to avoid working with projected phase polynomials (or equivalently, with passing from $G$ to a larger group extension in order to define the polynomial $P$) we will need to spend a non-trivial amount of effort in the proof of \Cref{inversegowers} to ensure that the tower of abelian extensions arising in \Cref{main-thm} are still factors of (ultraproducts of) the group $G$.

\subsection{Weyl towers and polynomial towers}

Henceforth $\Gamma$ is assumed to be a countable abelian group of bounded exponent. We establish \Cref{main-thm} by obtaining a more precise (but technical) description of $\Gamma$-systems of order $\leq k$ in terms of a certain new type of tower, which we call a \emph{polynomial tower}.  To motivate this construction, we first recall the existing notion of a Weyl system.

\begin{definition}[Weyl system]  An ergodic $\Gamma$-system $\XX$ is said to be a \emph{Weyl system of order $\leq k$} if it is of order $\leq k$, and the cocycles $\rho_1,\dots,\rho_k$ appearing in the Host--Kra tower  can be chosen to lie in $\Poly^1_{\leq j-1}(\Gamma,\ZZ^{\leq j-1}(\XX), U_j)$ for all $j=1,\dots,k$.
\end{definition}

\begin{remark}
   Every Weyl system of order $\leq k$ is also an Abramov system of order~$\leq k$ (cf.~\cite[Theorem 3.8]{btz}). However the converse is false, as demonstrated in \cite[Appendix D]{jst-tdsystems}. An expanded version of the example constructed in \cite[Appendix D]{jst-tdsystems} is presented in \Cref{sec-example} and analyzed in detail in  \Cref{analysis-sec} below. 
\end{remark}

One natural way to prove \Cref{main-thm} would be to show that every ergodic $\Gamma$-system of order $\leq k$ can be extended to a Weyl system of order $\leq k$. In \cite{btz} this was established for $\Gamma=\mathbb{F}_p^\omega$ in the "high characteristic" regime when $p>k$ (without extensions), and when $p\leq k$ it was shown that an ergodic $\Gamma$-system is a Weyl system of order $\leq C_p(k)$ for some constant depending only on $p,k$. In our previous work \cite{jst-tdsystems}, we were able to accomplish this result for all groups of bounded exponent if one generalized the notion of extension by allowing the acting group $\Gamma$ to also be extended to a larger group (and in particular to permit $\Gamma$ to become exponent-free). However, we have not been able to make this strategy work while keeping the group $\Gamma$ unchanged. In fact, we show in \Cref{analysis-sec} that the example constructed in \Cref{sec-example} does not admit any extension to a Weyl $\Gamma$-system of the same order. Our solution to this has been to replace the Host--Kra tower by a more general type of tower, where the intermediate factors are no longer required to match the Host--Kra factors $\ZZ^{\le j}(\XX)$, but for which the cocycles $\rho_j$ still can be expressed as polynomials. To this end, we define
  
\begin{definition}[Polynomial tower]\label{tower-def}  Let $k,j \geq 1$. An ergodic $\Gamma$-system $\XX$ is said to admit a \emph{polynomial tower of order $\leq k$ and height $j$} if $\XX$ is isomorphic to the top $\XX_j$ of a tower (\Cref{tower-gen}), in which each cocycle $\rho_i$, $1 \leq i \leq j$ lies in $\Poly^1_{\leq k-1}(\Gamma,\XX_{i-1}, U_i)$.
\end{definition}

Clearly, every Weyl system admits a polynomial tower of order $\leq k$ and height $k$ (in fact the lower cocycles of the Host--Kra tower are even lower degree than what is needed for this claim).  On the other hand, we have the following simple observation:

\begin{lemma}\label{tower-abramov}  Let $k \geq 1$.  If an ergodic $\Gamma$-system $\XX$ admits a polynomial tower of order $\leq k$ and some height $j$, then it is Abramov of order $\leq k$ (and hence of order $\leq k$, by \Cref{order-prop}(i)).
\end{lemma}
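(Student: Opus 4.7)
The natural plan is to induct on the height $j$ of the tower. The base case $j=0$ is vacuous since $\XX_0 = \mathrm{pt}$ is already Abramov of every order (being generated by the constant $\Poly_{\leq 0} = \T$). For the inductive step, suppose $\XX_{j-1}$ is Abramov of order $\leq k$, i.e., its measure algebra is generated by $\Poly_{\leq k}(\XX_{j-1})$. Since the factor map $\pi \colon \XX_j \to \XX_{j-1}$ intertwines the $\Gamma$-actions, the identity $\partial_\gamma(f\circ \pi) = (\partial_\gamma f)\circ \pi$ shows that pulling back along $\pi$ carries $\Poly_{\leq k}(\XX_{j-1})$ into $\Poly_{\leq k}(\XX_j)$; in particular the pullback of the full measure algebra of $\XX_{j-1}$ is generated by polynomials of degree $\leq k$ on $\XX_j$.

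What remains is to generate the ``fiber direction''. Because $\XX_j = \XX_{j-1}\times_{\rho_j} U_j$ as a measure space is a product, its measure algebra is generated (over the pullback from $\XX_{j-1}$) by the functions $F_\chi(x,u) \coloneqq \chi(u)$ as $\chi$ ranges over the Pontryagin dual $\widehat{U_j}$; this uses only Peter--Weyl / Stone--Weierstrass on $U_j$. So it suffices to check that every such $F_\chi$ lies in $\Poly_{\leq k}(\XX_j)$.

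The key computation is a one-line application of the definition of the skew product: for each $\gamma \in \Gamma$,
\begin{equation*}
(\partial_\gamma F_\chi)(x,u) = \chi(u + \rho_{j,\gamma}(x)) - \chi(u) = (\chi \circ \rho_{j,\gamma})(x).
\end{equation*}
By hypothesis $\rho_j \in \Poly^1_{\leq k-1}(\Gamma, \XX_{j-1}, U_j)$, so $\chi \circ \rho_j \in \Poly^1_{\leq k-1}(\Gamma, \XX_{j-1}, \T)$; that is, each $\chi\circ \rho_{j,\gamma}$ is a polynomial of degree $\leq k-1$ on $\XX_{j-1}$. Thus $\partial_\gamma F_\chi$, viewed on $\XX_j$, is the pullback of such a polynomial, hence lies in $\Poly_{\leq k-1}(\XX_j)$. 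Applying $k$ further derivatives gives $0$, so $\partial_{\gamma_1}\cdots \partial_{\gamma_{k+1}} F_\chi = 0$ and $F_\chi \in \Poly_{\leq k}(\XX_j)$, as required.

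Combining the two generating families closes the induction and shows $\XX_j = \XX$ is Abramov of order $\leq k$; the final sentence of the lemma (being of order $\leq k$) then follows from \Cref{order-prop}(i). There is no real obstacle here: the argument is entirely formal, the only substantive input being the compatibility $(\partial_\gamma F_\chi) = \chi \circ \rho_{j,\gamma}$ together with the polynomial-cocycle hypothesis, which is exactly what the definition of a polynomial tower supplies.
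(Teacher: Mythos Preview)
Your proof is correct and follows essentially the same approach as the paper: induct on the height, and at each step observe that the vertical coordinate has derivative equal to the cocycle, hence is a polynomial of degree $\leq k$. The only cosmetic difference is that the paper works with the $U_j$-valued coordinate function $u\colon (x,u)\mapsto u$ directly, while you compose with characters $\chi\in\widehat{U_j}$ to obtain $\T$-valued polynomials $F_\chi$; since Abramov is defined via $\Poly_{\leq k}(\XX,\T)$, your version is arguably the more explicit of the two.
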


\begin{proof} By induction, it suffices to show that if $\XX$ is Abramov of order $\leq k$ and $\rho \colon \Gamma \times \XX \to U$ is a polynomial cocycle of degree $\leq k-1$, then $\XX \times_\rho U$ is also Abramov of order $\leq k$.  If we let $u \colon (x,u) \mapsto u$ be the vertical coordinate function of $\XX \times_\rho U$, then one has $\partial_\gamma u = \rho_\gamma$ a.e. for all $\gamma \in \Gamma$, hence $u$ is a polynomial of degree $\leq k$.  Since $\XX \times_\rho U$ is generated as a $\sigma$-algebra by $\XX$ and $u$, the claim follows.
\end{proof}

In view of \Cref{tower-abramov}, we see that to prove \Cref{main-thm} (and thus \Cref{inversegowers}), it suffices to show (assuming bounded-exponent) that every ergodic $\Gamma$-system of order $\leq k$ extends to an ergodic system that admits a polynomial tower of order $\leq k$ and some height $j$.  In fact, for technical inductive reasons it is convenient to establish a stronger result in which the polynomial tower enjoys additional useful properties (see \Cref{technical}).
\begin{definition}[Exact cocycles and tower, large spectrum and purity]\label{key-three} Let $\XX$ be an ergodic $\Gamma$-system, and $U$ a compact abelian group.  
\begin{itemize}
    \item[(i)] A $\Gamma$-cocycle $\rho \in Z^1(\Gamma,\XX,U)$ is said to be \emph{exact} if for all $d \geq 0$, and any frequency $\xi \in \hat U$ in the Pontryagin dual $\hat U \coloneqq \Hom(U,\T)$ of $U$, the cocycle $\xi \circ \rho$ is of type $\leq d$ if and only if it is polynomial of degree $\leq d-1$, that is to say the obvious subgroup relation
    $$ \{ \xi \in \hat U : \xi \circ \rho \in \Poly^1_{\leq d-1}(\Gamma,\XX)\} \leq  \{ \xi \in \hat U : \xi \circ \rho \in Z^1_{\leq d}(\Gamma,\XX)\}  $$
    is in fact an equality for all $d \geq 0$. A polynomial tower (\Cref{tower-gen}) of order $\leq k$ and height $j$ is said to be an \emph{exact polynomial tower} if all the $\Gamma$-cocycles $\rho_1,\dots,\rho_j$ in the tower are exact.
    \item[(ii)]  An ergodic $\Gamma$-system $\XX$ is said to have \emph{large spectrum} if for any $\xi$ in the countable group $\tilde \Gamma$ defined in \eqref{gamma-basis-dense}, there is a solution $\phi_\xi \in \mathcal{M}(\XX,\T)$ to the eigenfunction equation $\partial_\gamma \phi_\xi = \xi \cdot \gamma$; in other words, every element of $\tilde \Gamma$ is an eigenvalue of the $\Gamma$-action on $\XX$.
    \item[(iii)]  An ergodic $\Gamma$-system $\XX$ is said to be \emph{$k$-pure} if for almost every $x_0\in \XX$ the filtered group $\imath_{x_0}(\Poly_{\leq k}(\XX))$ is pure (see Definition \ref{pure:def}) in $\Poly_{\leq k}(\Gamma)$ where both of these groups are equipped with the polynomial filtration and $\iota_{x_0}$ is the sampling map
    \begin{equation}\label{sampling} \iota_{x_0} P(\gamma) \coloneqq P(T^\gamma x_0),
    \end{equation}
    which (as we shall show in \Cref{roots-sec}) is a well-defined injective $\Gamma$-equivariant morphism from $\Poly_{\leq k}(\XX)$ to $\Poly_{\leq k}(\Gamma)$ for almost all $x_0$. 
    \item[(iv)] An ergodic extension $\XX = \YY \times_\rho U$ of $\Gamma$-systems $\YY$ (as in \eqref{abelian-extension}) is called relatively $k$-pure if the group $(d_\Gamma \Poly_{\leq k}(\XX))^U$ of $U$-invariant functions in $d_\Gamma\Poly_{\leq k}(\XX)$ is a pure subgroup of $\Poly_{\leq k-1}^1(\Gamma,\XX)$.  
\end{itemize}
\end{definition}
In more informal terms,
\begin{itemize}
    \item [(i)] The $\Gamma$-cocycles $\rho_i$ in an exact polynomial tower are ``as polynomial as possible'', in the sense that any component $\xi \circ \rho_i$ of the $\Gamma$-cocycle will be a polynomial of the smallest degree compatible with its type (if $\xi \circ \rho_i$ is of type $\leq k$, then it will be polynomial of degree $\leq k-1$); 
    \item[(ii)] Systems with large spectrum have a plentiful (though still countable) supply of eigenfunctions; and
    \item[(iii)] In systems that are $k$-pure one can manipulate polynomials by working ``locally'' using shifts by $\Gamma$, rather than ``globally'' in the system $\XX$. 
    \item[(iv)]  The notion of relative purity will be a technical one, required to ensure that extensions of a pure system are also pure. 
\end{itemize}

\begin{theorem}[Technical form of main theorem]\label{technical} 
Let $1 \leq j\leq k$ and let $\XX$ be an ergodic $\Gamma$-system of order $\leq j$. Then there is an ergodic $\Gamma$-extension $\XX'$ of $\XX$ that has large spectrum and admits an exact polynomial tower $(\XX'_{i})_{i=0,\ldots,j}$ of order $\leq k$ and height $j$ such that the factor $\XX'_i$ is $k$-pure for each $1\leq i\leq j$ and the abelian extension 
\[\begin{tikzcd}
    \XX'_i \arrow[d, Rightarrow, "\rho_{i}; U_i"] \\
    \XX'_{i-1} 
\end{tikzcd}
\]
is relatively $k$-pure for each $1<i\leq j$. 
\end{theorem}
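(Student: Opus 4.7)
The plan is induction on the height $j$, with $k$ fixed. For the base case $j=1$, a system of order $\leq 1$ is Kronecker; we take $\XX'$ to be a Kronecker extension of $\XX$ whose structure group $U_1$ is large enough (e.g.\ the direct sum of the Kronecker group of $\XX$ with the Pontryagin dual $\hat{\tilde\Gamma}$) to realize every $\xi\in\tilde\Gamma$ as an eigenvalue, with $\rho_1 \in \Poly^1_{\leq 0}(\Gamma,\mathrm{pt},U_1)$ the obvious homomorphism. Exactness is automatic (a homomorphism into a compact abelian group that is a coboundary over $\mathrm{pt}$ is trivial), large spectrum holds by construction, and $k$-purity reduces to verifying that polynomials on a compact group rotation pull back along the sampling map \eqref{sampling} to $\Poly_{\leq k}(\Gamma)$.

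For the inductive step, suppose the theorem holds at height $j-1$ (with the same $k$), and let $\XX$ be of order $\leq j$. Since $\XX$ is an abelian extension of its $(j-1)$-th Host--Kra factor, write $\XX \cong \ZZ^{\leq j-1}(\XX) \times_\sigma V$ for a compact abelian group $V$ and a cocycle $\sigma$ of type $\leq j$. Apply the inductive hypothesis to $\ZZ^{\leq j-1}(\XX)$ to produce a tower $(\YY_i)_{i=0}^{j-1}$ with all the required properties, with top $\YY \coloneqq \YY_{j-1}$. Pull back $\sigma$ along $\YY \twoheadrightarrow \ZZ^{\leq j-1}(\XX)$ to $\sigma' \in Z^1_{\leq j}(\Gamma,\YY,V)$ and, after trimming $V$ by Mackey--Zimmer (\Cref{zimmer}) if necessary to restore ergodicity, form $\XX'' \coloneqq \YY \times_{\sigma'} V$, which is an ergodic extension of both $\XX$ and $\YY$.

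The heart of the argument is now to replace $\sigma'$, up to cohomology and after possibly enlarging $V$ to $\tilde V$ through an $m$-torsion root extension, by an \emph{exact polynomial cocycle} $\rho_j$ of degree $\leq k-1$, with structure group $U_j \coloneqq \tilde V$. For each character $\xi$ of $\tilde V$ the composition $\xi \circ \sigma'$ is of type $\leq j$, so by the Host--Kra machinery reviewed in Appendix \ref{host-kra-theory} some iterated $\Gamma$-derivative of $\xi \circ \sigma'$ is already a coboundary; the bounded-exponent hypothesis $m\Gamma = 0$ then allows us to solve, by adjoining $m$-th roots in the structure group, the cohomological equations that upgrade ``type $\leq d$'' to ``polynomial of degree $\leq d-1$'' for each $d = 1,\dots,j$. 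Carrying this out simultaneously over a countable generating set of $\hat{\tilde V}$, and enforcing the exactness condition of \Cref{key-three}(i) uniformly in $\xi$ and $d$, produces the required $\rho_j$ and $U_j$.

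Setting $\XX'_i \coloneqq \YY_i$ for $0\leq i < j$ and $\XX'_j \coloneqq \YY \times_{\rho_j} U_j$ completes the tower. Large spectrum is inherited from $\YY$ since abelian extensions preserve the Kronecker factor's eigenvalues. Relative $k$-purity of the top extension follows from the inductive purity of $\YY$ together with the polynomial degree of $\rho_j$, since $(d_\Gamma \Poly_{\leq k}(\XX'_j))^{U_j}$ sits inside $\Poly^1_{\leq k-1}(\Gamma,\YY)$ and inherits purity there; $k$-purity of $\XX'_j$ then follows from the root-extension theory of \Cref{roots-sec}, possibly after absorbing a further Kronecker-type adjustment to close up any residual non-pure subgroup. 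The main obstacle is the core step above: converting a type-$\leq j$ cocycle into an exact polynomial of degree $\leq k-1$ without enlarging $\Gamma$, simultaneously across all characters of $\tilde V$ and all relevant $d$, and without destroying the purity and spectrum invariants. This is where the bounded-exponent assumption is used essentially, since only then are the $m$-th roots one adjoins guaranteed to live in a compact abelian group of bounded exponent, keeping $U_j$ and the tower controllable.
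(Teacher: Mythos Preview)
Your inductive skeleton matches the paper's, but the three steps you treat as routine are exactly where the work lies, and your proposed mechanisms for them do not hold up. For exactness of $\rho_j$, you suggest ``adjoining $m$-th roots in the structure group'' to upgrade type $\leq d$ to degree $\leq d-1$. The paper does not enlarge the structure group here; instead it proves that the base $\YY$ (with all its inductive purity and relative-purity properties) satisfies a \emph{straightening property}: every $\T$-valued cocycle of type $\leq d$ on $\YY$ is cohomologous to a polynomial of degree $\leq d-1$. This is \Cref{stronglyabramov}, and its proof occupies all of \Cref{stronglyabramov:section}, relying on the cyclic linearization lemma (\Cref{cycliclinearization}), the equivariant retraction of \Cref{EDPretraction}, and the integration lemmas of \Cref{int1-sec,int2-sec}. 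Only after straightening is established does \Cref{exactisexact} give exactness of $\rho'$ via Pontryagin duality and a splitting argument.

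Your claim that relative $k$-purity ``follows from the inductive purity of $\YY$'' is incorrect. The group $(d_\Gamma \Poly_{\leq k}(\XX'_j))^{U_j}$ is typically strictly larger than $d_\Gamma \Poly_{\leq k}(\YY)$ (it contains $U_j$-invariant derivatives of genuinely new polynomials on $\XX'_j$), and purity of $\YY$ says nothing about this larger group being pure in $\Poly^1_{\leq k-1}(\Gamma,\XX'_j)$. The paper handles this via \Cref{relativepure}: it builds an increasing sequence $\YY_\ell = \YY \times_{\rho_\ell} U_\ell$ where at each stage one adjoins to the structure group enough new coordinates to realize, as coboundaries, all polynomial cocycles that ``should'' be coboundaries according to the relations visible in $\Poly_{\leq k}(\Gamma)$; the inverse limit $\YY_\infty$ is then relatively $k$-pure by construction. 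Finally, $k$-purity of $\YY_\infty$ itself is not a ``Kronecker-type adjustment'' but the longest part of \Cref{proof}: after reducing to a finite structure group $W$ (\Cref{finite-red}), one runs a coordinate-by-coordinate induction on the cyclic factors of $W$, using \Cref{cycliclinearization} and \Cref{poly-integ} to linearize the relevant Conze--Lesigne type equations and push the problem down the tower. Your sketch does not engage with any of these constructions, and without them the induction does not close.
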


Thanks to \Cref{tower-abramov}, the $j=k$ case of \Cref{technical} will imply \Cref{main-thm}. We will in fact strengthen the large spectrum hypothesis in \Cref{technical} slightly, in that we will require that the factor $\XX'_1$ in the polynomial tower already has large spectrum (which of course implies the same for the full system $\XX'$), but we ignore this minor detail for this introduction. 
We can combine all the above implications together into an equivalence; see also \Cref{fig2:new}.

\begin{theorem}[Equivalence form of main theorem]\label{equiv}  Let $\XX$ be an ergodic $\Gamma$-system, and let $k \geq 1$ be an integer.  Then the following are equivalent:
\begin{itemize}
    \item[(i)]  $\XX$ is of order $\leq k$.
    \item[(ii)]  $\XX$ is a factor of an Abramov system of order $\leq k$.
    \item[(iii)]  $\XX$ is a factor of a polynomial tower of order $\leq k$ and some height $j$.
    \item[(iv)]  $\XX$ is a factor of an exact polynomial tower of order $\leq k$ and height $k$ of large spectrum that is also $k$-pure. 
    \item[(v)] $\XX$ is a factor of an exact polynomial tower of order $\leq k$ and height $k$ of large spectrum that is also $k$-pure and such that every extension in the tower is relative $k$-pure. 
\end{itemize}
\end{theorem}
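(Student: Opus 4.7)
The plan is to establish the five conditions as equivalent by proving the cyclic chain (v) $\Rightarrow$ (iv) $\Rightarrow$ (iii) $\Rightarrow$ (ii) $\Rightarrow$ (i) $\Rightarrow$ (v). Four of these arrows will be essentially formal, collapsing to the definitions together with results already recorded earlier in the paper; the entire difficulty is concentrated in the last arrow (i) $\Rightarrow$ (v), for which the heavy lifting is provided by \Cref{technical}.

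The first two arrows are immediate: (v) differs from (iv) only by the additional requirement that every extension in the tower be relatively $k$-pure, so (v) $\Rightarrow$ (iv) is trivial, and an exact polynomial tower of order $\leq k$ and height $k$ is in particular a polynomial tower of order $\leq k$ of some height (namely $j = k$), giving (iv) $\Rightarrow$ (iii). For (iii) $\Rightarrow$ (ii) I would invoke \Cref{tower-abramov}, which guarantees that the top of any polynomial tower of order $\leq k$ and finite height is Abramov of order $\leq k$; hence if $\XX$ is a factor of such a tower, then $\XX$ is a factor of an Abramov system of order $\leq k$. For (ii) $\Rightarrow$ (i) I would combine two items from \Cref{order-prop}: part (i) gives that an Abramov system of order $\leq k$ is itself of order $\leq k$, and part (vi) gives that the class of systems of order $\leq k$ is closed under passing to factors, so a factor of an Abramov system of order $\leq k$ is itself of order $\leq k$.

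The substantive step is (i) $\Rightarrow$ (v), and here I would simply apply \Cref{technical} with the choice $j = k$. Starting from an ergodic $\Gamma$-system $\XX$ of order $\leq k$, the theorem furnishes an ergodic $\Gamma$-extension $\XX'$ of $\XX$ that has large spectrum, is the top of an exact polynomial tower $(\XX'_i)_{i=0,\dots,k}$ of order $\leq k$ and height $k$, has each intermediate factor $\XX'_i$ being $k$-pure, and has each intermediate abelian extension relatively $k$-pure. This $\XX'$ satisfies exactly the properties listed in (v), and $\XX$ is by construction a factor of it, so (v) holds and the cycle is closed. The main obstacle is therefore not in the present theorem at all but in the proof of \Cref{technical}, which constitutes the technical core of the paper; the role of \Cref{equiv} is simply to repackage \Cref{main-thm} and \Cref{technical} together with \Cref{tower-abramov} and \Cref{order-prop} into a single clean equivalence.
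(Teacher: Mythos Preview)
Your proof is correct and matches the paper's approach exactly: the paper does not give a separate formal proof of \Cref{equiv} but simply indicates that the stated implications combine into an equivalence (as summarized in \Cref{fig2:new}), with the chain (v) $\Rightarrow$ (iv) $\Rightarrow$ (iii) $\Rightarrow$ (ii) $\Rightarrow$ (i) closed by invoking \Cref{technical} with $j=k$ for (i) $\Rightarrow$ (v). Your identification of \Cref{tower-abramov} for (iii) $\Rightarrow$ (ii) and \Cref{order-prop}(i),(vi) for (ii) $\Rightarrow$ (i) is precisely what the paper intends.
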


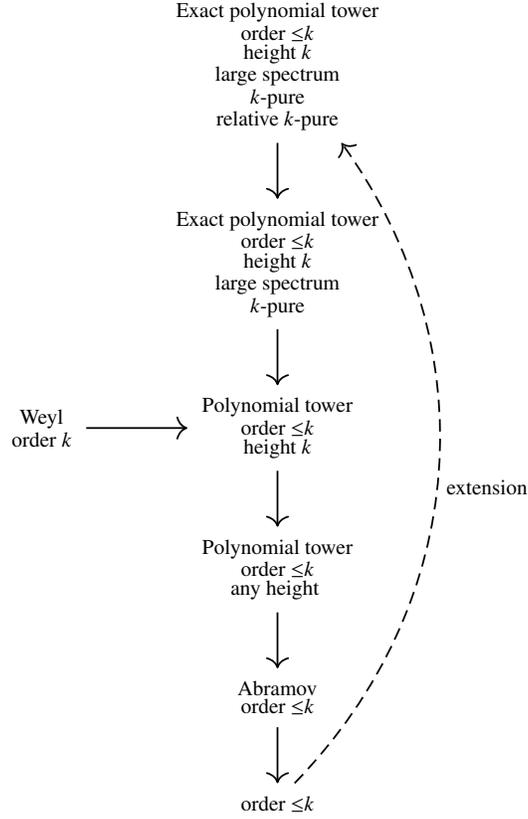
\begin{figure}[ht]
\centering
 \begin{tikzcd}
  {} & {\substack{\text{Exact polynomial tower}\\ \text{order } \leq k\\ \text{height }k \\ \text{large spectrum} \\ k\text{-pure} \\ \text{relative } k\text{-pure}}} \arrow[d] \\
 {} & {\substack{\text{Exact polynomial tower}\\ \text{order } \leq k\\ \text{height }k \\ \text{large spectrum} \\ k\text{-pure}}} \arrow[d] \\
{\substack{\text{Weyl}\\{\text{order }k}}} \arrow[r] & {\substack{\text{Polynomial tower}\\ \text{order } \leq k\\ \text{height }k}} \arrow[d] \\
& {\substack{\text{Polynomial tower}\\ \text{order } \leq k\\ \text{any height }}} \arrow[d]  \\
& {\substack{\text{Abramov}\\ \text{order } \leq k}} \arrow[d]  \\
& {\substack{\text{order } \leq k}} \arrow[uuuuu, bend right=40, "\text{extension}"', dashed]  
\end{tikzcd}
    \caption{Key implications between various properties of ergodic $\Gamma$-systems when $\Gamma$ is an arbitrary group of bounded exponent, for a given $k \geq 1$. For the dashed line, one needs to pass to an extension of the original system.}
  \label{fig2:new}
\end{figure} 

In \Cref{sec:tran}, we demonstrate that exact polynomial towers admit a representation as translational systems. While this structural characterization is not strictly required to derive our main ergodic-theoretic and combinatorial conclusions (Theorems \ref{technical} and \ref{inversegowers}), it establishes an analogue for groups of bounded exponent of existing results involving different group actions \cite{shalom2,shalom1, jst-tdsystems}. To state the result precisely, we first recall the definition of a translational system.

\begin{definition}\label{tran:def}
A \emph{translational system} is a
$\Gamma$-system of the form $\XX=(G/\Lambda,\mathcal{B},\mu_{G/\Lambda},T)$ where
$G$ is a Polish group, $\Lambda\leq G$ is a closed co-compact subgroup,
$\mathcal{B}$ is the Borel $\sigma$-algebra on $G/\Lambda$,
$\mu_{G/\Lambda}$ is a normalized $G$-invariant regular Borel probability measure on $G/\Lambda$,
and $T^\gamma(g\Lambda)=\phi(\gamma)\,g\Lambda$ for some homomorphism $\phi \colon  \Gamma\to G$.  If $G$ is $k$-step nilpotent, we call $\XX$
a \emph{$k$-step translational system}. 
\end{definition}

\begin{theorem}\label{thm:tran}
Let $1\le j\le k$, and let $\XX$ be an exact polynomial tower of order $\le k$ and height $j$. 
Then $\XX$ is isomorphic to a $k$-step translational system. 
\end{theorem}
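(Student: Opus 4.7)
The plan is to prove Theorem \ref{thm:tran} by induction on the height $j$, constructing at each stage a Polish $k$-step nilpotent group $G_i$ with a compatible decreasing filtration $G_i = G_{i,0} \geq G_{i,1} \geq \cdots \geq G_{i,k} \geq G_{i,k+1} = \{e\}$ (satisfying $[G_{i,a}, G_{i,b}] \leq G_{i,a+b}$), a closed cocompact subgroup $\Lambda_i \leq G_i$, and a homomorphism $\phi_i \colon \Gamma \to G_i$ so that the translational $\Gamma$-system $G_i/\Lambda_i$ is isomorphic to $\XX_i$. For the base case $i=1$, since $\XX_0 = \mathrm{pt}$ every $\rho_{1,\gamma} \in \Poly_{\leq k-1}(\mathrm{pt}, U_1)$ is a constant in $U_1$, so the $\Gamma$-cocycle equation collapses to the homomorphism identity $\rho_1(\gamma_1+\gamma_2) = \rho_1(\gamma_1) + \rho_1(\gamma_2)$; take $G_1 = U_1$, $\Lambda_1 = \{0\}$, $\phi_1 = \rho_1$, placing $U_1$ at the filtration level dictated by the (exact) type of its characters.

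For the inductive step, suppose $\XX_{i-1} \cong G_{i-1}/\Lambda_{i-1}$ as above. Pull the polynomial cocycle $\rho_i \in \Poly_{\leq k-1}^1(\Gamma, \XX_{i-1}, U_i)$ back along the quotient map $G_{i-1} \twoheadrightarrow G_{i-1}/\Lambda_{i-1}$ to a $\Gamma$-cocycle $\tilde{\rho}_i \colon \Gamma \times G_{i-1} \to U_i$. Since $\phi_{i-1}(\Gamma)$ acts by translation on $G_{i-1}$, a triviality-of-cohomology argument in the spirit of \Cref{trivialcohomolgy} together with measurable selection produces a Borel function $F \colon G_{i-1} \to U_i$ with $\tilde{\rho}_i(\gamma, g) = F(\phi_{i-1}(\gamma) g) - F(g)$. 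The polynomial-degree hypothesis on $\rho_i$, combined with the exactness of the tower, then translates into \emph{filtration-polynomial} constraints on $F$: after decomposing $U_i = \bigoplus_d U_i^{(d)}$ according to the types of its characters, each component $F_d \colon G_{i-1} \to U_i^{(d)}$ satisfies $\partial_{h_1} \cdots \partial_{h_n} F_d = 0$ whenever $h_\ell \in G_{i-1, a_\ell}$ with $a_1 + \cdots + a_n > d-1$. Form the central extension $G_i = G_{i-1} \times U_i$ with twisted multiplication
\[
(g, u)(g', u') = \bigl(g g',\ u + u' + \sigma(g, g')\bigr),
\]
where $\sigma$ is the Borel symmetric $2$-cocycle canonically attached to $F$. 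Extend the filtration of $G_{i-1}$ to $G_i$ by placing $U_i^{(d)}$ at level $d$; the polynomial bound on $F_d$ is exactly what guarantees both $[G_{i, a}, G_{i, b}] \leq G_{i, a+b}$ and $G_{i, k+1} = \{e\}$, so $G_i$ is Polish and $k$-step nilpotent. Setting $\phi_i(\gamma) = (\phi_{i-1}(\gamma), F(\phi_{i-1}(\gamma)))$ and $\Lambda_i = \{(\lambda, F(\lambda)) : \lambda \in \Lambda_{i-1}\}$, the map $(g, u) \mapsto (g \Lambda_{i-1}, u - F(g))$ descends to a $\Gamma$-equivariant isomorphism $G_i/\Lambda_i \xrightarrow{\sim} \XX_{i-1} \times_{\rho_i} U_i = \XX_i$.

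The main obstacle lies in the two filtration-related tasks: first, producing a lift $F$ of $\rho_i$ whose filtration-polynomial degrees match the type structure of each character component; and second, verifying that the resulting central extension $G_i$ with its extended filtration is genuinely $k$-step nilpotent and that $\Lambda_i$ is closed and cocompact (using continuity of $F$ along $\Lambda_{i-1}$-orbits and the cocompactness of $\Lambda_{i-1}$). Both tasks hinge critically on the exactness hypothesis, which guarantees that every character of $U_i$ of type $\leq d$ is automatically polynomial of degree $\leq d-1$; this is precisely what allows the character to sit at filtration level $d$ without violating the $k$-step nilpotency constraint. Without exactness, a character of low type but high polynomial degree would force an iterated commutator of length $\leq k+1$ to be non-trivial, pushing $G_i$ past the $k$-step bound.
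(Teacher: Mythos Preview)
Your proposal has a genuine gap at the step producing $F$. The claim that the lifted cocycle $\tilde{\rho}_i$ is a $\Gamma$-coboundary on $G_{i-1}$ --- i.e., that there exists $F \colon G_{i-1} \to U_i$ with $\tilde{\rho}_i(\gamma, g) = F(\phi_{i-1}(\gamma)g) - F(g)$ --- is unjustified and in fact false in general. The analogy with \Cref{trivialcohomolgy} fails: that example concerns the \emph{regular} action of $U$ on $C(U;A)$, which is transitive, whereas here $\Gamma$ acts on $G_{i-1}$ via the typically non-dense subgroup $\phi_{i-1}(\Gamma)$, and the resulting $\Gamma$-cohomology is highly nontrivial. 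Already at $i=2$ one has $G_1 = U_1$, $\Lambda_1 = \{0\}$, so $\tilde\rho_2 = \rho_2$; your $F$ would make $\rho_2$ a $\Gamma$-coboundary and hence $\XX_2 \cong U_1 \times U_2$ a direct product, contradicting any nontrivial example (such as those in \Cref{sec-example}). Relatedly, the extension $G_i$ is not in general \emph{central} --- elements of $U_i$ need only lie at an appropriate filtration level, not in the centre --- and the only ``canonical'' $2$-cocycle attached to a function $F$, namely $\sigma(g,g') = F(gg') - F(g) - F(g')$, is a $2$-coboundary, so would yield a split (direct product) extension incapable of producing a genuinely nilpotent $G_i$.

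The paper avoids both problems by constructing the nilpotent group in one step rather than inductively through group extensions. It defines $G(\XX)$ concretely as the group of tuples $(p_0, \ldots, p_{j-1})$ of \emph{exact polynomials} $p_i \colon X_i \to U_{i+1}$ (continuous maps with $p_i \bmod U_{i+1,>\ell}$ of degree $\le \ell$ for every $\ell$), acting on $X = \prod_i U_i$ by $V_{(p_0,\ldots,p_{j-1})}(u_1,\ldots,u_j) = (u_1+p_0,\ u_2+p_1(u_1),\ \ldots)$, with group law given by composition. A multi-level version of \Cref{poly-calc} shows that the obvious filtration $G_d(\XX)$ satisfies $[G_a,G_b]\le G_{a+b}$ and $G_{k+1}=\{e\}$, so $G(\XX)$ is $k$-step nilpotent. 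Since the constant tuples $\prod_i U_i \subset G(\XX)$ already act transitively, one takes $\Lambda$ to be the stabiliser of a basepoint, and $\phi(\gamma) = (\rho_1(\gamma),\ldots,\rho_j(\gamma))$ is a homomorphism directly from the $\Gamma$-cocycle identities --- no cohomological lifting is needed.
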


Combining \Cref{technical} and \Cref{thm:tran}, we immediately obtain the following structural description of finite order systems.  

\begin{corollary}\label{cor:tran}
  Every ergodic $\Gamma$-system of order $\leq k$ is a \emph{factor} of a $k$-step translational system. 
\end{corollary}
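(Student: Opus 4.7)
The proof is essentially an immediate combination of the two theorems referenced just before the statement, so the plan is simply to chain them.

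The plan is as follows. Let $\XX$ be an ergodic $\Gamma$-system of order $\leq k$. First I would apply \Cref{technical} with $j=k$ (which is allowed since $\XX$ is of order $\leq k$, hence a fortiori of order $\leq j$). This produces an ergodic $\Gamma$-extension $\XX'$ of $\XX$ (so in particular $\XX$ is a factor of $\XX'$) such that $\XX'$ is realized as the top $\XX'_k$ of an exact polynomial tower of order $\leq k$ and height $k$. The auxiliary conclusions of \Cref{technical} (large spectrum, $k$-purity of the intermediate factors, and relative $k$-purity of each extension in the tower) are not needed for the corollary and may be discarded at this stage.

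Next I would invoke \Cref{thm:tran}, applied to $\XX'$ viewed as an exact polynomial tower of order $\leq k$ and height $j=k$ (this fits the hypothesis $1\leq j\leq k$). The theorem directly gives that $\XX'$ is isomorphic to a $k$-step translational system $(G/\Lambda,\mathcal{B},\mu_{G/\Lambda},T)$ in the sense of \Cref{tran:def}.

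Finally, since being a factor is transitive (composing the factor map $\XX'\to\XX$ guaranteed by \Cref{technical} with the isomorphism $G/\Lambda\to\XX'$ furnished by \Cref{thm:tran} yields a factor map from the translational system onto $\XX$), we conclude that $\XX$ is a factor of a $k$-step translational system. There is no genuine obstacle here; the only thing to watch is that the index $j=k$ lies in the allowed range of both theorems, and that the extension produced by \Cref{technical} is exactly the object to which \Cref{thm:tran} applies, both of which are immediate from the statements.
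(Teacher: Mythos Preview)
Your proposal is correct and follows exactly the approach the paper indicates: the corollary is stated as an immediate consequence of \Cref{technical} (applied with $j=k$) and \Cref{thm:tran}, and you have spelled out precisely this chaining of the two results.
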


We discuss the proof of our main technical result, \Cref{technical}, and the key innovations behind it in a separate overview section (\Cref{sec:overview}). In the next section we present an example that illustrates our new notion of polynomial tower and, at the same time, shows that one cannot in general strengthen \Cref{main-thm} by replacing ``Abramov'' with ``Weyl'' if one insists on not also extending the acting group.\footnote{In our earlier paper \cite{jst-tdsystems} we showed that if one is allowed to extend the acting group as well - a notion we termed \emph{generalized extensions} - then every bounded-exponent system of finite order admits a Weyl extension.}

\subsection{An illustrative example}\label{sec-example}

The following example, building upon a construction from \cite{tz-lowchar}, will help motivate the concept of a polynomial tower.  Here we take $\Gamma = \F_2^\omega$.  We begin with a rotational system $\XX_{(0)}$ which we define to be the compact abelian group $(\Z/2\Z)^\N$ with the standard translation action
$$ T^{\sum_i \gamma_i e_i} (x_i)_{i \in \N} \coloneqq (x_i + \gamma_i)_{i \in \N}$$
where $e_1,e_2,\dots$ is the standard basis of $\F_2^\omega$, we use the obvious additive action of $\F_2$ on $\Z/2\Z$.  We then let $\XX = \XX_{(0)} \times_{\rho} \Z_2$ be the abelian extension of $\XX_{(0)}$ by the $2$-adic group $\Z_2 = \varprojlim \Z/2^n \Z$ and the cocycle $\rho \colon \Gamma \times X_{(0)} \to \Z_2$ given by the formula 
$$ \rho\left(\sum_i \gamma_i e_i, (x_i)_{i \in \N} \right)\coloneqq \sum_i |\gamma_i| (1 - 2 |x_i|),$$
where we define $|x| \in \Z_2$ for $x \in \Z/2\Z$ by setting $|x|=0$ if $x = 0 \mod 2$ and $|x|=1$ if $x = 1 \mod 2$, where we embed the integers into $\Z_2$ in the usual manner; note that as only finitely many of the $\gamma_i$ are non-zero, the summations here are well-defined.  One can check that this is indeed a $\Gamma$-cocycle.  

\begin{remark}  The system $\XX$ can be informally viewed, in the limit as $N \to \infty$ of the random process $((x_i)_{i=1}^N, s)$, where $(x_1,\dots,x_N)$ is drawn uniformly at random from $(\Z/2\Z)^N$, and $s \coloneqq \sum_i |x_i|$ is the Hamming norm of the random vector $(x_1,\dots,x_N)$, viewed as a $\Z_2$-valued random variable.  The shift $T^{e_i}$ then corresponds to the operation of flipping the value of the ``bit'' at $x_i$, which adjusts $s$ by $1-2|x_i|$.
\end{remark}

For any natural number $n$, we can quotient $\Z_2$ down to $\Z/2^n \Z$, which then produces intermediate factors $\XX_{(n)} \coloneqq \XX_{(0)} \times_{\rho \mod 2^n} \Z/2^n\Z$ by reducing the cocycle $\rho$ modulo $2^n$.  The factor $\XX_{(1)}$ is again a rotational system, as it can be identified with $(\Z/2\Z)^\N \times (\Z/2\Z)$ with rotational action
$$ T^{\gamma}((x,s)) = ((x,s) + \rho_1(\gamma))$$
where $\rho_{1} \colon \Gamma \to (\Z/2\Z)^\N \times (\Z/2\Z)$ is the homomorphism
$$ \rho_{1}\left(\sum_i \gamma_i e_i\right) \coloneqq \left((\gamma_i)_{i \in \N}, \sum_i \gamma_i\right).$$
One can then create a height $k$ tower
for any $k \geq 1$, where each $\XX_{(j)} = \XX_{(j-1)} \times_{\rho_j} 2^{j-1} \Z/2^j\Z$ and the cocycles $\rho_j \colon \Gamma \times \XX_{(j-1)} \to 2^{j-1} \Z/2^j \Z$ for $j \geq 2$ can be taken for instance to be
$$ \rho_j(\gamma, (x, s)) \coloneqq \rho(\gamma, x) \mod 2^j - \iota_j( \rho(\gamma, x) \mod 2^{j-1}) \in 2^{j-1} \Z/2^j \Z $$
where $\iota_j \colon \Z/2^{j-1}\Z \to \Z/2^j \Z$ is an arbitrary section of the short exact sequence
$$ 0 \to 2^{j-1} \Z / 2^j \Z \to \Z/2^j\Z \to \Z/2^{j-1}\Z \to 0$$
(for instance, one could arbitrarily define $\iota_j(k \mod 2^{j-1}) = k \mod 2^j$ for $k=0,\dots,2^j-1$, although many other choices are possible); see \Cref{height}.  

\begin{figure}[ht]
\centering
\begin{tikzcd}
    \XX_{(k)} \arrow[d, Rightarrow, "\rho_{k}; 2^{k-1}\Z/2^k\Z"] \\ 
    \XX_{(k-1)} \arrow[d, Rightarrow, "\rho_{k-1}; 2^{k-2}\Z/2^{k-1}\Z"]\\
    \vdots  \arrow[d, Rightarrow, "\rho_2; 2\Z/4\Z"] \\
    \XX_{(1)} \arrow[d, Rightarrow, "\rho_1; (\Z/2\Z)^\N \times \Z/2\Z"] \\
    \operatorname{pt}  
\end{tikzcd}
    \caption{The Host--Kra tower for $\XX_{(k)}$. For $k \geq 3$, this is not a Weyl tower (or even a polynomial tower of order $\leq k$).}
  \label{height}
\end{figure} 

In \Cref{analysis-sec} we establish the following facts about this tower:

\begin{proposition}\label{facts}\ 
\begin{itemize}
    \item[(i)]  $\XX$ is ergodic (and hence $\XX_{(k)}$ is ergodic for all $k \geq 0$).
    \item[(ii)]  For each $k \geq 1$, $\XX_{(k)}$ is of order $\leq k$ and Abramov of order $\leq k$.  Also, $\XX_{(k)} = \ZZ^{\leq k}(\XX) = \ZZ^{\leq k}(\XX_{(l)})$ for any $l \geq k$.  (In particular, this tower is the Host--Kra tower for $\XX$, and the $\rho_j$ are of type $\leq j$.)
    \item[(iii)]  For any $k,d \geq 1$, the $\Gamma$-cocycle $\rho_k$ can be selected to be polynomial of degree $\leq d$ if and only if $d \geq 2^{k-1}-1$.
    \item[(iv)] For each $k \geq 1$, $\rho \mod 2^k$ is a polynomial of degree $\leq k-1$. 
    \item[(v)] $\XX_{(3)}$ does not admit an $\F_2^\omega$ Weyl extension.
\end{itemize}
\end{proposition}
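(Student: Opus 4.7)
The proof rests on the identity
\[|x+y|-|x|=|y|(1-2|x|)\quad\text{in } \Z_2,\qquad x,y\in\Z/2\Z,\]
which holds by direct case-check. Applied componentwise, it gives
\[\partial_\eta \rho(\gamma,x)=-2\sum_i \gamma_i |\eta_i|(1-2|x_i|),\]
and since $\partial_{\eta'}(1-2|x_i|)=-2|\eta'_i|(1-2|x_i|)$, each additional derivative multiplies by $-2$. After $k$ derivatives the result is divisible by $2^k$, yielding (iv). Part (ii) then follows: the vertical coordinate $u\colon(x,s)\mapsto s$ on $\XX_{(k)}$ has $\partial_\gamma u=\rho_\gamma\bmod 2^k$, so every character of $u$ is a polynomial of degree $\leq k$, and together with the degree-$\leq 1$ polynomials from $\XX_{(0)}$ these generate the $\sigma$-algebra of $\XX_{(k)}$; hence $\XX_{(k)}$ is Abramov of order $\leq k$, so of order $\leq k$ by \Cref{order-prop}(i). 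The equality $\XX_{(k)}=\ZZ^{\leq k}(\XX)$ follows by an induction on $k$, combining this upper bound with the fact that each $\rho_n$ is of type exactly $n$ (not lower), preventing any accidental collapse.

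Part (i) (ergodicity) reduces, via Mackey--Zimmer and the density of $\bigcup_n\Z/2^n\Z$ in $\widehat{\Z_2}$, to checking that each extension $\XX_{(n)}\to\XX_{(n-1)}$ is ergodic, i.e.\ that $[\rho_n]$ is non-trivial. For $n=1$ the homomorphism $\rho_1\colon\Gamma\to(\Z/2\Z)^\N\times\Z/2\Z$ has dense image, so $\XX_{(1)}$ is rotational and ergodic; for $n\geq 2$, non-triviality of $[\rho_n]$ is a direct consequence of the lower bound in (iii) below.

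The content of (iii) is the precise polynomial degree. For the upper bound, take the ``obvious'' section $\iota_k(a)=a$, so $\rho_k(\gamma,x)=2^{k-1} b_{k-1}(\rho(\gamma,x))$ where $b_{k-1}$ extracts the $(k-1)$-st binary digit of $\rho(\gamma,x)=|\mathrm{supp}(\gamma)|-2\sum_i\gamma_i|x_i|$; a direct expansion via Lucas' theorem identifies this digit, as a polynomial in the bits $|x_i|$, with (a shift of) an elementary symmetric polynomial of weight $2^{k-1}$, giving cocycle degree $2^{k-1}-1$. The matching lower bound is proved by extracting, via a Gowers-type argument, a non-vanishing contribution at the top symmetric weight that cannot be cancelled by any coboundary $\partial_\gamma f$ on $\XX_{(k-1)}$.

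The main obstacle is (v). Suppose for contradiction that $\XX'$ is an ergodic $\F_2^\omega$-Weyl extension of $\XX_{(3)}$ of order $\leq 3$, so its top Host--Kra cocycle $\rho'_3\in\Poly^1_{\leq 2}(\Gamma,\ZZ^{\leq 2}(\XX'),U'_3)$ has polynomial degree $\leq 2$. The factor map $\pi\colon\XX'\to\XX_{(3)}$ lies above the induced factor map $\pi_2\colon\ZZ^{\leq 2}(\XX')\to\XX_{(2)}$ (by functoriality of Host--Kra factors), and by Mackey--Zimmer there exist a continuous homomorphism $\phi\colon U'_3\to\Z/2\Z$ and a measurable $f\colon\ZZ^{\leq 2}(\XX')\to\Z/2\Z$ satisfying
\[\rho_3\circ\pi_2=\phi\circ\rho'_3+\partial_\gamma f.\]
If $\phi$ is trivial, $\pi$ factors through $\ZZ^{\leq 2}(\XX')$, exhibiting $\XX_{(3)}$ as a factor of an order $\leq 2$ system and contradicting the strict order-$3$ nature of $\XX_{(3)}$ (from (ii) and the ergodic non-triviality of $\XX_{(3)}\to\XX_{(2)}$). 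Otherwise $\phi$ is non-trivial; applying the conditional expectation $E\colon\mathcal{M}(\ZZ^{\leq 2}(\XX'))\to\mathcal{M}(\XX_{(2)})$, which commutes with $\partial_\gamma$ and therefore preserves polynomial degree, one obtains
\[\rho_3=E(\phi\circ\rho'_3)+\partial_\gamma E(f),\]
with $E(\phi\circ\rho'_3)$ polynomial of degree $\leq 2$ on $\XX_{(2)}$. This exhibits a polynomial-degree-$\leq 2$ representative of $[\rho_3]\in H^1(\Gamma,\XX_{(2)},\Z/2\Z)$, contradicting the lower bound of (iii) for $k=3$ (which forces degree $\geq 2^{3-1}-1=3$).
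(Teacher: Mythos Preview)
Your arguments for (iv) and the Abramov portion of (ii) are essentially the paper's, and your reductions for (i) and the upper bound in (iii) are plausible (if sketchy). The lower bound in (iii) is not really proved: ``a Gowers-type argument extracting a non-vanishing top symmetric contribution'' is a description, not an argument. The paper gives a clean algebraic proof: if the vertical coordinate $u$ on $\XX_{(k)}$ had degree $\le 2^{k-1}-1$, then $\partial_S^{2^{k-1}}u=0$; since $2u=0$ and $\partial_{S^2}=2\partial_S+\partial_S^2$, induction gives $\partial_{S^{2^{k-1}}}u=\partial_S^{2^{k-1}}u=0$, so $u$ is $\XX_{(k-1)}$-measurable, a contradiction. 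Note also a circularity risk: you deduce ergodicity of $\XX_{(n)}\to\XX_{(n-1)}$ from the lower bound in (iii), but any proof of that lower bound (including the paper's) ultimately uses $\XX_{(k)}\ne\XX_{(k-1)}$.

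The serious gap is in (v). Your key step applies the conditional expectation $E\colon\mathcal{M}(\ZZ^{\le 2}(\XX'))\to\mathcal{M}(\XX_{(2)})$ to the equation $\rho_3\circ\pi_2=\phi\circ\rho'_3+\partial_\gamma f$ and claims $E(\phi\circ\rho'_3)$ is a polynomial cocycle of degree $\le 2$. But these are $\Z/2\Z$-valued (equivalently $\T$-valued) functions, and conditional expectation does not preserve that target: $E$ of a $\{0,1\}$-valued function lands in $[0,1]$, not in $\Z/2\Z$. There is no operation that simultaneously (a) descends $\T$-valued functions along a factor map, (b) commutes with $\partial_\gamma$, and (c) preserves the group structure on $\mathcal{M}(\cdot,\T)$; if there were, Weyl structure would descend trivially to factors, which is exactly what part (v) says fails. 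The paper's proof proceeds differently: assuming a Weyl extension $\YY$, one obtains a cubic $Q$ on $\YY$ with $2Q=\iota\circ\pi$ (where $\iota(x,t)=t/4$), then picks $t$ in the Conze--Lesigne structure group of $\YY$ projecting to $2\in 2\Z/4\Z$ and computes $\partial_{(0,t)}(2Q)=0$ (from $2t=0$ and the degree drop for vertical derivatives) while $\partial_{(0,t)}(\iota\circ\pi)=1/2$, a contradiction. The obstruction lives genuinely on the extension $\YY$, not on $\XX_{(2)}$.
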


The powers of two arising in part (iii) are related to Lucas's theorem, which among other things asserts that the degree $\leq d$ polynomial $n \mapsto \binom{n}{d} \mod 2$ is $2^k$-periodic if and only if $d \geq 2^{k-1}$.  From this proposition we see that the cocycle $\rho_3$ that extends the Conze--Lesigne factor $\XX_{(2)} = \ZZ^2(\XX)$ to the third Host--Kra factor $\XX_{(3)} = \ZZ^3(\XX)$ cannot be a quadratic polynomial, but (if it is to be polynomial at all) must be at least cubic in degree.  In particular, once $k \geq 3$, $\XX_{(k)}$ is of order $\leq k$ but \emph{not} a Weyl system of order $\leq k$, and \eqref{height} is \emph{not} a polynomial tower of order $\leq k$ and height $k$.

On the other hand, each $\XX_{(k)}$, $k \geq 2$ does admit a polynomial tower of order $\leq k$ and height $2$ in \Cref{alt-tower}
where $\rho_0 \colon \Gamma \to (\Z/2\Z)^\N$ is the homomorphism
$$ \rho_0\left(\sum_i \gamma_i e_i\right) \coloneqq (\gamma_i)_{i \in \N}.$$

\begin{figure}[ht]
\centering
\begin{tikzcd}
    \XX_{(k)} \arrow[d, Rightarrow, "\rho \mod 2^k; \Z/2^k\Z"]  \\ 
    \XX_{(0)} \arrow[d, Rightarrow, "\rho_0; (\Z/2\Z)^\N"]\\
    \operatorname{pt}  
\end{tikzcd}
    \caption{A polynomial tower of order $\leq k$ and height $2$ for $\XX_{(k)}$.}
  \label{alt-tower}
\end{figure}

\begin{remark}  The first few systems $\XX_{(k)}$, $k=0,1,2$ of the above tower were studied in \cite[Appendix E]{tz-lowchar}, where it was noted that $\XX_{(2)}$ failed a variant of the exact roots property (a property weaker than $k$-purity), in that there existed quadratic polynomials $P \in \Poly_{\leq 2}(\XX_{(2)})$ that were not of the form $P=2Q$ for any cubic polynomial $Q \in \Poly_{\leq 3}(\XX_{(2)})$, although it turns out that such $P$ can be expressed as $2R$ for a cubic polynomial $R \in \Poly_{\leq 3}(\XX_{(3)})$ in the extension $\XX_{(3)}$.  This failure of the exact roots property is related to the inability to ``straighten'' the cocycles in \Cref{facts}(iii) to be of lower degree than $2^{k-1}-1$.
\end{remark}



\subsection*{Acknowledgements}
A.J. was funded by the Deutsche Forschungsgemeinschaft (DFG, German Research Foundation) Heisenberg Grant - 547294463. 
OS was supported by NSF grant DMS-1926686 and Alon Fellowship.  Over the course of this research, TT was supported by a Simons Investigator grant, the James and Carol Collins Chair, the Mathematical Analysis \& Application Research Fund, and by NSF grants DMS-1764034 and DMS-2347850, and is particularly grateful to recent donors to the Research Fund.

\section{Overview of the main steps in the argument}\label{sec:overview}

The primary technical contribution of this paper is \Cref{technical}, from which our main ergodic-theoretic result, \Cref{main-thm}, follows. In this section, we outline the methodology employed to derive \Cref{technical}. We also emphasize our main new contributions. These include:
\begin{itemize}
    \item An integration lemma for cocycles in exact polynomial towers with large spectrum (\Cref{cocycle-integ}). See also \Cref{int1-sec,int2-sec} for related results.
    \item The classification of injective objects (purity) in the category of filtered abelian groups (see Appendix \ref{filteredcategory}). 
    \item The characterization of purity via the existence of specific retractions (\Cref{roots-sec,invariant}).
    \item The existence of pure extensions (\Cref{proof}).
\end{itemize}

Let $1\leq j \leq k$, and let $\XX$ be an ergodic $\Gamma$-system of order $\leq j$. Our goal is to find an extension $\XX'$ that is an (exact) polynomial tower of order $\leq k$. Furthermore, we require that $\XX'$ satisfies the following conditions:
\begin{itemize}
    \item[(i)] large spectrum;
    \item[(ii)] $k$-pure;
    \item[(iii)] $\XX_i'$ is a relatively $k$-pure extension of $\XX'_{i-1}$.
\end{itemize}

\Cref{abelext} allows us to write $\XX$ as an abelian extension
\[
\XX \cong \ZZ^{\leq j-1}(\XX)\times_\rho U,
\]
where $\rho$ is a cocycle of type $\leq j$ and $U$ is a compact abelian group. Conventionally, this proposition is applied in conjunction with an inductive argument on $j$, to replace $\ZZ^{\leq j-1}(\XX)$ with a more tractable system. However, a system of order $\leq j-1$ generally does not admit extensions that are $j$-pure (or, more generally, extensions that are $k$-pure with the degree control required later) if one insists on working strictly within the Host--Kra tower factors. To overcome this, we must extend $\ZZ^{\leq j-1}(\XX)$ to a system of order $\leq j$ (or more generally, order $\leq k$). To implement this move rigorously within an inductive framework, we must formally decouple the notions of \emph{order} and \emph{height}. This distinction is the primary motivation for the separate indices $j$ and $k$ appearing in the statement of \Cref{technical}.

By the induction hypothesis, there exists an extension $\YY^{j-1}$ which is an exact polynomial tower of order $\leq k$ and satisfies the technical conditions (i)--(iii). The extension $\pi^{j-1}\colon \YY^{j-1}\rightarrow \ZZ^{\leq j-1}(\XX)$ allows us to lift the cocycle $\rho$, thereby inducing an extension of $\XX$ given by
\[
\YY^{j-1}\times_{\rho\circ\pi^{j-1}} U.
\]
This extension may not be ergodic, but a result of Zimmer \cite{zimmer} shows that one can always replace $\rho\circ\pi^{j-1}$ with a minimal cocycle $\rho'$ cohomologous to $\rho\circ\pi^{j-1}$, and replace $U$ by the corresponding Mackey range (a closed subgroup $U'\leq U$), to obtain an ergodic extension $\YY^{j-1}\times_{\rho'} U'$ of $\XX$.

At this stage, our goal is to demonstrate that $\rho'$ is cohomologous to an exact cocycle. Establishing this would allow us to replace $\rho'$ with such an exact representative, thereby realizing the system as an exact polynomial tower extending $\XX$. Following standard practice in the field (cf.~\cite{btz,shalom2,jst-tdsystems}), we employ Pontryagin duality and a form of divisibility (in our case the $k$-purity of the extension), to reduce the problem to the case where the cocycle takes values in the torus $\T$.

We proceed by adapting the framework from \cite{btz}. We define a system to satisfy the \emph{straightening property up to level $k$} if every $\T$-valued cocycle of type $\leq d$ is cohomologous to a polynomial cocycle of degree $\leq d-1$ for all $d\leq k$. By induction on $d$, we demonstrate that $\YY^{j-1}$ possesses this property. Assuming we succeed in this regard, it follows that for every character $\xi\in\widehat{U'}$, the torus-valued cocycle $\xi\circ\rho'$ is cohomologous to a polynomial of the smallest degree compatible with its type; unpacking the definition of exactness then shows that $\rho'$ is cohomologous to an exact cocycle, as required.

Let $\sigma$ be a (generic) $\T$-valued cocycle of type $\leq d$ on $\YY^{j-1}$, and assume the straightening claim holds for all smaller values of $d$. We represent $\YY^{j-1}$ as a tower
\[
\YY^{j-1}
= U_1\times_{\rho_1} U_2\times \dots \times_{\rho_{j-2}} U_{j-1},
\]
where $U_1,\dots,U_{j-1}$ are compact abelian groups and $\rho_1,\dots,\rho_{j-2}$ are exact. We then employ a downward induction to linearize the \emph{higher order Conze--Lesigne equations} associated with vertical translations by the structure groups $U_{j-1},\dots,U_1$.

As the inductive step mirrors the basis, we focus here on the structure group $U_{j-1}$. The inductive hypothesis on $d$ yields a higher order Conze--Lesigne equation
\begin{equation}\label{hoCL}
    \partial_u \sigma \;=\; p_u \;+\; d_\Gamma F_u,
\end{equation}
for some polynomial $p_u$ and a measurable map $F_u$. The degree bookkeeping here is governed by the type (weight) filtration on $U_{j-1}$: if $u$ has weight $\weight(u)$ (equivalently $u\in (U_{j-1})_{>\ell-1}$ with $\weight(u)\ge \ell$), then $p_u$ can be chosen to have degree
\[
p_u \in \Poly^1_{\le d-1-\weight(u)}(\Gamma,\YY^{j-1})
\]
\[
\text{(in particular, }p_u\in \Poly^1_{\le d-\ell-1}\text{ whenever }u\in (U_{j-1})_{>\ell-1}\text{)}.
\]
Noting that the pair $(p_u,F_u)$ is not unique, our objective is to find a choice where $u\mapsto p_u$ and $u\mapsto F_u$ are linear (i.e. cocycles in $u$):
\[
p_{u+v}=p_u+p_v\circ V_u,
\qquad
F_{u+v}=F_u+F_v\circ V_u,
\qquad
(u,v\in U_{j-1}),
\]
where $V_u$ denotes the vertical translation action of $U_{j-1}$.

Once this linearity is established, we take advantage of the large spectrum and our integration theorem for cocycles (see \Cref{cocycle-integ}) in order to write $(p_u)_{u\in U_{j-1}} = d_U P$ for some polynomial $P$ of the appropriate degree. Moreover, by \Cref{HK-C8} we can write $(F_u)_{u\in U_{j-1}} = d_UF$ for some $F\in \mathcal{M}(\YY^{j-1})$. Consequently, $\sigma-P-d_\Gamma F$ is $U_{j-1}$-invariant. This invariance implies that the remainder is measurable with respect to a polynomial tower of strictly smaller height, allowing us to close the argument via the inductive hypothesis on $j$.

The linearization process typically involves two steps: linearizing $u\mapsto p_u$ over an open subgroup of $U_{j-1}$, followed by resolving the case where the last structure group is finite. The former is achieved by modifying the argument in \cite[Proposition 6.1]{btz}; the primary challenge here is preserving the degrees of the polynomials $p_u$, which may vary with $u$ via the weight filtration. Given the similarity of the first stage to existing literature, we shall focus our exposition on the case where $U_{j-1}$ is finite.

In the finite group case, we rely on the bounded-exponent assumption to apply a result from \cite[Theorem 1.4]{jst-tdsystems}, allowing us to represent
\[
U_{j-1} \;=\; \prod_{i=1}^N \Z/m_i\Z,
\]
where each $m_i$ divides the torsion (exponent) of $\Gamma$. Following the strategy in \cite[Proposition 7.1]{btz}, we linearize \Cref{hoCL} coordinate-by-coordinate, repeating the argument $N$ times via induction.

To illustrate this, we focus on a single cyclic component by selecting a generator $e\in U_{j-1}$ for one such component. Let $U=\langle e\rangle$, equipped with the filtration induced by $U_{j-1}$. For each $1\leq i \leq k$, we define $n_i$ to be the minimal integer so that $n_ie\in U_{>i}$. These integers generate a \emph{system of relations}. For the purpose of this overview, consider the special case where $U=\Z/4\Z$ is endowed with the $2$-adic filtration (i.e. $n_1 = 2$, $n_2 = 4$). From \Cref{hoCL}, we obtain the following relations for the generator $e$ and its multiple $2e$,
\[
\partial_e \sigma - d_\Gamma F_e = p_e,
\qquad
\partial_{2e} \sigma - d_\Gamma F_{2e} = p_{2e},
\]
where (by the preceding degree bookkeeping) $p_e$ has degree $\le d-2$ and $p_{2e}$ has degree $\le d-3$ in this $2$-adic example (reflecting $\weight(e)=2$ and $\weight(2e)=3$). The cocycle identity yields the telescoping series $\sum_{i=0}^3 \partial_e \sigma \circ V_{ie} =0$, and the identity $\partial_{2e}\sigma = \partial_e \sigma + \partial_e \sigma\circ V_{e}$. By substituting these into our expressions for $p_u$, we derive the following system of relations,
\begin{align*}
   &d_\Gamma\!\left(\sum_{i=0}^3 \partial_e F_e\circ V_{ie}\right) \;=\; \sum_{i=0}^3 \partial_e p_e\circ V_{ie}, \\
   &d_\Gamma\!\left(F_{2e} - \sum_{i=0}^1 F_e\circ V_{ie}\right) \;=\; p_{2e}-\sum_{i=0}^1 p_e\circ V_{ie}.
\end{align*}
For the sake of simplicity, we further assume that $p_e$ is invariant to translations by $e$ (a non-trivial assumption that we address in the sequel). Under this condition, the system reduces to
\begin{align*}
   &\sum_{i=0}^3 \partial_e F_e\circ V_{ie} \in 4\Poly_{\leq d}(\YY^{j-1}), \\
   &F_{2e} - \sum_{i=0}^1 F_e\circ V_{ie} \in 2\Poly_{\leq d}(\YY^{j-1})+\Poly_{\leq d-1}(\YY^{j-1}).
\end{align*}
We interpret these as linear equations: $b_1=4x_1$, and $b_2 = 2x_1 +x_2$, where $b_1,b_2$ are fixed constants, $x_1\in \Poly_{\leq d}(\YY^{j-1})$ is a variable to be determined later, and $x_2\in \Poly_{\leq d-1}(\YY^{j-1})$ is a lower order error term.

The preceding analysis ensures that this system is solvable over $\Gamma$. Formally, we may embed $\Poly_{\leq d}(\YY^{j-1})$ in $\Poly_{\leq d}(\Gamma)$ by fixing a generic point $x_0\in Y^{j-1}$ and defining the map $\iota_{x_0}(P)(\gamma)=P(T^\gamma x_0)$. Applying this map to the equations above yields a system of equations where the variables are polynomials on $\Gamma$. The assumption of $k$-purity of $\YY^{j-1}$ means that whenever there is a solution to a finite system of relations in the filtered group $\Poly_{\leq d}(\Gamma)$, there is also a solution in $\Poly_{\leq d}(\YY^{j-1})$. By subtracting this solution from $F_e$, and adjusting $p_e$ by adding the corresponding derivative, we can force the first equation to zero while preserving the degrees of the polynomials $p_u$ for all $u\in U$. Fortunately, in that case \Cref{HK-C8} can be used to linearize $u\mapsto F_u$, but since $u\mapsto \partial_u \sigma$ is a cocycle, equation \Cref{hoCL} guarantees that $u\mapsto p_u$ is now also linear.

In the general case where $p_e$ is not invariant to translations by $e$, we develop a mechanism that allows us to solve equations involving vertical translations by the last structure group. We observe that, in our setting, $k$-purity can be characterized via the existence of retractions for certain short exact sequences in the category of filtered locally compact abelian groups. Specifically, by fixing a generic point $x_0\in Y^{j-1}$, the embedding $\iota_{x_0}$ induces a short exact sequence
\[
0\rightarrow \Poly_{\leq k}(\YY^{j-1})\overset{\iota_{x_0}}{\rightarrow}\Poly_{\leq k}(\Gamma)\rightarrow \Poly_{\leq k}(\Gamma)/\iota_{x_0}(\Poly_{\leq k}(\YY^{j-1}))\rightarrow 0.
\]
While this sequence does not split in general, $k$-purity implies that its restriction to any subgroup $B\subseteq \Poly_{\leq k}(\Gamma)$ which contains $\iota_{x_0}(\Poly_{\leq k}(\YY^{j-1}))$ as a subgroup of finite index, does split in the category of filtered locally compact abelian groups. Let $B\leq \Poly_{\leq k}(\Gamma)$ denote the subgroup generated by $\iota_{x_0}(\Poly_{\leq k}(\YY^{j-1}))$, $p_e$ and all of its translations under the action of $e$. Under these conditions, we obtain a retraction $r\colon B\rightarrow \Poly_{\leq k}(\YY^{j-1})$ which preserves the degrees of polynomials. In \Cref{EDPretraction}, we demonstrate that the assumption of relative $k$-purity implies that there exists a retraction that is additionally equivariant with respect to the action of $U$ on $B$ and $\Poly_{\leq k}(\YY^{j-1})$. The existence of such an equivariant retraction allows us to resolve the previously derived equations without assuming invariance of $p_e$, thereby achieving the required linearization.

To complete the induction, we must demonstrate that the resulting polynomial tower is exact and satisfies the technical conditions (i)--(iii). While exactness follows from the straightening property, the system constructed thus far may not inherit the other properties. Consequently, we must pass to a further extension that preserves the exact polynomial-tower structure while satisfying the required properties.

Condition (i), the large spectrum property, is achieved by taking an ergodic component of the joining of $\XX$ with a rotational system possessing a large spectrum. Condition (ii), $k$-purity, follows from condition (iii); indeed, a relatively $k$-pure extension of a $k$-pure system is itself $k$-pure.

To achieve relative $k$-purity, we utilize a construction that, given a system $\XX$ and a family of $\Gamma$-polynomials, realizes these polynomials on an extension $\XX'$. In fact, we demonstrate that this construction can be performed such that $\XX'$ remains an exact polynomial tower. Specifically, we consider the countable collection of all finite systems of relations. Whenever a solution exists in $\Gamma$, we extend the system to incorporate it. Since this extension may generate more systems of linear equations, we repeat this process iteratively and take an inverse limit (see \Cref{relativepure} for a precise formulation). The resulting system is a relatively $k$-pure extension that remains an exact polynomial tower with a large spectrum. We demonstrate that such a system is necessarily $k$-pure, thereby completing the proof of \Cref{technical}.

\section{Analysis of the example}\label{analysis-sec}

In this section we prove \Cref{facts}, establishing parts (i)--(v) in turn.  Strictly speaking, the material in this section is not needed elsewhere in the paper, but may serve to provide useful intiution for the arguments in those sections.

We begin with part (i).  The translational system $\XX_{(0)} = \mathrm{pt} \times_{\rho_0} (\Z/2\Z)^\N$ is ergodic because the homomorphism $\rho_0 \colon \Gamma \to (\Z/2\Z)^\N$ has dense image.  To show that the extension $\XX = \XX_{(0)} \times_{\rho} \Z_2$ (and hence all the intermediate extensions $\XX_{(k)}$) are ergodic, it suffices by \Cref{zimmer} to show that the $\Gamma$-cocycle $\rho$ is not cohomologous to a cocycle taking values in a closed proper subgroup of $\Z_2$.  The only such subgroups are of the form $2^m \Z_2$ for some $m \geq 1$, and in particular are contained in $2\Z_2$; so it suffices to show that the quotient of $\rho$ by $2\Z_2$, that is to say $\rho_1$, is not a $\Gamma$-coboundary in $\Z/2\Z$.  Suppose for contradiction that this were the case, thus there existed $F \in \mathcal{M}(\XX_{(0)}, \Z/2\Z)$ such that $\rho_1 = d_\Gamma F$. 
  In particular this would imply that $\partial_{e_n} F = 1 \mod 2$, but by approximating $F$ in measure by functions depending on finitely many coordinates, we have $\partial_{e_n}F \to 0$ in measure as $n\to\infty$, giving the desired contradiction.  This establishes ergodicity. 

We introduce the coordinate functions $x_n \in \mathcal{M}(\XX, \Z/2\Z)$ for $n \in \N$ and $s \in \mathcal{M}(\XX,\Z_2)$ by the formulae
$$ x_n \colon ((x_i)_{i=1}^\infty, s) \mapsto x_n; \quad s \colon ((x_i)_{i=1}^\infty, s) \mapsto s.$$
Clearly we have $\partial_{e_j} x_n = 1_{n=j} \mod 2$, so the $x_n$ are linear.  From construction we also have
$$ \partial_{e_j} s = \rho(e_j) = 1 - 2 |x_j|$$
so in particular $\partial_{e_j} \partial_{e_k} s = 0$ for $j \neq k$.
From the identity $|a+b| = |a| + |b| - 2 |a| |b|$, we have $\partial_h |f| = |\partial_h f| - 2 |f| |\partial_h f|$ for any function $f$ and shift $h$, so by a routine induction we have
$$ \partial_{e_j}^k s = (-2)^{k-1} (1 - 2 |x_j|)$$
for any $k \geq 1$.  From this we conclude that $s \mod 2^k$ is a polynomial of degree $\leq k$ for all $k \geq 1$.  Since $\partial_h s  = \rho(h) \mod 2^k$, we conclude that $\rho\mod 2^k$ is a polynomial of degree $\leq k-1$, giving part (iv).  As $\XX_{(k)}$ is generated by the $x_n$ and $s \mod 2^k$, we also see that $\XX_{(k)}$ is an Abramov system of order $\leq k$, and hence also of order $\leq l$ for any $l \geq k$, giving some of the components of part (ii).  To complete the proof of (ii), it suffices by \Cref{prop-functoriality} to show that $\ZZ^{\leq k}(\XX) \leq \XX_{(k)}$ for all $k \geq 1$.

The Pontryagin dual of the compact abelian group $(\Z/2\Z)^\N \times \Z_2$ is the abelian discrete group $\Gamma \times \Z(2^\infty)$, where $\Z(2^\infty) \coloneqq \varinjlim \frac{1}{2^n} \Z/\Z$ is the Pr\"ufer $2$-group.  By Plancherel's theorem, this implies that the characters
$$ \chi_{\sum_i \gamma_i e_i,a/2^m \mod 1}((x_i)_{i=1}^\infty, s) \coloneqq e\left( \frac{\sum_i \gamma_i x_i}{2} + \frac{a s}{2^m} \right)$$
for $\sum_i \gamma_i e_i \in \Gamma$, $a$ an odd integer, and $m \geq 1$, form an orthonormal basis of $L^2(\XX)$.  We claim that the Gowers--Host--Kra seminorms $$\| \chi_{\sum_i \gamma_i e_i,a/2^m \mod 1} \|_{U^{k+1}(\XX)}$$ (as defined in, e.g., \cite[Appendix A]{btz}) vanish for $m \geq k+1$; by \cite[Lemma A.32]{btz}, this implies that all such characters are orthogonal to $\ZZ^{\le k}(\XX)$.  The remaining characters are all $\XX_{(k)}$ measurable, so this implies that $\ZZ^{\leq k}(\XX) \leq \XX_{(k)}$ as required.

It remains to calculate the Gowers--Host--Kra seminorms.  Let $h_1,\dots,h_{k+1} \in \Gamma$ be shifts, and write $h_j = \sum_i h_{j,i} e_i$ for $h_{j,i} \in \{0,1\}$.  The expression $\frac{\sum_i \gamma_i x_i}{2}$ is linear and thus annihilated by $\partial_{h_1} \dots \partial_{h_{k+1}}$.  As for the
$\frac{a s}{2^m}$ term, a routine induction using the identity $|a+b|-|a| = |b| (1 - 2|a|)$ for $a,b \in \Z/2\Z$ shows that
$$ \partial_{h_1} \dots \partial_{h_{k+1}} \frac{as}{2^m} = \sum_i \frac{a h_{1,i} \dots h_{k+1,i}}{2^m} (-2)^{k} (1 - 2|x_i|)$$
and hence, with $\Delta_h \chi(x) \coloneqq \chi(x+h) \overline{\chi(x)}$ denoting the multiplicative derivative,
$$ \Delta_{h_1} \dots \Delta_{h_{k+1}} \chi_{\sum_i \gamma_i e_i,a/2^m \mod 1}((x_i)_{i=1}^\infty, s) = \prod_i e\left( \frac{a h_{1,i} \dots h_{k+1,i}}{2^m} (-2)^{k} (1 - 2|x_i|) \right).$$
(All but finitely many of the terms in the product are equal to one.)  Integrating in the $x_i, s$ variables, we conclude that
$$ \int_{\XX} \Delta_{h_1} \dots \Delta_{h_{k+1}} \chi_{\sum_i \gamma_i e_i,a/2^m \mod 1} = \prod_i \cos\left( 2 \pi \frac{a h_{1,i} \dots h_{k+1,i}}{2^m} (-2)^{k} \right).$$
If $m\ge k+2$, then for any $i$ with $h_{1,i}\cdots h_{k+1,i}=1$ we have
\[
\left|\cos\!\left(2\pi\frac{a}{2^{m-k}}\right)\right|<1,
\]
and the right-hand side is bounded uniformly away from $1$ (since $a$ is odd and $m-k\ge 2$).
By the law of large numbers, for large $N$ and for asymptotically almost all
$h_1,\dots,h_{k+1}$ in the span of $e_1,\dots,e_N$, the set of indices
$i\in\{1,\dots,N\}$ with $h_{1,i}\cdots h_{k+1,i}=1$ has cardinality
$(2^{-(k+1)}+o(1))N$, and hence the above product is exponentially small in $N$.

In the remaining boundary case $m=k+1$, the integrand simplifies to
\[
\Delta_{h_1}\dots\Delta_{h_{k+1}}\chi_{\sum_i\gamma_i e_i,a/2^{k+1}\!\!\!\mod 1}
= (-1)^{\#\{i:\ h_{1,i}\cdots h_{k+1,i}=1\}},
\]
so averaging over $h_1,\dots,h_{k+1}$ in the span of $e_1,\dots,e_N$ gives
\[
\mathbb{E}_{h_1,\dots,h_{k+1}}
(-1)^{\#\{i:\ h_{1,i}\cdots h_{k+1,i}=1\}}
=\left(1-\frac{1}{2^k}\right)^N \to 0 \qquad (N\to\infty).
\]
Averaging in $h_1,\dots,h_{k+1}$ and using the convergence properties of the
Gowers--Host--Kra seminorms (see \cite[Lemma A.18]{btz}),
we obtain the desired vanishing
$$ \left\| \chi_{\sum_i \gamma_i e_i,a/2^m \mod 1} \right\|_{U^{k+1}(\XX)}^{2^{k+1}} = 0$$
for all $m\ge k+1$, completing the proof of (ii).

We establish (iii).  We begin with the ``only if'' direction.  Suppose for contradiction that one could choose the $\Gamma$-cocycle 
$$\rho_k \in Z^1_{\leq k}(\Gamma, \XX_{(k-1)}, 2^{k-1} \Z / 2^k \Z)$$
to be polynomial of degree at most $2^{k-1}-2$.  
Since $\XX_{(k)}$ is isomorphic to $\XX_{(k-1)} \times_{\rho_k} 2^{k-1} \Z / 2^k \Z$, we can use the vertical coordinate of the latter space to construct a measurable function $u \in \mathcal{M}(\XX_{(k)}, 2^{k-1} \Z / 2^k \Z)$ such that $\partial_\gamma u = \rho_k(\gamma)$ for all $\gamma \in \Gamma$, and such that $\XX_{(k)}$ is generated as a measure algebra by $\XX_{(k-1)}$ and $u$.  Since we are assuming the $\rho_k(\gamma)$ to be polynomials of degree $\leq 2^{k-1}-2$, we conclude that $u$ is polynomial of degree $\leq 2^{k-1}-1$.  In particular, any $2^{k-1}-1$-fold derivative of $u$ is constant, hence any $2^{k-1}-1$-fold derivative of $\partial_S u$ vanishes; thus $\partial_S u$ is a polynomial of degree at most $2^{k-1}-2$.  Iterating this, we conclude that $\partial_S^{2^{k-1}-1} u$ is of degree $\leq 0$, thus constant by ergodicity; in particular, $\partial_S^{2^{k-1}} u = 0$.

On the other hand, as the group $2^{k-1} \Z / 2^k \Z$ has exponent $2$, we have $2u = 0$.  Using the identity $\partial_{S^2} = 2 \partial_S + \partial_S^2$, we conclude that $\partial_{S^2} u = \partial_S^2 u$.  A routine induction then shows that $\partial_{S^{2^{k-1}}} u = \partial_S^{2^{k-1}} u = 0$; thus $u$ is $S^{2^{k-1}}$-invariant and thus lies in $\mathcal{M}(\XX_{(k-1)},2^{k-1}\Z/2^k\Z)$.  But then $\XX_{(k)}$ would be equal to $\XX_{(k-1)}$, which is absurd.  Thus, $\rho_k$ cannot have degree less than or equal to $2^{k-1}-2$.

To conclude the proof of (iii), it suffices to exhibit a polynomial $u \in \Poly_{\leq 2^{k-1}}(\XX_{(k)}, 2^{k-1} \Z / 2^k \Z)$ such that $\XX_{(k)}$ is generated as a measure algebra by $\XX_{(k-1)}$ and $u$. We can take the explicit choice
$$ u( (x_i)_{i=1}^\infty, s ) \coloneqq 2^{k-1} \binom{s}{2^{k-1}} \mod 2^k,$$
noticing from Lucas's theorem that the binomial coefficient $\binom{s}{2^{k-1}} \mod 2$ is periodic modulo $2^k$, but not modulo $2^{k-1}$, and so $u$ is well-defined and measurable with respect to $\XX_{(k)}$ but not $\XX_{(k-1)}$; it is then clear that $\XX_{(k)}$ is generated by $\XX_{(k-1)}$ and $u$; it remains to verify that $u$ is a polynomial of degree at most $2^{k-1}$.  It will suffice to prove that $\binom{s}{d} \mod 2$ is of degree $\leq d$ for any $d \geq 0$.  This is clear for $d=0$; for higher $d$ one can proceed by induction, noting from the binomial theorem that 
$$ \partial_{e_i} \binom{s}{d} = \binom{s}{d-1} (1-2|x_i|) + \binom{s}{d-2} \binom{1-2|x_i|}{1}+ \dots + \binom{1-2|x_i|}{d-1} \mod 2,$$
which is a linear combination of $\binom{s}{d'}$ for $d' \leq d-1$ and $\binom{s}{d'} x_i$ for $d' \leq d-2$ and thus of degree at most $d-1$ by induction hypothesis.  This gives the claim. 

It remains to establish (v). To this end, we represent 
\[
\XX_{(2)} = (\Z/2\Z)^\N\times_\rho \Z/4\Z
\]
where 
\[
\rho\colon\Gamma\times (\Z/2\Z)^\N\to \Z/4\Z
\]
is defined by 
\[
\rho\left(\sum_{i\in A} e_i,x\right)\coloneqq \sum_{i\in A} (1-2|x_i|) \bmod 4
\]
for any finite $A$. 
Let $\sigma\colon \Gamma\times \XX_{(2)}\to \frac{1}{2}\Z/\Z$ be the cocycle defined by 
\[
\sigma(\gamma,(x,t))\coloneqq \frac{\rho(\gamma,x) - \partial_\gamma F(x,t)}{4}
\]
where $F(x,i\bmod 4)= i \bmod 8$ for $i=0,\ldots,3$. 
Thus, we have 
\[
\XX_{(3)}=\XX_{(2)}\times_{\sigma} \frac{1}{2}\Z/\Z.
\]

\begin{lemma}\label{noquadraticlift}
    Let $\pi\colon \YY\to \XX_{(2)}$ be a Weyl extension. Then $\sigma\circ \pi$ is not cohomologous to a quadratic cocycle on $\YY$ (with values in $\frac{1}{2}\Z/\Z$).  
\end{lemma}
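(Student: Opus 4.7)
I will argue by contradiction; suppose there exist $Q \in \Poly^1_{\leq 2}(\Gamma, \YY, \frac{1}{2}\Z/\Z)$ and $G \in \mathcal{M}(\YY, \frac{1}{2}\Z/\Z)$ with $\sigma \circ \pi = Q + d_\Gamma G$.

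The first step is to reduce to the case where $\YY$ itself is Weyl of order $\leq 2$. Since $Q$ is polynomial of degree $\leq 2$, it is measurable with respect to $\ZZ^{\leq 2}(\YY)$ by \Cref{order-prop}(i), and the pullback $\sigma \circ \pi$ is $\XX_{(2)}$-measurable and thus a fortiori $\ZZ^{\leq 2}(\YY)$-measurable. Hence $d_\Gamma G$ is $\ZZ^{\leq 2}(\YY)$-measurable, and an ergodicity argument in the multiplicative picture $g := e(G) \in \{\pm 1\}$ (applying conditional expectation onto the $\Gamma$-invariant factor $\ZZ^{\leq 2}(\YY)$, and exploiting $|g|=1$ to force the resulting expectation to be two-valued) shows that $G$ can be chosen $\ZZ^{\leq 2}(\YY)$-measurable up to an additive constant. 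Since Host--Kra factors of Weyl systems are Weyl, I may therefore replace $\YY$ by $\ZZ^{\leq 2}(\YY)$, which is itself Weyl of order $\leq 2$ and still extends $\XX_{(2)}$.

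Next, form the abelian extension $\tilde{\YY} := \YY \times_Q \frac{1}{2}\Z/\Z$. Because $Q$ is polynomial of degree $\leq 2$, this is a Weyl extension of $\YY$ and hence a Weyl $\Gamma$-system of order $\leq 3$; the cohomology relation yields a $\Gamma$-factor map $\tilde{\pi}\colon \tilde{\YY} \to \XX_{(3)}$ given by $(y, v) \mapsto (\pi(y), v - G(y))$. Let $v$ denote the vertical coordinate on $\tilde{\YY}$ over $\YY$, with $\partial_\gamma v = Q_\gamma$; by the degree bound on $Q$, $v$ is polynomial of degree $\leq 3$ on $\tilde{\YY}$. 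On the other hand, the proof of \Cref{facts}(iii) shows that the vertical coordinate $w$ on $\XX_{(3)}$ over $\XX_{(2)}$, realized concretely by the Lucas-theoretic expression $4\binom{s}{4} \bmod 8$, has polynomial degree \emph{exactly} $4$, since $\sigma$ cannot be represented by a cocycle of polynomial degree $\leq 2$ on $\XX_{(2)}$. Pulling $w$ back along $\tilde{\pi}$ preserves its polynomial degree, so $w \circ \tilde{\pi}$ has degree exactly $4$ on $\tilde{\YY}$. From $\partial_\gamma (w \circ \tilde{\pi}) = \sigma_\gamma \circ \pi = \partial_\gamma (v + G)$ one deduces $w \circ \tilde{\pi} = v + G + c$ for some constant $c$ (using ergodicity of $\tilde{\YY}$, which follows from $\sigma$ not being a coboundary on $\XX_{(2)}$), and this combined with the degree estimate for $v$ forces $G$ (viewed as a polynomial on $\YY$, then pulled back to $\tilde{\YY}$) to have polynomial degree exactly $4$ on $\YY$.

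The final and most delicate step is to derive a contradiction from the existence of such a degree-$4$ function $G \in \mathcal{M}(\YY, \frac{1}{2}\Z/\Z)$ satisfying $\partial_\gamma G = \sigma_\gamma \circ \pi - Q_\gamma$ on the Weyl order-$\leq 2$ system $\YY$. The plan is to exploit the explicit Weyl tower decomposition $\YY = V_1 \times_{\tau_2} V_2$ (with $V_1, V_2$ compact abelian of exponent $2$ since $\Gamma = \F_2^\omega$, and $\tau_2$ of polynomial degree $\leq 1$) by Fourier-decomposing $G$ along the fibres of the factor map $\YY \to \XX_{(2)}$. Each Fourier component must satisfy a Conze--Lesigne-style functional equation inherited from $\partial_\gamma G = \sigma_\gamma \circ \pi - Q_\gamma$, and the $\frac{1}{2}\Z/\Z$-valuedness of $G$, together with the polynomial control afforded by the Weyl tower, should force the descent of this equation to $\XX_{(2)}$ itself, producing a polynomial cocycle $Q'$ of degree $\leq 2$ on $\XX_{(2)}$ with $\sigma$ cohomologous to $Q'$; this contradicts the conclusion of \Cref{facts}(iii). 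The hard part will be controlling the polynomial degree throughout the descent: arbitrary (non-Weyl) extensions could a priori introduce higher-degree fibre corrections that obstruct the descent, and it is precisely the Weyl hypothesis, specifically the degree $\leq 1$ of $\tau_2$, that prevents such obstructions.
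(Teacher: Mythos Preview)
Your argument has a genuine gap: the entire final paragraph is a plan, not a proof. You establish that $G$ is a degree-$4$ polynomial on a Weyl system $\YY$ of order $\le 2$, and then assert that a ``Fourier decomposition along the fibres of $\YY \to \XX_{(2)}$'' combined with the Weyl hypothesis ``should force'' a descent to a quadratic cocycle on $\XX_{(2)}$. But the map $\YY \to \XX_{(2)}$ has no natural compact-group fibre structure (the Weyl tower of $\YY$ is over $\mathrm{pt}$, not over $\XX_{(2)}$), so it is unclear what Fourier decomposition you intend; and even granting some decomposition, you give no mechanism by which the degree bound on $\tau_2$ propagates to the descent. The phrase ``the hard part will be controlling the polynomial degree throughout the descent'' is an admission that the argument is incomplete, and indeed nothing you have written rules out that the degree-$4$ polynomial $G$ simply exists on $\YY$ without producing any contradiction on $\XX_{(2)}$. (Your reduction in the first paragraph also glosses over the case $E(e(G)\mid \ZZ^{\le 2}(\YY))=0$, though this is minor by comparison.)

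The paper's proof avoids all of this by exploiting the explicit formula $\sigma=(\rho-\partial F)/4$. From the assumed relation $\sigma\circ\pi=q+d_\Gamma G$ one extracts a cubic polynomial $Q$ on $\YY$ with $2Q=\iota\circ\pi$, where $\iota(x,t)=t/4$ is the explicit degree-$2$ polynomial on $\XX_{(2)}$. The contradiction is then obtained by a two-line vertical-derivative computation: writing $\YY=\ZZ^{\le 1}(\YY)\times_p U$ and choosing $t\in U$ mapping to $2\in 2\Z/4\Z$, one uses that $\partial_{(0,t)}$ lowers degree by $2$ together with the $2$-torsion of $U$ to show $\partial_{(0,t)}(2Q)=0$ for any cubic $Q$, whereas $\partial_{(0,t)}(\iota\circ\pi)=\tfrac12\ne 0$. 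This is both shorter and conceptually cleaner than your approach: rather than trying to descend an unknown $G$ back to $\XX_{(2)}$, it reduces to a concrete ``no cubic square root'' obstruction that is checked by a single vertical derivative.
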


Assuming this lemma, let us prove that there does not exist a Weyl extension of $\XX_{(3)}$, establishing (v). 
    Towards a contradiction, assume there exists a Weyl extension $\YY$ of $\XX_{(3)}$. 
    Writing $\YY=\ZZ^{\le 2}(\YY)\times_p U$, denoting by $\pi\colon \ZZ^{\le 2}(\YY)\to \XX_{(2)}$ the factor map, and using \cite[Proposition A.9]{jst-tdsystems}, there is a surjective group homomorphism $\varphi\colon U\to \frac{1}{2}\Z/\Z$ such that $\varphi \circ p$ is cohomologous to $\sigma\circ\pi$ on $\ZZ^{\le 2}(\YY)$. Since $\varphi \circ p$ is a quadratic cocycle by assumption, this contradicts \Cref{noquadraticlift}.

It remains to establish \Cref{noquadraticlift}. Towards a contradiction, assume that there are $G\in\mathcal{M}(\YY,\frac{1}{2}\Z/\Z)$ and a quadratic cocycle $q\colon \Gamma\times \YY\to \frac{1}{2}\Z/\Z$ such that 
\[
\sigma\circ \pi = q + dG.
\]
Then 
\[
d\left(\frac{F\circ \pi}{8}+\frac{G}{2}\right) = \frac{\rho\circ \pi_1\circ\pi(y)}{8} - \frac{q}{2}
\]
where $\pi_1\colon \XX_{(2)}=(\Z/2\Z)^\N\times \Z/4\Z \to (\Z/2\Z)^\N$ describes the coordinate projection. It was observed in \cite[Appendix D]{jst-tdsystems} that the right hand side of the previous equation is a polynomial of degree $\leq 2$. Thus $Q=\frac{F\circ \pi}{8}+\frac{G}{2}$ is a cubic polynomial. Moreover, we have 
\[
2 Q = \frac{F\circ \pi}{4} = \iota\circ \pi
\]
where $\iota\colon (\Z/2\Z)^\N\times \Z/4\Z\to \T$ is the map $\iota(x,t)\coloneqq \frac{t}{4}$ which is polynomial of degree $\leq 2$ (cf. \cite[Appendix E]{tz-lowchar}). 

We could conclude by showing that on $\YY$ there is no cubic polynomial $Q$ such that $2Q=\iota\circ \pi$. To this end, we will use a different representation of $\XX_{(2)}$. From the previous analysis, we can represent 
\[
\XX_{(2)} = ((\Z/2\Z)^\N\times \Z/2\Z )\times_{\rho_2} 2\Z/4\Z
\]
with the action $T^\gamma(x,r,t)=((x,r)+\rho_1(\gamma), \rho_2(\gamma,(x,r))+t)$. 

We write $\YY=\ZZ^{\le 1}(\YY)\times_p U$. Let $\varphi\colon U\to 2\Z/4\Z$ be the surjective homomorphism given by \cite[Proposition A.9]{jst-tdsystems}. Now choose $t\in U$ such that $\varphi(t)=2\in 2\Z/4\Z$. 
 Since $(0,t)$ fixes $\ZZ^1(\YY)$, the derivative by $(0,t)$ reduces the degree of polynomials by $2$ (cf. \Cref{ppfacts}(iii)), if $P$ is a cubic polynomial on $\YY$, then $\Delta_{(0,t)}\Delta_{(0,t)}P=0$. Moreover it follows from the cocycle identity and since $U$ is a $2$-torsion group (cf. \cite[Theorem 1.4]{jst-tdsystems}) and therefore $2t=0$ that
\[
\Delta_{(0,t)}P + \Delta_{(0,t)}P\circ V_{(0,t)} = 0.
\]
It follows that $\Delta_{(0,t)}(2P)=0$, but $\Delta_{(0,t)}(\iota\circ \pi)=\frac{1}{2}\neq 0$, yielding the desired contradiction. 

\section{Integrating cocycles on compact groups: the role of exact cocycles}\label{int1-sec}

Let $\XX$ be an ergodic $\Gamma$-system.  A compact abelian group $U$ is said to \emph{act freely} on this $\Gamma$-system if the $\Gamma$ action can be enlarged to a $\Gamma \times U$ action (in particular, the $U$-action and $\Gamma$-action commute), and the vertical shift maps $V \colon u \mapsto V_u$ of this action are continuous and faithful (i.e., injective) from $U$ to the unitary group $\mathcal{U}(L^2(\XX))$ (equipped with the strong operator topology).  For instance, any ergodic skew product $\YY \times_\rho U$ has a free action of $U$ coming from the vertical shifts $V_v (x,u) \coloneqq (x,u+v)$; conversely, it is well known (see \cite[Theorem 3.29]{glasner2015ergodic}) that any free action arises in this fashion, up to isomorphism.

If $\XX$ has a free $U$ action $u \mapsto V_u$, then $U$ also acts on $\mathcal{M}(\XX,A)$ by setting $u F \coloneqq F \circ V_u$ for any compact group $A$.  Thus we can define $U$-cocycles $(f_u)_{u \in U}$ in $Z^1(U; \mathcal{M}(\XX,A))$, $U$-coboundaries $d_U F$ in $d_U \mathcal{M}(\XX,A)$, and so forth as per \Cref{gen-cocycle}.  We first observe that the action here has trivial cohomology:

\begin{lemma}[Integrating a cocycle]\label{HK-C8}  Let $U, A$ be compact abelian groups, and let $\XX$ be an ergodic $\Gamma$-system with a free $U$ action $u \mapsto V_u$.  Let $f \colon u \mapsto f_u$ be an element of $C(U;\mathcal{M}(\XX,A))$.  Then the following are equivalent:
\begin{itemize}
    \item[(i)] (Coboundary) $f$ lies in $d_U \mathcal{M}(\XX,A)$, that is to say there exists $F \in \mathcal{M}(\XX,A)$ such that $f = d_U F$.
    \item[(ii)]  (Cocycle) $f$ lies in $Z^1(U; \mathcal{M}(\XX,A))$, that is to say one has the $U$-cocycle equation
\begin{equation}\label{cocycle-u}
f_{u+v} = f_u + V_u f_v
\end{equation}
for all $u,v \in U$.
\end{itemize}
In fact, $\mathcal{M}(\XX,A)$ has trivial $U$-cohomology, in the sense that we have the short exact sequence
\begin{equation}\label{c8-seq}
 0 \to \mathcal{M}(\XX,A)^U \to \mathcal{M}(\XX,A) \stackrel{d_U}{\to} Z^1(U; \mathcal{M}(\XX,A)) \to 0
 \end{equation}
of locally compact abelian groups, where $Z^1(U; \mathcal{M}(\XX,A))$ is equipped with the topology inherited from $\mathcal{M}(\XX\times U,A).$
\end{lemma}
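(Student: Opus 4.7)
The plan is to deduce (i)\,$\Rightarrow$\,(ii) by a direct calculation, and establish the substantive direction (ii)\,$\Rightarrow$\,(i) by writing down an explicit antiderivative, paralleling the model computation in \Cref{trivialcohomolgy}. The easy direction is immediate: if $f_u = V_u F - F$, then the identity $V_{u+v} = V_u V_v$ yields
$$f_{u+v} = V_u(V_v F - F) + (V_u F - F) = V_u f_v + f_u,$$
which is exactly \eqref{cocycle-u}.

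For the reverse direction, I would first invoke the Mackey-type representation \cite[Theorem 3.29]{glasner2015ergodic} to identify $\XX$, up to isomorphism, with a skew product $\YY \times_\sigma U$ on which $U$ acts by vertical translations $V_w(y, w') = (y, w' + w)$. In these coordinates the cocycle equation \eqref{cocycle-u} reads
$$f_{u+v}(y, w) = f_u(y, w) + f_v(y, w + u) \qquad \text{for a.e.\ }(y, w)\text{ and all }u, v \in U.$$
Applying this identity at the point $(y, 0)$ suggests the formal antiderivative $F(y, w) := f_w(y, 0)$; the only defect is that evaluation at the null set $\{w' = 0\}$ is not literally well defined. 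To repair this, I would select (by standard arguments using continuity of $u \mapsto f_u$ from the compact metrizable group $U$ into the Polish space $\mathcal{M}(\XX, A)$) a jointly measurable representative $(u, x) \mapsto f_u(x)$. By Fubini, for almost every $w_0 \in U$ both the map $(u, y) \mapsto f_u(y, w_0)$ is measurable and the cocycle identity with base point $w' = w_0$ holds for a.e.\ $(u, v, y)$. Fixing any such $w_0$ and defining
$$F(y, w) := f_{w - w_0}(y, w_0),$$
which is jointly measurable in $(y, w)$, the cocycle identity applied with indices $(w - w_0, u)$ yields
$$f_{w + u - w_0}(y, w_0) = f_{w - w_0}(y, w_0) + f_u(y, w_0 + (w - w_0)) = F(y, w) + f_u(y, w)$$
for a.e.\ $(y, w, u)$. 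Rearranging gives $F(y, w + u) - F(y, w) = f_u(y, w)$, i.e.\ $d_U F = f$, establishing (i).

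For the short-exact-sequence statement, the inclusion $\mathcal{M}(\XX, A)^U \hookrightarrow \mathcal{M}(\XX, A)$ is tautologically injective with image equal to $\ker d_U$ (since $d_U F = 0$ is precisely the condition $V_u F = F$ for all $u \in U$), and surjectivity of $d_U$ onto $Z^1(U; \mathcal{M}(\XX, A))$ is exactly the (ii)\,$\Rightarrow$\,(i) implication just established. Continuity of the three maps in the indicated topologies follows from continuity of the $U$-action $u \mapsto V_u$ on $\mathcal{M}(\XX, A)$, which is a consequence of freeness together with standard density arguments in convergence in measure. I expect the only genuinely technical point to be the measurable-selection step producing a jointly measurable representative of $f$ and a good basepoint $w_0$; once that is in place, the remainder is algebraic manipulation.
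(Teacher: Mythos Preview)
Your argument for the equivalence (i)\,$\Leftrightarrow$\,(ii) is correct and essentially identical to the paper's (the paper writes $F(y,u) \coloneqq f_u(y,u_0)$ for a fixed $u_0$ and leaves the measurability details implicit, whereas you spell them out more carefully via Fubini; both yield the same antiderivative).

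There is, however, a genuine gap in your treatment of the short exact sequence \eqref{c8-seq}. The paper emphasizes (in the paragraph preceding the proof) that this sequence is required to be short exact in the category of \emph{locally compact} abelian groups, which in particular demands that $d_U$ be an \emph{open} map, not merely continuous and surjective. You only verify continuity. The paper devotes roughly half of its proof to establishing openness directly: given $f \in \mathcal{M}(\XX,A)$ and $\varepsilon>0$, one must show that $d_U(B_\varepsilon(f))$ is open in $Z^1(U;\mathcal{M}(\XX,A))$. The argument takes $g \in B_{\varepsilon_1}(f)$ with $\varepsilon_1<\varepsilon$, considers any cocycle $(h_u)_{u\in U}$ that is $\varepsilon_2$-close to $d_U g$ (with $\varepsilon_2=\varepsilon-\varepsilon_1$), and uses the explicit antiderivative construction from the (ii)$\Rightarrow$(i) step to integrate the cocycle $d_U g - (h_u)$ to an $F$ that is $\varepsilon_2$-close to $0$ in measure; then $h \coloneqq g - F$ satisfies $d_U h = (h_u)$ and lies in $B_\varepsilon(f)$ by the triangle inequality. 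Without this step, you have not established the topological exactness claimed in the lemma.
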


Most of this lemma is already contained in \cite[Lemma C.8]{host2005nonconventional}.  One technical point here is that we require the sequence \eqref{c8-seq} to be short exact in the category of locally compact abelian groups, not just abelian groups, so in particular the map $d_U$ here is required to be open; see Appendix \ref{filteredcategory}.

\begin{proof} A short calculation shows that (i) implies (ii).  In the converse direction, express $\XX = \YY \times_\rho U$ as above, and then set \begin{equation}\label{antiderivativef}F(y,u) \coloneqq f_u(y,u_0)\end{equation} for some point $u_0 \in U$.  A direct computation then shows that for almost all $u \in U$, one has $f_u = (d_U F)_u = \partial_{u} F$.

Clearly, $\mathcal{M}(\XX,A)^U$ is a closed subgroup of $\mathcal{M}(\XX,A)$. To complete the proof it is left to show that $d_U$ is an open map. Let $f\in \mathcal{M}(\XX,A)$ and $\varepsilon>0$ be arbitrary, the topology on $\mathcal{M}(\XX,A)$ is generated by sets of the form 
$$B_\varepsilon(f) \coloneqq \{g\in \mathcal{M}(\XX,A) : \mu(\{x\in X : |f(x)-g(x)|>\varepsilon\})<\varepsilon\}$$ where $|f(x)-g(x)|$ is the distance between $f(x)$ and $g(x)$ in $A$. We now prove that $d_U(B_\varepsilon(f))$ is open in $Z^1(U;\mathcal{M}(\XX,A))$. Let $g\in B_{\varepsilon}(f)$ be arbitrary, then there exists $\varepsilon_1<\varepsilon$ such that 
\begin{equation}\label{assumption}
    \mu(\{x\in X : |f(x)-g(x)|>\varepsilon_1\})<\varepsilon_1.
\end{equation}
Choose $\varepsilon_2 = \varepsilon-\varepsilon_1$, and let $(h_u)_{u\in U}\in Z^1(U;\mathcal{M}(\XX,A))$ be such that 
\begin{equation}\label{closetog}
    \mu(\{(u,x)\in U\times X : |\partial_u g(x)-h_u(x)|>\varepsilon_2\})<\varepsilon_2.
\end{equation}
We need to show that there exists some $h\in B_\varepsilon(f)$ such that $(h_u)_{u\in U} = d_U h$. Since $u\mapsto \partial_u g-h_u$ is a cocycle, we can define $F$ as in \eqref{antiderivativef} such that $d_U F = d_U g - (h_u)_{u\in U}$ and furthermore from the construction and \eqref{closetog} we see that $F$ is $\varepsilon_2$-close to $0$ in measure. Now set $h\coloneqq F-g$, then clearly $d_U h = (h_u)_{u\in U}$ and from the construction, $h$ is $\varepsilon_2$-close to $g$ in measure. Combining this with \eqref{assumption} using the triangle inequality we see that $h$ is $\varepsilon$-close to $f$ in measure, or equivalently $h\in B_\varepsilon(f)$ as required. 
\end{proof}
Informally, this lemma asserts that one can ``integrate'' any $U$-cocycle $(f_u)_{u \in U}$ on a compact group acting freely to obtain an ``antiderivative'' $F$.  However, in our applications, we will frequently want to also require $F$ to be a polynomial of degree at most $d+1$ for some $d \geq -1$.  Thus, we are interested in classifying the $U$-cocycles $(f_u)_{u \in U}$ that are of the form $d_U F$ for some $F \in \Poly_{\leq d+1}(\XX,A)$.

Now that we are no longer necessarily working with the Host--Kra tower, it turns out that different elements of $U$ can act with a different ``weight'', even if one restricts attention to non-zero elements.  We now introduce some algebraic notation to handle weights on the acting group $U$, and their impact on ``polynomiality'' on a $U$-group $A$.

\begin{definition}[Weighted actions and cocycles]\label{weighted-cocycle}  Let $U$ be a compact abelian group.  A \emph{weight filtration} on $U$ is a nested sequence of compact subgroups
$$ U = U_{>0} \geq U_{>1} \geq \dots$$
of $U$.  The \emph{weight} $\weight(u) \in [1,+\infty]$ of an element $u$ of $U$ is then defined as
$$ \weight(u) \coloneqq \sup \{ i+1 : u \in U_{>i} \},$$
thus elements of $U_{>i}$ have weight at least $i+1$, and $0$ has weight $+\infty$.  

If $U$ has a weight filtration, a \emph{polynomial filtration} on a $U$-group $A$ compatible with that weight filtration is a nested sequence of $U$-subgroups
$$ \Poly_{\leq -\infty}[A] \leq \dots \leq \Poly_{\leq -1}[A] \leq \Poly_{\leq 0}[A] \leq \Poly_{\leq 1}[A] \leq \dots \leq A$$
with the property that
\begin{equation}\label{weight-lower}
 \partial_{u} \Poly_{\leq d}[A] \leq \Poly_{\leq d - \weight(u)}[A] 
\end{equation}
for all $d \in \Z$ and $u \in U$.  In particular, if $P \in \Poly_{\leq d}[A]$ and $u \in U_{> i}$, then $\partial_{u} P \in \Poly_{\leq d - i - 1}[A]$.  Informally: if $u$ is of weight $k$, then $\partial_{u}$ behaves like a differential operator of order $\leq k$.

For any $d \in \Z$, we define $\Poly^1_{\leq d-\weight}[U;A] \leq Z^1(U;A)$ to be the collection of cocycles $(a_u)_{u \in U} \in Z^1(U;A)$ obeying the additional condition
\begin{equation}\label{d-weight}
a_u \in \Poly_{\leq d - \weight(u)}[A]
\end{equation}
for all $u \in U$.  In particular, $a_u \in \Poly_{\leq d-1}[A]$ for all $u \in U$, and one can strengthen this to $a_u \in \Poly_{\leq d-i-1}[A]$ whenever $u \in U_{>i}$ for some $i \geq 0$.  By construction, one has the sequence
$$ 0 \to \Poly_{\leq d}[A]^U \to \Poly_{\leq d}[A] \stackrel{d_U}{\to} \Poly_{\leq d-\weight}^1[U; A]$$
exact at the first two terms. 
Informally, elements of $\Poly_{\leq d-\weight}^1[U; A]$ are like ``virtual derivatives'' of ``virtual polynomials of degree $\leq d$''. 
\end{definition}

The structure group $U$ of an ergodic abelian extension $\XX = \YY \times_\rho U$ comes with a natural weight filtration, which we call the \emph{type filtration}: if $u \in U$ and $i \geq 0$, we say that $u$ is \emph{of weight $>i$} if the action of $u$ fixes the factor $\ZZ^{\leq i}(\XX)$, or equivalently the derivative operator $\partial_{u}$ annihilates $\mathcal{M}(\ZZ^{\leq i}(\XX),A)$ for any compact abelian group $A$.  The set of all $u$ of weight $>i$ will be denoted $U_{>i}$; this clearly forms a weight filtration (recall from ergodicity that $\ZZ^{\leq 0}(\XX)$ is trivial). One can think of $U_{>i}$ as a sort of ``orthogonal complement'' to $\ZZ^{\leq i}(\XX)$.

\begin{remark}  At each stage $\ZZ^{\leq k}(\XX) = \ZZ^{\leq k-1}(\XX) \times_{\rho_k} U_k$ of the Host--Kra tower, the type filtration $(U_{k,>i})_{i=0}^\infty$ is simply given by $U_{k,>i}=U_k$ for $i < k$ and $U_{k,>i}=0$ for $i \geq k$, such that all non-zero elements of $U_k$ have the same weight $k$.
But, as the example in \Cref{analysis-sec} shows, we cannot always assume that we are working with the Host--Kra tower, and so are forced to consider more general filtrations.  For instance, if we consider the extension
\begin{center}
\begin{tikzcd}
    \XX_{(k)} \arrow[d, Rightarrow, "\rho \mod 2^k; \Z/2^k\Z"]  \\ 
    \XX_{(0)} 
\end{tikzcd}
\end{center}
from that example, then one sees from \Cref{facts} that the weight of an element of $\Z/2^k \Z$ is one plus the number of times $2$ can divide into the element; thus $(\Z/2^k \Z)_{>i} = 2^i\Z/2^k\Z$ for $i \leq k$ and $(\Z/2^k \Z)_{>i}=0$ for $i \geq k$.
\end{remark}

If $u_{>i} \in U_{>i}$, then by \Cref{order-prop}(i) $\partial_{{u_{>i}}}$ annihilates $\Poly_{\leq i}(\XX,A)$ for any compact abelian group $A$.  
From this we see that $\partial_{{u_{>i}}}$ acts like a differential operator of order $>i$, in the sense that we have homomorphisms
\begin{equation}\label{diff-incl}
\partial_{{u_{>i}}} \colon \Poly_{\leq d}(\XX,A) \to \Poly_{\leq d-i-1}(\XX,A)
\end{equation}
for any integer $d$, as can be seen by a routine induction using $d \leq i$ as the base case.  Thus, we see that setting
$$ \Poly_{\leq d}[\mathcal{M}(\XX,A)] \coloneqq \Poly_{\leq d}(\XX,A),$$
with the convention that $\Poly_{\leq -\infty}[\mathcal{M}(\XX,A)] = 0$, will give a polynomial filtration on $\mathcal{M}(\XX,A)$ compatible with the type filtration on $U$.  Since $\YY$ is equivalent to the $U$-invariant factor of $\XX$, we now have the exact sequence
$$ 0 \to \Poly_{\leq d}(\YY,A) \to \Poly_{\leq d}(\XX,A) \stackrel{d_U}{\to} \Poly_{\leq d-\weight}[U; \mathcal{M}(\XX,A)]$$
for any integer $d$.

In particular we now see that a necessary condition to have $f = d_U F$ for some $F \in \Poly_{\leq d}(\XX,A)$ is that
\begin{equation}\label{i-cons}
    f \in \Poly_{\leq d-\weight}[U; \mathcal{M}(\XX,A)]
\end{equation}
or equivalently that
$$    
    f_{u_{>i}} \in \Poly_{\leq d-i-1}(\XX,A)
$$
for all $i \geq 0$ and $u_{>i} \in U_{>i}$.

The main objective of this section is to show that these necessary conditions are in fact sufficient in the case where the cocycle $\rho$ one can associate to the $U$ action is exact in the sense of \Cref{key-three}. We begin with the following technical algebraic calculation:

\begin{proposition}[Polynomial degree calculation]\label{poly-calc}  Suppose one has an abelian extension $\XX=\YY \times_\rho U$ of ergodic $\Gamma$-systems, and let $A$ be a closed subgroup of $\T$. We give $U$ the type filtration and $\mathcal{M}(\XX)$ the polynomial filtration.  Let $d,\ell_1,\dots,\ell_s \geq 0$ for some $s \geq 1$. Suppose one has the following objects:
\begin{itemize}
    \item[(i)] A function $F \in \mathcal{M}(\XX,A)$ that is ``virtually of degree $\leq d$`` in the sense that $d_U F \in \Poly^1_{\leq d-\weight}[U; \mathcal{M}(\XX)]$, or equivalently that
    $$ \partial_{{u}} F \in \Poly_{\leq d-i-1}(\XX)$$
for all $i \geq 0$ and $u \in U_{>i}$;
    \item[(ii)] A map $q \in \mathcal{M}(\XX,U)$ such that $q \mod U_{>i} \in \Poly_{\leq i}(\XX,U/U_{>i})$ for all $i \geq 0$; and
    \item[(iii)]  For each $1 \leq j \leq s$, a map $r_j \in \mathcal{M}(\XX,U)$ such that $r_j \mod U_{>i} \in \Poly_{\leq i-\ell_j}(\XX,U/U_{>i})$ for all $i \geq 0$.
\end{itemize}
Then the function 
$$ g(x) \coloneqq \partial_{{r_1(x)}} \dots \partial_{{r_s(x)}} F( V_{q(x)} x ) = \sum_{\omega \in \{0,1\}^s} (-1)^\omega F( V_{q(x)+ \sum_{j=1}^s \omega_j r_j(x)} x )$$ 
lies in $\Poly_{\leq d-\sum_{j=1}^s \ell_j}(\XX)$.
\end{proposition}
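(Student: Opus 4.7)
The plan is to induct on the target degree $\tilde d \coloneqq d - \sum_{j=1}^s \ell_j$, exploiting the reformulation
\[
g(x) \;=\; \bigl(\partial_{r_1(x)}\cdots \partial_{r_s(x)} F\bigr)\bigl(V_{q(x)}\, x\bigr),
\]
valid because all vertical shifts commute. For any \emph{fixed} $v_j \in U_{>\ell_j - 1}$, iterating the degree-reduction \eqref{diff-incl} starting from the virtual-degree hypothesis on $F$ yields $\partial_{v_1}\cdots \partial_{v_s} F \in \Poly_{\le \tilde d}(\XX)$, which is the target bound in the frozen-parameter case.

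For the base case $\tilde d = -1$, hypothesis (iii) forces $r_j(x) \in U_{>\ell_j - 1}$ a.s.\ (since $\Poly_{\le -1}(\XX, U/U_{>\ell_j - 1}) = 0$), and the preceding fixed-parameter calculation then yields $\partial_{v_1}\cdots \partial_{v_s} F = 0$ in $\mathcal{M}(\XX)$ for every admissible tuple $(v_1,\dots,v_s)$. A joint-measurability / Fubini argument---choosing a representative of $F$ for which this vanishing is realized on a full-measure set of parameter tuples---then upgrades the result to $g \equiv 0$ a.e.\ after the substitution $v_j = r_j(x)$.

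For the inductive step, I compute $\partial_{\gamma_0} g$ for arbitrary $\gamma_0 \in \Gamma$ by splitting $T^{\gamma_0} F = F + \partial_{\gamma_0} F$ together with $q(T^{\gamma_0}x) = q + \tilde q$ and $r_j(T^{\gamma_0}x) = r_j + \tilde r_j$, where $\tilde q = \partial_{\gamma_0} q$ and $\tilde r_j = \partial_{\gamma_0} r_j$. This yields the decomposition
\[
\partial_{\gamma_0} g \;=\; (g_* - g) \;+\; C,
\]
where $g_*$ has the same form as $g$ with $(q, r_j)$ replaced by $(q + \tilde q, r_j + \tilde r_j)$, while $C$ has the same form with $F$ replaced by $\partial_{\gamma_0} F$. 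Since $\Gamma$- and $U$-derivatives commute, $\partial_{\gamma_0} F$ has virtual degree $\le d-1$, so the inductive hypothesis at level $\tilde d - 1$ gives $C \in \Poly_{\le \tilde d - 1}(\XX)$.

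The main obstacle is to show $g_* - g \in \Poly_{\le \tilde d - 1}$, one degree sharper than what the same-level hypothesis affords. I telescope $g_* - g$ along the $s + 1$ parameters (flipping one of $q, r_1, \dots, r_s$ at a time) and rewrite each difference as a $g$-type function carrying one \emph{additional} $r$-parameter of strictly higher weight. The driving identities are
\[
g_{q + \tilde q,\, r_1, \dots, r_s} - g_{q,\, r_1, \dots, r_s} \;=\; -\,g_{q,\, r_1, \dots, r_s,\, \tilde q}
\]
and, for the $j$-th step,
\[
g_{q,\, r_1,\dots, r_j + \tilde r_j, \dots, r_s} - g_{q,\, r_1,\dots, r_j, \dots, r_s} \;=\; g_{q + r_j,\, r_1, \dots, r_{j-1}, r_{j+1}, \dots, r_s,\, \tilde r_j},
\]
obtained by restricting the $\omega$-sum to $\omega_j = 1$ (for the latter) and introducing a new cube coordinate for $\tilde q$ or $\tilde r_j$. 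In each case $\sum \ell_j$ increases by $1$, since $\tilde q$ has weight $\ge 1$ and $\tilde r_j$ has weight $\ge \ell_j + 1$, so every piece has target degree $\tilde d - 1$; after verifying that the augmented $q$-argument remains polynomial-filtered (e.g.\ $q + \tilde q + r_j$ still satisfies $(\cdot) \bmod U_{>i} \in \Poly_{\le i}$), the inductive hypothesis places each term in $\Poly_{\le \tilde d - 1}$. Summing the telescoping pieces and combining with the bound on $C$ closes the induction. The chief delicacies are the sign bookkeeping in these combinatorial identities and the joint-measurability argument in the base case.
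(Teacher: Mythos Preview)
Your overall structure matches the paper's: induct on $\tilde d=d-\sum_j\ell_j$, with the inductive step expanding $\partial_{\gamma_0} g$ into $g$-type terms each with $\tilde d$ lowered by one via a discrete chain rule. Your telescoping identities are precisely the paper's terms \eqref{term-1}--\eqref{term-3} (modulo sign conventions and the choice between cube-sum restriction and the operator identity $\partial_{a+b}=\partial_a V_b+\partial_b$), and the bookkeeping you note about the augmented $q$-argument is correct.

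The gap is in the base case. You know $\partial_{v_1}\cdots\partial_{v_s}F=0$ in $\mathcal{M}(\XX)$ for every \emph{fixed} admissible tuple, but a Fubini argument cannot upgrade this to $g\equiv 0$ after substituting $v_j=r_j(x)$: the map $x\mapsto\bigl(r_1(x),\dots,r_s(x),V_{q(x)}x\bigr)$ pushes $\mu$ onto a graph in $\prod_j U_{>\ell_j-1}\times X$, which is singular with respect to the product measure whenever $s\ge 1$, so a.e.\ vanishing on the product says nothing at these substituted points. The paper fills this by first reducing to finite $U$: since $\partial_u F\in\Poly_{\le d-1}(\XX)$ for all $u\in U$, these polynomials are discrete modulo constants (\Cref{ppfacts}(i)), and $u\mapsto\partial_u F$ is continuous with $\partial_0 F=0$, so $\partial_u F$ is constant on an open subgroup; $F$ is then invariant under the (open) kernel of the resulting character, and $U$ may be replaced by the finite quotient. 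With only finitely many parameter tuples modulo this kernel, one intersects finitely many full-measure vanishing sets to obtain vanishing \emph{uniformly} in the parameters, after which the substitution is harmless. Your ``choosing a good representative of $F$'' gesture is in the right direction, but needs exactly this structural input about $F$ to go through.
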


\begin{proof}
In order to avoid measure-theoretical technicalities we first reduce matters to the case where $U$ is a finite group. From \Cref{ppfacts}(i) and the continuity of $u\mapsto \partial_u F$ we see that we can find an open neighborhood $U'\subseteq U$ such that $\partial_u F$ is a constant on $u\in U'$. In \cite[Theorem 1.4]{jst-tdsystems} we proved that $U$ is totally disconnected and so we may assume without loss of generality that $U'$ is an open subgroup. Write $\partial_u F = \xi(u)$ the cocycle identity shows that $\xi \colon  U'\rightarrow \mathbb{T}$ is a character. Again, since $U$ is totally disconnected so is $U'$ and so $\ker \xi$ is an open subgroup. Thus, $\partial_u F = 0$ for all $u\in \ker \xi$ and so we may quotient out by $\ker \xi$ and assume without loss of generality that $U$ is finite.

Set $m \coloneqq d-\sum_{j=1}^s \ell_j$.  First suppose that $m$ is negative.  By assumption (iii), for any $j=1,\dots,s$, $r_j$ vanishes modulo $U_{>\ell_j-1}$, and thus takes values in $U_{>\ell_j-1}$ (with the convention that $U_{>-1}=U_{>0}$).  For every $x_0 \in \XX$, we conclude from one application of assumption (i) and $s-1$ applications of \eqref{diff-incl} that
$$ \partial_{{r_1(x_0)}} \dots \partial_{{r_s(x_0)}} F \in \Poly_{\leq d-\sum_{j=1}^s \ell_j}(\XX) = 0$$
since $m$ is negative, and so $g$ vanishes identically, giving the claim in this case.

Now suppose that inductively $m$ is non-negative, and that the claim has already been proven for $m-1$.  It will then suffice to show that
$$ \partial_\gamma g \in \Poly_{\leq m-1}(\XX)$$
for every $\gamma \in \Gamma$.   If $d=0$ then all derivatives $\partial_{u} F$ of $F$ vanish, and the claim is trivial; so we will assume that $d \geq 1$.

Morally speaking, one expects to be able to expand the ``discrete derivative'' $\partial_\gamma g$ by some sort of ``discrete chain rule'', and this is precisely what we shall now attempt. Fix $\gamma$, and pick a point $x$ in $\XX$.  Consider the quantity
$$ T^\gamma g(x) = \partial_{V_{T^\gamma r_1(x)}} \dots \partial_{ V_{ T^\gamma r_s(x)}} F( V_{T^\gamma q(x)} T^\gamma x ).$$
Since $T^\gamma$ commutes with $V_{T^\gamma q(x)}$, one can write this as the sum of
\begin{equation}\label{term-1}
\partial_{V_{T^\gamma r_1(x)}} \dots \partial_{ V_{ T^\gamma r_s(x)}} (\partial_\gamma F)( V_{T^\gamma q(x)} x )
\end{equation}
and
$$ \partial_{V_{T^\gamma r_1(x)}} \dots \partial_{ V_{ T^\gamma r_s(x)}} F( V_{T^\gamma q(x)} x ).$$
Writing $T^\gamma q(x) = q(x) + \partial_\gamma q(x)$, one can write the latter term as the sum of
\begin{equation}\label{term-2}
\partial_{V_{T^\gamma r_1(x)}} \dots \partial_{ V_{ T^\gamma r_s(x)}} \partial_{{\partial_\gamma q(x)}} F( V_{q(x)} x )
\end{equation}
and
$$\partial_{V_{T^\gamma r_1(x)}} \dots \partial_{ V_{ T^\gamma r_s(x)}} F( V_{q(x)} x ).$$
If one successively applies the cocycle identity $\partial_{{T^\gamma r_j(x)}} = \partial_{{\partial_\gamma r_j(x)}} V_{r_j(x)} + \partial_{{r_j(x)}}$ for $j=1,\dots,s$, one can express the latter term as the sum of
\begin{equation}\label{term-3}
\partial_{{r_1(x)}} \dots \partial_{{r_{j-1}(x)}} \partial_{{\partial_\gamma r_j(x)}} \partial_{{r_{j+1}(x)}} \dots \partial_{ V_{ T^\gamma r_s(x)}} F( V_{q(x)+r_j(x)} x )
\end{equation}
for $j=1,\dots,s$, and
$$ \partial_{{r_1(x)}} \dots \partial_{{r_s(x)}} F( V_{q(x)} x ).$$
The latter expression is of course just $g(x)$.  We conclude that $\partial_\gamma g$ can be expressed as the sum of \eqref{term-1}, \eqref{term-2}, and the $s$ terms \eqref{term-3} (this is the aforementioned ``discrete chain rule'').  It therefore suffices to show that each of these terms is a polynomial of degree at most $m-1$.

The expression \eqref{term-1} is of the same form as $g$, but with $F$ replaced by $\partial_\gamma F$.  Observe that $\partial_\gamma F$ obeys the same hypotheses as $F$, but with $d$ replaced by $d-1$, thus effectively lowering the quantity $m = d - \sum_{j=1}^s \ell_j$ by one; and so the claim for this term follows from the induction hypothesis.

The expression \eqref{term-2} is also of the same form as $g$, but with a new function $r_{s+1} \coloneqq \partial_\gamma q$ added to the collection $r_1,\dots,r_s$.  From assumption (ii) we see that $\partial_\gamma q$ obeys assumption (iii) with $\ell_{s+1}=1$, thus again effectively lowering
$m = d - \sum_{j=1}^s \ell_j$ by one; and so the claim again follows from the induction hypothesis.

Finally, any term of the form \eqref{term-3} is also of the same form as $g$, but with some of the $r_{j'}$ translated (which does not impact $\ell_{j'}$), one of the $r_j$ replaced by a derivative $\partial_\gamma r_j$ (which increases $\ell_j$ by one), and $q$ shifted by $r_j$ (which does not affect assumption (ii), thanks to assumption (iii)).  Thus again $m = d - \sum_{j=1}^s \ell_j$ has effectively been lowered by one, and the claim again follows from the induction hypothesis.
\end{proof}

\begin{remark}  One can use this proposition to obtain a new proof of \cite[Lemma 8.14]{btz}, after setting $s$ equal to one and $F$ equal to the potential function provided by \Cref{HK-C8}; we leave the details to the interested reader.  Actually, this new proof repairs a gap in the original proof given in \cite{btz}; the difficulty there being that the derivation of part (ii) of that proposition from part (i) was unjustified, since the map $(y,u) \mapsto p_t(y,uq(y,u))$ is not necessarily a cocycle.  Roughly speaking, this corresponded to omitting a treatment of the term \eqref{term-2} in the proof above, which is also the main reason why the proposition had to study higher derivatives of $F$ and not just first derivatives.
\end{remark}

Next, we recall an alternate description of the type filtration, after representing $\XX$ as a skew product $\YY \times_\rho U$ as above.

\begin{lemma}[Duality between $U_{>i}$ and $Z^1_{\leq i}$]\label{u-dual}  Suppose one has an abelian extension $\XX=\YY \times_\rho U$ of ergodic $\Gamma$-systems. Let $U_\bullet = (U_{>i})_{i=0}^\infty$ be the type filtration.  Then for each $i$, $U_{>i}$ is the smallest (necessarily compact) subgroup of $U$ such that $\rho \mod U_{>i} \in Z^1_{\leq i}(\Gamma,\YY,U/U_{>i})$; in particular $\rho$ is of type $\leq i$ if and only if $U_{>i}=0$.   Equivalently (by Pontryagin duality and \Cref{type}(ii)), one has
    $$ U_{>i} = \mathrm{Ann} \left\{ \xi \in \hat U : \xi \circ \rho \in Z^1_{\leq i}(\Gamma,\YY)\right\}.$$
\end{lemma}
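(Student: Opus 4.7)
The plan is to reduce, via Pontryagin duality and \Cref{type}(ii), to showing the equality $U_{>i}^{\perp}=H_i$ in $\hat U$, where $H_i\coloneqq\{\xi\in\hat U:\xi\circ\rho\in Z^1_{\leq i}(\Gamma,\YY)\}$, and then establish both inclusions using the interaction between the $U$-action on $\XX$ and the Host--Kra factor $\ZZ^{\leq i}(\XX)$.

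First, \Cref{type}(ii) identifies the condition ``$\rho\mod V\in Z^1_{\leq i}(\Gamma,\YY,U/V)$'' with ``$V^{\perp}\subseteq H_i$'', and $H_i$ is closed in $\hat U$ (being the preimage of the closed subgroup $Z^1_{\leq i}\le Z^1$ under the continuous homomorphism $\xi\mapsto\xi\circ\rho$). Pontryagin duality then makes $V=\mathrm{Ann}(H_i)$ the smallest $V$ for which $V^{\perp}\subseteq H_i$, so the lemma is equivalent to $U_{>i}=\mathrm{Ann}(H_i)$, that is $U_{>i}^{\perp}=H_i$.

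For the inclusion $H_i\subseteq U_{>i}^{\perp}$, if $\xi\circ\rho$ has type $\leq i$ on $\YY$, the standard characterization of type $\leq i$ (namely, that the vertical coordinate of $\YY\times_\sigma\T$ is $\ZZ^{\leq i}$-measurable in that extension) applied to $\sigma=\xi\circ\rho$, combined with pullback along the $\Gamma$-equivariant factor map $\pi_\xi\colon\XX\to\YY\times_{\xi\circ\rho}\T$, $(y,u)\mapsto(y,\xi(u))$, shows that $\phi_\xi(y,u)\coloneqq\xi(u)$ lies in $L^2(\ZZ^{\leq i}(\XX))$; here one uses the functoriality of Host--Kra factors under factor maps. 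Since $U_{>i}$ is the stabilizer of $\ZZ^{\leq i}(\XX)$ and $V_u\phi_\xi=\phi_\xi+\xi(u)$, this forces $\xi(u)=0\in\T$ for every $u\in U_{>i}$, so $\xi\in U_{>i}^{\perp}$.

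For the reverse inclusion $U_{>i}^{\perp}\subseteq H_i$, the plan is to exhibit the Host--Kra factor itself as an abelian extension. Since $U_{>i}$ acts trivially on $\ZZ^{\leq i}(\XX)$ while the $U/U_{>i}$-action remains, Mackey--Zimmer theory writes $\ZZ^{\leq i}(\XX)\cong\ZZ^{\leq i}(\YY)\times_{\rho'}U/U_{>i}$ for some cocycle $\rho'$ on $\ZZ^{\leq i}(\YY)=\YY\cap\ZZ^{\leq i}(\XX)$. The standard criterion that an abelian extension of an order-$\leq i$ system has order $\leq i$ if and only if the defining cocycle has type $\leq i$ then forces $\rho'$ to be of type $\leq i$ on $\ZZ^{\leq i}(\YY)$, since both the base and the extension have order $\leq i$. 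Comparing with the abelian extension $\XX^{U_{>i}}=\YY\times_{\rho\mod U_{>i}}U/U_{>i}$, Mackey--Zimmer applied to the factor map $\XX^{U_{>i}}\to\ZZ^{\leq i}(\XX)$ (which covers $\pi\colon\YY\to\ZZ^{\leq i}(\YY)$ on the base) yields $\rho\mod U_{>i}=\rho'\circ\pi+d_\Gamma F$ on $\YY$ for some $F\colon\YY\to U/U_{>i}$. Since pullbacks preserve type and coboundaries have type $\leq 0$, $\rho\mod U_{>i}$ is of type $\leq i$ on $\YY$; one more application of \Cref{type}(ii) then produces $\xi\circ\rho\in Z^1_{\leq i}(\Gamma,\YY)$ for every $\xi\in U_{>i}^{\perp}$, which is $U_{>i}^{\perp}\subseteq H_i$.

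The main obstacle is the structural input used in the last paragraph: both the identification $\ZZ^{\leq i}(\XX)\cong\ZZ^{\leq i}(\YY)\times_{\rho'}U/U_{>i}$ via Mackey--Zimmer and the ``order $\leq i$ iff cocycle has type $\leq i$'' statement for abelian extensions of an order-$\leq i$ base are standard facts of Host--Kra theory, but they require careful tracking of cohomology classes across several factor maps to produce the specific coboundary correction $d_\Gamma F$.
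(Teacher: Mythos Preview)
The paper does not give a self-contained proof here; it simply refers to \cite[Proposition 7.6]{host2005nonconventional}, noting that the argument there (stated for $\Gamma=\Z$) carries over to general countable abelian $\Gamma$. Your proposal goes further and attempts to reconstruct a proof from more basic Host--Kra facts, but there is a genuine gap in the inclusion $H_i\subseteq U_{>i}^{\perp}$.

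You assert that when $\xi\circ\rho$ has type $\le i$ on $\YY$, the vertical character $\phi_\xi(y,u)=\xi(u)$ is $\ZZ^{\le i}(\XX)$-measurable. This is false in general. For a concrete instance, take $\Gamma=\Z$, let $\YY$ be an order-$2$ skew shift on $\T^2$ with Kronecker factor $\ZZ^{\le 1}(\YY)=\T$ (the first coordinate), set $U=\T$, and let $\rho=\rho_0+d_\Gamma F$ where $\rho_0\colon\Gamma\to\T$ is a nonzero homomorphism and $F(y_1,y_2)=y_2$. Then $\rho$ has type $\le 1$, and one computes that the factor map $\XX\to\ZZ^{\le 1}(\XX)$ sends $(y,t)\mapsto(\pi_1(y),t-F(y))$; the vertical coordinate $t$ is therefore not $\ZZ^{\le 1}(\XX)$-measurable. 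Worse, even the weaker replacement $\E\bigl(e(\phi_\xi)\mid\ZZ^{\le i}(\XX)\bigr)\ne 0$ fails here, since $\E(e(t)\mid \pi_1(y),t-y_2)=e(t-y_2)\,\E(e(y_2))=0$. So the ``standard characterization'' you invoke does not hold without an additional hypothesis such as $\YY$ itself having order $\le i$; neither your argument nor its obvious repair via conditional expectation goes through.

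Your reverse inclusion $U_{>i}^{\perp}\subseteq H_i$ is essentially correct and amounts to reproving \Cref{typefiltration}(i)--(ii). Note, though, that in the paper \Cref{typefiltration} is itself attributed to the same Host--Kra source, so you are not avoiding the citation, only relocating it. The genuinely nontrivial content of Host--Kra's Proposition 7.6 is precisely the minimality statement (equivalently, the inclusion $H_i\subseteq U_{>i}^{\perp}$): one must show that whenever $\rho\bmod V$ has type $\le i$ for some closed $V\le U$, the quotient $U/V$ acts faithfully on $\ZZ^{\le i}(\XX)$. This requires direct analysis of the cubic measures (in particular, showing the relevant descended cocycle on $\ZZ^{\le i}(\YY)$ retains type $\le i$, which lies outside the scope of \Cref{8.11} since $\YY$ need not have order $\le i$), and is not a formal consequence of functoriality and Mackey--Zimmer alone.
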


\begin{proof}  See \cite[Proposition 7.6]{host2005nonconventional}; the proof there is stated in the case $\Gamma=\Z$, but extends to arbitrary countable abelian groups without difficulty.
\end{proof}

Now we can give our main polynomial integration result, which highlights the useful role of exactness for a cocycle.

\begin{theorem}[Polynomial integration lemma]\label{poly-integ}  Suppose one has an abelian extension $\XX=\YY \times_\rho U$ of ergodic $\Gamma$-systems with $\rho$ exact. We give $U$ the type filtration and $\mathcal{M}(\XX)$ the polynomial filtration.  Let $f \colon u \mapsto f_u$ be an element of $C(U; \mathcal{M}(\XX))$. Then for any $k \geq 0$, the following are equivalent:
\begin{itemize}
    \item[(i)] (Coboundary of polynomial) $f = d_U F$ for some $F \in \Poly_{\leq k}(\XX)$.
    \item[(ii)]  (Polynomial cocycle) $f$ lies in $\Poly_{\leq k-\weight}^1[U; \mathcal{M}(\XX)]$; that is to say, it obeys the $U$-cocycle condition \eqref{cocycle-u} for all $u,v \in U$, as well as the degree condition \eqref{i-cons} for all $i \geq 0$ and $u \in U_{>i}$.
\end{itemize}
More compactly, one has the short exact sequence
\begin{equation}\label{dUses} 0 \to \Poly_{\leq k}(\YY) \to \Poly_{\leq k}(\XX) \stackrel{d_U}{\to} \Poly_{\leq k-\weight}^1[U; \mathcal{M}(\XX)] \to 0.
\end{equation}
Furthermore, this short exact sequence splits in the category of $k$-filtered groups, where the groups are equipped with the polynomial filtration introduced in Example \ref{polynomialfiltration}.
\end{theorem}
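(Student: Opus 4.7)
The direction (i)$\Rightarrow$(ii) is immediate: for $F\in\Poly_{\leq k}(\XX)$, the derivative $d_U F$ is automatically a $U$-cocycle, and the weight condition \eqref{d-weight} follows from \eqref{diff-incl} applied to each $u\in U_{>i}$. Moreover, the kernel of $d_U$ restricted to $\Poly_{\leq k}(\XX)$ equals $\Poly_{\leq k}(\YY)$, giving exactness of \eqref{dUses} at the middle term.

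For the non-trivial direction (ii)$\Rightarrow$(i), my plan is to construct the antiderivative explicitly by mimicking the proof of \Cref{HK-C8}: set $\tilde F(y,u)\coloneqq f_u(y,0)$. A short calculation using the $U$-cocycle equation \eqref{cocycle-u} shows $d_U \tilde F = f$, and the assignment $f\mapsto \tilde F$ is plainly a group homomorphism. The central task is to verify that $\tilde F$ itself lies in $\Poly_{\leq k}(\XX)$, for which I would apply \Cref{poly-calc} to $F=\tilde F$ with a carefully chosen family of parameters. Since $d_U\tilde F=f$ lies in $\Poly^1_{\leq k-\weight}[U;\mathcal{M}(\XX)]$ by hypothesis, condition (i) of \Cref{poly-calc} is satisfied. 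The other two conditions are where the exactness of $\rho$ enters: by \Cref{u-dual}, one has $\rho_\gamma\bmod U_{>i}\in \Poly_{\leq i-1}(\YY, U/U_{>i})$, so the vertical coordinate $q(y,u)=u$ satisfies condition (ii) (it is a ``degree $\leq i$ primitive'' of $\rho\bmod U_{>i}$), and any $r_j$ taken to be $\rho_{\gamma_i}$ satisfies condition (iii) with weight $\ell_j\geq 1$.

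To conclude $\tilde F\in\Poly_{\leq k}(\XX)$, I would expand $\partial_{\gamma_1}\cdots\partial_{\gamma_{k+1}}\tilde F$ using the decomposition $T^\gamma(y,u)=V_{\rho_\gamma(y)}T^\gamma_{\YY}(y,u)$, yielding a discrete chain rule type sum in which each term is a composition of vertical shifts $V_{\rho_{\gamma_i}}$ (absorbed as $r_j$'s of \Cref{poly-calc}) and $\YY$-shifts $T^{\gamma_i}_\YY$ (absorbed through the parameter $q$ by using the identity $(T^\gamma y,0)=V_{-\rho_\gamma(y)}T^\gamma(y,0)$). Each resulting term falls into the range of \Cref{poly-calc} with a total weight $\sum\ell_j\geq k+1$, placing it into $\Poly_{\leq k-(k+1)}(\XX)=0$. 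The main obstacle is the bookkeeping: because $V_{\rho_\gamma(y)}$ depends on $y$, the $V$- and $\YY$-parts of $\partial_\gamma$ fail to commute, so one must carefully expand the product analogously to the proof of \Cref{poly-calc} itself, tracking how each derivative either lowers the effective degree of $\tilde F$ or increases the weight count through additional $r_j$'s.

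Once $\tilde F\in\Poly_{\leq k}(\XX)$ is established, the splitting of \eqref{dUses} in the category of $k$-filtered groups follows routinely. The section $s\colon f\mapsto \tilde F$ is a group homomorphism by construction, and the same argument applied with each $j\leq k$ in place of $k$ shows that $s(\Poly^1_{\leq j-\weight}[U;\mathcal{M}(\XX)])\subseteq \Poly_{\leq j}(\XX)$. Hence $s$ intertwines the polynomial filtrations and provides the splitting required.
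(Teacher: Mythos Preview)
Your plan for (ii)$\Rightarrow$(i) has the right ingredients but misapplies \Cref{poly-calc}. The expression that proposition controls is $g(x)=\partial_{r_1(x)}\cdots\partial_{r_s(x)}F(V_{q(x)}x)$, in which every derivative is a \emph{vertical} $U$-derivative; the parameter $q$ only contributes a vertical pre-shift $V_{q(x)}$. There is no mechanism to ``absorb $\YY$-shifts through $q$'': if you expand $\partial_{\gamma_1}\cdots\partial_{\gamma_{k+1}}\tilde F$ via $T^\gamma=V_{\rho_\gamma(y)}T^\gamma_\YY$, the $T^\gamma_\YY$ pieces fall outside the format of \Cref{poly-calc}. (Concretely, your choice $q(y,u)=u$ gives $V_{q(x)}x=(y,2u)$, which is not what you want.) What you describe --- iterating a discrete chain rule over $k+1$ steps and tracking how each $\Gamma$-derivative lowers the degree or contributes an $r_j$ --- is essentially re-running the inductive \emph{proof} of \Cref{poly-calc} rather than invoking the proposition as a black box.

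The paper instead applies \Cref{poly-calc} \emph{once}, with $s=1$, $r(y,u)=u-u_0$, $q(y,u)=u_0-u$, and $\ell_1=0$. Exactness of $\rho$ gives $r\bmod U_{>i},\ q\bmod U_{>i}\in\Poly_{\leq i}$, and the proposition immediately yields that
\[
g(y,u)=\partial_{u-u_0}F\bigl(V_{u_0-u}(y,u)\bigr)=F(y,u)-F(y,u_0)
\]
lies in $\Poly_{\leq k}(\XX)$; since $d_Ug=d_UF=f$, this is the required polynomial antiderivative. Taking $u_0=0$, this $g$ coincides with your $\tilde F$ (because $\tilde F(y,0)=f_0(y,0)=0$), so your target was correct --- the point you missed is that one may take $\ell_1=0$ in \Cref{poly-calc}, so a single $r$ of ``weight zero'' suffices.

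For the splitting, your idea of using $f\mapsto\tilde F$ as an explicit section is sound once (ii)$\Rightarrow$(i) is established at each level $j\leq k$. The paper instead proves $\omega$-purity of $\Poly_{\leq k}(\YY)$ in $\Poly_{\leq k}(\XX)$ and appeals to \Cref{puresplit}; your direct section is arguably simpler here, though the purity computation in the paper reuses the same $r,q$ trick and so costs little extra.
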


\begin{proof}  The implication of (ii) from (i) has already been established.  Now suppose that (ii) holds. Then by \Cref{HK-C8} we have $f = d_U F$ for some $F \in \mathcal{M}(\XX)$.  By \eqref{i-cons} we see that $F$ obeys assumption (i) of \Cref{poly-calc}.  Now set $r(y,u) \coloneqq u - u_0$ and $q(y,u) \coloneqq u_0-u$ for some fixed $u_0 \in U$.  By the exactness hypothesis, we see for every $i$ that $\rho \mod U_{>i} \in \Poly_{\leq i-1}^1(\Gamma,\XX)$.  Since $\partial_\gamma r = \rho_\gamma$ and $\partial_\gamma q = -\rho_\gamma$, we conclude that $q \mod U_{>i}, r \mod U_{>i}$ lie in $\Poly_{\leq i}^1(\Gamma,\XX)$.  By \Cref{poly-calc}, we conclude that the function
$$ g(y,u) \coloneqq \partial_{{u-u_0}} F(V_{u_0-u}(y,u)) = F(y,u) - F(y,u_0)$$
is a polynomial of degree $\leq k$.  But for any $v \in U$, we have $\partial_{v} g = \partial_{v} F = f_v$, giving (i).

Since $\Poly_{\leq k}(\YY)$ is closed in $\Poly_{\leq k}(\XX)$ and $d_U$ is an open map (\Cref{HK-C8}), we see that \eqref{dUses} is a short exact sequence of $k$-filtered locally compact abelian groups. We now establish the splitting of \eqref{dUses}. By \Cref{puresplit} and \Cref{ppfacts}(i), it suffices to prove that $\Poly_{\leq k}(\YY) \leq \Poly_{\leq k}(\XX)$ is $\omega$-pure.  Let $\mathcal{R}\subset \Z^\omega\times\{1,\ldots,k+1\}$ be a countable set of relations and let $P=(P_i)_{i\in\omega}$ be a countable sequence of polynomials in $\Poly_{\leq k}(\XX)$ such that for every $(\vec{m};j)\in \mathcal{R}$ there exists $Q_{(\vec{m};j)}\in \Poly_{\leq k}(\YY)$ satisfying
$$Q_{(\vec{m};j)}\circ \pi - \vec{m}\cdot P\in \Poly_{\leq k-j}(\XX)$$ where $\pi \colon \XX\rightarrow \YY$ is the factor map. Since $Q_{(\vec{m};j)}\circ \pi $ is $U$-invariant, \Cref{ppfacts}(iii) implies that 
\begin{equation}\label{mdup}
    \vec{m}\cdot d_U P \in \Poly^1_{\leq k-j-\weight}(\XX).
\end{equation}
By another application \Cref{ppfacts}(iii), we have for every $i\in\omega$ that 
$$d_U P_i \in \Poly^1_{\leq k-\weight}(\XX).$$ For a fixed $u_0\in U$, define again $r(y,u)\coloneqq u-u_0$, $q(y,u)\coloneqq u_0-u$, and set for each $i\in\omega$, 
$$P'_i(y,u) \coloneqq \partial_{u-u_0} P(V_{u_0-u}(y,u)).$$
Again  by \Cref{poly-calc}, $P'_i$ is a polynomial of degree $\leq k$. Since $\partial_u P_i = \partial_u P'_i$, it follows from \eqref{mdup} that $\vec{m}\cdot P'$, where $P'=(P_i)_{i\in \omega}$, is an element of $\Poly^1_{\leq k-j}(\XX)$. From $U$-invariance it follows that $Q\coloneqq P-P'$ is a sequence of polynomials in $\Poly_{\leq k}(\YY)$ satisfying $$Q_{(\vec{m};j)} - \vec{m}\cdot Q\in \Poly_{\leq k-j}(\YY),$$ and this completes the proof.
\end{proof}


\section{Integration of finite coordinate subgroups: the role of large spectrum}\label{int2-sec}

We have seen how the exactness of a cocycle can enable efficient integration of polynomial cocycles.  In this section, we similarly demonstrate how a large spectrum hypothesis can facilitate the integration of polynomial cocycles on certain finite subgroups of $\Gamma$, specifically the groups
\begin{equation}\label{gamma-l} 
\Gamma_{[\ell]} \coloneqq \bigoplus_{i=1}^\ell \Z/m_\ell\Z \leq \Gamma
\end{equation}
for $\ell \geq 0$; this is a nested sequence
$$ 0 = \Gamma_{[0]} \leq \Gamma_{[1]} \leq \dots \leq \Gamma$$
of finite subgroups of $\Gamma$ that exhaust $\Gamma$ in the sense that
\begin{equation}\label{exhaust}
\varinjlim \Gamma_{[\ell]} = \bigcup_{l=0}^\infty \Gamma_{[\ell]} = \Gamma.
\end{equation}
In particular, the $\Gamma_{[\ell]}$ form a F{\o}lner sequence for $\Gamma$.  We also have the splitting $\Gamma = \Gamma_{[\ell]} \times \Gamma_{[\ell]}^\perp$ where $\Gamma_{[\ell]}^\perp$ is the finite index subgroup
\begin{equation}\label{gamma-ge} 
\Gamma_{[\ell]}^\perp\coloneqq \bigoplus_{i>\ell} \Z/m_\ell\Z \leq \Gamma
\end{equation}

Before we begin integrating on these subgroups, we first pause to ensure that systems with large spectrum actually exist. It is clear that any (ergodic) extension of a system with large spectrum, also has large spectrum.  We also have a basic example:

\begin{example}[Standard rotational system]\label{standard-ex}  Let $\iota \colon \Gamma \to \tilde \Gamma$ be the standard (but non-canonical) isomorphism
$$ \iota \colon (\gamma_i)_{i \in \N} \mapsto \left(\frac{\gamma_i}{m_i} \right)_{i \in \N}$$
between $\Gamma$ and the dense countable subgroup $\tilde \Gamma$.  This gives a translational system
\begin{center}
\begin{tikzcd}
    \hat \Gamma\arrow[d, Rightarrow, "\iota; \hat \Gamma"]  \\ 
    \operatorname{pt}  
\end{tikzcd}
\end{center}
given explicitly by $T^\gamma(\xi) \coloneqq \xi + \iota(\gamma)$.  As the homomorphism $\iota \colon \Gamma \to \hat \Gamma$ has dense image, this is an ergodic system; it is clearly of order $\leq 1$.  If $\gamma_0 \in \Gamma$, one can easily check that if we define the  function $\phi_{\iota(\gamma_0)} \in \mathcal{M}(\hat \Gamma,\T)$ by
$$ \phi_{\iota(\gamma_0)}(\eta) \coloneqq \eta \cdot \gamma_0$$
then for any $\gamma \in \Gamma$ one has
$$ \partial_\gamma \phi_{\iota(\gamma_0)}(\eta) = \iota(\gamma) \cdot \gamma_0 = \iota(\gamma_0) \cdot \gamma$$
and so $\phi_{\iota(\gamma_0)}$ is an eigenfunction of the system with eigenvalue $\iota(\gamma_0)$. 
\end{example}

Using this example, we can easily extend systems to have large spectrum:

\begin{lemma}\label{large-spectrum-exist}  Let $k \geq 1$.  Then any ergodic $\Gamma$-system $\XX$ of order $\leq k$ has an abelian extension $\XX'$ which is also ergodic of order $\leq k$, and has large spectrum.
\end{lemma}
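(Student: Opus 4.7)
The plan is to extend $\XX$ by the standard rotational system of \Cref{standard-ex}, realised as an abelian extension. Concretely, I would view the standard homomorphism $\iota\colon \Gamma\to \hat\Gamma$ as a $\Gamma$-cocycle in $Z^1(\Gamma,\XX,\hat\Gamma)$ by letting $\iota_\gamma$ be the constant function with value $\iota(\gamma)$; the cocycle equation then reduces to the homomorphism property of $\iota$. The skew product $\XX\times_\iota \hat\Gamma$ need not be ergodic (it fails whenever $\XX$ already carries some nontrivial eigenvalue in $\tilde\Gamma$), so I would invoke the Mackey--Zimmer theorem (\Cref{zimmer}) to produce a closed subgroup $H\le \hat\Gamma$ and a cohomologous cocycle $\iota' = \iota + d_\Gamma F \in Z^1(\Gamma,\XX,H)$, with $F\colon X\to \hat\Gamma$ measurable, such that $\XX'\coloneqq \XX\times_{\iota'}H$ is ergodic.

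For the order bound, I would note that $\iota$ has vanishing $\XX$-derivatives and is therefore a polynomial cocycle of degree $\le 0$, hence of type $\le 1\le k$; since the type filtration is cohomology-invariant, the same holds for $\iota'$. Standard Host--Kra theory then guarantees that an ergodic abelian extension of an order $\le k$ system by a type $\le k$ cocycle is again of order $\le k$, giving that $\XX'$ is of order $\le k$.

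For the large spectrum property, given $\gamma_0\in\Gamma$ I would produce an eigenfunction on $\XX'$ with eigenvalue $\iota(\gamma_0)\in\tilde\Gamma$ using the Pontryagin pairing between $\Gamma$ and $\hat\Gamma$. Setting
\[
\phi(x,h)\coloneqq \langle \gamma_0, h\rangle - \langle \gamma_0, F(x)\rangle,
\]
which is well defined in $\mathcal{M}(\XX',\T)$ since $h\in H\le \hat\Gamma$ and $F(x)\in\hat\Gamma$, a short calculation yields
\[
\partial_\gamma \phi(x,h) = \langle \gamma_0, \iota'(\gamma)-\partial_\gamma F(x)\rangle = \langle \gamma_0, \iota(\gamma)\rangle = \iota(\gamma_0)\cdot \gamma,
\]
where the final equality is the reciprocity of the pairing built into the coordinatisations \eqref{gamma-basis}--\eqref{gamma-basis-dual}. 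Since every element of $\tilde\Gamma$ is of the form $\iota(\gamma_0)$ for some $\gamma_0\in\Gamma$, this exhausts $\tilde\Gamma$ and gives large spectrum.

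The only substantive step is the appeal to Mackey--Zimmer to secure ergodicity; the rest is essentially bookkeeping. The design feature that keeps everything aligned is that the correction term $-\langle \gamma_0, F(x)\rangle$ in $\phi$ is tuned precisely to cancel the $\partial_\gamma F$ contribution to $\iota'(\gamma)$, so that the eigenvalue one reads off depends only on the original homomorphism $\iota$ and is independent of the particular Mackey representative $\iota'$ chosen.
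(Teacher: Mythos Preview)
Your proof is correct and follows essentially the same approach as the paper. The paper phrases the construction as taking an ergodic component of the direct product $\XX\times\hat\Gamma$ (which is the same as your $\XX\times_\iota\hat\Gamma$, since $\iota$ is constant in $x$), invokes \Cref{zimmer} to see that this component is an abelian extension of $\XX$, and obtains large spectrum by noting that the component also extends $\hat\Gamma$ as a factor and hence inherits its eigenfunctions. Your version makes the Mackey--Zimmer reduction explicit via the correction $F$ and writes down the eigenfunctions by hand; unpacking the paper's ``extends $\hat\Gamma$'' step in Mackey coordinates would yield precisely your formula for $\phi$, so the two arguments coincide up to presentation.
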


\begin{proof}  The direct product $\XX \times \hat \Gamma$ of $\XX$ with the standard system in \Cref{standard-ex} is a $\Gamma$-system which contains both $\XX$ and $\hat \Gamma$ as factors.  It need not be ergodic; but (almost) any ergodic component $\XX'$ will also extend both $\XX$ and $\hat \Gamma$, as both factors are ergodic (i.e., these components are joinings of $\XX$ and $\hat \Gamma$).  Also, each such component is an abelian extension of $\XX$ by \Cref{zimmer}. As $\XX$ is of order $\leq k$ and $\hat \Gamma$ of order $\leq 1$, hence order $\leq k$, it is easy to check that $\XX \times \hat \Gamma$, as well as almost all of its ergodic components, are also of order $\leq k$.  Thus a generic ergodic component will yield the desired extension.
\end{proof}

The presence of large spectrum allows us to ``trivialize'' the behavior of any finite coordinate subgroup:

\begin{proposition}[Trivializing finite coordinate subgroups]\label{fin-coord}  Let $\XX$ be an ergodic $\Gamma$-system with large spectrum, and let $\ell \geq 0$.Then, up to isomorphism, we can express $\XX$ as the direct sum $\YY \oplus \Gamma_{[\ell]}$ of an ergodic $\Gamma_{{[\ell]}}^\perp$-system $\YY$ and the finite $\Gamma_{[\ell]}$-system $\Gamma_{[\ell]}$ with the regular translation action, in the sense that the action of $\Gamma$ on $\YY \oplus \Gamma_{[\ell]}$ takes the form
$$ T^{\gamma_{[\ell]} + \gamma_{[\ell]}^\perp} (y, \sigma_{[\ell]}) = (T^{\gamma_{[\ell]}^\perp} y, \sigma_{[\ell]} + \gamma_{[\ell]})$$
for almost all $y \in \YY$, $\gamma_{[\ell]}, \sigma_{[\ell]} \in \Gamma_{[\ell]}$, and $\gamma_{[\ell]}^\perp \in \Gamma_{[\ell]}^\perp$.
\end{proposition}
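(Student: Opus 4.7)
The plan is to use the large spectrum hypothesis to exhibit a $\Gamma$-equivariant factor map $\Psi\colon \XX\to \Gamma_{[\ell]}$ (where $\Gamma_{[\ell]}$ is viewed as a $\Gamma$-system via the quotient $\Gamma\to \Gamma_{[\ell]}$), and then use $\Psi$ to split $\XX$ as a direct sum. First, for each $i=1,\dots,\ell$, let $\xi_i\in \tilde\Gamma$ be the character whose $i$-th coordinate is $1/m_i$ and whose other coordinates vanish. By large spectrum, there is $\phi_i\in \mathcal{M}(\XX,\T)$ with $\partial_\gamma \phi_i=\xi_i\cdot\gamma$ for all $\gamma\in\Gamma$. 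Since $\partial_\gamma(m_i\phi_i)=m_i\xi_i\cdot\gamma=0$, ergodicity gives $m_i\phi_i=c_i$ a.e.~for some constant $c_i\in\T$; using divisibility of $\T$ write $c_i=m_ia_i$ and replace $\phi_i$ by $\phi_i-a_i$, so that $\phi_i$ takes values in $\tfrac{1}{m_i}\Z/\Z$ almost everywhere. Identifying $\tfrac{1}{m_i}\Z/\Z$ with $\Z/m_i\Z$ via multiplication by $m_i$, we obtain $\psi_i\colon X\to \Z/m_i\Z$ with $\psi_i\circ T^\gamma = \psi_i+\gamma_i$, where $\gamma_i$ denotes the $i$-th coordinate of $\gamma$.

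Bundling these together, define the measurable map $\Psi\colon X\to \Gamma_{[\ell]}$ by $\Psi(x)\coloneqq(\psi_1(x),\dots,\psi_\ell(x))$. By construction
\begin{equation*}
\Psi(T^\gamma x) \;=\; \Psi(x) + \gamma_{[\ell]}
\end{equation*}
for all $\gamma\in\Gamma$, so $\Psi$ is a $\Gamma$-equivariant factor map, where $\Gamma$ acts on $\Gamma_{[\ell]}$ through its projection and then by regular translation. Since the pushforward $\Psi_*\mu$ is $\Gamma_{[\ell]}$-translation invariant on the finite group $\Gamma_{[\ell]}$, it is the uniform measure. Now set $Y\coloneqq \Psi^{-1}(0)$ with the restricted measure $\mu_0$ obtained by disintegration, and let $\YY=(Y,\mathcal{Y},\mu_0,T|_{\Gamma_{[\ell]}^\perp})$; this is well-defined because $\Gamma_{[\ell]}^\perp$ preserves the level sets of $\Psi$.

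Define $\Phi\colon X\to Y\oplus \Gamma_{[\ell]}$ by
\begin{equation*}
\Phi(x) \coloneqq (T^{-\Psi(x)}x,\,\Psi(x)),
\end{equation*}
where $\Psi(x)\in\Gamma_{[\ell]}$ is viewed inside $\Gamma$ in the obvious way. This is measurable, measure-preserving (the fibre $\Psi^{-1}(\sigma)$ maps bijectively onto $Y\times\{\sigma\}$ via the measure-preserving map $T^{-\sigma}$), and for $\gamma=\gamma_{[\ell]}+\gamma_{[\ell]}^\perp$ we compute
\begin{equation*}
\Phi(T^\gamma x) = (T^{\gamma_{[\ell]}^\perp}T^{-\Psi(x)}x,\,\Psi(x)+\gamma_{[\ell]}),
\end{equation*}
matching the claimed diagonal action on $Y\oplus\Gamma_{[\ell]}$. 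Its inverse is $(y,\sigma)\mapsto T^\sigma y$. Finally, to check that $\YY$ is ergodic as a $\Gamma_{[\ell]}^\perp$-system, observe that any $\Gamma_{[\ell]}^\perp$-invariant $f\in L^2(\YY)$ pulls back via $\Phi$ to the function $(y,\sigma)\mapsto f(y)$, which is $\Gamma$-invariant on $Y\oplus\Gamma_{[\ell]}$; by ergodicity of $\XX$ this forces $f$ to be constant.

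The only real obstacles are the null-set bookkeeping (all identities hold only a.e., so one must use the near-action formalism to choose representatives on a single $\Gamma$-invariant conull set before defining $\Psi$ and $\Phi$) and the verification of measurability of the disintegration; both are standard in the Lebesgue-space setting. The key substantive input is the large spectrum assumption, which is precisely what supplies the eigenfunctions $\phi_i$ corresponding to the finitely many characters needed to coordinatize $\Gamma_{[\ell]}$.
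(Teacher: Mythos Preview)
Your argument is correct and reaches the same conclusion as the paper, but the route is somewhat different. The paper first observes that $\Gamma_{[\ell]}$ acts freely on $\XX$ (using large spectrum), then invokes the general structure theorem for free compact group actions (Glasner, \cite[Theorem 3.29]{glasner2015ergodic}) to write $\XX = \YY \times_\rho \Gamma_{[\ell]}$ as a skew product; it then argues that $\rho$ vanishes on $\Gamma_{[\ell]}$ and, using the eigenfunctions from large spectrum, shows component by component that the residual $\Gamma_{[\ell]}^\perp$-cocycle is a coboundary. You instead bypass the free-action/skew-product machinery entirely: you use the very same eigenfunctions $\phi_i$ up front to build the equivariant map $\Psi$ to $\Gamma_{[\ell]}$, and then write down the isomorphism $\Phi(x)=(T^{-\Psi(x)}x,\Psi(x))$ by hand. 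Your approach is more self-contained (no appeal to \cite{glasner2015ergodic}) and makes the coordinate change completely explicit; the paper's approach is more structural and perhaps makes clearer why the obstruction is a cocycle class. Both arguments ultimately rest on exactly the same substantive input, namely the eigenfunctions supplied by the large spectrum hypothesis.
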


\begin{proof}  The action of $\Gamma_{[\ell]}$ on $\XX$ is free, since for any $\gamma_{[\ell]} \in \Gamma_{[\ell]}$ one can use the large spectrum hypothesis to locate an eigenfunction $\phi \in \mathcal{M}(\XX)$ whose eigenvalue $\partial_{\gamma_{[\ell]}} \phi$ at $\gamma_{[\ell]}$ is non-trivial.  Thus, by 
\cite[Theorem 3.29]{glasner2015ergodic} one can write $\XX$ up to isomorphism as $\XX = \YY \times_\rho \Gamma_{[\ell]}$ for some cocycle $\rho \in Z^1(\Gamma,\YY,\Gamma_{[\ell]})$.  By comparing the cocycle action with the vertical action of $\Gamma_{[\ell]}$, we see that $\Gamma_{[\ell]}$ must act trivially on $\YY$ and the cocycle $\rho_{\gamma_{[\ell]}}$ must vanish for all $\gamma_{[\ell]} \in \Gamma_{[\ell]}$; this means that the action takes the form
$$ T^{\gamma_{[\ell]} + \gamma_{[\ell]}^\perp} (y, \sigma_{[\ell]}) = (T^{\gamma_{[\ell]}^\perp} y, \sigma_{[\ell]} + \rho_{\Gamma_{[\ell]}^\perp}(y) + \gamma_{[\ell]})$$
where $\rho$ is now viewed as a cocycle in $Z^1(\Gamma_{[\ell]}^\perp, \YY, \Gamma_{[\ell]})$.  To finish the proof, it will suffice to show that this cocycle is a coboundary in $d_{\Gamma_{[\ell]}^\perp} \mathcal{M}(\YY, \Gamma_{[\ell]})$.  If we write $\rho$ in components as $(\rho_i)_{i=1}^\ell$ with $\rho_i \in Z^1(\Gamma_{[\ell]}^\perp, \YY, \Z/m_i\Z)$, it suffices to show that each $\rho_i$ is a coboundary in $d_{\Gamma_{[\ell]}^\perp} \mathcal{M}(\YY, \Z/m_i\Z)$ for each $i=1,\dots,\ell$.

Fix $i$. By the large spectrum hypothesis, one can find an eigenfunction $\phi \in \mathcal{M}(\XX,\T)$ such that $\partial_\gamma \phi = \frac{\gamma_i }{m_i} \mod 1$ for all $\gamma = (\gamma_{i'})_{i' \in \N}$ in $\Gamma$.  Then $m_i \phi$ is invariant and thus constant by ergodicity; by subtracting a constant we may assume without loss of generality that $m_i \phi = 0$, thus $\phi$ now takes values in $\frac{1}{m_i}\Z/\Z$. Specializing the eigenfunction equation to shifts $\gamma$ in $\Gamma_{[\ell]}$, we conclude that $\phi$ must take the form
$$ \phi(y, \sigma_{[\ell]}) = \underline{\phi}(y) + \frac{\gamma_i \sigma_i }{m_i} \mod 1$$
for some $\underline{\phi} \in \mathcal{M}(\YY,\frac{1}{m_i}\Z/\Z)$. If we now test the eigenfunction equation on $\Gamma_{[\ell]}^\perp$, we conclude that
$$ 0 = \partial_{\Gamma_{[\ell]}^\perp} \phi(y,\sigma) = \partial_{\Gamma_{[\ell]}^\perp} \underline{\phi}(y) + \frac{(\rho_{\gamma_{[\ell]}^\perp})_i}{m_i} \mod 1$$
and thus
$$ (\rho_{\Gamma_{[\ell]}^\perp})_i =  \partial_{\Gamma_{[\ell]}^\perp} (-m_i\underline{\phi}(y)).$$
Thus $\rho_i$ is a coboundary in $d_{\Gamma_{[\ell]}^\perp} \mathcal{M}(\YY, \Z/m_i\Z)$  as required.
\end{proof}

As a corollary of this decomposition, we obtain an integration result on finite coordinate subgroups.

\begin{theorem}[Integration on finite coordinate subgroups]\label{integ-finite}  Let $\XX$ be an ergodic $\Gamma$-system with large spectrum, and $\ell \geq 0$. Let $f \in C(\Gamma_{[\ell]};\mathcal{M}(\XX))$ be a function, and let $k \geq 0$.  Then the following are equivalent:
\begin{itemize}
    \item[(i)] (Coboundary of polynomial) There exists $F \in \Poly_{\leq k}(\XX)$ such that $f_{\gamma_{[\ell]}} = \partial_{\gamma_{[\ell]}} F$ for all $\gamma_{[\ell]} \in \Gamma_{[\ell]}$.
    \item[(ii)]  (Polynomial cocycle) One has the $\Gamma_{[\ell]}$-cocycle equation
\begin{equation}\label{cocycle-fin}
f_{\gamma_{[\ell]} + \gamma'_I} = f_{\gamma_{[\ell]}} + T^{\gamma'_I} f_{\gamma_{I'}}
\end{equation}
for all $\gamma_{[\ell]}, \gamma'_I \in \Gamma_{[\ell]}$, and $f_{\gamma_{[\ell]}} \in \Poly_{\leq k-1}(\XX)$ for all $\gamma_{[\ell]} \in \Gamma_{[\ell]}$.
\end{itemize}
More compactly, $\Poly_{\leq k-1}(\XX)$ has trivial $\Gamma_{[\ell]}$-cohomology, and one has the short exact sequence
\begin{equation}\label{finite-short}
0 \to \Poly_{\leq k}(\XX)^{\Gamma_{[\ell]}} \to \Poly_{\leq k}(\XX) \stackrel{d_{\Gamma_{[\ell]}}}{\to} Z^1[\Gamma_{[\ell]}; \Poly_{\leq k-1}(\XX)] \to 0.
\end{equation}
Furthermore, \eqref{finite-short} splits in the category of $k$-filtered locally compact abelian groups (see Appendix \ref{filteredcategory}).
\end{theorem}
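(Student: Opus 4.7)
My plan is to reduce to the situation where $\Gamma_{[\ell]}$ acts freely and transitively on a factor coordinate, and then write down an explicit antiderivative. The direction (i)$\Rightarrow$(ii) is immediate: if $F\in\Poly_{\leq k}(\XX)$ then each $\partial_{\gamma_{[\ell]}}F$ lies in $\Poly_{\leq k-1}(\XX)$, and the cocycle identity \eqref{cocycle-fin} holds tautologically for any coboundary. So the core of the argument is (ii)$\Rightarrow$(i).

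For (ii)$\Rightarrow$(i), I will apply \Cref{fin-coord} to assume, up to isomorphism, that $\XX=\YY\oplus\Gamma_{[\ell]}$ with $T^{\gamma_{[\ell]}^\perp+\gamma_{[\ell]}}(y,\sigma)=(T^{\gamma_{[\ell]}^\perp}y,\sigma+\gamma_{[\ell]})$; in particular the $\Gamma_{[\ell]}$-action is the free regular translation on the second coordinate. Define
\[
F(y,\tau)\coloneqq f_\tau(y,0).
\]
Evaluating the cocycle equation $f_{\tau+\sigma}=f_\tau+T^\tau f_\sigma$ at $(y,0)$ (and using $T^\tau(y,0)=(y,\tau)$) yields $\partial_\sigma F(y,\tau)=f_\sigma(y,\tau)$, so $d_{\Gamma_{[\ell]}}F=f$. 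To verify $F\in\Poly_{\leq k}(\XX)$, I write each $\gamma\in\Gamma$ as $\gamma=\gamma^\perp+\gamma_{[\ell]}$ and use the additive decomposition $\partial_\gamma=\partial_{\gamma^\perp}+T^{\gamma^\perp}\partial_{\gamma_{[\ell]}}$ to expand $\partial_{\gamma_1}\cdots\partial_{\gamma_{k+1}}F$ into $2^{k+1}$ commuting terms. Any term containing at least one $\partial_{\gamma_{i,[\ell]}}$ reduces, after commuting it to the outside, to applying $k$ derivatives to some translate of $f_{\gamma_{i,[\ell]}}\in\Poly_{\leq k-1}(\XX)$, which vanishes. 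The sole remaining term is $\partial_{\gamma_1^\perp}\cdots\partial_{\gamma_{k+1}^\perp}F$; since $\Gamma_{[\ell]}^\perp$-derivatives act only on the $y$-variable, this is $(\partial_{\gamma_1^\perp}\cdots\partial_{\gamma_{k+1}^\perp}f_\tau)(y,0)=0$, using $f_\tau\in\Poly_{\leq k-1}(\XX)$ (only $k$ of the $k+1$ derivatives are needed to kill it).

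The short exact sequence \eqref{finite-short} then follows formally: the kernel of $d_{\Gamma_{[\ell]}}$ restricted to $\Poly_{\leq k}(\XX)$ consists precisely of $\Gamma_{[\ell]}$-invariant degree-$\leq k$ polynomials, and surjectivity onto $Z^1[\Gamma_{[\ell]};\Poly_{\leq k-1}(\XX)]$ is what we just proved. Openness of $d_{\Gamma_{[\ell]}}$ is verified in a manner parallel to the end of the proof of \Cref{HK-C8}: given $f$ in a basic open neighborhood in $Z^1$ and a cocycle $h$ close to $d_{\Gamma_{[\ell]}}f$ in measure, the explicit antiderivative $F_h(y,\tau)=h_\tau(y,0)$ lies close to $0$ in measure, so $f+F_h$ is a preimage close to $f$.

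For the splitting in the category of $k$-filtered locally compact abelian groups, the very map $f\mapsto F$ constructed above provides a continuous linear section. Filtration preservation is automatic from the degree argument: if $f$ lies in level $i$ of the polynomial filtration on $Z^1[\Gamma_{[\ell]};\Poly_{\leq k-1}(\XX)]$, meaning $f_\sigma\in\Poly_{\leq k-1-i}(\XX)$ for all $\sigma$, the same commutation expansion shows $F\in\Poly_{\leq k-i}(\XX)$, placing it in level $i$ of the polynomial filtration on $\Poly_{\leq k}(\XX)$. The main subtlety is the degree bookkeeping in the expansion step, where one must exploit that $\Gamma_{[\ell]}^\perp$-derivatives leave the $\Gamma_{[\ell]}$-coordinate untouched so that the ``pure $\perp$'' term inherits the degree of $f_\tau$ rather than of $F$; everything else is routine.
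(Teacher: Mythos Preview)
Your proof is correct and follows essentially the same route as the paper for the equivalence (i)$\Leftrightarrow$(ii): both invoke \Cref{fin-coord} to reduce to $\XX=\YY\oplus\Gamma_{[\ell]}$, define the explicit antiderivative $F(y,\tau)=f_\tau(y,0)$, and then verify $F\in\Poly_{\leq k}(\XX)$ by a derivative-splitting argument (the paper uses $\partial_{a+b}=\partial_a+\partial_b+\partial_a\partial_b$ while you use $\partial_\gamma=\partial_{\gamma^\perp}+T^{\gamma^\perp}\partial_{\gamma_{[\ell]}}$, but these are equivalent bookkeeping).

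The one genuine difference is in the splitting of \eqref{finite-short}. You observe that the antiderivative $f\mapsto F$ is already a continuous, filtration-preserving section, which is more direct. The paper instead establishes that $\Poly_{\leq k}(\XX)^{\Gamma_{[\ell]}}$ is an $\omega$-pure subgroup of $\Poly_{\leq k}(\XX)$ and invokes \Cref{puresplit}. Your approach is shorter and perfectly valid here; the paper's detour through purity is presumably chosen for uniformity with the parallel argument in \Cref{poly-integ}, where the analogous explicit section is less transparent. Similarly, for openness the paper appeals to \Cref{ppfacts} (discreteness modulo constants), whereas your explicit-antiderivative argument in the style of \Cref{HK-C8} also works, and indeed the existence of your continuous section already forces $d_{\Gamma_{[\ell]}}$ to be open.
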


\begin{proof} The implication of (ii) from (i) is clear.  Now suppose that (ii) holds.  By \Cref{fin-coord}, we may assume without loss of generality that $\XX$ is of the form $\YY \oplus \Gamma_{[\ell]}$.  We set $F \in \mathcal{M}(\XX,A)$ to be the function \begin{equation}\label{antiderivative}
    F(y,\gamma_{[\ell]}) \coloneqq f_{\gamma_{[\ell]}}(y,0),
\end{equation} then we have $f_{\gamma_{[\ell]}} = \partial_{\gamma_{[\ell]}} F$ from the cocycle equation.  In particular any $k+1$-fold derivative of $F$ will vanish if one of the derivatives is along a direction in $\Gamma_{[\ell]}$.  On the other hand, since $f_{\gamma_{[\ell]}}$ is of degree $\leq k-1$ and the $\Gamma_{[\ell]}^\perp$ action is trivial in the vertical direction, any $k$-fold derivative of $F$ will vanish if all of the derivatives lie along $\Gamma_{[\ell]}^\perp$.  By repeated use of the identity $\partial_{a+b} = \partial_a + \partial_b + \partial_a \partial_b$ we conclude that any $k+1$-fold derivative of $F$ in any direction in $\Gamma = \Gamma_{[\ell]} \oplus \Gamma_{[\ell]}^\perp$ will vanish, and the claim follows\footnote{One could also have proceeded via \Cref{poly-calc}, although this is somewhat of an overkill here since the system is so simple.}.

Since $\Poly_{\leq k}(\XX)^{\Gamma_{[\ell]}}$ is closed in $\Poly_{\leq k}(\XX)$ and the kernel of $d_{\Gamma_{[\ell]}}$ contains $\Poly_{\leq 0}(\XX)$, \Cref{ppfacts} implies that $d_{\Gamma_{[\ell]}}$ is an open map and \eqref{finite-short} is therefore a short exact sequence of filtered locally compact abelian groups. Now we show that the sequence \eqref{finite-short} splits.
By \Cref{puresplit}, it suffices to show that $\Poly_{\leq k}(\XX)^{\Gamma_{[\ell]}}$ is a $\omega$-pure subgroup of $\Poly_{\leq k}(\XX)$.

Let $\mathcal{R}\subset \Z^\omega\times\{1,\ldots,k+1\}$ be a countable set of relations and let $P=(P_i)_{i\in\mathbb{N}}$ be a countable sequence of polynomials in $\Poly_{\leq k}(\XX)$ be such that for every $(\vec{m};j)\in \mathcal{R}$ there exists $Q_{(\vec{m};j)}\in \Poly_{\leq k}(\XX)^{\Gamma_{[\ell]}}$ satisfying
$$Q_{(\vec{m};j)} - \vec{m}P\in \Poly_{\leq k-j}(\XX).$$ Since $Q_{(\vec{m};j)}$ is $\Gamma_{[\ell]}$-invariant, we have  
$$\vec{m}\cdot d_{\Gamma_{[\ell]}} P \in \Poly_{\leq k-j-\weight}(\XX).$$ Write $\XX=\YY\oplus \Gamma_{[\ell]}$ as in \Cref{fin-coord} and write:
$$P'(y,\gamma) \coloneqq \partial_\gamma P(y,0)$$ for all $\gamma\in \Gamma_{[\ell]}.$
Arguing as we did with \eqref{antiderivative}, we see that that $P'$ is a polynomial of degree $\leq k$, $\vec{m}\cdot P'$ is of degree $\leq k-j$ and $\partial_\gamma P = \partial_\gamma P'$. Thus, $Q=P-P'$ is a sequence of polynomials in $\Poly_{\leq k}(\XX)^{\Gamma_{[\ell]}}$ satisfying that $$Q_{(\vec{m};j)} - \vec{m}\cdot Q\in \Poly_{\leq k-j}(\XX)^{\Gamma_{[\ell]}}$$ as required.
\end{proof}

\subsection{Cocycle integration} 

Our next goal is to combine \Cref{poly-integ} and \Cref{integ-finite} in order to provide a cocycle analogue of \Cref{poly-integ}.  We start with a technical lemma.
\begin{lemma}\label{rXY}
    Let $k\geq 0$, let $\YY$ be an ergodic $\Gamma$-system with large spectrum, and let $\XX=\YY\times_\rho U$ be an ergodic abelian extension of $\YY$ with $\rho$ exact. 
    Then for all $\ell\geq 0$, the short exact sequence
    $$0\rightarrow \Poly_{\leq k}(\YY)\to \Poly_{\leq k}(\XX)\overset{d_U}{\to} \Poly_{\leq k-\weight}^1[U;\mathcal{M}(\XX)]\rightarrow 0$$ 
    splits in the category of $k$-filtered locally compact $\Gamma_{[\ell]}$-groups (all morphisms are also required to be $\Gamma_{[\ell]}$-equivariant). Equivalently, there exists a filtration preserving $\Gamma_{[\ell]}$-equivariant retraction $$r \colon \Poly_{\leq k}(\XX)\rightarrow \Poly_{\leq k}(\YY),$$ or a filtration preserving $\Gamma_{[\ell]}$-equivariant cross-section $$s \colon \Poly_{\leq k-\weight}^1[U;\mathcal{M}(\XX)]\rightarrow \Poly_{\leq k}(\XX).$$
\end{lemma}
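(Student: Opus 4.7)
The plan is to upgrade the filtration-preserving retraction supplied by \Cref{poly-integ} to a $\Gamma_{[\ell]}$-equivariant one. The retraction there is essentially of the form $r_0(F)(y)=F(y,u_*)$ for a fixed constant $u_*\in U$, and this is not in general $\Gamma_{[\ell]}$-equivariant, because the $\Gamma_{[\ell]}$-action on $\XX=\YY\times_\rho U$ twists the vertical coordinate by $\rho|_{\Gamma_{[\ell]}}$. My plan is to replace the constant $u_*$ by a $y$-dependent antiderivative $u_0(y)$ of $\rho|_{\Gamma_{[\ell]}}$, chosen carefully so that the polynomial estimates from \Cref{poly-calc} remain available. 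The structural input is \Cref{fin-coord} applied to $\YY$: large spectrum provides the decomposition $\YY\cong \tilde\YY\oplus \Gamma_{[\ell]}$, which in turn enables \Cref{integ-finite} to produce polynomial antiderivatives on $\YY$.

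The main technical step is to construct $u_0\in \mathcal{M}(\YY,U)$ satisfying (a) $\partial_{\gamma_{[\ell]}} u_0=\rho_{\gamma_{[\ell]}}$ for every $\gamma_{[\ell]}\in \Gamma_{[\ell]}$, and (b) $u_0\bmod U_{>i}\in \Poly_{\leq i}(\YY, U/U_{>i})$ for every $0\leq i\leq k$. Condition (a) produces equivariance; condition (b) is what will allow \Cref{poly-calc} to conclude polynomiality. For each $i$, the exactness of $\rho$ combined with \Cref{u-dual} gives that $\rho|_{\Gamma_{[\ell]}}\bmod U_{>i}$ is a $\Gamma_{[\ell]}$-cocycle on $\YY$ of polynomial degree $\leq i-1$, so \Cref{integ-finite} (applied to $\YY$, and extended to compact abelian group targets via Pontryagin duality) produces a polynomial antiderivative of degree $\leq i$. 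An inductive argument up the weight filtration, handling the short exact sequences $0\to U_{>i-1}/U_{>i}\to U/U_{>i}\to U/U_{>i-1}\to 0$ at each step, stitches these antiderivatives into a single $u_0\bmod U_{>k}$ satisfying (a) and (b); any measurable lift to $U$ then produces $u_0$, and lifting beyond $U_{>k}$ is harmless because $U_{>k}$ acts trivially on $\Poly_{\leq k}(\XX)$.

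With $u_0$ in hand, set $r(F)(y)\coloneqq F(y,u_0(y))$. Equivariance follows directly from (a): both $r(T^{\gamma_{[\ell]}}F)(y)$ and $T^{\gamma_{[\ell]}}r(F)(y)$ reduce to $F(T^{-\gamma_{[\ell]}}y,\,u_0(T^{-\gamma_{[\ell]}}y))$, via the relation $u_0(y)-\rho_{\gamma_{[\ell]}}(T^{-\gamma_{[\ell]}}y)=u_0(T^{-\gamma_{[\ell]}}y)$ obtained from (a). For polynomiality, apply \Cref{poly-calc} with $s=1$, $q(y,u)=u_0(y)-u$, $r_1(y,u)=u-u_0(y)$, and $\ell_1=0$: the hypotheses on $q\bmod U_{>i}$ and $r_1\bmod U_{>i}$ follow from (b) together with exactness of $\rho$, since $\partial_\gamma q=\partial_\gamma u_0-\rho_\gamma$ has both summands polynomial of degree $\leq i-1$ modulo $U_{>i}$. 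The conclusion of \Cref{poly-calc} yields $F(y,u)-F(y,u_0(y))\in \Poly_{\leq k}(\XX)$, hence $r(F)\in \Poly_{\leq k}(\XX)^U=\Poly_{\leq k}(\YY)$, and the same argument with $k$ replaced by $k-d$ yields filtration preservation; the retraction property is automatic because $\Poly_{\leq k}(\YY)$ consists of $U$-invariant functions. The main obstacle is the inductive construction of $u_0$ in the second step: at each layer, lifting $u_0\bmod U_{>i-1}$ to $u_0\bmod U_{>i}$ may introduce a polynomial defect in the $U_{>i-1}/U_{>i}$-component that must be absorbed by a further application of \Cref{integ-finite}, and ensuring that all of these corrections assemble coherently across the weight filtration is the technically delicate point.
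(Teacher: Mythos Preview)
Your approach is correct and genuinely different from the paper's. The paper proceeds abstractly: it first composes the (non-equivariant) retractions from \Cref{poly-integ} and \Cref{integ-finite} to obtain a retraction $r_0\colon \Poly_{\le k}(\XX)\to \Poly_{\le k}(\YY)^{\Gamma_{[\ell]}}$, and then runs an induction on the $\Gamma_{[\ell]}$-degree $i$, setting $r_{\le i}=s\circ r_{\le i-1}^{\oplus\Gamma_{[\ell]}}\circ d_{\Gamma_{[\ell]}}+r_0$ (with $s$ the cross-section of \Cref{integ-finite}) to manufacture equivariance one degree at a time. Your route instead writes down the retraction in closed form, $r(F)(y)=F(y,u_0(y))$, and pushes all the work into the construction of $u_0$ and a single invocation of \Cref{poly-calc}. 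Your argument is more direct and transparent here; the paper's inductive pattern, on the other hand, is a template that recurs verbatim in \Cref{retract-invariances} and \Cref{EDPretraction}, which may explain why the authors chose it.

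One simplification you are missing: the ``main obstacle'' you flag, the layered induction up the weight filtration to build $u_0$, is unnecessary. Using \Cref{fin-coord} to write $\YY\cong\tilde\YY\oplus\Gamma_{[\ell]}$, the single explicit formula
\[
u_0(\tilde y,\gamma_{[\ell]})\ \coloneqq\ \rho_{\gamma_{[\ell]}}(\tilde y,0)
\]
already satisfies both of your conditions (a) and (b) simultaneously, for every $i$. Condition (a) is the cocycle identity for $\rho|_{\Gamma_{[\ell]}}$; condition (b) follows because the proof of \Cref{integ-finite} (which is purely about vanishing of iterated derivatives and works verbatim for any compact abelian target) shows that this antiderivative of $\rho\bmod U_{>i}$ has degree $\le i$ whenever $\rho\bmod U_{>i}$ has degree $\le i-1$, which is exactly exactness. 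No stitching across the short exact sequences $0\to U_{>i-1}/U_{>i}\to U/U_{>i}\to U/U_{>i-1}\to 0$ is required. With this in hand, the hypotheses (ii) and (iii) of \Cref{poly-calc} for $q(y,u)=u_0(y)-u$ and $r_1=-q$ hold for all $i\ge 0$, and your polynomiality and filtration-preservation claims go through as you describe.
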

\begin{proof}
The equivalences follow from \Cref{equivalenceses}. It therefore suffices to construct the retraction $r$. Composing the retractions from \Cref{poly-integ} and \Cref{integ-finite} we can find a filtration preserving retraction
    $$r_0 \colon \Poly_{\leq k}(\XX)\rightarrow \Poly_{\leq k}(\YY)^{\Gamma_{[\ell]}}.$$
  For each $0\leq i\leq k$, let $\Poly_{\leq k,\leq i}(\XX)$ denote the subgroup of $\Poly_{\leq k}(\XX)$ of polynomials of degree $\leq i$ with respect to the $\Gamma_{[\ell]}$-action, and similarly define $\Poly_{\leq k,\leq i}(\YY)$. 
   By induction on $i$, we prove that there exists a filtration preserving, $\Gamma_{[\ell]}$-equivariant retraction $r_{\leq i} \colon  \Poly_{\leq k,\leq i}(\XX)\rightarrow \Poly_{\leq k,\leq i}(\YY)$. The case $i=k$, then gives the desired retraction $r$. When $i=0$, we let $r_{\leq 0}$ denote the restriction of $r_0$ to $\Poly_{\leq k,\leq 0}(\XX) = \Poly_{\leq k}(\XX)^{\Gamma_{[\ell]}}$, the $\Gamma_{[\ell]}$-equivariance in this case is trivial since the action of this group on the domain and range of $r_{\leq 0}$ is trivial. Let $i\geq 1$ and suppose we have already constructed $r_{\leq i-1}$. Let $Q\in\Poly_{\leq k,\leq i}(\XX)$, then from the induction hypothesis and \Cref{integ-finite} we have $r_{\leq i-1}^{\oplus \Gamma_{[\ell]}}(d_{\Gamma_{[\ell]}}Q) \in Z^1[\Gamma_{[\ell]};\Poly_{\leq k-1}(\YY)]\cong d_{\Gamma_{[\ell]}}\Poly_{\leq k}(\YY)$. \Cref{integ-finite} guarantees the existence of a cross-section to the short exact sequence
   $$0\rightarrow \Poly_{\leq k}(\YY)^{\Gamma_{[\ell]}}\rightarrow \Poly_{\leq k}(\YY)\overset{d_{\Gamma_{[\ell]}}}{\to}  Z^1[\Gamma_{[\ell]};\Poly_{\leq k-1}(\YY)]\rightarrow 0.$$
   Since the restriction of $r_0$ to $\Poly_{\leq k}(\YY)$ is a retraction, we can choose a cross-section $s\colon Z^1[\Gamma_{[\ell]};\Poly_{\leq k-1}(\YY)]\to \Poly_{\leq k}(\YY)$ such that $r_0+s\circ d_{\Gamma_{[\ell]}}$ is the identity map on $\Poly_{\leq k}(\YY)$. Now define
    $$r_{\leq i} = s\circ r_{\leq i-1}^{\oplus \Gamma_{[\ell]}}\circ d_{\Gamma_{[\ell]}} + r_0.$$
    Since $s,r_{\leq i-1}$ and $r_0$ are filtration preserving, so is $r_{\leq i}$. Furthermore, $d_{\Gamma_{[\ell]}} r_{\leq i} = r_{\leq i-1}^{\oplus\Gamma_{[\ell]}}\circ d_{\Gamma_{[\ell]}}$ gives the $\Gamma_{[\ell]}$-equivariance. Thus the image of $r_{\leq i}$ is in $\Poly_{\leq k,\leq i}(\YY)$. Finally, suppose that $Q\in \Poly_{\leq k,\leq j}(\YY)$, we prove by induction on $j\leq i$ that $r_{\leq j}(Q\circ \pi)=Q$, where $\pi \colon \XX\rightarrow \YY$ is the factor map. Indeed, if $j=0$, this follows from the fact that $r_0$ is a retraction. Let $j\geq 1$ and assume inductively the claim holds for all smaller values of $j$, we see that
    $$r_{\leq j}(Q\circ \pi) =  s\circ r_{\leq j-1}^{\oplus \Gamma_{[\ell]}}\circ d_{\Gamma_{[\ell]}}(Q\circ \pi) + r_0(Q\circ \pi) = s\circ d_{\Gamma_{[\ell]}}(Q) + r_0(Q\circ \pi)=Q.$$
\end{proof}
Our main result is the following analogue of \Cref{poly-integ}.
\begin{theorem}[Cocycle integration]\label{cocycle-integ}
 Let $k \geq 0$, let $\YY$ be an ergodic $\Gamma$-system with large spectrum, and let $\XX = \YY \times_\rho U$ be an ergodic abelian extension of $\YY$ with $\rho$ exact. Let $p \in C(U; Z^1(\Gamma,\XX))$.  Observe that $U$ acts on $Z^1(\Gamma,\XX)$ by the diagonal action $u (p_\gamma)_{\gamma \in \Gamma} \coloneqq (u p_\gamma)_{\gamma \in \Gamma}$.  Then the following are equivalent:
\begin{itemize}
    \item[(i)] There exists $q \in \Poly^1_{\leq k}(\Gamma,\XX)$ such that $p_u = \partial_{u} q$ for all $u \in U$.
    \item[(ii)]  $p$ lies in $\Poly^1_{\leq k-\weight}[U; Z^1(\Gamma,\XX)]$, where we give $Z^1(\Gamma,\XX)$ the polynomial filtration $(\Poly^1_{\leq k}(\Gamma,\XX))_{k \in \Z}$.  In other words, $p_u$ obeys the $U$-cocycle equation 
\begin{equation}\label{cocycle-cocycle}
p_{u+v} = p_u + u p_v
\end{equation}
for all $u,v \in U$, and
\begin{equation}\label{deriv-cocycle}
p_{u_{>i}} \in \Poly^1_{\leq k-i-1}(\Gamma,\XX)
\end{equation}
for all $i \geq 0$ and $u_{>i} \in U_{>i}$, where $(U_{>i})_{i=0}^\infty$ is the type filtration of $U$.
\end{itemize}
More compactly, one has the short exact sequence
\begin{equation}\label{GammadU} 0 \to \Poly^1_{\leq k}(\Gamma,\YY) \to \Poly^1_{\leq k}(\Gamma,\XX) \stackrel{d_U}{\to} \Poly^1_{\leq k-\weight}[U; Z^1(\Gamma,\XX)] \to 0.
\end{equation}
Furthermore, this short exact sequence splits in the category of $k$-filtered locally compact abelian groups.
\end{theorem}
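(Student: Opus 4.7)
The direction $(i)\Rightarrow(ii)$ is routine: given $q\in\Poly^1_{\leq k}(\Gamma,\XX)$, the element $p\coloneqq d_U q$ satisfies the $U$-cocycle identity \eqref{cocycle-cocycle} automatically (a standard verification for the action $V_u$ of the abelian group $U$), each $p_u=\partial_u q$ remains a $\Gamma$-cocycle because $V_u$ commutes with $T^\gamma$, and the degree bound \eqref{deriv-cocycle} follows from \Cref{ppfacts}(iii)/\eqref{diff-incl}: if $q_\gamma\in\Poly_{\leq k}(\XX)$ and $u\in U_{>i}$, then $\partial_u q_\gamma\in\Poly_{\leq k-i-1}(\XX)$.

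For $(ii)\Rightarrow(i)$, fix $p$ satisfying (ii). For each individual $\gamma\in\Gamma$, the map $u\mapsto p_{u,\gamma}$ lies in $\Poly^1_{\leq k-\weight}[U;\mathcal{M}(\XX)]$, so by \Cref{poly-integ} there exists $q_\gamma\in\Poly_{\leq k}(\XX)$ with $\partial_u q_\gamma=p_{u,\gamma}$, unique modulo $\Poly_{\leq k}(\YY)$. The task is to coordinate these choices so that $\gamma\mapsto q_\gamma$ is a $\Gamma$-cocycle. The obstruction
\[
e_{\gamma_1,\gamma_2}\coloneqq q_{\gamma_1+\gamma_2}-q_{\gamma_1}-T^{\gamma_1}q_{\gamma_2}
\]
lies in $\Poly_{\leq k}(\YY)$: it has polynomial degree $\leq k$ by construction, and it is $U$-invariant since $\partial_u e_{\gamma_1,\gamma_2}=p_{u,\gamma_1+\gamma_2}-p_{u,\gamma_1}-T^{\gamma_1}p_{u,\gamma_2}=0$ as each $p_u\in Z^1(\Gamma,\XX)$. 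Thus $e$ represents a class $[e]\in H^2(\Gamma;\Poly_{\leq k}(\YY))$ which must be trivialized.

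To kill $[e]$, we use the $\Gamma_{[\ell]}$-equivariant filtered cross-sections $s_\ell\colon \Poly^1_{\leq k-\weight}[U;\mathcal{M}(\XX)]\to \Poly_{\leq k}(\XX)$ supplied by \Cref{rXY} for each $\ell\geq 0$. Setting $q^{(\ell)}_\gamma\coloneqq s_\ell(p_{\cdot,\gamma})$, the linearity of $s_\ell$ combined with its $\Gamma_{[\ell]}$-equivariance and the $\Gamma$-cocycle identity $p_{\cdot,\gamma_1+\gamma_2}=p_{\cdot,\gamma_1}+T^{\gamma_1}p_{\cdot,\gamma_2}$ gives
\[
q^{(\ell)}_{\gamma_1+\gamma_2}=s_\ell(p_{\cdot,\gamma_1})+T^{\gamma_1}s_\ell(p_{\cdot,\gamma_2})=q^{(\ell)}_{\gamma_1}+T^{\gamma_1}q^{(\ell)}_{\gamma_2}
\]
for $\gamma_1,\gamma_2\in\Gamma_{[\ell]}$, so $[e]$ restricts to zero in $H^2(\Gamma_{[\ell]};\Poly_{\leq k}(\YY))$. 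Exhausting $\Gamma$ by the nested family $\{\Gamma_{[\ell]}\}$ from \eqref{exhaust} and inductively choosing the sections $s_\ell$ so that the corresponding families $q^{(\ell)}$ become compatible on each $\Gamma_{[\ell]}$ (absorbing the ambiguity in $\Poly_{\leq k}(\YY)$ at each stage), the diagonal limit yields a global $\Gamma$-cocycle $q\in\Poly^1_{\leq k}(\Gamma,\XX)$ with $d_U q=p$. The main obstacle is precisely this upgrade from $\Gamma_{[\ell]}$-equivariance (available from \Cref{rXY} for each finite $\ell$) to full $\Gamma$-equivariance in the limit; compatibility is arranged by exploiting the filter-preservation property of each $s_\ell$, together with the splitting of \Cref{integ-finite} controlling the difference $q^{(\ell+1)}-q^{(\ell)}$ on $\Gamma_{[\ell]}$.

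For the splitting assertion in the category of $k$-filtered locally compact abelian groups, we verify $\omega$-purity of the inclusion $\Poly^1_{\leq k}(\Gamma,\YY)\leq \Poly^1_{\leq k}(\Gamma,\XX)$ and then invoke \Cref{puresplit}, following the pattern already used in \Cref{poly-integ} and \Cref{integ-finite}: given a countable set $\mathcal{R}\subset \Z^\omega\times\{1,\ldots,k+1\}$ of relations and a sequence $P=(P_i)_i\in\Poly^1_{\leq k}(\Gamma,\XX)^\omega$ such that each $\vec m\cdot P$ matches some $Q_{(\vec m,j)}\in\Poly^1_{\leq k}(\Gamma,\YY)$ modulo $\Poly^1_{\leq k-j}(\Gamma,\XX)$, the $U$-invariance of the targets forces $\vec m\cdot d_U P\in \Poly^1_{\leq k-j-\weight}[U;Z^1(\Gamma,\XX)]$, and applying the $\Gamma$-equivariant antiderivative constructed above coordinate-wise produces a correction $P'=(P'_i)\in\Poly^1_{\leq k}(\Gamma,\XX)^\omega$ of the right degree such that $Q\coloneqq P-P'$ lies in $\Poly^1_{\leq k}(\Gamma,\YY)^\omega$ and solves the system as required.
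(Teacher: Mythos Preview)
Your strategy matches the paper's: both invoke the $\Gamma_{[\ell]}$-equivariant filtered section of \Cref{rXY} to solve on each finite piece $\Gamma_{[\ell]}$, then glue as $\ell\to\infty$ using \Cref{integ-finite}. The paper packages this slightly differently---rather than working with the cocycle families $q^{(\ell)}$ directly, it builds a single polynomial $Q_{[\ell]}\in\Poly_{\le k+1}(\XX)$ at each stage with $\partial_u\partial_\gamma Q_{[\ell]}=(p_u)_\gamma$ for $\gamma\in\Gamma_{[\ell]}$, and the inductive step treats only the new generator $e_\ell$, using a ``line cocycle'' summation $\Sigma=\sum_{j=0}^{m_\ell-1}T^{je_\ell}$ to verify the cyclic consistency condition before invoking \Cref{integ-finite}. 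Your organization is arguably cleaner since the $\Gamma_{[\ell]}$-cocycle property of $q^{(\ell)}$ falls out for free from the equivariance of $s_\ell$.

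One point deserves more care. You write that compatibility across $\ell$ is arranged ``by inductively choosing the sections $s_\ell$''; but the sections are handed to you by \Cref{rXY} and are not yours to choose. What you can do is correct $q^{(\ell+1)}$ after the fact: the difference $\delta\coloneqq q^{(\ell+1)}|_{\Gamma_{[\ell]}}-q^{(\ell)}$ is a $\Gamma_{[\ell]}$-cocycle valued in $\Poly_{\le k}(\YY)$, and \Cref{integ-finite} (applied at degree $k{+}1$ on $\YY$, which has large spectrum) writes $\delta=d_{\Gamma_{[\ell]}}D$ for some $D\in\Poly_{\le k+1}(\YY)$; then $q^{(\ell+1)}-d_{\Gamma_{[\ell+1]}}D$ restricts to $q^{(\ell)}$ on $\Gamma_{[\ell]}$ while retaining $d_U(\cdot)=p$ and landing componentwise in $\Poly_{\le k}(\XX)$. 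This is the content of your last parenthetical sentence, but it should be stated rather than gestured at.

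For the splitting, the invocation of \Cref{puresplit} is a red herring: that result requires the quotient to be discrete, which is not immediate here. What you actually describe in that paragraph is the correct route (and the paper's): the assignment $p\mapsto q$ built above is already a filtration-preserving homomorphism once all ingredients (the $s_\ell$ and the $D$-corrections via the section in \Cref{integ-finite}) are chosen linearly and filter-preservingly, so it furnishes the section directly. The paper makes this explicit by defining $s(p)_\gamma=\lim_\ell\partial_\gamma Q_{[\ell]}$ and verifying filter-preservation from the recursive formula \eqref{Qlwithsections}.
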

\begin{proof}
     The implication of (ii) from (i) follows from \Cref{poly-integ} (noting that any derivative along $U$ preserves cocycle equations).  Now suppose that (ii) holds. Recall the F\o lner sequence \eqref{exhaust} of finite subgroups $\Gamma_{[\ell]}$.  We will recursively construct, for each $\ell$,   
        $Q_{[\ell]} \in \Poly_{\leq k+1}(\XX)$ such that
    \begin{equation}\label{qgam}
    \partial_{u} \partial_\gamma Q_{[\ell]} = (p_u)_\gamma
    \end{equation}
    holds for all $u \in U$ and $\gamma \in \Gamma_{[\ell]}$, and
    \begin{equation}\label{compat}
    \partial_\gamma Q_{[\ell]} = \partial_\gamma Q_{[\ell-1]}
    \end{equation}
    holds for all $\gamma \in \Gamma_{[\ell-1]}$. Assuming this, we see that for all $\gamma\in \Gamma$, we can choose $\ell$ sufficiently large and set $q_\gamma=\partial_\gamma Q_{[\ell]}$. By \eqref{compat} this definition is independent of the choice of $\ell$ for sufficiently large $\ell$ and so $q$ is well defined and a cocycle (because each two $\gamma_1,\gamma_2\in \Gamma$ belong to some $\Gamma_{[\ell]}$ for sufficiently large $\ell$.). From \eqref{qgam} we will then have $\partial_{u} q_\gamma = (p_u)_\gamma$, giving (i). 
    
It remains to construct the $Q_{[\ell]}$.  For $\ell=0$ we can take $Q_{(0)}=0$.  Now suppose that $\ell \geq 1$ and $Q_{[\ell-1]}$ has already been constructed.  Let $e_\ell$ be the $\ell^{\mathrm{th}}$ generator of $\Gamma$; thus $e_\ell$ has order $m_\ell$, and $\Gamma_{[\ell]}$ is the direct sum of $\Gamma_{[\ell-1]}$ and the cyclic group $\langle e_\ell \rangle$ of order $m_\ell$.  This will be a complex diagram chase, starting with a traversal of the diagram in \Cref{initial-diag}.

\begin{figure}[ht]
\centering
\begin{tikzcd}
   &\Poly_{\leq k+1}(\XX) \arrow[d, "d_{\Gamma_{[\ell]}}"] \arrow[dd, "\partial_{e_\ell}"', shift right = 3ex, bend right=50] \\
    & \Poly^1_{\leq d}(\Gamma_{[\ell]},\XX) \arrow[r, "d_U"] \arrow[d, "|_{e_\ell}"] & \Poly^1_{\leq k-\weight}[U; Z^1(\Gamma_{[\ell]},\XX)] \arrow[d, "|_{e_\ell}"]  \\ 
    \Poly_{\leq k}(\YY) \arrow[r, hook] \arrow[d, "d_{\Gamma_{[\ell-1]}}"] &  \Poly_{\leq k}(\XX) \arrow[r, "d_U"] \arrow[d, "d_{\Gamma_{[\ell-1]}}"]  & \Poly^1_{\leq k-\weight}[U; \mathcal{M}(\XX)] \arrow[d, "d_{\Gamma_{[\ell-1]}}"]  \\
    \Poly^1_{\leq k-1}(\Gamma_{[\ell-1]},\YY) \arrow[r, hook] & \Poly^1_{\leq k-1}(\Gamma_{[\ell-1]},\XX) \arrow[r, "d_U"]  & \Poly^1_{\leq k-1-\weight}[U;Z^1(\Gamma_{[\ell-1]},\XX)] 
\end{tikzcd}
    \caption{The initial diagram that we will chase as part of the construction of $Q_{[\ell]}$, where $|_{e_\ell}$ denotes the operation of evaluation at $e_\ell$. The diagram is commutative and horizontally exact, but not vertically exact.  Initially, one can place (a portion of) $p$ in the right group on the second row, and $Q_{[\ell-1]}$ in the top group.}
  \label{initial-diag}
\end{figure} 

By (ii), $p$ lies in $\Poly^1_{\leq k-\weight}[U; Z^1(\Gamma,\XX)]$, and hence so does\footnote{Actually, only the portions of the double cocycle $p = ((p_u)_\gamma)_{u \in U, \gamma \in \Gamma}$ in which the $\gamma$ parameter is restricted to $\Gamma_{[\ell]}$ will be relevant for our analysis.}
\begin{equation}\label{p'-def}
 p' \coloneqq p - d_U d_\Gamma Q_{[\ell-1]}.
 \end{equation}
We can evaluate $p'$ at $e_\ell$ to obtain an element $((p'_u)_{e_\ell})_{u \in U}$ of $\Poly^1_{\leq k-\weight}[U; \mathcal{M}(\XX)]$. By \eqref{qgam}, $p'$ is $\Gamma_{[\ell-1]}$-invariant. Let $s_U^{\ell} \colon  \Poly^1_{\leq k-\weight}[U; \mathcal{M}(\XX)]\rightarrow \Poly_{\leq k}(\XX) $ be a $\Gamma_{[\ell]}$-equivariant cross-section from \Cref{rXY}. Then $Q\coloneqq s_{U}^{\ell}(((p'_u)_{e_\ell})_{u \in U})$ is a $\Gamma_{[\ell-1]}$-invariant polynomial satisfying
$$ d_U Q = ((p'_u)_{e_\ell})_{u \in U}.$$

To get back into the group $\Poly_{\leq k+1}(\XX)$ at the top of \Cref{initial-diag} one would like to ``integrate'' or invert the $\partial_{e_\ell}$ operator at $Q$.  
This will be another diagram chase (see \Cref{final-diag}), involving the cyclic group $\langle e_\ell \rangle$ of order $m_\ell$, and a summation homomorphism $\Sigma \colon B \to B^{\langle e_\ell \rangle}$ defined on any $\Gamma$-group $B$ by the formula
$$ \Sigma \coloneqq \sum_{j=0}^{m_\ell-1} V_{e_\ell}^j.$$

\begin{figure}[ht]
\centering
\begin{tikzcd}
& \Poly_{\leq k+1}(\XX) \arrow[d, "d_{\Gamma_{[\ell]}}"] \arrow[dd, "\partial_{e_\ell}"', shift right = 2.5ex, bend right=50]  \\
  {}  & \Poly^1_{\leq k}(\Gamma_{[\ell]},\XX) \arrow[r, "d_U"] \arrow[d, "|_{e_\ell}"]  & \Poly^1_{\leq k-\weight}[U; Z^1(\Gamma_{[\ell]},\XX)] \arrow[d, "|_{e_{\ell}}"]\\
    \Poly_{\leq k}(\YY) \arrow[r,hook] \arrow[d,"\Sigma"] & \Poly_{\leq d}(X) \arrow[r, "d_U"] \arrow[d,"\Sigma"] & \Poly^1_{\leq k-\weight}[U; \mathcal{M}(\XX)] \arrow[d, "\Sigma"]  \\ 
    \Poly_{\leq k}(\YY)^{\langle e_\ell \rangle} \arrow[r,hook] &  \Poly_{\leq k}(\XX)^{\langle e_\ell \rangle} \arrow[r, "d_U"]  & \Poly^1_{\leq k-\weight}[U; \mathcal{M}(\XX)]^{\langle e_\ell \rangle}  \\
\end{tikzcd}
    \caption{A portion of the final diagram that we chase after \Cref{initial-diag} to complete the construction of $Q_{[\ell]}$; the diagram is partially commutative and partially exact, and overlaps to some extent with the preceding diagram. Initially, the reader should place (a portion of) $p'$ in the right group of the second row, $Q'$ in the center group of the third row, and $Q_{[\ell-1]}$ in the top row; the objective then involves constructing a new element $Q_{[\ell]}$ in the top row.}
  \label{final-diag}
\end{figure} 

Note that as $e_\ell$ has order $m_\ell$, any element in the range of $\Sigma$ is invariant under $e_\ell$ and hence $\langle e_\ell \rangle$, and by telescoping series $\Sigma$ also annihilates any element in the range of $\partial_{e_\ell}$; in other words, $\Sigma$ serves as an obstruction to inverting $\partial_{e_\ell}$ (this is the ``line cocycle'' condition in \cite{btz}).  The strategy is then to exploit some partial exactness properties of the sequence
$$ B \stackrel{\partial_{e_\ell}}{\to} B \stackrel{\Sigma}{\to} B^{\langle e_\ell \rangle}$$
for a suitable $B$.
Since $s_U^{\ell}$ is also $\left<e_{\ell}\right>$-equivariant, we observe that $\Sigma Q = s(\Sigma^{\oplus U} ((p'_u)_{e_{\ell}})_{u\in U}) = s(0) = 0$. Now that $Q$ is known to be in the kernel of $\Sigma$, we can ``integrate'' it by defining $q \in \Poly^1_{\leq k}(\Gamma_{[\ell]},\XX)$ by the formula
\begin{equation}\label{qgam-def}
 q_\gamma \coloneqq \sum_{i=0}^{\Gamma_{[\ell]} - 1} T^{ie_\ell} Q
 \end{equation}
for $\gamma = (\gamma_1,\dots,\Gamma_{[\ell]})$ in $\Gamma_{[\ell]}$.  The condition $\Sigma Q=0$, ensures that $q$ is a $\Gamma_{[\ell]}$-cocycle, and it is of degree $\leq k$ since $Q$ is.  Applying \Cref{integ-finite}, we can write $q = d_{\Gamma_{[\ell]}} P$ for some $P \in \Poly_{\leq k+1}(\XX)$; we denote by $s_{\ell}$ the map $Q\mapsto P$ defined above and then set 

\begin{equation}\label{Qlwithsections}
    Q_{[\ell]} \coloneqq Q_{[\ell-1]} + s_{\ell}(Q).
\end{equation}
We first verify the compatibility condition \eqref{compat}, which is equivalent to $P$ being $\Gamma_{[\ell-1]}$-invariant, or equivalently that $q_\gamma$ vanishes for $\gamma \in \Gamma_{[\ell-1]}$.  But this is clear from \eqref{qgam-def}.

The same calculation also establishes \eqref{qgam} at $\ell$ for $\gamma \in \Gamma_{[\ell-1]}$ thanks to the induction hypothesis for \eqref{qgam}.  By the cocycle equation, it remains to verify \eqref{qgam} at $\gamma = e_\ell$.  But from \eqref{qgam-def} and \eqref{p'-def} we have
\begin{align*}
\partial_{u} \partial_{e_\ell} Q_{[\ell]}  &= \partial_{u} \partial_{e_\ell} Q_{[\ell-1]} + \partial_{u} \partial_{e_\ell} P \\
&= \partial_{u} \partial_{e_\ell} Q_{[\ell-1]} + \partial_{u} q_{e_\ell} \\
&= \partial_{u} \partial_{e_\ell} Q_{[\ell-1]} + \partial_{u} Q \\
&= \partial_{u} \partial_{e_\ell} Q_{[\ell-1]}  + (p'_u)_{e_\ell} \\
&= (p_u)_{e_\ell} 
\end{align*}
giving \eqref{qgam} as required.  This closes the recursive construction of the $Q_{[\ell]}$, and the claim follows.

Note that $\Poly_{\leq d}^1(\Gamma,\YY)$ is a closed subgroup of $\Poly_{\leq d}^1(\Gamma,\XX)$, and we can view $\Poly_{\leq d}^1(\Gamma,\XX)$ as a closed subgroup of $\mathcal{M}(\XX)^\Gamma$ and $\Poly^1_{\leq d-\weight}[U;Z^1(\Gamma,\XX)]$ as a closed subgroup of $\mathcal{M}(U\times \XX)^\Gamma$, where  $\mathcal{M}(\XX)^\Gamma$ and $\mathcal{M}(U\times \XX)^\Gamma$ are equipped with the product topology respectively. It now follows from the definition of the product topology and the proof of \Cref{HK-C8} that $d_U$ is an open map. 

It is left to prove that the short exact sequence \eqref{GammadU} splits. Observe that for each $p\in \Poly^1_{\leq k-\weight}[U; \mathcal{M}(\XX)]$ we assigned $s(p)\in \Poly^1_{\leq k}(\Gamma,\XX)$ defined by 
$$s(p) = (\lim_{\ell\rightarrow\infty} \partial_\gamma Q_{[\ell]})_{\gamma\in \Gamma}$$
 where the limit is well defined because $\partial_\gamma Q_{[\ell]}$ is eventually a constant. We claim that this $s$ is already a filtration preserving cross-section for \eqref{GammadU}. By construction, $s$ is a homomorphism of groups satisfying $d_U\circ s = \mathrm{Id}$. It remains to show that for each $1\leq j \leq k$ and $p\in \Poly^1_{\leq k-j-\weight}[U; \mathcal{M}(\XX)]$, we have that $s(p) \in   \Poly^1_{\leq k-j}(\Gamma,\XX)$. Now since $s_{\ell}, s_U^{\ell}$ are filtration preserving, by an induction on $\ell$ and \eqref{Qlwithsections}, it follows that $\partial_\gamma Q_{[\ell]}\in \Poly_{\leq k-j}(\XX)$, and thus the limit $s(p)\in \Poly_{\leq k-j}^1(\Gamma,\XX)$ which proves the claim.  
\end{proof}

\section{Taking roots of polynomials: the role of purity}\label{roots-sec}

The arguments in \cite{btz} relied crucially on classifying the gap between $\Poly_{\leq k}(\XX)$ and $n \Poly_{\leq k}(\XX)$ for various $n$ and $k$, focusing in particular on the ability to take $n^{\mathrm{th}}$ roots of polynomials while only increasing the degree of the polynomial by the least amount possible.  These arguments were algebraic in nature and specific to the case of vector spaces $\Gamma = \F_p^\omega$ over finite fields.  Due to Sylow type theorems, one can perform similar analysis when $m$ is square-free, but the algebra becomes considerably more complicated for general $m$.  For instance, we do not have a tractable description\footnote{To illustrate, let us consider the one-dimensional case of polynomials over $\mathbb{Z}/4\mathbb{Z}$. By \Cref{mtimes}, the range of such polynomials consists of certain $2^{m}$-th roots of unity, so it suffices to analyze their behavior in terms of multiplying by powers of $2$. In contrast with the $\mathbb{Z}/2\mathbb{Z}$ case, where multiplication by $2$ always reduces the degree by $1$ (cf.~\cite[Lemma 1.7]{tz-lowchar}), multiplication by $2$ in the setting of $\mathbb{Z}/4\mathbb{Z}$ can reduce the degree by $0$, $1$, $2$, or $3$. 

For example, multiplying $\frac{|x|_4^2}{32}\bmod 1$ by $2$ reduces the degree by $2$; multiplying $\frac{|x|_4^2}{16} \bmod 1$ by $2$ reduces the degree by $3$; multiplying $\frac{|x|_4^2}{8} \bmod 1$ by $2$ reduces the degree by $0$; and multiplying $\frac{|x|_4^2}{4} \bmod 1$ by $2$ reduces the degree by $1$. Here, $|\cdot|_4$ denotes the standard integer lift (or section) from $\mathbb{Z}/4\mathbb{Z}$ to $\mathbb{Z}$.
} of $n \Poly_{\leq k}((\Z/4\Z)^N)$ for general $n,k,N$, though of course for any specific choice of these parameters one can in principle compute this abelian group explicitly. Fortunately for our analysis, we will not need to understand either $n \Poly_{\leq k}(\XX)$ or $n \Poly_{\leq k}(\Gamma)$ precisely, but only know that these abelian groups are somehow ``compatible''.

The link is through the sampling homomorphisms $\iota_{x_0}$ defined in \eqref{sampling}.  For each polynomial $P \in \Poly_{\leq k}(\XX)$ (modulo constants), we assign a measurable representative, which obeys the defining equation $\partial_{\gamma_1} \dots \partial_{\gamma_{k+1}} P(x) = 0$ of a polynomial for all $x$ outside of a null set.  By \Cref{ppfacts}(i), there are only countably many such representatives one needs to select.  This makes $\iota_{x_0}$ a homomorphism from $\Poly_{\leq k}(\XX)$ to $\Poly_{\leq k}(\Gamma)$ for all $k$ and all $x_0$ outside of a null set, which preserves the constants $\T$ and is $\Gamma$-equivariant.  For almost all $x_0$, the pointwise ergodic theorem asserts that the ergodic averages of $e(\iota_{x_0} P)$ using the F{\o}lner sequence $\Gamma_{[\ell]}$ converge to the integral of $e(P)$.  In particular, if $P$ is non-constant, then the mean of $e(P)$ has magnitude strictly less than one, and so $\iota_{x_0} P$ cannot be constant; this implies that $\iota_{x_0}$ is injective for almost all $x_0$. Thus we have the short exact sequence \eqref{sampling-exact} for each $k$ outside of a null set.  Among other things, this makes $\iota_{x_0}$ degree preserving: for any polynomial $P$, $\iota_{x_0} P$ has the same degree as $P$.

We are interested in whether this short exact sequence is split, or equivalently whether there is a retract homomorphism from $\Poly_{\leq k}(\Gamma)$ to $\Poly_{\leq k}(\XX)$ which is a left inverse of $\iota_{x_0}$.  This turns out to be closely related to the $n^{\mathrm{th}}$ root problem and solving more general linear equations:

\begin{lemma}\label{exact-crit}  Let $\XX$ be an ergodic $\Gamma$-system, and $k \geq 0$. The following statements are equivalent:
\begin{itemize}
    \item[(i)]  (Splitting modulo constants) The short exact sequence
    \begin{equation}\label{sampling-exact}
        0\rightarrow \Poly_{\leq k}(\XX)\overset{\imath_{x_0}}{\to} \Poly_{\leq k}(\Gamma)\rightarrow \Poly_{\leq k}(\Gamma)/\imath_{x_0}(\Poly_{\leq k}(\XX))\rightarrow 0
    \end{equation}
    of $k$-filtered abelian groups splits finitely (see Definition \ref{finite-split}) for almost every $x_0 \in X$.
    \item[(ii)]  (Purity) $\XX$ is $k$-pure, i.e., $\iota_{x_0}(\Poly_{\leq k}(\XX))$ is pure up to length $\alpha$ in $\Poly_{\leq k}(\Gamma)$ for every finite $\alpha$ (in the sense of \Cref{pure:def}) for almost every $x_0 \in X$.   
\end{itemize}
\end{lemma}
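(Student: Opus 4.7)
The equivalence (i) $\Leftrightarrow$ (ii) should follow from a general principle in the category of $k$-filtered abelian groups developed in Appendix \ref{filteredcategory}, together with the observation that the quantifier ``for almost every $x_0$'' appears symmetrically on both sides. The plan is to reduce the lemma to a purely algebraic statement about short exact sequences of filtered abelian groups, and then apply it to \eqref{sampling-exact}.

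First, I would invoke (or verify, if it is not explicitly stated in the appendix in this form) the following general fact: given a short exact sequence $0 \to A \to B \to C \to 0$ in the category of $k$-filtered abelian groups, the sequence splits finitely in the sense of \Cref{finite-split} if and only if $A$ is pure up to length $\alpha$ in $B$ for every finite $\alpha$ in the sense of \Cref{pure:def}. Morally, this is the $k$-filtered version of Pr\"ufer's classical characterization of pure subgroups as those for which every finite system of linear equations with coefficients in the subgroup that is solvable in the ambient group is already solvable in the subgroup. On both sides of this equivalence, one is saying that every obstruction coming from a finitely generated sub-quotient of $C$ (respectively, every finite system of filtered linear relations between a finite tuple of elements of $A$ as computed in $B$) can be resolved within $A$ while respecting the polynomial filtration.

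Second, I would apply this general principle to the short exact sequence \eqref{sampling-exact}. The paragraphs preceding the lemma already establish that, for almost every $x_0 \in X$, the sampling map $\iota_{x_0}$ is a well-defined, injective, $\Gamma$-equivariant, and degree-preserving morphism from $\Poly_{\leq k}(\XX)$ into $\Poly_{\leq k}(\Gamma)$, so the sequence \eqref{sampling-exact} is a genuine short exact sequence of $k$-filtered abelian groups for almost every $x_0$. For such $x_0$, statement (i) asserts finite splitting of \eqref{sampling-exact} while statement (ii) asserts purity up to every finite length of the image of $\iota_{x_0}$; by the general principle these are the same condition. Since both the ``almost every $x_0$'' null sets in (i) and (ii) are contained in the common null set outside of which $\iota_{x_0}$ is a well-defined filtered injection, the equivalence transfers.

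The main obstacle is the verification of the general principle in the $k$-filtered setting, which is presumably one of the main results of Appendix \ref{filteredcategory}. The underlying classical statement is standard, but the filtered refinement requires tracking the degree bookkeeping through every step: the finite systems of relations appearing in \Cref{pure:def} come labeled with degree drops, and the finite splittings of \Cref{finite-split} must be compatible with the polynomial filtration (rather than being mere group-theoretic splittings). Modulo this bookkeeping, no new ideas beyond the classical argument are required, and with the appendix in hand the proof of \Cref{exact-crit} reduces to an appeal to this equivalence.
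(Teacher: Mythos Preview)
Your approach is essentially the paper's: (ii)$\Rightarrow$(i) is a direct citation of \Cref{puresplit}, and (i)$\Rightarrow$(ii) is obtained by taking the finitely many elements $P_1,\dots,P_n$ witnessing a system of relations, forming the subgroup $B$ they generate together with $\iota_{x_0}(\Poly_{\leq k}(\XX))$, applying the finite-splitting hypothesis to get a filtered retraction $r\colon B\to\Poly_{\leq k}(\XX)$, and setting $Q_i=r(P_i)$.

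One point you gloss over: the ``general principle'' you propose is not actually a theorem about arbitrary $k$-filtered abelian groups, and only one direction (\Cref{puresplit}) appears in the appendix. The converse (i)$\Rightarrow$(ii) needs the subgroup $B$ above to contain $\iota_{x_0}(\Poly_{\leq k}(\XX))$ with \emph{finite index}, not merely finitely generated quotient, since that is what \Cref{finite-split} requires. In the present setting this is automatic because $\Poly_{\leq k}(\Gamma)/\T$ has bounded exponent by \Cref{mtimes} (as $\Gamma$ does), so any finitely generated subquotient is finite; but your abstract equivalence would fail, e.g., over a torsion-free quotient. The paper's proof also passes over this point quickly (writing ``finitely generated'' where ``finite index'' is what is used), so you are in good company, but it is worth making explicit.
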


Informally, (ii) asserts that if one can solve a linear equation in polynomials of degree $\leq k$ ``locally'' (i.e., after applying the sampling operator $\iota_{x_0}$) without any loss of degree, then one can also do so ``globally'' (without sampling). The proof is an immediate application of \Cref{puresplit}.
\begin{proof}
    We start with $(ii)\Rightarrow(i)$. By assumption, $\imath_{x_0}(\Poly_{\leq k}(\XX))\leq \Poly_{\leq k}(\Gamma)$ is pure for almost all $x_0\in X$. By \Cref{puresplit}, \eqref{sampling-exact}  splits finitely for almost every $x_0$. Now we prove $(i)\Rightarrow(ii)$. Let $x_0\in X$ be such that \eqref{sampling-exact} splits finitely for $x_0$. We need to show that $\imath_{x_0}(\Poly_{\leq k}(\XX))\leq \Poly_{\leq k}(\Gamma)$ is pure up to length $\alpha$ for every finite $\alpha$.   
    Let $\mathcal{R}\subset \Z^n\times \{1,\ldots,k+1\}$ be a system of relations in $n$-variables for some $n\in \mathbb{N}$. Let $P=(P_1,\ldots,P_n)$ with $P_i\in \Poly_{\leq k}(\Gamma)$ and suppose that for all $(\vec{m};j)\in \mathcal{R}$ there exists $Q_{(\vec{m};j)}\in \Poly_{\leq k}(\XX)$ such that 
    \begin{equation}\label{virtualsolution}
        \vec{m}\cdot P = \imath_{x_0}(Q_{(\vec{m};j)})+\Poly_{\leq k-j}(\Gamma). 
    \end{equation}
    Consider the subgroup $B\leq \Poly_{\leq k}(\Gamma)$, generated by $\imath_{x_0}(\Poly_{\leq k}(\XX))$ and $P_1,\ldots,P_n$. Since $B/\imath_{x_0}(\Poly_{\leq k}(\XX))$ is finitely generated, by assumption the short exact sequence
    $$0\rightarrow \Poly_{\leq k}(\XX)\overset{\imath_{x_0}}{\rightarrow} B\rightarrow B/\imath_{x_0}(\Poly_{\leq k}(\XX))\rightarrow 0$$ splits. 
    Let $r:B\rightarrow \Poly_{\leq k}(\XX)$ be a retraction and let $Q=(r(P_1),\ldots,r(P_n))$. From \eqref{virtualsolution} we deduce that 
    $$
        \vec{m}\cdot Q = Q_{(\vec{m};j)}+\Poly_{\leq k-j}(\XX),
    $$
    as required.
\end{proof}

\begin{remark}\label{rem:0-1-law}
For any ergodic $\Gamma$-system $\XX$, $k\geq 0$, and finite $\alpha$, the set 
     $$\{x_0 \in X:  \iota_{x_0}(\Poly_{\leq k}(\XX)) \text{pure up to length $\alpha$ in } \Poly_{\leq k}(\Gamma)\}$$ is both measurable and $\Gamma$-invariant, and thus has measure $0$ or $1$ due ergodicity. 
Indeed, $\Gamma$-invariance follows from the above observation that the embedding $\imath_{x_0}$ is $\Gamma$-equivariant for almost all $x_0\in X$. As for measurability, due to the countability of polynomials modulo constants (cf. \Cref{ppfacts}(i)), checking that $\iota_{x_0}(\Poly_{\leq k}(\XX)) \text{ is pure up to length $\alpha$  in } \Poly_{\leq k}(\Gamma)$ only depends on countably many conditions.  
\end{remark}
  
Conveniently, $k$-purity is preserved under reduction of the structure group, at least when the cocycle is exact.

\begin{proposition}[Descent of $k$-purity]\label{exactroots-descent}  Let $k \geq 1$.  Suppose one has an extension $\XX = \YY \times_\rho (V \times W)$
of ergodic $\Gamma$-systems $\XX, \YY$, where $V,W$ are compact abelian groups, and $\rho \in \Poly^1_{\leq k-1}(\Gamma,\YY,V \times W)$ is exact.  If $\XX$ is $k$-pure, then so is $\XX' \coloneqq \YY \times_{\rho \mod V} W$.
\end{proposition}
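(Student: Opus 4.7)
The plan is to realize $\XX$ as a skew product of $\XX'$ by $V$, apply \Cref{poly-integ} to extract a filtration-preserving retraction $r\colon \Poly_{\leq k}(\XX)\to \Poly_{\leq k}(\XX')$, and then combine this retraction with the $k$-purity of $\XX$ through the splitting criterion of \Cref{exact-crit}.  Writing $\pi\colon \XX\to \XX'$ and $\pi_\YY\colon \XX'\to \YY$ for the natural factor maps, the obvious identification yields $\XX\cong \XX'\times_\sigma V$, where $\sigma\coloneqq \rho_V\circ \pi_\YY$ is the pullback of the $V$-component $\rho_V$ of $\rho$.  I would first verify that $\sigma$ inherits exactness from $\rho$: for any $\eta\in \widehat V$ we have $\eta\circ\sigma = ((\eta,0)\circ\rho)\circ \pi_\YY$, and both polynomial degree and type of cocycles transfer from $\YY$ to $\XX'$ under pullback by $\pi_\YY$ (the former by commutation of pullback with discrete differentiation together with injectivity of $\cdot\circ \pi_\YY$ on cocycles, the latter via \Cref{u-dual} applied to both $\XX'\to \YY$ and $\XX\to \YY$).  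Consequently the biconditional defining exactness of $\sigma$ transfers directly from that of $\rho$.

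With $\sigma$ exact, \Cref{poly-integ} applied to the extension $\XX=\XX'\times_\sigma V$ produces the desired filtration-preserving retraction $r\colon \Poly_{\leq k}(\XX)\to \Poly_{\leq k}(\XX')$, where $\Poly_{\leq k}(\XX')$ is identified with $\Poly_{\leq k}(\XX)^V$ via the embedding $P'\mapsto P'\circ \pi$.  Now fix a generic $x_0\in \XX$ for which $\iota_{x_0}$ is a degree-preserving, injective, $\Gamma$-equivariant embedding, set $x'_0\coloneqq \pi(x_0)$, and observe the compatibility $\iota_{x_0}(P'\circ \pi)=\iota_{x'_0}(P')$ for all $P'\in \Poly_{\leq k}(\XX')$.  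Given any finitely generated extension $B'\supseteq \iota_{x'_0}(\Poly_{\leq k}(\XX'))$ inside $\Poly_{\leq k}(\Gamma)$, form $B\coloneqq B'+\iota_{x_0}(\Poly_{\leq k}(\XX))$.  Since $\iota_{x'_0}(\Poly_{\leq k}(\XX'))\subseteq \iota_{x_0}(\Poly_{\leq k}(\XX))$, the quotient $B/\iota_{x_0}(\Poly_{\leq k}(\XX))$ is a quotient of the finitely generated group $B'/\iota_{x'_0}(\Poly_{\leq k}(\XX'))$ and so remains finitely generated.  By the $k$-purity of $\XX$ and \Cref{exact-crit} there is then a filtration-preserving retraction $s\colon B\to \Poly_{\leq k}(\XX)$, and the composition $r\circ s|_{B'}\colon B'\to \Poly_{\leq k}(\XX')$ is a filtration-preserving retraction: for $P'\in \Poly_{\leq k}(\XX')$ one computes $s(\iota_{x'_0}(P'))=s(\iota_{x_0}(P'\circ \pi))=P'\circ \pi$ and $r(P'\circ \pi)=P'$.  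A second appeal to \Cref{exact-crit} then yields the $k$-purity of $\XX'$.

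I expect the main technical obstacle to be the exactness of $\sigma$.  Polynomial degree transfers across pullback essentially by definition, but verifying that type transfers in both directions between $\YY$ and $\XX'$ is delicate.  One direction (from $\YY$ to $\XX'$) follows from functoriality of the Host--Kra factors; the other direction — that a cocycle on $\YY$ whose pullback to $\XX'=\YY\times_{\rho_W}W$ has type $\leq d$ must itself already have type $\leq d$ on $\YY$ — can be extracted from \Cref{u-dual} by comparing the type filtrations induced on $V$ by viewing $\XX$ respectively as an extension of $\YY$ by $V\times W$ and as an extension of $\XX'$ by $V$.
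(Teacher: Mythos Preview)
Your overall strategy matches the paper's: both realize $\XX = \XX' \times_\sigma V$ with $\sigma = \rho_V \circ \pi_\YY$, assert that $\sigma$ is exact, and then invoke \Cref{poly-integ} (you through the splitting clause and the retraction equivalence of \Cref{exact-crit}; the paper by applying \Cref{poly-integ} directly to the solution tuple furnished by $k$-purity of $\XX$). Modulo this packaging the two arguments are the same.

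The step that does not go through is exactly the one you flag as delicate: exactness of $\sigma$. The direction you need is that if $(\xi,0)\circ\rho$ pulled back to $\XX'$ has type $\le d$, then it already has type $\le d$ on $\YY$; this is not a consequence of \Cref{u-dual}, and \Cref{8.11} only supplies it when $\XX'$ has order $\le d$. Here is a concrete obstruction. Take $V=W=\Z/3\Z$, let $\YY$ be Kronecker, let $\rho_W$ be a degree-$1$ polynomial cocycle of type exactly $2$, and set $\rho_V=\rho_W+c$ for a homomorphism $c\colon\Gamma\to\Z/3\Z$ chosen so that $\XX$ remains ergodic. Then $(\chi_V,\chi_W)\circ\rho=(\chi_V+\chi_W)\circ\rho_W+\chi_V\circ c$, so $\rho$ is exact and $(V\times W)_{>1}$ is the diagonal; hence $V_{>1}^{\sigma}=(V\times\{0\})\cap(V\times W)_{>1}=\{0\}$, forcing every $\xi\circ\sigma$ to have type $\le 1$ on $\XX'$ by \Cref{u-dual}. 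But for nonzero $\xi$ one has $\xi\circ\sigma=\xi\circ\rho_W\circ\pi_\YY+\xi\circ c$, which is not of degree $\le 0$ on $\XX'$; thus $\sigma$ is not exact and the hypothesis of \Cref{poly-integ} (namely $\sigma\bmod V_{>i}\in\Poly^1_{\le i-1}$) fails at $i=1$. The paper's proof makes the identical assertion with the same terse justification (``one sees from \Cref{u-dual} and \Cref{8.11}''), so this gap is shared; repairing it would seem to require either a descent lemma for type stronger than \Cref{8.11}, or a direct construction of the needed retraction $\Poly_{\le k}(\XX)\to\Poly_{\le k}(\XX')$ that uses only exactness of $\rho$ on the full group $V\times W$ rather than exactness of $\sigma$ on $V$.
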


\begin{proof} We have the commuting diagram of abelian extensions
\begin{center}
\begin{tikzcd}
    \XX \arrow[dd, Rightarrow, "\rho; V \times W"'] \arrow[dr, Rightarrow, "\rho \circ \pi; V"] \\ 
    & \XX' \arrow[dl, Rightarrow, "\rho \mod V; W"]\\
    \YY 
\end{tikzcd}
\end{center}
where $\pi \colon \XX' \to \YY$ is the factor map.  Let $x_0$ be a point in $\XX$, let $x'_0$ be its image in $\XX'$. Let $\mathcal{R}\subset \Z^n\times \{1,\ldots,k+1\}$  be a system of relations in $n$-variables. Let $\tilde{Q} = (\tilde{Q}_1,\ldots,\tilde{Q}_n)$ be in $\Poly_{\leq k}(\Gamma)$ and suppose that for all $(\vec{m};j)\in \mathcal{R}$ there exists $Q_{(\vec{m};j)} \in \Poly_{\leq k}(\XX')$ such that 
$$\imath_{x'_0}(Q_{(\vec{m};j)}) = \vec{m}\cdot \tilde{Q} \mod \Poly_{\leq k-j}(\Gamma).$$

By $k$-purity of $\XX$, we find $P = (P_1,\ldots,P_n)$ in $\Poly_{\leq k}(\XX)$ such that $Q_{(\vec{m};j)} = \vec{m}\cdot P \mod \Poly_{\leq k-j}(\XX)$. This almost accomplishes our goal as $P$ is measurable with respect to $\XX$ rather than $\XX'$. The type filtration $((V \times W)_i)_{i=1}^\infty$ on $V \times W$ restricts to the type filtration $(V_i)_{i=1}^\infty$ on $V$, thanks to \Cref{u-dual}.  In particular, by \eqref{diff-incl}, we see that for any $i \geq 0$ and $v \in V_{>i}$, that $\partial_v P \in \Poly_{\leq k-i-1}(\XX)$. Since $\vec{m} \cdot \partial_v P = \partial_v Q_{(\vec{m};j)} \mod \Poly_{\leq k-j-i-1}(\XX)$, since $\partial_v Q_{(\vec{m};j)}=0$, we conclude that
$$\vec{m}\cdot \partial_v P  \in \Poly_{\leq k-j-i-1}(\XX).$$ As the cocycle $\rho$ is exact, one sees from \Cref{u-dual} and \Cref{8.11} that the quotient cocycle $\rho \circ \pi$ is also exact.  
As $\partial_v P_i$ is a cocycle in $V$ for all $i$, we may now invoke \Cref{poly-integ} for each polynomial $P_i$, and conclude that $\partial_v P = \partial_v R$ for some $R=(R_1,\ldots,R_n)$ in $\Poly_{\leq k}(\XX)$ such that $\vec{m}\cdot R\in \Poly_{\leq k-j}(\XX)$.  Subtracting, we see that $P-R$ is $V$-invariant and thus equal to some $P' \in \Poly_{\leq k}(\XX')$; as modulo $\Poly_{\leq k-j}(\XX)$ we have $\vec{m}\cdot P' = \vec{m}\cdot P - \vec{m}\cdot R = \vec{m}\cdot P = Q_{(\vec{m};j)}$, we obtain the claim.
\end{proof}
A first application of $k$-purity will be to split the short exact sequence relating polynomial cocycles and polynomial coboundaries.

\begin{lemma}\label{poly-root}
    Let $k\geq 1$, and let $\XX$ be an ergodic $k$-pure $\Gamma$-system. Then $d_\Gamma \Poly_{\leq k}(\XX)$ is a pure (filtered) subgroup of $\Poly^1_{\leq k-1}(\Gamma,\XX)$. In particular, the short exact sequence
    \begin{equation}\label{poly-root-seq}
        0\rightarrow d_\Gamma \Poly_{\leq k}(\XX)\to \Poly_{\leq k-1}^1(\Gamma,\XX)\rightarrow \Poly_{\leq k-1}^1(\Gamma,\XX)/_\Gamma \Poly_{\leq k}(\XX)\rightarrow 0
    \end{equation}
    of $k$-filtered abelian groups splits finitely. 
\end{lemma}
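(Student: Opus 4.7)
The plan is to use the $k$-purity of $\XX$ via the sampling embedding $\iota_{x_0}$, which transports the problem from polynomial cocycles on $\XX$ to polynomials on $\Gamma$ (viewed as a $\Gamma$-system by translation), where the hypothesis applies directly. Fix a generic $x_0 \in X$ outside the relevant null sets so that $\iota_{x_0}\colon \Poly_{\leq k}(\XX) \to \Poly_{\leq k}(\Gamma)$ is injective, degree-preserving, $\Gamma$-equivariant, and $\iota_{x_0}(\Poly_{\leq k}(\XX))$ is pure in $\Poly_{\leq k}(\Gamma)$. The key compatibility is the intertwining identity
\[
\iota_{x_0}^{\oplus \Gamma}\circ d_\Gamma \;=\; d_\Gamma\circ \iota_{x_0},
\]
which is immediate from the fact that both sides, evaluated at $\eta$, equal $P(T^{\gamma + \eta} x_0) - P(T^\eta x_0)$.

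Suppose we are given the data of the purity statement: a finite system of relations $\mathcal{R}\subset \Z^n\times \{1,\ldots,k+1\}$, cocycles $\rho_1,\ldots,\rho_n \in \Poly_{\leq k-1}^1(\Gamma,\XX)$, and polynomials $P_{(\vec{m};j)} \in \Poly_{\leq k}(\XX)$ satisfying $\vec{m}\cdot \rho - d_\Gamma P_{(\vec{m};j)} \in \Poly_{\leq k-1-j}^1(\Gamma,\XX)$. After subtracting the constant $P_{(\vec{m};j)}(x_0)\in\T$ (which does not affect $d_\Gamma P_{(\vec{m};j)}$), we may assume that $P_{(\vec{m};j)}(x_0)=0$. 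Set
\[
R_i(\gamma) \coloneqq \rho_{i,\gamma}(x_0) \qquad \text{and} \qquad \hat{Q}_{(\vec{m};j)} \coloneqq \iota_{x_0}(P_{(\vec{m};j)}).
\]
Using the cocycle equation, one computes $\partial_\gamma R_i = \iota_{x_0}(\rho_{i,\gamma})$, which has degree $\leq k-1$, so $R_i \in \Poly_{\leq k}(\Gamma)$ with $R_i(0)=0=\hat{Q}_{(\vec{m};j)}(0)$. Applying $\iota_{x_0}^{\oplus \Gamma}$ to the relation and using the intertwining yields $d_\Gamma(\vec{m}\cdot R - \hat{Q}_{(\vec{m};j)}) \in \iota_{x_0}^{\oplus\Gamma}(\Poly_{\leq k-1-j}^1(\Gamma,\XX))\subseteq \Poly_{\leq k-1-j}^1(\Gamma,\Gamma)$; together with the vanishing at $0$, this forces $\vec{m}\cdot R - \hat{Q}_{(\vec{m};j)} \in \Poly_{\leq k-j}(\Gamma)$ (the normalization cleanly handles the boundary case $j=k+1$, where $\Poly_{\leq -1}(\Gamma)=0$).

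We are now in position to invoke the $k$-purity of $\XX$ directly: with ambient group $\Poly_{\leq k}(\Gamma)$, pure subgroup $\iota_{x_0}(\Poly_{\leq k}(\XX))$, data $R_1,\ldots,R_n$, and virtual solutions $P_{(\vec{m};j)}\in\Poly_{\leq k}(\XX)$, the retraction argument used in the proof of \Cref{exact-crit} produces $\tilde{P}_1,\ldots,\tilde{P}_n \in \Poly_{\leq k}(\XX)$ with $\vec{m}\cdot \tilde{P} - P_{(\vec{m};j)} \in \Poly_{\leq k-j}(\XX)$ for every $(\vec{m};j)\in\mathcal{R}$. Applying $d_\Gamma$ to each such identity gives $\vec{m}\cdot d_\Gamma \tilde{P} - d_\Gamma P_{(\vec{m};j)} \in d_\Gamma \Poly_{\leq k-j}(\XX)$, which is exactly the purity of $d_\Gamma \Poly_{\leq k}(\XX)$ in $\Poly_{\leq k-1}^1(\Gamma,\XX)$. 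The finite splitting of \eqref{poly-root-seq} then follows from \Cref{puresplit}. The main subtlety lies in the filtration bookkeeping when transferring between $\Gamma$-cocycles on $\XX$ and polynomials on $\Gamma$ via sampling and antidifferentiation; once that accounting is handled, the rest of the argument reduces mechanically to $k$-purity of $\XX$.
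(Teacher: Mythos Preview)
Your proof is correct and follows essentially the same approach as the paper: sample the cocycles at a generic $x_0$ to obtain polynomials on $\Gamma$, invoke $k$-purity to produce $\tilde P_1,\ldots,\tilde P_n\in\Poly_{\le k}(\XX)$, then apply $d_\Gamma$. One small point: your concluding line ``which is exactly the purity'' skips a step---what you derived is $\vec m\cdot d_\Gamma\tilde P - d_\Gamma P_{(\vec m;j)}\in d_\Gamma\Poly_{\le k-j}(\XX)\subseteq\Poly^1_{\le k-1-j}(\Gamma,\XX)$, and combining this with the hypothesis $\vec m\cdot\rho-d_\Gamma P_{(\vec m;j)}\in\Poly^1_{\le k-1-j}(\Gamma,\XX)$ is what actually yields $\vec m\cdot(\rho-d_\Gamma\tilde P)\in\Poly^1_{\le k-1-j}(\Gamma,\XX)$, the purity conclusion.
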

\begin{proof}
    Let $\mathcal{R}\subset \Z^n\times \{1,\ldots,k\}$  be a system of relations in $n$-variables, and let $q=(q_1,\ldots q_n)$ be tuple of polynomial cocycles of degree $\leq k-1$ on $\XX$. Suppose that for every $(\vec{m},j)\in \mathcal{R}$ we find some $Q_{(\vec{m};j)}\in \Poly_{\leq k}(\XX)$ such that
    $$d_\Gamma Q_{(\vec{m};j)} = \vec{m}\cdot q \mod \Poly^1_{\leq k-j-1}(\Gamma,\XX).$$
    Then for almost every $x_0\in \XX$ and all $\gamma\in \Gamma$ we have
    $\imath_{x_0}(Q_{(\vec{m};j)})(\gamma) = \vec{m}\cdot q(\gamma,x_0) \mod \Poly_{\leq k-j}(\Gamma)$. Hence, by purity of $\XX$, we find  $Q=(Q_1,\ldots,Q_n)$ with $Q_i\in \Poly_{\leq k}(\XX)$ such that $Q_{(\vec{m},j)} = \vec{m}\cdot Q \mod \Poly_{\leq k-j}(\XX)$. Taking $d_\Gamma$ on both sides of this equation we deduce that
    $$\vec{m}\cdot (q-d_\Gamma Q) \in \Poly^1_{\leq k-j-1}(\Gamma,\XX),$$ as required.
\end{proof}

Now we begin combining $k$-purity with the large spectrum property.  We first use the large spectrum property to provide some equivariance to the retract homomorphisms arising from $k$-purity:

\begin{theorem}[Retract with invariances]\label{retract-invariances}
Let $\XX$ be an ergodic $k$-pure $\Gamma$-system with large spectrum for some $k\geq 1$. Then the short exact sequence \eqref{sampling-exact}
splits finitely in the category of $k$-filtered $\Gamma_{[\ell]}$-groups (where the morphisms are required to be also $\Gamma_{[\ell]}$-equivariant) for all $\ell\geq 0$. 
\end{theorem}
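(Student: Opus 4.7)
The plan is to mirror the inductive construction from the proof of \Cref{rXY}, with $k$-purity playing the role that \Cref{poly-integ} plays there, and with \Cref{integ-finite} (which uses the large-spectrum hypothesis via \Cref{fin-coord}) supplying the cohomological input needed to force $\Gamma_{[\ell]}$-equivariance.

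Fix $x_0\in X$ at which $\iota_{x_0}$ is injective, $\Gamma$-equivariant, and $\iota_{x_0}(\Poly_{\leq k}(\XX))$ is pure up to every finite length in $\Poly_{\leq k}(\Gamma)$; this holds almost surely by $k$-purity. Let $B\leq \Poly_{\leq k}(\Gamma)$ be a $\Gamma_{[\ell]}$-invariant subgroup containing $\iota_{x_0}(\Poly_{\leq k}(\XX))$ with finitely generated quotient; since $\Gamma_{[\ell]}$ is finite and $\iota_{x_0}(\Poly_{\leq k}(\XX))$ is $\Gamma$-invariant, we may always enlarge to ensure $\Gamma_{[\ell]}$-invariance without harming finite generation of the quotient. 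By \Cref{exact-crit}, $k$-purity produces a filtration-preserving (not necessarily equivariant) retraction $\tilde r\colon B\to \Poly_{\leq k}(\XX)$. From the splitting of \eqref{finite-short} furnished by \Cref{integ-finite}, extract a filtration-preserving retraction $\pi\colon \Poly_{\leq k}(\XX)\to \Poly_{\leq k}(\XX)^{\Gamma_{[\ell]}}$ and a compatible cross-section $s\colon Z^1[\Gamma_{[\ell]};\Poly_{\leq k-1}(\XX)]\to \Poly_{\leq k}(\XX)$ satisfying $\pi+s\circ d_{\Gamma_{[\ell]}}=\operatorname{id}$; set $r_0\coloneqq \pi\circ \tilde r$.

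Filter $B$ and $\Poly_{\leq k}(\XX)$ by the $\Gamma_{[\ell]}$-degree: let $B_{\leq i}$ and $\Poly_{\leq k,\leq i}(\XX)$ denote the subgroups of elements that are polynomials of degree $\leq i$ with respect to the $\Gamma_{[\ell]}$-subaction. By induction on $i$, construct filtration-preserving, $\Gamma_{[\ell]}$-equivariant retractions $r_{\leq i}\colon B_{\leq i}\to \Poly_{\leq k,\leq i}(\XX)$ via the formula
\[
r_{\leq i}\;\coloneqq\; s\circ r_{\leq i-1}^{\oplus \Gamma_{[\ell]}}\circ d_{\Gamma_{[\ell]}}+r_0,
\]
initialized by $r_{\leq 0}\coloneqq r_0|_{B_{\leq 0}}$ (trivially equivariant, as both source and target are $\Gamma_{[\ell]}$-invariant). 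For $P\in B_{\leq i}$ the $\Gamma_{[\ell]}$-invariance of $B$ ensures $d_{\Gamma_{[\ell]}}P$ takes values in $B_{\leq i-1}\cap \Poly_{\leq k-1}(\Gamma)$, so $r_{\leq i-1}^{\oplus \Gamma_{[\ell]}}$ applies to it and yields a $\Gamma_{[\ell]}$-cocycle with values in $\Poly_{\leq k-1,\leq i-1}(\XX)$; applying $s$ integrates this back into $\Poly_{\leq k,\leq i}(\XX)$. Together with the $\Gamma$-equivariance of $\iota_{x_0}$, the induction hypothesis, the cross-section identity $\pi+s\circ d_{\Gamma_{[\ell]}}=\operatorname{id}$, and the resulting relation $d_{\Gamma_{[\ell]}}r_{\leq i}=r_{\leq i-1}^{\oplus \Gamma_{[\ell]}}\circ d_{\Gamma_{[\ell]}}$, this delivers both the retraction property $r_{\leq i}\circ \iota_{x_0}|_{\Poly_{\leq k,\leq i}(\XX)}=\operatorname{id}$ and the $\Gamma_{[\ell]}$-equivariance (exactly as verified in \Cref{rXY}); filtration preservation is immediate from that of $s$, $r_{\leq i-1}$, and $r_0$. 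Taking $i=k$ produces the required equivariant retraction $r=r_{\leq k}$.

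The argument introduces no essentially new ingredient: its two pillars, a non-equivariant filtration-preserving retraction into $\Poly_{\leq k}(\XX)$ and the cohomological triviality of the $\Gamma_{[\ell]}$-action on the relevant polynomial groups, are supplied respectively by $k$-purity (via \Cref{exact-crit}) and the large-spectrum hypothesis (via \Cref{fin-coord} and \Cref{integ-finite}). The principal obstacle is bookkeeping: simultaneously preserving the ambient total-degree filtration and the auxiliary $\Gamma_{[\ell]}$-degree filtration while propagating equivariance through every induction step, which proceeds as in the proof of \Cref{rXY}.
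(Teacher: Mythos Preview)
Your proposal is correct and takes essentially the same approach as the paper: both construct the auxiliary retraction $r_0$ by composing a (non-equivariant) retraction from $k$-purity with the projection to $\Gamma_{[\ell]}$-invariants from \Cref{integ-finite}, and then upgrade to equivariance via the same induction on the $\Gamma_{[\ell]}$-degree with the formula $r_{\leq i}=s\circ r_{\leq i-1}^{\oplus\Gamma_{[\ell]}}\circ d_{\Gamma_{[\ell]}}+r_0$. The paper's verification of the retraction property is slightly more explicit (it checks separately that $r_{\leq i}$ and $r_{\leq i-1}$ agree on $B_{\leq i-1}$), but your appeal to the identity $\pi+s\circ d_{\Gamma_{[\ell]}}=\operatorname{id}$ together with the $\Gamma$-equivariance of $\iota_{x_0}$ accomplishes the same thing.
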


\begin{proof}We introduce the increasing sequence of groups
$$ \Poly_{\leq k,\leq 0}(\Gamma) \leq \Poly_{\leq k,\leq 1}(\Gamma) \leq \dots \leq \Poly_{\leq k,\leq k}(\Gamma) = \Poly_{\leq  k}(\Gamma)$$
where $\Poly_{\leq k, \leq i}(\Gamma)$ denotes the polynomials in $\Poly_{\leq  k}(\Gamma)$ that are of degree $\leq i$ with respect to the $\Gamma_{[\ell]}$ action. Let $B=B_{x_0}=\iota_{x_0}(\Poly_{\leq k}(\XX))$ and denote by $B_{\leq i} = B\cap \Poly_{\leq k,\leq i}(\Gamma)$.  For almost every $x_0 \in X$, we have the short exact sequence
\begin{equation}\label{sampling-exact-0}
  0 \to \Poly_{\leq k,\leq i}(\XX)\stackrel{\iota_{x_0}}{\to} B_{\leq i}\to B_{\leq i}/\imath_{x_0}(\Poly_{\leq k, \leq i}(\XX))\rightarrow 0.
\end{equation}
of locally compact $k$-filtered abelian groups.  We show that for each $0 \leq i \leq k$ this sequence splits with a retract homomorphism 
 $$r_{\leq i} \colon B_{\leq i} \to \Poly_{\leq k,\leq i}(\XX)$$ which is additionally $\Gamma_{[\ell]}$-equivariant. Then setting $i=k$ will give the claim.

We first construct an auxiliary retraction.  Consider the short exact sequence
$$ 0 \to \Poly_{\leq k,\leq 0}(\XX)  \stackrel{\iota_{x_0}}{\to} B \to B/\iota_{x_0}(\Poly_{\leq k,\leq 0}(\XX)) \to 0.$$
By \Cref{exact-crit}, we obtain a retract homomorphism from $B$ to $\Poly_{\leq k}(\XX)$, and from  \Cref{integ-finite}, we have a retract homomorphism from $\Poly_{\leq k}(\XX)$ to $\Poly_{\leq k,\leq 0}(\XX)$.  Composing the two, we may thus find a retract homomorphism $r \colon B \to \Poly_{\leq k,\leq 0}(\XX)$ to the above sequence.

We can now build the retract homomorphisms $r_{\leq i}$.  For $i=0$ this is easy: we just restrict $r$ to $B_{\leq 0}$, with the equivariance being obvious as $\Gamma_{[\ell]}$ acts trivially on both $B_{\leq 0}$ and $\Poly_{\leq k,\leq 0}(X)$. Now suppose that $0 < i \leq k$ and that the retract homomorphism $r_{\leq i-1}$ has already been constructed.  To construct $r_{\leq i}$, we will chase the diagram in \Cref{retract-fig}.
\begin{figure}[ht]
\centering
\begin{tikzcd}
0 \arrow[d, shift left = 1ex]\\
    Z^1[\Gamma_{[\ell]}; \Poly_{\leq k,\leq i-1}(\XX)] 
    \arrow[r, hookrightarrow, "\iota_{x_0}^{\oplus \Gamma_{[\ell]}}"', shift right = 1 ex] 
    \arrow[u] 
    \arrow[d, shift left = 1ex, "s"] 
    & Z^1[\Gamma_{[\ell]}; B_{\leq i-1}] 
    \arrow[l, "r_{\leq i-1}^{\oplus \Gamma_{[\ell]}}"', shift right = 1 ex] \\ 
    \Poly_{\leq k, \leq i}(\XX) 
    \arrow[u, "d_{\Gamma_{[\ell]}}"] \arrow[r,hookrightarrow, "\iota_{x_0}"', shift right = 1 ex] 
    \arrow[d, shift left = 1 ex, "r \iota_{x_0}"]  
    & 
    B_{\leq i}
    \arrow[u, "d_{\Gamma_{[\ell]}}"] 
    \arrow[d, hookrightarrow] 
    \arrow[l, dashed, "r_{\leq i}"', shift right = 1 ex]
  \\
    \Poly_{\leq k, \leq 0}(\XX)
   \arrow[u] 
    \arrow[r, hookrightarrow, "\iota_{x_0}"', shift right = 1 ex] \arrow[d, shift left = 1 ex]
    & 
    B 
    \arrow[l, "r"', shift right = 1 ex] 
  \\
    0 \arrow[u]
\end{tikzcd}
    \caption{The construction of the retract homomorphism $r_{\leq i}$ will involve ``chasing'' the indicated diagram.  We caution that this diagram is only partially commutative, although it will be important to note that the left column is exact.}
  \label{retract-fig}
\end{figure} 

If $Q \in \Poly_{\leq k, \leq i}(\XX)$, then $d_{\Gamma_{[\ell]}} Q$ lies in 
$Z^1[\Gamma_{[\ell]}; \Poly_{\leq k,\leq i-1}(\XX)]$.  Conversely, if $q \in Z^1[\Gamma_{[\ell]}; \Poly_{\leq k,\leq i-1}(\XX)]$, then by \Cref{integ-finite}, we have $q = d_{\Gamma_{[\ell]}} Q$ for some $Q \in \Poly_{\leq k}(\XX)$; as $q$ has degree $\leq i-1$ in $\Gamma_{[\ell]}$, $Q$ has degree $\leq i$.  From this we see that $d_{\Gamma_{[\ell]}}$ is a surjection from $\Poly_{\leq k, \leq i}(\XX)$ to $Z^1[\Gamma_{[\ell]}; \Poly_{\leq k,\leq i-1}(\XX)]$, making the upward sequence on the left column of \Cref{retract-fig} short and exact.  The map $r\iota_{x_0}$ is a retract homomorphism for this sequence by the definition of $r$, hence comes with an associated section homomorphism which we label $s$.

We now set $r_{\leq i}$ to be the homomorphism
$$ r_{\leq i} \coloneqq s r_{\leq i-1}^{\oplus \Gamma_{[\ell]}} d_{\Gamma_{[\ell]}} + r;$$
equivalently, for $ \tilde Q \in B_{\leq i}$, $r_{\leq i}(\tilde Q)$ is the unique element of $\Poly_{\leq k, \leq i}(\XX)$ such that
\begin{equation}\label{rb}
 d_{\Gamma_{[\ell]}} r_{\leq i} \tilde Q = r_{\leq i-1}^{\oplus \Gamma_{[\ell]}} d_{\Gamma_{[\ell]}} \tilde Q
\end{equation}
and
\begin{equation}\label{rot}
 r \iota_{x_0}  r_{\leq i} \tilde Q = r \tilde Q.
\end{equation}
If $\tilde Q = \iota_{x_0}(Q)$ for some $Q \in \Poly_{\leq k, \leq i}(\XX)$, then (since $r_{\leq i-1}$ is a $\Gamma_{[\ell]}$-equivariant retract homomorphism)
$$ d_{\Gamma_{[\ell]}} Q = r_{\leq i-1}^{\oplus \Gamma_{[\ell]}} \iota_{x_0}^{\oplus \Gamma_{[\ell]}} d_{\Gamma_{[\ell]}} Q = r_{\leq i-1}^{\oplus \Gamma_{[\ell]}} d_{\Gamma_{[\ell]}} \tilde Q$$
and
$$ r \iota_{x_0} Q = r \tilde Q$$
and hence by the uniqueness of $r_{\leq i} \tilde Q$ we have $r_{\leq i}\tilde Q = Q$; thus $r_{\leq i}$ is a retract.  It remains to show $\Gamma_{[\ell]}$-equivariance.  By \eqref{rb} and the retract nature of $r_{\leq i-1}$, 
$$ \iota_{x_0} \partial_{\gamma_{[\ell]}}r_{\leq i} \tilde Q = \partial_{\gamma_{[\ell]}}\tilde Q$$
for any $\gamma_{[\ell]} \in \Gamma_{[\ell]}$ and $\tilde Q \in  B_{\leq i}$; the same claim is true at $i=0$ since both sides vanish in that case.  If $\tilde Q \in B_{\leq i-1}$, we then conclude that
$$ \iota_{x_0} \partial_{\gamma_{[\ell]}} r_{\leq i} \tilde Q =  \iota_{x_0}\partial_{\gamma_{[\ell]}} r_{\leq i-1} \tilde Q $$
hence by injectivity of $\iota_{x_0}$ we see that $r_{\leq i} \tilde Q$ and $\tilde r_{\leq i-1}(Q)$ differ by an element of $\Poly_{\leq k,\leq 0}(\XX)$.  Using \eqref{rot} (which also holds at $i=0$ as $r$ is a retract) we conclude that $r_{\leq i}$ and $r_{\leq i-1}$ agree on $\Poly_{\leq k,\leq i-1}(\XX)$.  In particular, we have from \eqref{rb} that
$$ d_{\Gamma_{[\ell]}} r_{\leq i} \tilde Q = r_{\leq i-1}^{\oplus \Gamma_{[\ell]}} d_{\Gamma_{[\ell]}} \tilde Q = r_{\leq i}^{\oplus \Gamma_{[\ell]}} d_{\Gamma_{[\ell]}} \tilde Q$$
for all $\tilde Q \in \Poly_{\leq k,\leq i}(\XX)$.  Hence $r_{\leq i}$ commutes with $\partial_{\gamma_{[\ell]}}$ for all $\gamma_{[\ell]} \in \Gamma_{[\ell]}$, giving the required $\Gamma_{[\ell]}$-equivariance.
\end{proof}

Now we split a sequence of polynomial cocycles.


\begin{proposition}[Splitting a polynomial cocycle sequence]\label{cocycle-root} Let $k \geq 1$, and let $\XX$ be a $(k-1)$-pure ergodic $\Gamma$-system with large spectrum.  Then for almost all $x_0 \in X$, the short exact sequence 
$$ 0 \to \Poly^1_{\leq k-1}(\Gamma,\XX) \stackrel{\iota^{\oplus \Gamma}_{x_0}}{\to} \Poly^1_{\leq k-1}(\Gamma,\Gamma) \to \Poly^1_{\leq k-1}(\Gamma,\Gamma) / \iota^{\oplus \Gamma}_{x_0} \Poly^1_{\leq k-1}(\Gamma,\XX) \to 0$$ 
splits finitely in the category of $(k-1)$-filtered locally compact abelian groups.
\end{proposition}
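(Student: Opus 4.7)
The plan is to establish the finite splitting via a recursive construction along the F\o lner sequence $(\Gamma_{[\ell]})_{\ell\ge 0}$ from \eqref{gamma-l}. Unpacking ``splits finitely'' in the style of \Cref{exact-crit}, I must show: for any finite system of relations $\mathcal{R}\subset \Z^n\times\{1,\ldots,k\}$, tuple of cocycles $q=(q_1,\ldots,q_n)\in \Poly^1_{\leq k-1}(\Gamma,\Gamma)^n$, and virtual solutions $Q_{(\vec m,j)}\in \Poly^1_{\leq k-1}(\Gamma,\XX)$ with $\vec m\cdot q \equiv \iota_{x_0}^{\oplus \Gamma}(Q_{(\vec m,j)})\pmod{\Poly^1_{\leq k-j-1}(\Gamma,\Gamma)}$ for each $(\vec m,j)\in\mathcal R$, there exists $q'=(q'_1,\ldots,q'_n)\in \Poly^1_{\leq k-1}(\Gamma,\XX)^n$ with $\vec m\cdot q'\equiv Q_{(\vec m,j)}\pmod{\Poly^1_{\leq k-j-1}(\Gamma,\XX)}$.

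The inductive construction defines $q'_i|_{\Gamma_{[\ell]}}$ by recursion on $\ell$, starting with $q'_i(0)=0$. Given $q'_i$ on $\Gamma_{[\ell-1]}$, it suffices to choose $q'_i(e_\ell)\in \Poly_{\leq k-1}(\XX)$, where $e_\ell$ is the $\ell^{\mathrm{th}}$ standard generator of $\Gamma$ (of order $m_\ell$); the cocycle equation together with the splitting $\Gamma_{[\ell]}=\Gamma_{[\ell-1]}\oplus\langle e_\ell\rangle$ then determines $q'_i$ on the rest of $\Gamma_{[\ell]}$, provided the consistency condition $\Sigma q'_i(e_\ell)=0$ holds, where $\Sigma\coloneqq \sum_{a=0}^{m_\ell-1} T^{a e_\ell}$. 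To produce such a choice I will apply \Cref{retract-invariances} (with its parameter ``$k$'' replaced by $k-1$, using the hypothesis that $\XX$ is $(k-1)$-pure with large spectrum) to the finitely-generated extension
\[
B_\ell \coloneqq \iota_{x_0}(\Poly_{\leq k-1}(\XX)) + \sum_{i=1}^n\sum_{a=0}^{m_\ell-1}\Z\cdot T^{a e_\ell}\,q_i(e_\ell) \;\leq\; \Poly_{\leq k-1}(\Gamma),
\]
obtaining a $\Gamma_{[\ell]}$-equivariant, filtration-preserving retraction $r_\ell\colon B_\ell\to \Poly_{\leq k-1}(\XX)$, and set $q'_i(e_\ell)\coloneqq r_\ell(q_i(e_\ell))$.

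Two key verifications justify this choice: (a) since $q_i$ is a $\Gamma$-cocycle, $\Sigma q_i(e_\ell)=q_i(m_\ell e_\ell)=0$, and the $\Gamma_{[\ell]}$-equivariance of $r_\ell$ forces $\Sigma q'_i(e_\ell)=r_\ell(\Sigma q_i(e_\ell))=0$, making the cocycle extension well-defined; (b) applying $r_\ell$ to the congruence $\vec m\cdot q(e_\ell)\equiv \iota_{x_0}(Q_{(\vec m,j)}(e_\ell))\pmod{\Poly_{\leq k-j-1}(\Gamma)}$ and using $\Z$-linearity, the retraction identity $r_\ell\circ \iota_{x_0}=\mathrm{id}$, and filtration preservation yields $\vec m\cdot q'(e_\ell)\equiv Q_{(\vec m,j)}(e_\ell)\pmod{\Poly_{\leq k-j-1}(\XX)}$. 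The corresponding relation at an arbitrary $\gamma=\gamma_0+s e_\ell\in\Gamma_{[\ell]}$ with $\gamma_0\in\Gamma_{[\ell-1]}$ then follows automatically by expanding $\vec m\cdot q'(\gamma)$ and $Q_{(\vec m,j)}(\gamma)$ via their cocycle equations and invoking the induction hypothesis at $\gamma_0$. Since each inductive step only introduces values at $e_\ell$, earlier values are untouched; passing to the limit via $\Gamma=\bigcup_\ell\Gamma_{[\ell]}$ produces the required $q'$, and filtration preservation of each $r_\ell$ ensures the resulting retraction is filtration-preserving in the $(k-1)$-filtered category.

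The main obstacle is ensuring that the pointwise lifts $q'_i(e_\ell)$, chosen independently at each level, assemble into a genuine $\Gamma$-cocycle; the crucial point is the $\Sigma$-vanishing check. The content of \Cref{retract-invariances} that goes beyond bare $(k-1)$-purity -- namely the $\Gamma_{[\ell]}$-equivariance of the retraction, which is where the large-spectrum hypothesis is ultimately used via \Cref{integ-finite} -- is exactly what makes $r_\ell$ commute with the summation operator $\Sigma$, transporting the identity $q_i(m_\ell e_\ell)=0$ on $\Gamma$ to its counterpart on $\XX$ and thereby permitting the cocycle extension to $\Gamma_{[\ell]}$.
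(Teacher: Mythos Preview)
Your recursion has a genuine gap: to extend a $\Gamma$-cocycle from $\Gamma_{[\ell-1]}$ to $\Gamma_{[\ell]}=\Gamma_{[\ell-1]}\oplus\langle e_\ell\rangle$, the condition $\Sigma q'_i(e_\ell)=0$ alone is not sufficient. Because $\Gamma$ is abelian, the cocycle equation forces the \emph{cross-compatibility}
\[
\partial_{\gamma_0} q'_i(e_\ell) \;=\; \partial_{e_\ell} q'_i(\gamma_0)
\qquad\text{for every }\gamma_0\in\Gamma_{[\ell-1]},
\]
as one sees by computing $q'_i(\gamma_0+e_\ell)$ in two ways. The left-hand side equals $r_\ell(\partial_{\gamma_0} q_i(e_\ell))=r_\ell(\partial_{e_\ell} q_i(\gamma_0))$ by equivariance of $r_\ell$ and the cocycle property of $q_i$. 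But the right-hand side involves $q'_i(\gamma_0)$, which was built at previous stages using the \emph{different} retractions $r_1,\ldots,r_{\ell-1}$, and there is no reason for $\partial_{e_\ell} q'_i(\gamma_0)$ to coincide with $r_\ell(\partial_{e_\ell} q_i(\gamma_0))$. Your construction therefore produces a map $\Gamma\to\Poly_{\le k-1}(\XX)$ that need not satisfy the cocycle equation on $\Gamma_{[\ell]}$.

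The paper's proof avoids this by working one level up: rather than specifying $q'_i(e_\ell)$ directly, it builds an auxiliary polynomial $P_{[\ell]}\in\Poly_{\le k}(\XX)$ and sets $q(\gamma,\cdot)\coloneqq\partial_\gamma P_{[\ell]}$, which is \emph{automatically} a $\Gamma$-cocycle. Concretely, the $\Gamma_{[\ell]}$-equivariant retraction $r$ from \Cref{retract-invariances} is applied to $\partial_\gamma\tilde P$ for all $\gamma\in\Gamma_{[\ell]}$ simultaneously; equivariance makes $\gamma\mapsto r(\partial_\gamma\tilde P)$ a $\Gamma_{[\ell]}$-cocycle, and \Cref{integ-finite} (this is the second essential use of large spectrum) integrates it to some $\bar P\in\Poly_{\le k}(\XX)$. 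The compatibility $d_{\Gamma_{[\ell-1]}}P_{[\ell]}=d_{\Gamma_{[\ell-1]}}P_{[\ell-1]}$ is then arranged by a correction term $P''$, again produced via \Cref{integ-finite}. Your approach could be repaired along these lines, but doing so amounts to reintroducing the antiderivative $P_{[\ell]}$; the cross-compatibility obstruction is precisely the statement that $\gamma_0\mapsto\partial_{e_\ell}q'_i(\gamma_0)$ must itself be integrated before $q'_i(e_\ell)$ can be chosen consistently.
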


\begin{proof}  By \Cref{puresplit}, it suffices to show that $\imath_{x_0}^{\oplus \Gamma}(\Poly^1_{\leq k-1}(\Gamma,\XX)) \leq \Poly_{\leq k-1}^1(\Gamma,\Gamma)$ is pure. Let $\mathcal{R}\subset \Z^n\times \{1,\ldots,k\}$  be a system of relations in $n$-variables and let $\tilde{p}=(\tilde{p}_1,\ldots,\tilde{p}_n)$ with $\tilde{p}_i\in \Poly^1_{\leq k-1}(\Gamma,\Gamma)$ be such that for all $(\vec{m};j)\in \mathcal{R}$ there exists some $q_{(\vec{m};j)}\in \Poly^1_{\leq k-1}(\Gamma,\XX)$ such that
\begin{equation}\label{eq-(i)}
    \imath_{x_0}^{\oplus \Gamma}q_{(\vec{m};j)} = \vec{m}\cdot \tilde{p} + \Poly_{\leq k-j-1}^1(\Gamma,\Gamma).
\end{equation}
    We need to find some $q=(q_1,\ldots,q_n)$ in $\Poly^1_{\le k-1}(\Gamma,\XX)$ such that 
    \begin{equation}\label{needtoshow}
    q_{(\vec{m};j)}-\vec{m}\cdot q \in \Poly_{\leq k-j-1}^1(\Gamma,\XX).
    \end{equation}
    Let $\tilde P = (\tilde{P}_1,\ldots,\tilde{P}_n) \in \Poly_{\leq k}(\Gamma)$ be the polynomials $\tilde{P}(\gamma) \coloneqq \tilde{p}(\gamma,0_\Gamma)$. We see that  $\tilde p = d_\Gamma \tilde P$, thus
\begin{equation}\label{nmult}
\imath_{x_0}^{\oplus \Gamma}q_{(\vec{m};j)} = \vec{m}\cdot d_\Gamma\tilde{P} + \Poly_{\leq k-j-1}^1(\Gamma,\Gamma).
\end{equation}
We will solve \eqref{needtoshow} ``locally'' at first, and then glue together the local solutions to create a global solution.\\
Recall the F\o lner sequence \eqref{exhaust} of finite subgroups $\Gamma_{[\ell]}$.  We will recursively construct, for each $\ell$, a solution $P_{[\ell]} = (P_{[\ell],1},\ldots, P_{[\ell],n}) \in \Poly_{\leq k}(\XX)$ to the equation
\begin{equation}\label{ngam}
 q_{(\vec{m};j)}(\gamma,\cdot)=\vec{m} \cdot \partial_\gamma  P_{[\ell]} \mod \Poly_{\leq k-j}(\XX)
\end{equation}
for all $\gamma\in \Gamma_{[\ell]}$, in such a manner that we have the compatibility condition
\begin{equation}\label{compat0}
 d_{\Gamma_{[\ell-1]}} P_{[\ell]} = d_{\Gamma_{[\ell-1]}} P_{[\ell-1]}
 \end{equation}
for all $\ell \geq 1$. Assuming this, we see that for all $\gamma\in \Gamma$, we can choose $\ell$ sufficiently large and set $q(\gamma,x) = \partial_\gamma P_{[\ell]}$. By \eqref{compat0} this definition is independent of the choice of $\ell$ for sufficiently large $\ell$ and so $q$ is well defined and a cocycle (because each two $\gamma_1,\gamma_2\in \Gamma$ belong to some $\Gamma_{[\ell]}$ for sufficiently large $\ell$.). From \eqref{ngam} we will then have that $q$ satisfies \eqref{needtoshow}, thus completing the proof.

It remains to construct the $P_{[\ell]}$.  For $\ell=0$ we can just take $P_{[0]}=0$.  Now suppose inductively for $\ell>0$ that $P_{[\ell-1]}$ has already been constructed. Let $B_{\leq k-1}$ denote the subgroup of $\Poly_{\leq k-1}(\Gamma)$ generated by $\partial_\gamma \tilde{P}$, for all $\gamma\in \Gamma_{[\ell]}$ and all of $\imath_{x_0}(\Poly_{\leq k-1}(\XX))$. Since $\Poly_{\leq k-1}(\Gamma)$ is of bounded exponent (\Cref{mtimes}), $\imath_{x_0}(\Poly_{\leq k-1}(\XX))\subseteq B_{\leq k-1}$ is of finite index, and we conclude from \Cref{retract-invariances}, that we can find a retract homomorphism $r \colon B_{\leq k-1} \to \Poly_{\leq k-1}(\XX)$ which is $\Gamma_{[\ell]}$-equivariant.  For almost all $x_0\in X$, we will chase the (partially commutative) diagram in \Cref{root-fig}.
\begin{figure}[ht]
\centering
\begin{tikzcd}
    Z^1[\Gamma_{[\ell]};\Poly_{\leq k-2}(\XX)] \arrow[r, hookrightarrow, "\iota_{x_0}^{\oplus \Gamma_{[\ell]}}"] & Z^1[\Gamma_{[\ell]};B_{\leq k-2}] \\ 
    \Poly_{\leq k-1}(\XX) \arrow[u, "d_{\Gamma_{[\ell]}}"] \arrow[r,hookrightarrow, "\iota_{x_0}"', shift right = 1 ex]  & B_{\leq k-1} \arrow[u, "d_{\Gamma_{[\ell]}}"] \arrow[l,rightarrow, "r"', shift right = 1 ex]\\
    \Poly_{\leq k-1}(\XX) \arrow[u, "\vec{m}"] \arrow[r, hookrightarrow, "\iota_{x_0}"', shift right = 1 ex] & B_{\leq k-1} \arrow[l, "r"', shift right = 1 ex] \arrow[u, "\vec{m}"]   
\end{tikzcd}
    \caption{The diagram that we will chase to construct $P_{[\ell]}$, where $n$ denotes the operation of multiplication by $n$.  We caution that this diagram is only partially commutative, and the sequences are not expected to be exact. The reader is invited to annotate this diagram as the proof progresses, for instance starting with $\tilde P$ in the bottom right group, $P_{[\ell-1]}$ in the bottom left, and $q$ in the top left.}
  \label{root-fig}
\end{figure} 
By $\Gamma_{[\ell]}$-equivariance of $r$, the map
$\gamma \mapsto r(\partial_\gamma \tilde P)$ 
is a cocycle on $\Gamma_{[\ell]}$. Thus, by \Cref{integ-finite} we can find a polynomial $\bar{P}\in \Poly_{\leq k}(\XX)$ such that 
\begin{equation}\label{newP}
r(\partial_\gamma \tilde{P}) = \partial_\gamma \bar{P}
\end{equation}
for all $\gamma\in \Gamma_{[\ell]}$. 
We consider the polynomial $P' \in \Poly_{\leq k}(\XX)$ defined by 
$$P' \coloneqq \bar{P}- P_{[\ell-1]}.$$
Then for any $\gamma \in \Gamma_{[\ell-1]}$, from the retract properties of $r$, together with \eqref{ngam} for $\ell-1$, we see that the following equalities hold modulo $\Poly_{\leq k-j-1}(\Gamma)$. 
\begin{align*}
\iota_{x_0}(\vec{m}\cdot\partial_\gamma P') &= \imath_{x_0}(\vec{m}\cdot \partial_{\gamma}\overline{P}) - \imath_{x_0}(\vec{m}\cdot \partial_\gamma P_{[\ell-1]})\\&=\vec{m} \cdot  \iota_{x_0}r(\partial_\gamma\tilde{P})- \iota_{x_0}^{\oplus\Gamma} q_{(\vec{m},j)}\\
&= 0,
\end{align*}
where the last equality follows from \eqref{nmult}. Since $\imath_{x_0}^{\oplus \Gamma}q_{\vec{m},j}$, $\partial_\gamma\tilde{P}$ are polynomials of degree $\leq k-1$, we may apply $r$ to both of these functions whenever $\gamma\in \Gamma_{[\ell]}$. Since the difference is in $\Poly_{\leq k-j-1}(\Gamma)$, and $r$ preserves the degrees, we see that
$$q_{(\vec{m},j)}(\gamma,\cdot) = \vec{m}\cdot r(\partial_\gamma \tilde{P})$$ modulo $\Poly_{\leq k-j-1}(\XX)$. Now, taking $\imath_{x_0}$ on both sides the claim follows.

By the injectivity of $\iota$, we conclude that $\vec{m} \cdot \partial_\gamma P' \in \Poly_{\leq k-j-1}(\XX)$ for all $\gamma\in \Gamma_{[\ell]}$. Applying  \Cref{integ-finite} for each polynomial in $\gamma\mapsto \partial_\gamma P'$, we can find a polynomial $P'' = (P''_1,\ldots,P''_n) \in \Poly_{\leq k}(\XX)$ such that 
\begin{equation}\label{p-p'}
\partial_\gamma P' = \partial_\gamma P''
\end{equation} and 
\begin{equation}\label{residue}
    \vec{m}\cdot P''\in \Poly_{\leq k-j}(\XX)
\end{equation} where the latter is true since $\vec{m}\cdot P''$ is exactly the function defined as in \eqref{antiderivative} for $\partial_\gamma \vec{m} \cdot P'$.\\
We now define 
\begin{equation}\label{pell-def}
P_{[\ell]} \coloneqq P_{[\ell-1]} + P' - P'' \in \Poly_{\leq k}(\XX).
\end{equation}
From \eqref{p-p'} we clearly have \eqref{compat0}.  For $\gamma \in \Gamma_{[\ell]}$, modulo $\Poly_{\leq k-j-1}(\XX)$ we have
\begin{align*}
   \iota_{x_0}( \vec{m}\cdot \partial_\gamma P_{[\ell]} ) &= \vec{m}\cdot \iota_{x_0} \partial_\gamma P_{[\ell-1]}  + \vec{m}\cdot \iota_{x_0} \partial_\gamma P'  - \iota_{x_0} \partial_\gamma(\vec{m}\cdot P'')  \\
    &= \vec{m}\cdot \iota_{x_0} \partial_\gamma P_{[\ell-1]}  + \vec{m}\cdot \iota_{x_0} \partial_\gamma\bar P - \vec{m}\cdot\iota_{x_0} \partial_\gamma P_{[\ell-1]} - \vec{m}\cdot \partial_\gamma P''\\
    &= \vec{m}\cdot \imath_{x_0}r(\partial_\gamma \tilde P)\\
    &= \iota_{x_0} (q_\gamma), 
\end{align*}
where the last equation again follows from \eqref{nmult}, as before. Thanks to the equivariance and retract properties of $r$ and \eqref{eq-(i)}.  By the injectivity of $\iota_{x_0}$, we conclude \eqref{ngam}, and this closes the induction. 
\end{proof}

\section{Retraction with global invariance}\label{invariant}

We introduce the following notion of relative purity. At the end of this paper (see the proof of \Cref{technical} in \Cref{proof}), we will prove that a relatively pure extension of a pure system is pure. 

\begin{definition}[Relative purity]
    Let $k\geq 1$ and let $\XX=\YY\times_\rho U$ be an abelian extension of ergodic $\Gamma$-systems. 
    We say that $\XX$ is \emph{$k$-pure relative to $\YY$} if $(d_\Gamma\Poly_{\leq k}(\XX))^U$ is a pure $k$-filtered locally compact abelian subgroup of $\Poly_{\leq k-1}^1(\Gamma,\XX)$. 
\end{definition}

The following technical result is needed in the key cyclic linearization result (\Cref{cycliclinearization}).

\begin{theorem}[Equivariant filtration preserving retractions]\label{EDPretraction}
    Let $k\geq 1$ and let $\XX=\YY\times_\rho U$ be an abelian extension of ergodic $\Gamma$-systems with $\rho$ exact and taking values in a cyclic group $U=\Z/m\Z$. Suppose that $\XX$ is both $k$-pure and $k$-pure relative to $\YY$. Let $n\in\N$, $\{p_1,\ldots,p_n\}\subset \Poly_{\leq k-1}^1(
    \Gamma,\XX)$, and $\{d_\Gamma Q_1,\ldots,d_\Gamma Q_n\}\subseteq d_\Gamma \Poly_{\leq k}(\XX)$. Let $A$ be the group generated by $d_\Gamma Q_1,\ldots,d_\Gamma Q_n$, their $U$-translations, and $(d_\Gamma\Poly_{\leq k}(\XX))^U$ and let $B$ be the group generated by $p_1,\ldots,p_n$, their $U$-translations and $A$. Then there is a $U$-equivariant filtration preserving retraction $r:B\rightarrow A$.   
\end{theorem}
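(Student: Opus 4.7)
My construction of the desired retraction $r \colon B \to A$ will amount to choosing, for each $i = 1, \ldots, n$, an element $q_i = r(p_i) \in A$ of filtration degree at most that of $p_i$; $U$-equivariance will then force $r(u^j p_i) = u^j q_i$, and I will extend $r$ linearly over $A$. Writing $K \coloneqq \{(c_i) \in \Z[U]^n : \sum_i c_i p_i \in A\}$ for the ``syzygy'' $\Z[U]$-submodule, the assignment $r$ is a well-defined retraction on $B$ if and only if the canonical $\Z[U]$-equivariant map $\phi \colon K \to A$, $\phi((c_i)) = \sum_i c_i p_i$, extends to a filtration-preserving $\Z[U]$-homomorphism $\tilde\phi \colon \Z[U]^n \to A$; I then set $q_i \coloneqq \tilde\phi(e_i)$. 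Since $B/A$ is finitely generated over the finite (and hence Noetherian) group ring $\Z[U]$, the submodule $K$ is also finitely generated, so the extension problem reduces to a finite system of linear equations in the unknowns $q_i$ over $A$.

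To produce $\tilde\phi$, I plan to combine the two purity hypotheses as follows. By $k$-purity of $\XX$ together with \Cref{poly-root}, the subgroup $D \coloneqq d_\Gamma \Poly_{\leq k}(\XX)$ is a pure filtered subgroup of $\Poly_{\leq k-1}^1(\Gamma, \XX)$; by relative $k$-purity, $D^U$ is also a pure filtered subgroup. One has the chain $D^U \subseteq A \subseteq D$, and in fact $A^U = A \cap D^U = D^U$ since $A \subseteq D$ and $D^U \subseteq A$. For each generator of $K$, I would decompose the corresponding element of $A$ into a $U$-invariant part lying in $D^U$ and a ``free'' part expressed as a $\Z[U]$-combination of the $d_\Gamma Q_j$'s already sitting inside $A$. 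The $U$-invariant part can be lifted, filtration-preservingly, using the purity of $D^U$ via \Cref{puresplit}, while the free part is handled using the purity of $D$ together with the explicit presence of the $d_\Gamma Q_j$ and their $U$-translates in $A$. Interleaving these finitely many lifts should yield the required $q_i \in A$ of the correct filtration degrees.

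The hardest part will be the simultaneous management of the filtration, the $\Z[U]$-equivariance, and the retraction condition. Conceptually, I need to show that the extension class of $0 \to A \to B \to B/A \to 0$ in the appropriate filtered $\Z[U]$-module $\mathrm{Ext}^1$ vanishes: $k$-purity handles the underlying filtered splitting of abelian groups, while relative $k$-purity — by pinning down the pure embedding of the $U$-invariants $D^U$ in $\Poly_{\leq k-1}^1(\Gamma,\XX)$ — kills the $U$-cohomological obstruction to equivariance. The finiteness of $U = \Z/m\Z$ and of the generating set of $B/A$ reduces the problem to a finite diagram chase, but verifying that the interlacing of the two purity splittings simultaneously preserves the filtration and respects the $U$-action is delicate and is where I expect the main technical effort to lie.
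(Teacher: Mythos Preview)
Your outline is really only a plan, and the hard step you flag at the end is exactly the one the paper's argument is designed to handle; as written, the plan has a genuine gap.

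The most concrete symptom is that you never use the hypothesis that $\rho$ is exact. In the paper's proof this is essential: the argument inducts on the \emph{$U$-polynomial degree} (the degree with respect to the vertical $U$-action), and the inductive step requires integrating a $U$-cocycle of polynomials back to a single polynomial of the correct $\Gamma$-degree, which is precisely what \Cref{poly-integ} supplies under the exactness assumption. Your $\Z[U]$-module formulation has no analogue of this step. Concretely, the paper builds $r_{\leq i}\colon B_{\leq i}\to A_{\leq i}$ by first applying $d_U$ (dropping $U$-degree by one), then the already-constructed equivariant $r_{\leq i-1}$, and then a cross-section $s_{\leq i}$ of $0\to D^U\to A_{\leq i}\to d_U A_{\leq i}\to 0$ obtained from relative $k$-purity via \Cref{puresplit}; the formula is $r_{\leq i}=s_{\leq i}\,r_{\leq i-1}^{\oplus U}\,d_U+r_0$, where $r_0\colon B\to D^U$ is an initial (non-equivariant) retraction coming from relative purity. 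The point is that $d_U$ interacts with the $\Gamma$-polynomial filtration through the \emph{type} (weight) filtration on $U$, and exactness of $\rho$ is what makes the $d_U$/integrate round trip filtration-preserving.

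Your proposed ``interleaving'' of the two purity splittings does not supply this mechanism. Purity of $D$ and of $D^U$ in $\Poly^1_{\leq k-1}(\Gamma,\XX)$ lets you solve finite filtered systems with targets in $D$ or $D^U$, but the unknowns $q_i$ you seek must lie in $A$, and you have not argued that $A$ is pure in $B$ (it need not be in any obvious way), nor have you given a device to upgrade a merely filtration-preserving retraction to a $U$-equivariant one. The paper's induction on $U$-degree is exactly that device: at degree~$0$ equivariance is free, and each step inherits equivariance from the previous one because $d_U r_{\leq i}=r_{\leq i-1}^{\oplus U}d_U$. If you want to salvage the $\Z[U]$-module viewpoint, you would need a filtered analogue of ``$\Z[U]$-purity'' together with an integration lemma playing the role of \Cref{poly-integ}; absent that, I would recommend reorganizing along the $U$-degree induction.
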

\begin{proof}
By assumption, $(d_\Gamma\Poly_{\leq k}(\XX))^U$ is pure in $\Poly_{\leq k-1}^1(\Gamma,\XX)$. Since $B$ is generated by $(d_\Gamma\Poly_{\leq k}(\XX))^U$ and finitely many elements, we obtain from \Cref{puresplit} a filtration preserving (but not necessarily equivariant) retract $$r_0:B\rightarrow (d_\Gamma\Poly_{\leq k}(\XX))^U.$$

For each $0\leq i \leq k$, let $B_{\leq i}\leq B$ denote the subgroup of $B$ of polynomials of degree $\leq i$ with respect to the $U$-action, and similarly define $A_{\leq i} \leq A$.  By construction, the restriction of $r_0$ to $B_{\leq 0}$ is $U$-equivariant. 
Now, by induction on $i$, we construct a filtration preserving $U$-equivariant retraction $r_{\leq i} \colon   B_{\leq i}\rightarrow A_{\leq i}$; then setting $r=r_{\leq k}$ proves the claim. Assume we have already constructed $r_{\leq i-1}$. Let $q\in B_{\leq i}$. Since $\partial_u q \in B_{\leq i-1}$ and the $U$-equivariance of $r_{\leq i-1}$, we have that $u\mapsto r_{\leq i-1}(\partial_uq)$ is a $U$-cocycle. Furthermore, since $r_{\leq i-1}$ preserves the filtrations, we also have that
$$r_{\leq i-1}(\partial_u q) = d_\Gamma Q_u$$ where $d_\Gamma Q_u \in d_\Gamma\Poly_{\leq k-\weight(u)}(\XX)\cap A_{\leq i-1}$. Let $e$ be a generator of $\Z/m\Z$. Since $u\mapsto d_\Gamma Q_u$ is a cocycle, ergodicity implies that $\Sigma_e Q_e\coloneqq \sum_{i=0}^{m-1}Q_e\circ V_{ie} \in \mathbb{T}$ is a constant. By divisibility of $\mathbb{T}$, we can modify $Q_u$ by subtracting a constant $c_u$ such that now $u\mapsto d_\Gamma Q_u$ is a cocycle, while still having $d_\Gamma Q_u \in d_\Gamma \Poly_{\leq k-\weight(u)}(\XX)\cap A_{\leq i-1}$ and $r_{\leq i-1}(\partial_u q) = d_\Gamma Q_u$. By \Cref{poly-integ}, there is $Q\in \Poly_{\leq k}(\XX)$ such that $Q_u = \partial_u Q$ and $d_\Gamma Q \in A_{\leq i}$. In other words, $r_{\leq i-1}^{\oplus U} d_U$ takes values in $d_Ud_\Gamma A_{\leq i}$. 

Now consider the short exact sequence

$$0\rightarrow (d_\Gamma\Poly_{\leq k}(\XX))^U\rightarrow A_{\leq i}\overset{d_U}\rightarrow d_U A_{\leq i}\rightarrow 0.$$

Since $A_{\leq i}/(d_\Gamma\Poly_{\leq k}(\XX))^U$ is finitely generated, by \Cref{puresplit}, there exists a cross-section of filtered groups $s_{\leq i} \colon  d_U(A_{\leq i}) \rightarrow A_{\leq i}$ such that $r_0+s_{\leq i}\circ d_U = \mathrm{Id}$ on $A_{\leq i}$. We now set
$$r_{\leq i} \coloneqq s_{\leq i} r_{\leq i-1}^{\oplus U} d_U + r_0.$$
To complete the proof, we claim that $r_{\leq i}$ is a filtration preserving and $U$-equivariant retraction. Since $r_{\leq i-1},s_{\leq i}$, and $r_0$ are filtration preserving it follows from the definition that so is $r_{\leq i}$. The direct computation  Since
$$d_U r_{\leq i} = d_U s_{\leq i} r_{\leq i-1}^{\oplus U} d_U + d_U r_0 = r_{\leq i-1}^{\oplus U}d_U,$$ shows that $r_{\leq i}$ is $U$-equivariant. Finally, in order to verify that $r_{\leq i}$ is a retract we induct on $i$. If $t\in A_{\leq 0}$, then $r_{\leq i}(t)=r_0(t)=t$. Let $t\in A_{\leq i}$ and assume inductively that $r_{\leq i-1}^{\oplus U}(d_Ut) = d_Ut$. By the choice of the cross-section $s_{\leq i}$, $r_{\leq i}(t) = s_{\leq i}(d_U t) + r_0(t) = t$, as required.
\end{proof}

\subsection{The cyclic linearization lemma}

The following technical lemma plays a crucial role in this paper.

\begin{lemma}[Cyclic linearization]\label{cycliclinearization} Let $k\geq 0$ and let $\XX = \YY \times_{\rho} U$ be an ergodic $\Gamma$-system of order $\leq k$ with $\rho$ exact and $U=\Z/m\Z$ a cyclic group. Suppose that $\XX$ is both $k$-pure and $k$-pure relative to $\YY$. Let $-1\leq d<k$, let $n\geq 1$, let $\mathcal{R}$ be a finite system of relations in $n$-variables of type at most $d+1$, and let $\sigma = (\sigma_1,\ldots,\sigma_n) \in Z^1(\Gamma,\XX)$. If for all $u \in U$ and for all $(\vec{m},j)\in \mathcal{R}$,
\begin{equation} \label{generalCL}
\partial_u \sigma \in \Poly^1_{\leq d-\weight(u)}(\Gamma,\XX) + d_\Gamma \mathcal{M}(\XX)
\end{equation}
and
\begin{equation}\label{mCL}
\partial_u (\vec{m} \cdot \sigma) \in \Poly^1_{\leq d-j-\weight(u)}(\Gamma,\XX) + d_\Gamma\mathcal{M}(\XX).
\end{equation} 
Then there exists $\sigma'=(\sigma'_1,\ldots,\sigma'_n) \in Z^1(\Gamma,\XX)$ cohomologous to $\sigma$ such that for all $u \in U$ and for all $(\vec{m},j)\in \mathcal{R}$, 
$$ \partial_u \sigma' \in \Poly^1_{\leq d-\weight(u)}(\Gamma,\XX)$$ 
and
$$\partial_u (\vec{m} \cdot \sigma') \in \Poly^1_{\leq d-j-\weight(u)}(\Gamma,\XX).$$
\end{lemma}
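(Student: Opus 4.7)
The plan is to construct a coboundary $d_\Gamma F$ (with $F\in\mathcal M(\XX)^n$) such that $\sigma':=\sigma-d_\Gamma F$ has the required polynomial derivatives in $\Gamma$; the function $F$ will be produced via the cocycle integration lemma \Cref{HK-C8} once the polynomial and coboundary parts of $\partial_u\sigma$ are simultaneously linearized in $u$ using the equivariant retraction theorem \Cref{EDPretraction}.

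First, I would use \eqref{generalCL} and \eqref{mCL} to pick, for each $u\in U$, $1\le i\le n$, and $(\vec m,j)\in\mathcal R$, decompositions
\begin{align*}
\partial_u\sigma_i &= p_u^{(i)}+d_\Gamma F_u^{(i)},\\
\vec m\cdot p_u &= q_{u,\vec m,j}+d_\Gamma Q_{u,\vec m,j},
\end{align*}
with $p_u^{(i)}\in\Poly^1_{\le d-\weight(u)}(\Gamma,\XX)$, $\vec m\cdot p_u:=\sum_i m_i p_u^{(i)}$, $q_{u,\vec m,j}\in\Poly^1_{\le d-j-\weight(u)}(\Gamma,\XX)$, and $Q_{u,\vec m,j}\in\Poly_{\le d-\weight(u)+1}(\XX)$ (automatic since the coboundary $d_\Gamma Q_{u,\vec m,j}=\vec m\cdot p_u-q_{u,\vec m,j}$ is a polynomial cocycle, whence its antiderivative is polynomial of one higher degree). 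Since $U=\Z/m\Z$ is cyclic with a generator $e$, I fix the choices at $u=e$; for $u=ke$ with $0<k<m$, comparing the $\Gamma$-cocycle identity $\partial_{ke}\sigma_i=\sum_{l<k}V_{le}\partial_e\sigma_i$ with the chosen decomposition at $u=ke$ yields a discrepancy $\sum_{l<k}V_{le}p_e^{(i)}-p_{ke}^{(i)}=d_\Gamma G_{ke}^{(i)}$ with $G_{ke}^{(i)}\in\Poly_{\le d-\weight(e)+1}(\XX)$.

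Next, I apply \Cref{EDPretraction} with $\{p_e^{(i)}\}_{i=1}^n$ in the role of the $p_i$'s and with the $d_\Gamma Q_i$'s taken to be the finite collection $\{d_\Gamma G_{ke}^{(i)}\}_{k,i}\cup\{d_\Gamma Q_{ke,\vec m,j}\}_{k,(\vec m,j)}$. This produces a $U$-equivariant, filtration-preserving retraction $r\colon B\to A$ where $A\subseteq d_\Gamma\Poly_{\le k}(\XX)$ is generated by those coboundaries, their $U$-translates, and $(d_\Gamma\Poly_{\le k}(\XX))^U$. Setting $p_e^{\prime(i)}:=p_e^{(i)}-r(p_e^{(i)})$, writing $r(p_e^{(i)})=d_\Gamma R_e^{(i)}$ with $R_e^{(i)}\in\Poly_{\le d-\weight(e)+1}(\XX)$, updating $F_e^{\prime(i)}:=F_e^{(i)}+R_e^{(i)}$, and extending by $p_{ke}^{\prime(i)}:=\sum_{l<k}V_{le}p_e^{\prime(i)}$ and $F_{ke}^{\prime(i)}:=\sum_{l<k}V_{le}F_e^{\prime(i)}$, three verifications are carried out. (a) $p^{\prime(i)}$ is a $U$-cocycle in $u$: the telescoping sum $\sum_{l<m}V_{le}p_e^{(i)}=-d_\Gamma\sum_{l<m}V_{le}F_e^{(i)}$ is a $U$-invariant polynomial coboundary and hence lies in $(d_\Gamma\Poly_{\le k}(\XX))^U\subseteq A$, so it is fixed by $r$, and $U$-equivariance then gives $\sum_{l<m}V_{le}p_e^{\prime(i)}=0$. (b) Using $\sum_{l<k}V_{le}p_e^{(i)}=p_{ke}^{(i)}+d_\Gamma G_{ke}^{(i)}$ and $d_\Gamma G_{ke}^{(i)}\in A$, the element $p_u^{\prime(i)}$ simplifies to $p_{ke}^{(i)}-r(p_{ke}^{(i)})\in\Poly^1_{\le d-\weight(u)}(\Gamma,\XX)$ by filtration preservation. (c) The same reasoning applied to $\vec m\cdot p_u$, with $d_\Gamma Q_{ke,\vec m,j}\in A$, gives $\vec m\cdot p_u^\prime=q_{ke,\vec m,j}-r(q_{ke,\vec m,j})\in\Poly^1_{\le d-j-\weight(u)}(\Gamma,\XX)$.

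Finally, since $\partial_u\sigma_i$ and $p_u^{\prime(i)}$ are both $U$-cocycles in $u$, so is $d_\Gamma F_u^{\prime(i)}$; commuting $V_u$ with $d_\Gamma$ shows that $F_u^{\prime(i)}$ itself is a $U$-cocycle modulo $\Gamma$-invariants (which are constants by ergodicity), and since $H^2(\Z/m\Z,\T)=0$ these constants can be absorbed by shifting $F_u^{\prime(i)}$ by a suitable $\kappa(u)\in\T$. \Cref{HK-C8}, applied to the free $U$-action on $\XX$, then produces $F^{(i)}\in\mathcal M(\XX)$ with $F_u^{\prime(i)}=\partial_u F^{(i)}$; setting $\sigma':=\sigma-d_\Gamma(F^{(1)},\dots,F^{(n)})$ yields $\partial_u\sigma_i'=p_u^{\prime(i)}$ with the required polynomial structure. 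The main obstacle is the design of $A$: it must simultaneously contain the ``gluing'' coboundaries $d_\Gamma G_{ke}^{(i)}$ (to enforce the per-$u$ degree bound after the retraction), the $\vec m$-coboundaries $d_\Gamma Q_{ke,\vec m,j}$ (to enforce the relations), and the $U$-invariants $(d_\Gamma\Poly_{\le k}(\XX))^U$ (so that the telescoping sum vanishes and $p'$ is a genuine $U$-cocycle), so that a single invocation of \Cref{EDPretraction} delivers a retraction with all the required equivariance and degree properties at once.
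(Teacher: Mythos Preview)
Your proof is correct and takes a genuinely different route from the paper's. The paper proceeds by a \emph{backwards induction} on the weight levels: for each $1\le i\le k+1$ it records the decomposition $\partial_{m_ie}\sigma=p_i-d_\Gamma F_i$ at the distinguished multiples $m_ie$ (where $m_i$ is minimal with $m_ie\in U_{>i-1}$), and then inductively constructs cohomologous cocycles $\sigma^{(\ell)}$ with $F_i^{(\ell)}=0$ for all $i\ge\ell$, descending from $\ell=k+1$ to $\ell=1$. At each step the telescoping identity between levels $\ell$ and $\ell+1$ is fed into \Cref{EDPretraction}, so the retraction is invoked roughly $k$ times, once per layer of the type filtration.

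By contrast, you load all the constraints into a single retraction: the group $A$ is enlarged to contain not only $(d_\Gamma\Poly_{\le k}(\XX))^U$ but also the gluing coboundaries $d_\Gamma G_{ke}^{(i)}$ (which encode the discrepancies between the telescoped sums and the chosen decompositions at each $ke$) and the relation coboundaries $d_\Gamma Q_{ke,\vec m,j}$. One invocation of \Cref{EDPretraction} then delivers a $U$-equivariant filtration-preserving $r$, and the identities $p_{ke}'=p_{ke}-r(p_{ke})$ and $\vec m\cdot p_{ke}'=q_{ke,\vec m,j}-r(q_{ke,\vec m,j})$ fall out at once. The key check, that $p_{ke}^{(i)}$ and $q_{ke,\vec m,j}$ actually lie in $B$ so that $r$ may be applied to them, follows because each is a $\Z$-combination of $U$-translates of $p_e^{(i)}$ and an element of $A$.

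What each approach buys: the paper's layered induction is more modular and makes the interaction with the type filtration explicit level by level, which matches the exposition in \Cref{sec:overview}. Your approach is more economical (a single retraction) and shows that the full strength of \Cref{EDPretraction} already suffices without the auxiliary bookkeeping of $(1)$--$(4)$. The final integration step (adjusting $F_e'$ by a constant so that $\sum_{l<m}V_{le}F_e'=0$, then invoking \Cref{HK-C8}) is essentially the same in both arguments; your appeal to $H^2(\Z/m\Z,\T)=0$ is correct but can be replaced by the direct observation that the obstruction is a single constant, removable by shifting $F_e'$ by an $m$-th root.
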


\begin{proof}
    Let $e\in U$ be a generator. For $1\leq i\leq k+1$, define 
    \[
    m_i=\min\{n\in \N\colon ne \in U_{>i-1}\}.
    \]
    Note that $m_i$ divides $m_{i+1}$ for $i=1,\ldots,k$. Moreover, since $X$ is of order $\leq k$, $U_{>k}$ is trivial, and so $m_{k+1}=m$.
    From \eqref{generalCL}, we have
    \begin{equation}\label{rhoi}
    \partial_{m_ie}\sigma = p_i - d_\Gamma F_i
    \end{equation}
    where $p_i\in \Poly^1_{\leq d-i}(\Gamma,\XX)$ and $F_i\in \mathcal{M}(\XX)$. 
    First, we claim that we can assume without loss of generality that for all $(\vec{m},j)\in\mathcal{R}$, 
    \begin{equation}\label{wlog}
    \vec{m}\cdot p_i\in \Poly_{\leq d-j-i}^1(\Gamma,\XX).
    \end{equation} 
    Indeed, from \eqref{mCL} and since the $p_i$'s are polynomial, we have that $$\vec{m}\cdot p_i \in \Poly_{\leq d-j-i}^1(\Gamma,\XX) + d_\Gamma \Poly_{\leq d}(\XX).$$ Fix $1\leq i \leq k+1$, and let $B$ be the group generated by $d_\Gamma \Poly_{\leq d}(\XX)$ and the $n$ coordinate functions of $p_i$. By \Cref{poly-root}, there exists a filtration preserving retraction $r:B\rightarrow d_\Gamma \Poly_{\leq d}(\XX)$. Write $r(p_i) = d_\Gamma F'_i$ (where we apply $r$ coordinate wise), replacing $p_i$ with $p_i-r(p_i)$ and $F_i$ with $F_i-F'_i$ we have that \eqref{wlog} and \eqref{rhoi} hold simultaneously.

    Assuming \eqref{wlog} holds, we construct, by backwards induction on $1\leq \ell\leq k+1$, cocycles $\sigma^{(\ell)} = (\sigma^{(\ell)}_1,\ldots,\sigma^{(\ell)}_n)$ in $Z^1(\Gamma,\XX)$, polynomial cocycles $p_i^{(\ell)} = (p_{i,1}^{(\ell)},\ldots,p_{i,n}^{(\ell)})$ in $\Poly^1_{\leq d-i}(\Gamma,\XX)$, and measurable functions $F_i^{(\ell)} = (F_{i,1}^{(\ell)},\ldots,F_{i,n}^{(\ell)})$ in $\mathcal{M}(\XX)$ such that:
    \begin{itemize}
        \item[(1)] $\sigma^{(\ell)}$ are cohomologous to $\sigma$;
        \item[(2)] $\vec{m}\cdot p_i^{(\ell)}\in \Poly^1_{\leq d-i-j}(\Gamma,\XX)$ for all $(\vec{m},j)\in\mathcal{R}$ and $1\leq i\leq k+1$;
        \item[(3)] $\partial_{m_ie}\sigma^{(\ell)} = p_i^{(\ell)} -d_\Gamma F_i^{(\ell)}$ for all $1\leq i\leq k+1$;
        \item[(4)] $F_i^{(\ell)} =0$ for all $i\geq \ell.$
    \end{itemize}
    Assuming we accomplished this, setting $\ell=1$, we have $F_i^{(1)}=0$ for all $1\leq i\leq k+1$, and thus $\sigma'=\sigma^{(1)}$ satisfies the required result. 
    
    When $\ell=k+1$, since $U_{>k}$ is trivial, it follows from \eqref{generalCL} and ergodicity that $F_{k+1}$ is a constant. The choice $\sigma^{(k+1)}=\sigma$, $p_i^{(k+1)}=p_i$, and $F_i^{(k+1)}=F_i-F_{k+1}$ then satisfies $(1)-(4)$ for this $\ell$. Assume inductively that we have already constructed $\sigma^{(\ell+1)}$, $p_i^{(\ell+1)}$ and $F_i^{(\ell+1)}$. Setting $i=\ell$ in $(3)$ we get
    \begin{equation}\label{l+1}
        \partial_{m_\ell e}\sigma^{(\ell+1)} = p_\ell^{(\ell+1)} -d_\Gamma F_\ell^{(\ell+1)}.
    \end{equation}
    Let $n_{\ell}\coloneqq m_{\ell+1}/m_{\ell}$. We have the telescoping identity
    $$\partial_{m_{\ell+1}e}\sigma^{(\ell+1)} =\sum_{t=0}^{n_{\ell}-1} \partial_{m_{\ell}e}\sigma^{(\ell+1)}\circ V_{t\cdot m_{\ell} e}.$$ From $(3),(4)$ and the induction hypothesis, the left hand side is equal to $p_{\ell+1}^{(\ell+1)}$. Therefore by \eqref{l+1}, we obtain the equation
    \begin{equation}\label{telescoping}
        d_\Gamma \left(\sum_{t=0}^{n_{\ell}-1}  F_{\ell}^{(\ell+1)} \circ  V_{t\cdot m_{\ell} e}\right) = \sum_{t=0}^{n_{\ell}-1} p_{\ell}^{(\ell+1)}\circ V_{t\cdot m_{\ell}e} - p_{\ell+1}^{(\ell+1)}.
    \end{equation}
    Let $A$ denote the group generated by the coboundary $ d_\Gamma \left(\sum_{t=0}^{n_{\ell}-1}  F_{\ell}^{(\ell+1)} \circ  V_{t\cdot m_{\ell} e}\right)$, all of its translations by $U$, and $(d_\Gamma \Poly_{\leq k}(\XX))^U$. Let $B$ denote the group generated by this $A$, the polynomials $p_{\ell}^{(\ell+1)}$ and $p_{\ell+1}^{(\ell+1)}$ and all of their translations by $U$. Then by \Cref{EDPretraction}, we can find a $U$-equivariant filtration-preserving retraction $r:B\rightarrow A$. Write $r(p_{\ell}^{(\ell+1)}) \coloneqq d_\Gamma P_\ell$ and $r(p_{\ell+1}^{(\ell+1)}) =  d_\Gamma P_{\ell+1}$.  Applying $r$ to both sides of \eqref{telescoping}, the $U$-equivariance of $r$ and ergodicity implies that
    \begin{equation}\label{constant}
        \sum_{t=0}^{n_{\ell}-1}  (F_{\ell}^{(\ell+1)} - P_{\ell}) \circ  V_{t\cdot m_{\ell} e} + P_{\ell+1} 
    \end{equation}
    is constant. Consider the function \begin{equation}\label{lineareq1}F'_{\ell} \coloneqq F_{\ell}^{(\ell+1)} - P_{\ell}  - c
    \end{equation}
    where $c$ is an $n_{\ell}$-root of the constant in \eqref{constant}, we have
    \begin{equation}\label{linearmodulo}
         \sum_{t=0}^{n_{\ell}-1}  F'_{\ell} \circ  V_{t\cdot m_{\ell} e} = P_{\ell+1}.
    \end{equation}
    From the induction hypothesis, we see that for all $i\geq \ell+1$, we have $\partial_{m_ie}\sigma^{(\ell+1)} = p_i^{(\ell+1)}\in \Poly_{\leq d-i}^1(\Gamma,\XX)$. Since $r$ is filtration-preserving it preserves the degree of the polynomials and thus,
    \begin{equation}\label{degrees}
        \sum_{t=0}^{m_i/m_{\ell}-1} P_{\ell+1}\circ V_{t\cdot m_{\ell} e} \in \Poly_{\leq d-i+1}(\XX)
    \end{equation}
    In particular setting $i=k+1$ we get
$$\sum_{t=0}^{m_{k+1}/m_{\ell}-1}P_{\ell+1}\circ  V_{t\cdot m_\ell e}=0.$$
    Combining this with \eqref{linearmodulo}, we deduce that
    \begin{equation}
        \sum_{t=0}^{m_{k+1}/m_\ell-1}F'_\ell \circ V_{t\cdot m_\ell e} = \sum_{t=0}^{m_{k+1}/m_{\ell+1}-1}\left(\sum_{t=0}^{n_{\ell}-1} F'_\ell \circ V_{t\cdot m_\ell e} \right)\circ V_{t\cdot m_{\ell+1}e} =0.
    \end{equation}
    Equivalently, the map $z\mapsto F'_z\coloneqq\sum_{t=0}^{z-1}F'_\ell \circ V_{t\cdot m_{\ell} e}$ is a cocycle from the group $\left<m_{\ell} e\right>$ to $\mathcal{M}(\XX)$. Thus, by \Cref{HK-C8}, we can find $F'\in \mathcal{M}(X)$ such that \begin{equation}\label{lineareq2}
    \partial_{m_{\ell} e} F' = F'_\ell.
    \end{equation}  Define the cocycle $$\sigma^{(\ell)} \coloneqq \sigma^{(\ell+1)}-d_\Gamma F'.$$ For all $i\leq \ell$, set
 \begin{equation}\label{lineareq3}
    F_i^{(\ell)} \coloneqq F_i^{(\ell+1)} - \partial_{m_\ell e} F' - P_\ell -c
\end{equation}
and
  $$  p_i^{(\ell)} \coloneqq p_i^{(\ell+1)}+d_\Gamma P_{\ell},$$
and for all $i\geq \ell+1$, set $F_i^{(\ell)}=0$ and  $p_i^{(\ell)} = p_i^{(\ell+1)} - d_\Gamma (\sum_{t=0}^{m_i/m_{\ell+1}-1} P_{\ell+1}\circ V_{t\cdot m_{\ell+1}e})$. 

We verify the properties $(1)-(4)$. By transitivity of the cohomology equivalence, $\sigma^{(\ell)}$ is cohomologous to $\sigma$, giving $(1)$. Since $r$ is filtration-preserving, it preserves the degrees of polynomials so we have that $d_\Gamma P_{\ell}\in \Poly_{\leq d-\ell}^1(\Gamma,\XX)$ and $d_\Gamma (\vec{m}\cdot P_{\ell}) \in \Poly_{\leq d-\ell-j}^1(\Gamma,\XX)$, thus giving $(2)$ when $i\leq \ell$. When $i\geq \ell+1$  property $(2)$ follows from \eqref{degrees}. When $i\leq \ell$, property $(3)$ follows from a direct computation, and when $i\geq \ell+1$ we have
\begin{align*}
    \partial_{m_{i}e}F' &= \sum_{t=0}^{m_i/m_{\ell}-1}\partial_{m_\ell e}F'\circ V_{t\cdot m_\ell e} \\
    &= \sum_{t=0}^{m_i/m_{\ell+1}-1} \left(\sum_{t=0}^{n_{\ell}-1} F'_\ell\circ V_{t\cdot m_\ell e}\right)\circ V_{t\cdot m_{\ell+1}e} 
    \\ &= \sum_{t=0}^{m_i/m_{\ell+1}-1}P_{\ell+1}\circ V_{t\cdot m_{\ell+1}e}.
\end{align*}
Therefore, by construction $p_{i}^{(\ell)} + \partial_{m_ie}d_\Gamma F' = p_i^{(\ell+1)} = \partial_{m_ie}\sigma^{(\ell+1)}$. Finally, property $(4)$ follows from definition when $i\geq \ell+1$. When $i=\ell$, property $(4)$ follows by combining equations \eqref{lineareq1}, \eqref{lineareq2} and \eqref{lineareq3}.
\end{proof}

\section{The straightening property: The role of relative purity}\label{stronglyabramov:section}

The \emph{straightening property} was studied by Bergelson, Tao, and Ziegler in \cite{btz} to investigate the structure of the Host--Kra factors of $\mathbb{F}_p^\omega$-systems. 
\begin{definition}[The straightening property]
    Let $d\geq 1$ and let $\Gamma$ be a countable abelian group. An ergodic $\Gamma$-system $\XX$ is said to satisfy the \emph{straightening property of type $d$} if any cocycle $\rho \colon \Gamma\times \XX\rightarrow \mathbb{T}$ of type $\ell$ is cohomologous to a polynomial of degree $\leq \ell-1$ for all $\ell\leq d$.
\end{definition}

The main result of this section establishes that polynomial towers satisfying the relative purity property fulfill the straightening property. More precisely:
\begin{proposition}\label{stronglyabramov}
Let $\XX$ be an ergodic $\Gamma$-system, let $1\leq j \leq k$, and let $d\geq 0$. Suppose that $\XX$ is an exact polynomial tower of order $\leq k$ and height $j$ with large spectrum and such that each $\XX_i$ is $d$-pure and each extension  
\[
\begin{tikzcd}
    \XX_i \arrow[d, Rightarrow, "\sigma_{i-1}; U_i"] \\
    \XX_{i-1} 
\end{tikzcd}
\]
is relatively $d$-pure for $1<i\leq j$. Then $\XX$ satisfies the straightening property of type $d+1$. 
\end{proposition}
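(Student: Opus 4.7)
The plan is to argue by induction on $\ell$, showing that every cocycle in $Z^1_{\leq \ell}(\Gamma,\XX)$ is cohomologous to a polynomial cocycle of degree $\leq \ell - 1$, for all $\ell \leq d + 1$. The base case $\ell = 0$ is immediate from the definition $Z^1_{\leq 0}(\Gamma,\XX) = d_\Gamma \mathcal{M}(\XX)$, so each such cocycle is cohomologous to the zero polynomial. For the inductive step, fix $\sigma \in Z^1_{\leq \ell}(\Gamma,\XX)$, and perform a downward induction on the tower index $i$ from $i = j$ down to $i = 0$, establishing at each stage that $\sigma$ is cohomologous, modulo a polynomial cocycle in $\Poly^1_{\leq \ell-1}(\Gamma,\XX)$ and a coboundary, to a cocycle pulled back from $\XX_i$. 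At $i = 0$ the residual cocycle becomes trivial and $\sigma$ is exhibited as cohomologous to a polynomial of degree $\leq \ell - 1$, completing the outer induction.

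For the inductive step from $i$ to $i-1$, replace $\sigma$ by its current residue so that $\sigma \in Z^1_{\leq \ell}(\Gamma, \XX_i)$; note that $\XX_i$ is itself an exact polynomial tower of order $\leq k$ and height $i$ inheriting all the hypotheses of the proposition (including large spectrum at level $1$, $d$-purity, and relative $d$-purity at each level), so the outer induction hypothesis applies to $\XX_i$ as well. For each $u \in U_i$ of weight $w$ (in the type filtration), the derivative $\partial_u \sigma$ is a cocycle of type $\leq \ell - w$ by standard properties of the type filtration, so by the outer induction hypothesis we may write
\[
\partial_u \sigma = p_u + d_\Gamma F_u, \qquad p_u \in \Poly^1_{\leq \ell - 1 - w}(\Gamma, \XX_i),\ F_u \in \mathcal{M}(\XX_i).
\]
Using the bounded-exponent decomposition of $U_i$ into a product of cyclic groups of order dividing the exponent of $\Gamma$ (via \cite[Theorem 1.4]{jst-tdsystems}), we apply the cyclic linearization lemma \Cref{cycliclinearization} iteratively to each cyclic factor, modifying $\sigma$ by a further coboundary so that $u \mapsto p_u$ becomes a $U_i$-cocycle in $\Poly^1_{\leq \ell-1-\weight}[U_i; Z^1(\Gamma,\XX_i)]$. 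The cocycle integration theorem \Cref{cocycle-integ} (which requires both the exactness of $\rho_i$ and the large spectrum of $\XX_i$) then yields $P \in \Poly^1_{\leq \ell - 1}(\Gamma, \XX_i)$ with $d_{U_i} P = (p_u)_u$; a parallel application of \Cref{HK-C8} produces $F \in \mathcal{M}(\XX_i)$ with $d_{U_i} F = (F_u)_u$. The residue $\sigma - P - d_\Gamma F$ is then $U_i$-invariant, hence $\XX_{i-1}$-measurable, closing the downward step.

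The principal obstacle is the linearization phase. To invoke \Cref{cycliclinearization} we must verify hypotheses \eqref{generalCL} and \eqref{mCL} not only for $\partial_u \sigma$ but also for $\partial_u(\vec m \cdot \sigma)$ across an arbitrary finite system of relations $\vec m$; since $\Z$-linear combinations preserve type $\leq \ell$, the outer induction hypothesis furnishes polynomial representatives of the prescribed weighted degree. The internal mechanism of \Cref{cycliclinearization} relies on the $U_i$-equivariant, filtration-preserving retractions constructed in \Cref{EDPretraction}, and this is precisely the point at which the $d$-purity of $\XX_i$ (to obtain an initial filtration-preserving retraction) and the relative $d$-purity of the extension $\XX_i \to \XX_{i-1}$ (to upgrade that retraction to a $U_i$-equivariant one) are used. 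Careful degree-bookkeeping through the type filtration on each cyclic factor of $U_i$ — keeping the polynomials $p_u$ of the sharp weight-dependent degree $\ell - 1 - \weight(u)$ throughout the iterative linearization — is the technical heart of the argument and the reason the hypotheses are formulated with the parameter $d$ in place.
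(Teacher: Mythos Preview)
Your overall architecture matches the paper's (outer induction on the type parameter, inner descent down the tower using the higher-order Conze--Lesigne equations, cyclic linearization, then integration), but two substantive steps are missing and the scheme as written does not close.

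First, the structure groups $U_i$ are in general \emph{infinite} products of cyclic groups (bounded exponent only guarantees an infinite profinite product), so one cannot simply ``apply \Cref{cycliclinearization} iteratively to each cyclic factor'': that would be an infinite iteration with no convergence mechanism. The paper inserts a genuinely additional step here: a Baire-category / Pettis-lemma argument (\Cref{opensection}) shows that $u\mapsto F_u$ can be linearized on an \emph{open} subgroup $U'\leq U_i$, after which \Cref{cocycle-integ} (together with a second separability argument for the $p_u$) integrates away the $U'$-part, reducing to a \emph{finite} structure group. Only then does the induction on the number of cyclic factors run, with \Cref{exactroots-descent} and \Cref{8.11} used at each pass to ensure the smaller system still satisfies the tower hypotheses.

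Second, the bottom step of your descent (from $\XX_1$ to $\XX_0=\mathrm{pt}$) cannot invoke \Cref{cocycle-integ}, because that theorem requires the base to have large spectrum and the trivial system does not (nor do the finite quotients of $\XX_1$ that arise after the reduction above). The paper handles this Kronecker case by a separate argument: it applies the non-cocycle integration \Cref{poly-integ} coordinate-wise in $\gamma$, temporarily abandoning the cocycle property of the integrated polynomial, and then runs a bespoke version of the cyclic-factor induction that restores it. You also omit the preliminary reduction to the low-order regime $d+1\geq k$ (by passing to $\ZZ^{\leq d+1}(\XX)$), which is what licenses the repeated use of exact type descent (\Cref{8.11}) throughout.
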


To prove this proposition, we follow the argument of Bergelson, Tao, and Ziegler \cite{btz} with various modifications, although we will be able to simplify some parts of their proof by using  \Cref{cocycle-integ}. The remainder of this section is devoted to the proof of \Cref{stronglyabramov}. 

For technical reasons arising from the fact that \Cref{8.11} is only applicable when the order of the underlying system is smaller than the order of the cocycle $\rho$ (the \emph{low order case}), Bergelson, Tao and Ziegler \cite{btz} study the high order case $d<k$ and the low order case $d+1\geq k$, separately. We need to do the same, and we shall start by reducing matters to the low order case.
\begin{proposition}
Let $k\geq 1$, it suffices to prove \Cref{stronglyabramov} in the case where $d+1\geq k$. 
\end{proposition}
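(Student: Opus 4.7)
The plan is to reduce the high-order case $d+1<k$ to the low-order case by showing that the conclusion needed for $\XX$ follows from the same conclusion applied to its Host--Kra factor $\ZZ^{\leq d+1}(\XX)$, viewed as an exact polynomial tower of the strictly smaller order $k'\coloneqq d+1$. The starting observation is that a $\T$-valued cocycle $\sigma$ on $\XX$ of type $\leq \ell$ with $\ell\leq d+1$ is cohomologous to a cocycle measurable with respect to $\ZZ^{\leq \ell}(\XX)$, which is itself a factor of $\ZZ^{\leq d+1}(\XX)$; this follows from the duality between type and the subgroups $U_{>i}$ encoded in \Cref{u-dual} together with \Cref{8.11}. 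Since a polynomial cocycle of degree $\leq \ell-1$ on $\ZZ^{\leq d+1}(\XX)$ pulls back to one of the same degree on $\XX$, it suffices to establish the straightening of type $d+1$ on $\ZZ^{\leq d+1}(\XX)$.

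I would then realize $\ZZ^{\leq d+1}(\XX)$ as an exact polynomial tower of order $\leq k'$ that inherits the remaining hypotheses of \Cref{stronglyabramov}. Within the given tower with levels $\XX_0,\XX_1,\dots,\XX_j=\XX$ connected by skew products $\XX_i=\XX_{i-1}\times_{\rho_i}U_i$, I would quotient each $U_i$ by its subgroup $(U_i)_{>d+1}$ in the type filtration and consider the projected cocycle $\rho_i\bmod (U_i)_{>d+1}$. The exactness of $\rho_i$ forces this projection to be polynomial of degree $\leq d=k'-1$, so the resulting sub-tower is an exact polynomial tower of order $\leq k'$ and height $j'\leq j$ whose top factor is $\ZZ^{\leq d+1}(\XX)$ by the functoriality of the Host--Kra factors (\Cref{prop-functoriality}) together with \Cref{u-dual}. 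The large spectrum hypothesis descends from $\XX_1$, which remains a factor of $\ZZ^{\leq d+1}(\XX)$; the $d$-purity and relative $d$-purity at each level transfer to the corresponding sub-tower level by successive applications of \Cref{exactroots-descent}, peeling off the $(U_i)_{>d+1}$ components, using that both purity conditions only constrain polynomials of degree $\leq d$ and the two towers agree in that range.

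With this sub-tower in place, the low-order case of \Cref{stronglyabramov} applied to $\ZZ^{\leq d+1}(\XX)$, for which $d+1\geq k'$ holds by construction, yields the desired straightening of type $d+1$ on the factor, and the reduction in the first paragraph lifts this to $\XX$. The main obstacle will be the sub-tower extraction in the middle paragraph: one must verify rigorously that quotienting each $\rho_i$ by $(U_i)_{>d+1}$ really produces a tower whose top level equals $\ZZ^{\leq d+1}(\XX)$, and not a strict subfactor, a step that hinges on combining exactness with the characterization of the Host--Kra factors via the vanishing of the higher Gowers--Host--Kra seminorms.
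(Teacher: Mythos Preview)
Your proposal is correct and follows essentially the same route as the paper: descend the cocycle to $\ZZ^{\leq d+1}(\XX)$, realize the latter as an exact polynomial tower of order $\leq d+1$ by quotienting each structure group $U_i$ by $(U_i)_{>d+1}$, verify that the auxiliary hypotheses (large spectrum, $d$-purity, relative $d$-purity) transfer, and invoke the low-order case. Two small points: the descent of a type-$\ell$ cocycle to $\ZZ^{\leq \ell}(\XX)$ up to cohomology is exactly \Cref{7.9} rather than a consequence of \Cref{u-dual} and \Cref{8.11}, and the paper handles the purity transfer more directly via \Cref{ppfacts}(ii) (polynomials of degree $\leq d$ are already $\ZZ^{\leq d+1}$-measurable, so the relevant polynomial groups coincide) rather than through \Cref{exactroots-descent}, but both routes are valid.
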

\begin{proof}
    Let $\rho \colon \Gamma\times \XX\rightarrow \mathbb{T}$ be a cocycle of type $d+1$. By \Cref{7.9}, $\rho$ is cohomologous to a cocycle measurable with respect to $\ZZ^{\leq d+1}(\XX)$. It is left to show that when $k>d+1$, $\ZZ^{\leq d+1}(\XX)$ is an exact polynomial tower of height $j$, has large spectrum and that each $(\ZZ^{\leq d+1}(\XX))_i$ is $d$-pure and a relative $d$-pure extension of $\ZZ^{\leq d+1}(\XX)$. 
    To do so we induct on $j$. When $j=1$, there is nothing to prove because $k=1$ and $Z^1(\XX)=\XX$. Let $j\geq 2$ and assume inductively that the claim holds for all smaller values of $j$. Write $\XX=X_{j-1}\times_{\sigma_j} U_j$. Since $\ZZ^{\leq d+1}(\XX) =\ZZ^{\leq d+1}(\XX_{j-1})\times_{\sigma_j \mod U_{j,>d+1}} U_j/U_{j,>d+1},$ we see from the induction hypothesis that $\ZZ^{\leq d+1}(\XX)$ is an exact polynomial tower of height $j$. Finally, the large spectrum follows from the fact that it only depends on $Z^1(\XX)$ and all the purity results follow from the fact that polynomials of degree $\leq d+1$ are measurable with respect to $\ZZ^{\leq d+1}(\XX)$ (see \Cref{ppfacts}(ii)) 
\end{proof}
This proposition is particularly useful as throughout the proof we will start with a cocycle $\rho$ and by subtracting polynomials from $\rho$ we will show that it is cohomologous to cocycles that are measurable with respect to smaller factors. It is then important that these cohomologous cocycles inherit the same type as $\rho$ on the smaller factors. We obtain this by applying \Cref{8.11} which requires that the order of the underlying system is equal to the type of the cocycle $\rho$.
We proceed by induction on $d$.  The case $d=0$ follows immediately from \Cref{type}(vi). Let $d\geq 1$ (note that now by the previous proposition we can assume $d\geq k\geq 1$) and assume that the claim holds for all $d'<d$, that is, any $\mathbb{T}$-valued cocycle of type $s$ is cohomologous to a polynomial of degree $\leq s-1$ for all $s<d+1$. 

We proceed by another induction on $j$. We postpone the proof of the induction base $j=1$ (which also corresponds to $k=1$) for later. Assume that $j\geq 2$ and that the claim holds for all $j'<j$.
Write $\XX = \XX_{j-1}\times_\sigma U$, where $\XX_{j-1}$ is the sub-tower of height $j-1$, $\sigma \colon \Gamma\times \XX_{j-1}\rightarrow U$ is an exact polynomial cocycle of degree $\leq k-1$, and $U$ is a compact abelian group. Let $\rho \colon \Gamma\times\XX\rightarrow \T$ be a cocycle of type $d+1$. By \Cref{type}(iv), $\partial_u \rho$ is a cocycle of type $d$ (at most), thus by the induction hypothesis, $\partial_u \rho$ is cohomologous to a polynomial of degree $\leq d-1$.\\

\noindent \textit{Reduction to finite $U$.} Consider the group $$G\coloneqq\{(u,F)\in U\times \mathcal{M}(\XX) \colon \partial_u \rho -d_\Gamma F\in \Poly_{\leq d-1}^1(\Gamma,\XX)\}$$ with group law $$(u,F)\cdot (u',F') = (u+u', F'+F\circ V_{u'}).$$ We have just argued that the projection map $p:G\rightarrow U$ is surjective. The kernel of $p$ is isomorphic to $\Poly_{\leq d}(\XX)$. Therefore, we have the short exact sequence
\begin{equation}\label{shortseq}
0\rightarrow \Poly_{\leq d}(\XX)\rightarrow G\rightarrow U\rightarrow 0.
\end{equation}

\begin{lemma}[Right split for an open subgroup]\label{opensection}
    There exists an open subgroup $U'\leq U$ and a homomorphism $\varphi:U'\rightarrow G$ such that $p\circ \varphi:U'\rightarrow U$ is the natural embedding.
\end{lemma}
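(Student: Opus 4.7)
The plan is to first find a continuous section of $p$ on an open subgroup of $U$, and then correct it to a homomorphism using the cocycle integration results developed earlier in the paper. Equipping $G$ with the Polish topology inherited from its embedding in $U\times \mathcal{M}(\XX)$ (with the semidirect product law written in the statement), $p\colon G\to U$ is a continuous surjective group homomorphism---surjectivity is precisely the inductive hypothesis on $d$---whose kernel is $\Poly_{\leq d}(\XX)$, since the condition $d_\Gamma F\in \Poly^1_{\leq d-1}(\Gamma,\XX)$ is equivalent to $F\in \Poly_{\leq d}(\XX)$.

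To obtain a continuous local section, I will first invoke a standard Borel selector theorem for Polish group quotients to produce a Borel section $s_0\colon U\to G$ with $s_0(0)=0$, then use Lusin's theorem to make $s_0$ continuous on a compact set of positive Haar measure in $U$. Because $U$ is totally disconnected (it is profinite of bounded exponent by \cite[Theorem 1.4]{jst-tdsystems}), open subgroups form a neighborhood basis of $0$, and a Steinhaus-type argument then yields an open subgroup $U_0\leq U$ on which there is a continuous section $s\colon U_0\to G$ with $s(0)=0$. Write $s(u)=(u,F_u)$ and set $p_u\coloneqq \partial_u\rho-d_\Gamma F_u\in \Poly^1_{\leq d-1}(\Gamma,\XX)$, both depending continuously on $u\in U_0$.

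Next I will upgrade $s$ to a homomorphism on a further open subgroup $U'\leq U_0$. The obstruction to $s$ being a homomorphism is the 2-cocycle $c(u,v)\coloneqq F_v+F_u\circ V_v-F_{u+v}\in \Poly_{\leq d}(\XX)$; a direct computation using that $u\mapsto \partial_u\rho$ is a genuine $U$-cocycle in $Z^1(\Gamma,\XX)$ shows that $d_\Gamma c(u,v)=p_{u+v}-p_v-V_vp_u$ is the cocycle defect of $u\mapsto p_u$. Using \Cref{cocycle-integ} (applied to the extension $\XX=\XX_{j-1}\times_\sigma U$ with exact $\sigma$ and the large spectrum hypothesis), I will find, after restricting to a further open subgroup $U'\leq U_0$ on which the weight-filtered degree hypotheses are satisfied, an antiderivative $q\in \Poly^1_{\leq d}(\Gamma,\XX)$ and a corresponding $Q\in \Poly_{\leq d}(\XX)$ via \Cref{HK-C8} that allows me to adjust $F_u$ so that $u\mapsto p_u$ becomes a genuine $U'$-cocycle. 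Once $u\mapsto p_u$ is a cocycle, $d_\Gamma c=0$, so $c$ is constant-valued by ergodicity; the resulting continuous symmetric 2-cocycle $U'\times U'\to \T$ is then trivialized on an even smaller open subgroup $U''\leq U'$ by exploiting total disconnectedness of $U'$ and divisibility of $\T$, producing the desired homomorphism $\varphi\colon U''\to G$.

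The hard part will be implementing the correction so that $u\mapsto p_u$ becomes a cocycle while simultaneously retaining the polynomial degree $\leq d-1$ bound on each $p_u$. The full strength of \Cref{cocycle-integ}---which requires exactness of $\sigma$ and large spectrum of the base---is precisely what is needed here, as is a careful choice of the open subgroups $U_0\supset U'\supset U''$ so that the weight filtration on each sits compatibly with the polynomial filtration on $\mathcal{M}(\XX)$. The $d$-purity and relative $d$-purity hypotheses on the tower, in turn, ensure that the filtered short exact sequences split in the appropriate categories, letting the correction land in the correct degree class without spoiling the polynomial constraint on $p_u$.
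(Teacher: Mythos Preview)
Your outline has the right shape—get a continuous local section, measure the obstruction as a $2$-cocycle, reduce to a $\T$-valued extension, and split—but two steps do not go through as written.

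First, your invocation of \Cref{cocycle-integ} is misplaced. That theorem takes as \emph{input} a $U$-cocycle $(p_u)_u$ satisfying the weight condition and produces an antiderivative; it does not take a non-cocycle and linearize it. You are proposing to use it to \emph{make} $(p_u)_u$ a cocycle, which is circular. In fact this detour is unnecessary: once you have a continuous section $s(u)=(u,F_u)$ with $s(0)=0$, the obstruction $c(u,v)=F_v+F_u\circ V_v-F_{u+v}$ lies in $\Poly_{\le d}(\XX)$ and is continuous with $c(0,0)=0$. By the separation result \Cref{ppfacts}(i), any element of $\Poly_{\le d}(\XX)$ sufficiently close to $0$ is a constant, so on a small enough open subgroup $c$ already takes values in $\T$. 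No integration theorem is needed here, and this is exactly what the paper does (in slightly different packaging, via a Pettis-type argument that lands you directly in a subgroup $K$ with $K/\T\cong V$).

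Second, and more seriously, you assert that the resulting $\T$-valued $2$-cocycle is \emph{symmetric}, and then appeal to divisibility of $\T$ to trivialize it. But $G$ is a genuine semidirect product and need not be abelian: $c(u,v)-c(v,u)=\partial_v F_u-\partial_u F_v$ has no reason to vanish. Equivalently, the central extension $0\to\T\to K'\to U'\to 0$ you obtain may be non-abelian (think Heisenberg), and injectivity of $\T$ only applies in the category of compact \emph{abelian} groups. The paper fills this gap with a commutator argument: since $K/\T$ is abelian, commutators in $K$ land in $\T$; the commutator map $(v_1,v_2)\mapsto[(v_1,\tilde F_{v_1}),(v_2,\tilde F_{v_2})]$ is then a continuous bihomomorphism into $\T$, hence on a sufficiently small open subgroup (using total disconnectedness and that $(-0.1,0.1)\bmod 1$ contains no nontrivial subgroup) it vanishes identically. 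Only after forcing $K'$ to be abelian can one invoke injectivity of $\T$ to split. You need to insert this commutator step; ``total disconnectedness plus divisibility'' alone will not do it.
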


\begin{proof}
In the case where $U$ is $p$-torsion, this was established in \cite[Proposition 6.1]{btz}, but a similar arguments extends to general groups of bounded exponent; see also \cite[Proof of Proposition 5.2]{jst-tdsystems}.
\end{proof}

We want to use \Cref{opensection} to reduce matters to finite $U$. Let $U'$ be as in \Cref{opensection}, and write $\varphi(u) = (u,F_u)$. By \Cref{HK-C8}, we can find a measurable map $F \colon \XX\rightarrow \mathbb{T}$ such that $F_u = \partial_u F$. Since $(u,F_u)\in G$, we can find a polynomial cocycle $p_u$ of degree $\leq d-1$ such that 
$$\partial_u \rho = p_u + d_\Gamma F_u.$$
Replacing $\rho$ with the cohomologous cocycle $\rho+d_\Gamma F$, we may assume that
\begin{equation}\label{pu}
\partial_u \rho = p_u \text{ for all } u\in U'.
\end{equation}
In particular, $(p_u)_{u\in U'}\in \Poly_{\leq d-1}^1[U';Z^1(\Gamma,\XX)]$. 

Let $U_\bullet$ denote the type filtration on $U$ with respect to $\sigma$. By \Cref{typefiltration}, $U_{>i}$ fixes the $\sigma$-algebra associated with the factor $\ZZ^{\leq i}(\XX)$ for all $i\geq 0$. By \Cref{type}(iv), $\partial_u \rho$ is a cocycle of type $\leq d-\weight(u)$ for all $u\in U$. Let $U'_{\bullet}$ be the induced filtration on $U'$. By the induction hypothesis, since $p_u$ is of type $d-\weight(u)$ for all $u\in U'$, we can write 
$$p_u = q_u + d_\Gamma F'_u$$ for some polynomial cocycle $q_u \colon\Gamma\times \XX\rightarrow \mathbb{T}$ of degree $\leq d-\weight(u)-1$ (with the convention that $q_u=0$ if $\weight(u)\geq d$) and a measurable map $F'_u \colon\XX\rightarrow \mathbb{T}$. 

Since $U'_{>0}=U'$, $F'_u$ is a polynomial of degree $\leq d$ for all $u\in U'$. By \Cref{ppfacts}(i), there are at most countably many polynomials of degree $\leq d$ up to constants, repeating the Pettis' lemma argument from above, we can find an open subgroup $U''\leq U'$, such that $F'_u$ is a constant for every $u\in U''$. In other words, $(p_u)_{u\in U''} \in \Poly_{\leq d-\weight}^1[U'';Z^1(\Gamma,\XX)]$. Since the filtration is of finite degree, by passing to an open subgroup of $U''$, we may assume that the induced filtration on $U''$ is a sub-filtration of $U'$. Since $U$ is isomorphic to a direct product of finite groups, by shrinking $U''$ if necessary, we may assume that $U$ splits as a direct product of $U''$ and some finite subgroup $W$ (i.e., $U=U''\times W$). Writing $\XX = (\XX_{j-1}(\XX)\times_{\sigma_W} W)\times_{\sigma_{U''}}U''$ where $\sigma_W$ and $\sigma_{U''}$ are the projections of $\sigma$ to $W$ and $U''$ respectively, we can apply \Cref{cocycle-integ} with $(p_u)_{u\in U''}$ and deduce that there exists $q\in \Poly_{\leq d}^1(\Gamma,\XX)$ such that $\partial_u q = p_u$ for all $u\in U''$. By \eqref{pu}, $\rho - q$ is measurable with respect to the factor $\XX=\XX_{j-1}(\XX)\times_{\sigma_W} W$. By \Cref{exactroots-descent} and \Cref{8.11}, it suffices therefore to prove the proposition in the case where the group $U$ is finite.\\

\noindent \textit{The finite case.} Suppose that $U$ is finite. By the structure theorem of finite abelian groups, we can write $U = \prod_{j=1}^N \mathbb{Z}/m_j\mathbb{Z}$ for some integers $m_1,\ldots,m_N$. By \cite[Theorem 1.4]{jst-tdsystems}, all $m_1,\ldots,m_N$ divide $m$. We induct on $N$. If $N=0$, then $U$ is trivial, and the claim follows by the induction on $j$. Fix $N\geq 1$ and assume that the claim holds for all $N'<N$. Let $e$ denote the generator of $\mathbb{Z}/m_N\mathbb{Z}$. Each subgroup of $\mathbb{Z}/m_N\mathbb{Z}$ takes the form $a\mathbb{Z}/m_N\mathbb{Z}$ for some $a$ which divides $m_N$. For each $i\geq 0$, let $a_i$ be minimal such that $a_ie\in U_{>i}$. Since the $U_{>i}$ are decreasing, we must have that $a_1|a_2|\ldots|a_k=m_N$. By the induction hypothesis, we can find polynomials $p_i \colon\Gamma\times \XX\rightarrow \mathbb{T}$ of degree $\leq d-i-1$ (where $p_i=0$ if $i\geq d$) and measurable maps $F_i \colon\XX\rightarrow \mathbb{T}$ such that
\begin{equation}\label{CLi}
    \partial_{a_ie} \rho = p_i + d_\Gamma F_i.
\end{equation}
Applying \Cref{cycliclinearization} with an empty system $\mathcal{R}$ of relations and modifying $p_i$ if necessary (without increasing its degree), we may assume that there exists a coboundary $d_\Gamma F\in d_\Gamma \mathcal{M}(\XX)$ such that
\begin{equation}\label{integrated}
\partial_{a_ie} (\rho + d_\Gamma F) = p_i.
\end{equation} 
By the cocycle identity, $$\partial_{se} (\rho+d_\Gamma F) = p_s$$ where $p_s\coloneqq\sum_{t=0}^{s-1} p_0\circ V_{te}$ is a cocycle on the group $\left<e\right>$. Furthermore, $p_{a_ie} = p_i$ is a polynomial of degree $\leq d-i$. Therefore, we can apply \Cref{cocycle-integ} and find $Q\in \Poly_{\leq d}^1(\Gamma,\XX)$ such that $p_s = \partial_{se} Q$. In other words, $\rho-Q+d_\Gamma F$ is $e$-invariant. By \cite[Proposition 8.11]{btz} and \Cref{exactroots-descent}, we can apply the induction hypothesis on $N$, and deduce that $\rho-Q+d_\Gamma F$ (and therefore $\rho$) are cohomologous to a polynomial cocycle of degree $\leq d$.\\ 

\noindent \textit{The Kronecker case.}
It remains to establish the claim for $j=1$. In this case, $\XX_1$ is isomorphic to a $\Gamma$-rotational system on a compact abelian group $U$. Here the main difference from the previous argument is that the finite factors of $\XX_1$ might no have large spectrum, and hence, we might not be able to apply \Cref{cocycle-integ}. 

Recall that by the induction hypothesis in $d$, we have 
$$\partial_u \rho = p_u + d_\Gamma F_u,$$
where $p_u\in \Poly_{\leq d-1}(\XX_1)$ for all $u\in U$. Using a linearization argument for $u\mapsto F_u$ as right after the proof of \Cref{opensection}, we find an open subgroup $U'\leq U$ such that 
\[
\partial_u \rho = p_u \text{ for all } u\in U'.
\]
In particular, $(p_u)_{u\in U'}\in \Poly_{\leq d-1}^1[U';Z^1(\Gamma,\XX_1)]$, and therefore by separately applying \Cref{poly-integ} for each $\gamma\in \Gamma$, we find a (non-cocycle) polynomial $Q \colon \Gamma\times \XX\rightarrow \T$ of degree $\leq d$ such that $\partial_u Q = p_u$ for all $u\in U'$ and $\gamma\in\Gamma$. By construction, $\rho'\coloneqq \rho - Q$ is $U'$-invariant, but not necessarily a cocycle. We write $U = U'\times W$ for some finite $W$. Then for all $w\in W$, letting $p'_w = p_w - \partial_w Q$, we have
\begin{equation}\label{CLmodU'}
\partial_w \rho' = p'_w + d_\Gamma F_w.
\end{equation}
For all $u\in U'$, we deduce that $\partial_u p'_w = - d_\Gamma \partial_u F_w$. Since $\partial_u p'_w$ is at most of degree $\leq d-2$, we deduce that $\partial_u F_w$ is a polynomial of degree $\leq d-1$. By \Cref{poly-integ}, there is a polynomial $R_w$ of degree $\leq d$ such that $\partial_u F_w = \partial_u R_w$. Replacing $F_w$ with $F_w-R_w$ and $p'_w$ with $p'_w + d_\Gamma R_w$, we may assume that $F_w$ and $p'_w$ are $U'$-invariant without effecting \eqref{CLmodU'}. This achieves a reduction to the finite group case, albeit we have lost the cocycle property on the way. 

The finite compact abelian group $W$ is isomorphic to $\prod_{j=1}^N \mathbb{Z}/m_j\mathbb{Z}$. We induct on $N$. The case $N=0$ is trivially true. Fix $N$ and assume that the claim is true for all $N'<N$. Let $e=e_N$ denote the generator of the last factor of $W$ and let $W' = \prod_{j=1}^{N-1} \mathbb{Z}/m_j\mathbb{Z}$. By the cocycle property applied to \eqref{CLmodU'}, we have 
\[
d_\Gamma\left(\sum_{t=0}^{m_N-1} F_e\circ V_{t\cdot e}\right) =\sum_{t=0}^{m_N-1} p'_e \circ V_{t\cdot e}.
\]
Although $p'_e$ is not necessarily a cocycle in $\gamma$, by the identity $p'_e = p_e + \partial_e Q$ and since $t\cdot e\mapsto \partial_{te} Q$ is a cocycle on $\langle e \rangle$ (and therefore $\sum_{t=0}^{m_N-1} \partial_eQ\circ V_{t\cdot e} = 0$), we obtain   
\begin{equation}\label{finiteF}
d_\Gamma\left(\sum_{t=0}^{m_N-1} F_e\circ V_{t\cdot e}\right) =\sum_{t=0}^{m_N-1} p_e\circ V_{t\cdot e},
\end{equation}
where $p_e$ is a cocycle. 

Let $A$ denote the group generated by the coboundary on the left hand side of \eqref{finiteF} and $(d_\Gamma \Poly_{\leq d}(\XX))^{\left<e\right>}$, and let $B$ denote the group generated by $A$, $p_e$, and all of its $\left<e\right>$-translations. We want to apply \Cref{EDPretraction}. For this, we need to show first that $$(d_\Gamma \Poly_{\leq d}(W))^{\left<e\right>}\leq \Poly^1_{\leq d-1}(\Gamma,W')$$ is a pure filtered subgroup. Indeed, let $q\in \Poly^1_{\leq d-1}(\Gamma,W')$. Since $W'$ is a finite transitive $\Gamma$-system and $\partial_{\gamma'}q_\gamma =\partial_{\gamma}q_{\gamma'}$ for all $\gamma,\gamma'\in \Gamma$ (by the cocycle property), $q$ is a cocycle of type $1$, and thus cohomologous to a constant by \Cref{type}(vi). In other words, 
\begin{equation}\label{type1}
\Poly^1_{\leq d-1}(\Gamma,W') =d_\Gamma \Poly_{\leq d}(W') + \widehat{\Gamma}.
\end{equation}
By assumption, $\mathrm{W}$ is $d$-pure. By \Cref{exactroots-descent}, the $\Gamma$-system $\left<e\right>$ is $d$-pure, in particular $1$-pure. Thus, we find a retraction $r \colon \widehat{\Gamma}\to \widehat{\left<e\right>}$. Let $n\geq 1$ be arbitrary, and let $\mathcal{R}$ be a system of relations in $n$-variables of type at most $d$. 
Let $q=(q_1,\ldots,q_n)$ be in $\Poly^1_{\leq d-1}(\Gamma,W')$ such that for every $(\vec{m};j)\in \mathcal{R}$ there exists $d_\Gamma Q_{(\vec{m};j)} \in (d_\Gamma \Poly_{\leq d}(W))^{\left<e\right>}$ satisfying $$d_\Gamma Q_{(\vec{m};j)} =\vec{m}\cdot q + \Poly^1_{\leq d-j-1}(\Gamma,W').$$ Using \eqref{type1}, we find $P\in \Poly_{\leq d}(W')$ and $c\in \hat{\Gamma}^n$ such that $q=c+d_\Gamma P$, therefore $$d_\Gamma (Q_{(\vec{m};j)} -\vec{m}\cdot P) = \vec{m} \cdot c +  \Poly^1_{\leq d-j-1}(\Gamma,W').$$ Let $\xi\in \widehat{\left<e\right>}^n$ be such that $r(c_i) = \xi_i$ for all $i=1,\ldots,n$, and set $Q=P+\xi$. We conclude that $$d_\Gamma Q_{(\vec{m};j)} = \vec{m}\cdot Q + \Poly^1_{\leq d-j-1}(\Gamma,W'),$$ proving the claim. 

Now we can apply \Cref{EDPretraction} and obtain an $\left<e\right>$-equivariant retraction $\tilde{r} \colon  B\rightarrow A$ of filtered groups. Thus, we can write $\tilde{r}(p_e)=d_\Gamma P_e$ for some polynomial $P_e$, and from \eqref{finiteF}, the $\left<e\right>$-equivariance of $\tilde{r}$, and ergodicity, we have that $$\sum_{t=0}^{M_N-1}F_e\circ V_{t\cdot e}- \sum_{t=0}^{M_N-1}P_e\circ V_{te}$$ is a constant. Modifying $P_e$ by subtracting an $M_N$ root of that constant we may assume that $\sum_{t=0}^{M_N-1}(F_e-P_e)\circ V_{t\cdot e} = 0$. Thus, the map $F_{ze}\coloneqq \sum_{t=0}^{z-1} (F_e-P_e)\circ V_{te}$ is a cocycle in $z$ and so there exists some measurable map $F \colon W'\rightarrow \mathbb{T}$ such that $\partial_e F = F_e - P_e$. Replacing $p'_e$ with $p'_e-d_\Gamma P_e$ and $F_e$ by $F_e-P_e$, we can assume $\partial_e F = F_e$ for $F$ in \eqref{finiteF}. 

Plugging this in \eqref{CLmodU'}, we have 
$$\partial_e (\rho'+ d_\Gamma F) = p'_e.$$
Setting $p''_{se} \coloneqq \sum_{t=0}^{s-1} p_e\circ V_{te}$, the cocycle identity implies that 
$$\partial_{se}(\rho'+d_\Gamma F) = p''_{se}.$$

By \Cref{poly-integ}, applied once for every $\gamma\in \Gamma$, we can find a polynomial  $Q_e \colon \Gamma\times U\rightarrow \T$ such that $p''_{se} = \partial_{se} Q_e$ and in particular, $p'_e = \partial_e Q_e$. We deduce that $\rho''\coloneqq\rho' - Q_e + d_\Gamma F$ is $e$-invariant. By \Cref{8.11} and \Cref{exactroots-descent}, we can apply the induction hypothesis on $N$, and deduce \eqref{CLmodU'} for $\rho''$ and then the claim follows by the induction on $N$. This concludes the proof. 

We close this section by proving a useful corollary of \Cref{stronglyabramov}. We show that purity implies the exactness of a polynomial tower extension.
\begin{corollary}\label{exactisexact}
   Let $d,j,k\geq 1$. Let $\XX=\YY\times_\rho U$ be an abelian extension of ergodic $\Gamma$-systems with $\rho$ a cocycle of type $\leq d$. Suppose that $\YY$ admits an exact polynomial tower of order $\leq k$ and height $j$ such that each $\YY_i$ is $d$-pure and the extensions
\[
\begin{tikzcd}
    \YY_i \arrow[d, Rightarrow, "\rho; U_i"] \\
    \YY_{i-1} 
\end{tikzcd}
\]
   are relatively $d$-pure for all $1<i\leq j$. Then $\rho$ is cohomologous to an exact polynomial cocycle of degree $\leq d-1$. 
\end{corollary}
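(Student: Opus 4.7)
The plan is to derive this corollary from \Cref{stronglyabramov} in two stages: first producing a polynomial representative of the cohomology class of $\rho$ via Pontryagin duality and a Pettis-type coherence argument, then descending through the type filtration of $\hat U$ using $d$-purity to achieve exactness.

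First, I would invoke \Cref{stronglyabramov} applied to $\YY$ (if large spectrum is not implicitly available from the broader inductive context of \Cref{technical}, it can be adjoined via \Cref{large-spectrum-exist}, with purity descending back by \Cref{exactroots-descent}) to obtain the straightening property of type $d+1$: every $\T$-valued cocycle on $\YY$ of type $\leq \ell$ with $\ell \leq d+1$ is cohomologous to a polynomial of degree $\leq \ell-1$. I would then use Pontryagin duality to treat $\rho$ component-wise: each $\xi\circ\rho$ with $\xi\in\hat U$ is a $\T$-valued cocycle of type $\leq d$ by \Cref{type}(ii), hence cohomologous to a polynomial of degree $\leq d-1$ by straightening. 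To assemble these per-character straightenings into a single coboundary, I would consider the group
\[
G \coloneqq \{(\xi,F)\in\hat U\times\mathcal{M}(\YY,\T)\colon \xi\circ\rho - d_\Gamma F \in \Poly^1_{\leq d-1}(\Gamma,\YY)\}
\]
with the product group law. This fits into a short exact sequence $0\to\Poly_{\leq d}(\YY,\T)\to G\to\hat U\to 0$ whose surjectivity is provided by straightening. A Pettis--Baire argument modeled on \Cref{opensection}, exploiting separability of $\mathcal{M}(\YY,\T)$, the bounded exponent and total disconnectedness of $U$ (cf.~\cite[Theorem~1.4]{jst-tdsystems}), and the divisibility of $\T$, splits this sequence; Pontryagin-dualizing a continuous splitting produces an $F\in\mathcal{M}(\YY,U)$ with $\rho'\coloneqq\rho-d_\Gamma F\in\Poly^1_{\leq d-1}(\Gamma,\YY,U)$.

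To upgrade $\rho'$ to an exact cocycle, I would descend along the type filtration $\hat U = \hat U_{\leq d}^\rho \supseteq \hat U_{\leq d-1}^\rho \supseteq \cdots$, where $\hat U_{\leq d'}^\rho \coloneqq \{\xi\in\hat U\colon \xi\circ\rho\in Z^1_{\leq d'}(\Gamma,\YY)\}$. For each $d'<d$, the components $\xi\circ\rho'$ with $\xi\in\hat U_{\leq d'}^\rho$ have type $\leq d'$ yet polynomial degree only bounded by $d-1$; by straightening they differ from polynomial cocycles of degree $\leq d'-1$ by a coboundary of a polynomial of degree $\leq d$. Using $d$-purity of $\YY$ via \Cref{poly-root} to solve the resulting system coherently in $\xi$, one constructs $F_{d'}\in\Poly_{\leq d}(\YY,U)$ whose coboundary lowers the polynomial degrees of these components to $\leq d'-1$ without disturbing the straightening at higher types. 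Iterating over $d',d'-1,\ldots,0$ yields an exact polynomial cocycle cohomologous to $\rho$.

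The principal obstacle is the coherent lifting in the second step: converting a pointwise-per-$\xi$ straightening into a single $U$-valued coboundary requires the Pettis-type splitting of the short exact sequence above, which crucially leans on the bounded-exponent structure of $U$ and the divisibility of $\T$. The final exactness descent, while technical, reduces to a direct application of the purity splittings already established in Section~\ref{roots-sec}.
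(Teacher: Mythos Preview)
Your overall architecture---straighten $\xi\circ\rho$ component-wise via \Cref{stronglyabramov}, assemble into a short exact sequence over $\hat U$, split it, and Pontryagin-dualize the section to a $U$-valued $F$---is exactly the paper's approach. However, the splitting mechanism you invoke is wrong, and this is a genuine gap rather than a cosmetic mislabeling.

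The group $\hat U$ is discrete (dual of compact $U$), so a Pettis--Baire argument modeled on \Cref{opensection} has no traction: \Cref{opensection} works by exploiting Baire category in a \emph{compact} acting group to locate an open subgroup, and in a discrete group every subgroup is already open and every nonempty set non-meager. Separability of $\mathcal{M}(\YY,\T)$ and total disconnectedness of $U$ are simply not the relevant inputs here. The paper instead splits the sequence using the purely algebraic \Cref{splitsubgroup}, whose hypotheses are that $\hat U$ has bounded exponent (from \cite[Theorem~1.4]{jst-tdsystems}) and that the purity condition $\iota(\Poly_{\leq d})\cap nH = n\,\iota(\Poly_{\leq d})$ holds. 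Verifying this purity condition is precisely where \Cref{poly-root} (and hence $d$-purity of $\YY$) enters: given $(\xi,F)\in H$ with $n\xi=0$, one has $n\,d_\Gamma F = n p_\xi$ for some $p_\xi\in\Poly^1_{\leq d-1}$, and \Cref{poly-root} supplies $R\in\Poly_{\leq d}$ with $n\,d_\Gamma R = n\,d_\Gamma F$, hence $nR=nF$ after adjusting a constant. So $d$-purity is already essential in your step~1, not only in step~2.

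A second difference is one of economy rather than correctness. The paper defines its group $H$ so that membership already encodes the full exactness requirement: $(\xi,F)\in H$ demands $\xi\circ\rho + d_\Gamma F \in \Poly^1_{\leq i-1}$ whenever $\xi$ annihilates $U_{>i}$, for every $i$. A single splitting of this $H$ then yields an exact cocycle directly. Your two-stage plan---first achieve degree $\leq d-1$, then iteratively descend through the type filtration---could be made to work, but each descent step would require another coherence/splitting argument of the same kind, and the sketch in your second paragraph does not supply the mechanism for assembling the per-$\xi$ corrections at level $d'$ into a single $F_{d'}\in\Poly_{\leq d}(\YY,U)$ compatibly with the corrections already made at higher levels. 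The paper sidesteps this by packaging all levels into one sequence.
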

\begin{proof}
Let $H$ denote the group of all pairs $(\xi,F)\in \widehat{U}\times \mathcal{M}(X,\T)$ such that $\xi\circ\rho_\gamma + d_\gamma F\in \Poly_{\leq i-1}^1(\Gamma,\XX)$ for all $\gamma\in \Gamma$ whenever $\xi\in \widehat{U_{>i}}$ (cf.~\Cref{u-dual}). Thus by \Cref{stronglyabramov} applied to $\YY$, the projection $p:H\rightarrow \widehat{U}$ is onto. The kernel can be identified with $\Poly_{\leq d}(\XX)$ and we have a short exact sequence
\begin{equation}\label{Hses'}
0\rightarrow \Poly_{\leq d}(\XX)\rightarrow H\rightarrow \widehat{U}\rightarrow 0.
\end{equation}
To prove that this short exact sequence splits in the category of locally compact abelian groups, we rely on \Cref{splitsubgroup}. In \cite[Theorem 1.4]{jst-tdsystems} we have established that $U$, and therefore $\widehat{U}$ is of bounded exponent. Let $n\in \mathbb{N}$ and let $(\xi,F)\in H$ be such that  $n\cdot \xi = 0$ and $n\cdot F\in \Poly_{\leq d}(\XX)$. Since $(\xi,F)$ are in $H$ we can write $$\xi\circ\rho = p_\xi + d_\Gamma F$$ for some $p_\xi\in \Poly^1_{\leq d-1}(\Gamma,\XX)$. Since $n\cdot \xi=0$, we deduce that $n\cdot d_\Gamma  F = n\cdot p_\xi$. By \Cref{poly-root}, we see that there exists some $R$ in $\Poly_{\leq d}(\XX)$ such that $n\cdot d_\Gamma F = n\cdot d_\Gamma R$. Modifying $R$ by a constant if necessary we may assume that $n F = n R$. Since $R\in \Poly_{\leq d}(\XX)$, we see that the assumptions in \Cref{splitsubgroup} hold. Thus there exists a cross-section $\xi\mapsto (\xi,F_\xi)$ from $\widehat{U}$ to $H$. In particular, $\xi\mapsto F_\xi$ is a homomorphism, and so by Pontryagin duality we have $F_\xi = \xi\circ F$ for some measurable map $F \colon \XX\rightarrow U$. Let $\rho'\coloneqq \rho - d_\Gamma F$. Since $\rho$ and $\rho'$ are cohomologous, the type filtrations with respect to these two cocycles are equal. Furthermore, for every $\xi\in U_{>i}$ we have that $\xi\circ\rho'$ is a polynomial cocycle of degree $\leq i-1$. Equivalently, 
$\mathrm{Ann}(U_{>i}) \subseteq \{\xi\in\widehat U: \xi\circ\rho'\in \Poly^1_{\le i-1}(\Gamma,X)\}$. The other inclusion follows immediately by \Cref{ppfacts}(v).
\end{proof}

\section{Proof of main theorem} \label{proof}

We prove \Cref{technical} by induction on $j$ (and fix $k$ throughout). The induction base is the following assertion. 

\begin{lemma}\label{j=1}
      Every ergodic $\Gamma$-rotational system has an ergodic $\Gamma$-extension with the structure of a $k$-pure $\Gamma$-rotational system with large spectrum.
\end{lemma}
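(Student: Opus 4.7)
The plan is to proceed in two stages: first extend $\XX$ to achieve large spectrum while preserving the rotational structure, then iteratively extend within the category of rotational systems to achieve $k$-purity via an inverse-limit construction.

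For the large spectrum step, I would apply \Cref{large-spectrum-exist} in the restricted setting of rotational systems. Concretely, form the direct product $\XX \times \hat{\Gamma}$ with the standard rotational system $\hat{\Gamma}$ from \Cref{standard-ex}; since $\hat{\Gamma}$ is itself rotational, this product is a compact abelian group with $\Gamma$-rotation, hence rotational. By \Cref{zimmer}, any ergodic component $\XX_0$ is an abelian extension of both $\XX$ and $\hat{\Gamma}$; since it extends $\hat{\Gamma}$, it inherits all of $\tilde{\Gamma}$ as eigenvalues, giving large spectrum.

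For the $k$-purity step, I would follow the inverse-limit strategy outlined in \Cref{sec:overview} (formalized in \Cref{relativepure}), specialized to the Kronecker setting. Since $\Poly_{\leq k}(\Gamma)$ is countable modulo constants (\Cref{ppfacts}(i)), I can enumerate all pairs $(\mathcal{R}, \tilde{P})$ with $\mathcal{R}$ a finite system of relations in finitely many variables and $\tilde{P}$ a tuple in $\Poly_{\leq k}(\Gamma)$. Then I construct inductively ergodic rotational extensions $\XX_0 \leftarrow \XX_1 \leftarrow \dots$: at stage $n$, if the $n$-th enumerated pair admits a virtual solution in $\XX_n$ (in the sense of \Cref{exact-crit}), extend $\XX_n$ to a rotational $\XX_{n+1}$ realizing an actual solution. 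Setting $\XX' = \varprojlim \XX_n$ then gives an ergodic rotational system with large spectrum, and $k$-purity follows because any finite system of relations with virtual solution in $\XX'$ involves only finitely many polynomials in $\Poly_{\leq k}(\Gamma)$, which by the enumeration are already realized in $\XX_n$ for some finite $n$.

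The main obstacle will be constructing the rotational extension at each inductive step. To remain rotational (order $\leq 1$), the cocycle must be a homomorphism into a compact abelian group $V_n$; the polynomials of degree $\leq k$ to be realized must then live naturally on the extended compact abelian group underlying $\XX_{n+1}$. The bounded-exponent hypothesis on $\Gamma$ will be crucial here: by \Cref{mtimes}, polynomials in $\Poly_{\leq k}(\Gamma)$ take values in a subgroup of $\T$ of bounded exponent (depending only on $m$ and $k$), so a Pontryagin-dual argument allows us to choose $V_n$ to be a finite compact abelian group whose characters encode the missing polynomial data. This is the Kronecker specialization of the general construction in \Cref{relativepure}.
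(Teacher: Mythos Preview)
Your two-stage outline diverges from the paper's argument, and the second stage has a genuine gap.

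\textbf{What the paper does.} The paper works entirely on the dual side. A rotational system corresponds to a countable subgroup of $\widehat{\Gamma}$; the paper constructs a countable subgroup $\Sigma \leq \widehat{\Gamma}$ containing both $\widehat{Z}$ and $\tilde{\Gamma}$ that is \emph{pure} as a subgroup of $\widehat{\Gamma}$ (by iteratively adjoining $n$-th roots). This single construction yields large spectrum and $1$-purity simultaneously. The key non-obvious step is then showing that for rotational systems, $1$-purity already implies $k$-purity: given a virtual solution with witnesses $Q_{(\vec m;i)} \in \Poly_{\leq k}(\ZZ_\Sigma)$, the paper uses \Cref{ppfacts}(iv) and total disconnectedness to find a finite factor $W$ on which all the $Q_{(\vec m;i)}$ live, then uses $1$-purity to split $\Gamma \twoheadrightarrow W$ via a section $s \colon W \to \Gamma$, and sets $Q \coloneqq P \circ s$. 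No further extension is needed.

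\textbf{The gap in your approach.} You propose to realize solutions by extending within the rotational category, but the mechanism ``choose $V_n$ finite whose characters encode the missing polynomial data'' is not worked out and faces a real obstruction. To stay rotational the extension cocycle must be a homomorphism (degree~$0$), so the new vertical coordinate has degree~$\leq 1$; it is not clear how such an extension produces the degree-$\leq k$ polynomials $Q$ you need. A natural idea would be to realize the $\tilde P_j$ on a finite quotient of $\Gamma$, but polynomials in $\Poly_{\leq k}(\Gamma)$ need not factor through any finite quotient (e.g.\ $P(x)=\sum_{i<j}\tfrac{x_ix_j}{2}$ on $\F_2^\omega$ has $\partial_{e_l}P \neq 0$ for every $l$). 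The specialization of \Cref{relativepure} you invoke builds extensions by cocycles of degree $\leq k-1$, not degree~$0$, so it does not stay in the Kronecker category. There is also a minor enumeration issue: a pair $(\mathcal R,\tilde P)$ may acquire a virtual solution only after stage $n$, so each pair must be revisited infinitely often; your closing sentence does not address this.

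The fix is to adopt the paper's reduction: achieve $1$-purity by the pure-subgroup construction on $\widehat{\Gamma}$, then prove directly that a $1$-pure rotational system is automatically $k$-pure via the section argument.
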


\begin{proof}
   The action of $\Gamma$ on $\mathrm{Z}$ is defined by a homomorphism $\alpha \colon \Gamma\rightarrow Z$ with dense image. By Pontryagin duality, there is an embedding $\widehat{\alpha} \colon  \widehat{Z}\rightarrow \widehat{\Gamma}$ which we use to identify $\widehat{Z}$ with a countable discrete subgroup of $\widehat{\Gamma}$. Conversely, given any countable discrete subgroup $\Sigma\leq \widehat{\Gamma}$, Pontryagin duality yields a homomorphism $\phi \colon  \Gamma\rightarrow \widehat{\Sigma}$ with dense image which induces an ergodic $\Gamma$-rotational system $\mathrm{Z}_{\Sigma}$ on the compact abelian group $\widehat{\Sigma}$ with the $\Gamma$-action given by $T^\gamma (\xi) = \phi(\gamma)+\xi$. 
   
   We want to construct a countable discrete pure subgroup $\Sigma\leq \widehat{\Gamma}$ that contains both $\widehat{Z}$ and $\tilde{\Gamma}$, where $\tilde{\Gamma}$ was defined in \eqref{gamma-basis-dense}. By Pontryagin duality, it will then follow that $\mathrm{Z}_{\widehat{\Sigma}}$ is an ergodic $\Gamma$-rotational system with large spectrum extending $\mathrm{Z}$. 
   
   We construct $\Sigma$ by an infinite recursion. We start with $\Sigma_0$: The subgroup generated by $\widehat{Z}$ and $\tilde{\Gamma}$. Suppose that we have constructed $\Sigma_n$. We describe how to construct $\Sigma_{n+1}$. Let $\xi\in \Sigma_n$. Check if there exist $m\in \mathbb{N}$ and $f(\xi,m)\in \hat{\Gamma}$ such that $m f(\xi,m) = \xi$ and there does not exist $g \in \Sigma_n$ such that $m g=\xi$. Note that such $f(\xi,m)$ does not need to exist and if it exists it may not be unique. For every $\xi\in \Sigma_n$ if there is such a root $f(\xi,m)$, we choose exactly one  and add it to the set $A$. Define $\Sigma_{n+1}$ to be the subgroup of $\hat{\Gamma}$ generated by $\Sigma_n\cup A$. Finally, define $\Sigma$ to be the union of all $\Sigma_n$. By construction, $\Sigma$ is a discrete countable pure subgroup of $\hat{\Gamma}$. Equivalently, the $\Gamma$-system $\ZZ_\Sigma$ is $1$-pure.

   Next, we will prove that the $\Gamma$-system $\mathrm{Z}_\Sigma$ is also $k$-pure for all $k\geq 2$. Let $\mathcal{R}$ be a finite set of relations in $n$-variables of type at most $k+1$ and let $P=(P_i)_{i=1,\ldots,n}\in \Poly_{\leq k}(\Gamma)$ be such that for almost all $x_0 \in Z_\Sigma$ and all $(\vec{m};i) \in \mathcal{R}$ there exists $Q_{(\vec{m};i)} \in \Poly_{\leq k}(\ZZ_\Sigma)$ such that $$\iota_{x_0}(Q_{(\vec{m};i)}) = \vec{m}\cdot P \mod \Poly_{\leq k-i}(\Gamma).$$

     By \Cref{ppfacts}(iv), for each $(\vec{m};i)\in \mathcal{R}$ there is an open neighborhood of the identity $V_{(\vec{m};i)}\subset Z_\Sigma$ such that $\partial_uQ_{(\vec{m};i)} = \xi_{(m;j)}(u)$ is constant for all $u\in V_{(\vec{m};i)}$. Since $Z_\Sigma$ is totally disconnected,  $V_{(\vec{m};i)}$ contains an open subgroup $V'_{(\vec{m};i)}$. The cocycle identity implies that the restriction of $\xi_{(m;j)}$ to $V'_{(\vec{m};i)}$ is a homomorphism. Since  $V'_{(\vec{m};i)}$ is also totally disconnected, the kernel of $\xi_{(\vec{m};i)}$ is an open subgroup.  
     Since $\mathcal{R}$ is finite, we can let $V$ denote the intersection of all the $\ker \xi_{(\vec{m};i)}$. Since $Z_\Sigma$ is isomorphic to the product of finite cyclic groups, shrinking $V$ if necessary we may assume that $V$ is a cylinder subgroup of $Z_\Sigma$. Equivalently, there is a closed subgroup $W$ of $Z_\Sigma$ such that $Z_\Sigma=V\times W$. The finite factor $W$ is then $1$-pure (say by \Cref{exactroots-descent}) and so the short exact sequence
     $$0\rightarrow \ker \alpha \rightarrow \Gamma \overset{\alpha}\rightarrow W\rightarrow 0$$ splits, where $\alpha \colon \Gamma\rightarrow W$ is the homomorphism inducing the action by rotations on $W$ (note that by ergodicity $\alpha$ must be onto). Let $s \colon W\rightarrow \Gamma$ be a cross-section and define $Q\coloneqq P\circ s$. Since $s$ is a homomorphism, we see that $Q=(Q_1,\ldots,Q_n)$ are polynomials of degree $\leq k$ and furthermore, since all $Q_{(\vec{m};i)}$ are measurable with respect to $W$, we have $Q_{(\vec{m};i)} =\vec{m}\cdot Q \mod \Poly_{\leq k-i}(\mathrm{Z}_\Sigma)$ after lifting $Q$ to $Z_\Sigma$. This proves that $Z_\Sigma$ is $k$-pure.
\end{proof}

To prove \Cref{technical}, it remains to establish the induction step. Let $j\geq 2$ and assume that the claim holds for all $j'<j$. By \Cref{large-spectrum-exist}, the $\Gamma$-system $\XX$ admits an extension with large spectrum. Since an extension of a system with large spectrum has large spectrum, we can assume that $\XX$ has large spectrum.  Write $\XX=\ZZ^{\le j-1}(\XX)\times_\rho U$ as in \Cref{abelext}. By the induction hypothesis, there exists an exact polynomial tower $\YY$ of order $\leq k$ and height $j-1$ extending $\ZZ^{\le j-1}(\XX)$ satisfying all the requirements in \Cref{technical}. Let $\pi \colon \YY\rightarrow \ZZ^{\le j-1}(\XX)$ be the factor map, and let $\rho' \colon \YY\rightarrow U'$ be a minimal cocycle cohomologous to $\rho\circ\pi$ with image $U'$ (cf. \Cref{zimmer}). By \Cref{exactisexact}, we may assume  that $\rho'$ is an exact polynomial cocycle of degree $\leq k-1$.
 
 \begin{lemma}\label{relativepure}
     The $\Gamma$-system $\YY\times_{\rho'} U'$ admits an ergodic extension of the form $\YY_\infty\coloneqq\YY\times_{\rho_\infty} U_\infty$, where $\rho_\infty$ is an exact polynomial cocycle of degree $\leq k-1$ and $\YY_\infty$ is a relatively $k$-pure extension of $\YY$. 
 \end{lemma}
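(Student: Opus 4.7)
The plan is to build $\YY_\infty$ as an inverse limit $\varprojlim_n \YY_n$ of an ascending chain of ergodic abelian extensions $\YY_n = \YY \times_{\rho_n} U_n$ of $\YY$, where each $\rho_n$ is an exact polynomial cocycle of degree $\leq k-1$, and at each finite stage we adjoin just enough extra structure to eventually realize every possible ``virtual'' solution witnessing a failure of relative $k$-purity. The model is a standard injective-hull / pure-envelope construction, transposed into the filtered category of polynomial cocycles, and executed countably often.

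By \Cref{ppfacts}(i), the groups $\Poly_{\leq k}(\XX)$ and $\Poly^1_{\leq k-1}(\Gamma,\XX)$ associated to any ergodic $\Gamma$-system $\XX$ are countable modulo constants. Hence for each stage $\YY_n$ one can enumerate the tuples $(\mathcal{R},r,\{b_{(\vec m;j)}\})$ consisting of a finite system of relations $\mathcal{R}$ in $\nu$ variables, a vector $r\in \Poly^1_{\leq k-1}(\Gamma,\YY_n)^\nu$, and witnesses $b_{(\vec m;j)}\in (d_\Gamma\Poly_{\leq k}(\YY_n))^{U_n}$ with $\vec m\cdot r = b_{(\vec m;j)}$ modulo $\Poly^1_{\leq k-j-1}(\Gamma,\YY_n)$. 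By dovetailing across stages, I enumerate all such witnesses in a single list $(\mathcal{R}_n,r^{(n)},b^{(n)})_{n\in \N}$ so that every witness appearing at any finite stage eventually appears.

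Set $\YY_0 = \YY\times_{\rho'}U'$, which is already of the required form. Inductively, given $\YY_n = \YY\times_{\rho_n}U_n$ and the $n$-th purity witness, I would construct $\YY_{n+1}$ by adjoining to $\rho_n$ a new polynomial cocycle $\sigma_n$ on $\YY$ (of degree $\leq k-1$, taking values in some compact abelian group $W_n$). The target is that, in the enlarged system $\YY\times_{(\rho_n,\sigma_n)}(U_n\times W_n)$, each $r^{(n)}_i$ becomes cohomologous, modulo lower-degree cocycles and modulo $d_\Gamma \Poly_{\leq k}(\YY)$, to a cocycle of the form $\chi_i\circ (\rho_n,\sigma_n)$ for a suitable character $\chi_i$ of $U_n\times W_n$, so that it lands in the enlarged group $(d_\Gamma\Poly_{\leq k}(\YY_{n+1}))^{U_{n+1}}$. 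I then apply \Cref{zimmer} to pass to the ergodic Mackey range, and \Cref{exactisexact} (using the exact polynomial-tower structure on $\YY$ together with $k$-purity and relative $k$-purity of the lower levels, both available by the outer induction) to replace the composite cocycle with an exact polynomial cocycle of degree $\leq k-1$. Taking $\YY_\infty = \varprojlim_n \YY_n$ with $U_\infty = \varprojlim_n U_n$ then produces an ergodic abelian extension $\YY\times_{\rho_\infty}U_\infty$ of $\YY$; ergodicity, exactness, and the degree bound all descend to the inverse limit since they are formulated by countably many conditions. Relative $k$-purity of $\YY_\infty$ follows because any purity witness in $\YY_\infty$ involves only finitely many polynomial cocycles, each measurable with respect to some $\YY_n$, so it appears in the enumeration at some stage $m$ and is realized in $\YY_{m+1}$.

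The main obstacle is the construction of the cocycle $\sigma_n$ at each stage. Since $r^{(n)}$ is a cocycle on $\YY_n$ rather than on $\YY$, a direct skew product of $\YY_n$ by $r^{(n)}$ would produce an abelian extension only of $\YY_n$, not of $\YY$, and would break the ``$\YY_\infty = \YY\times_{\rho_\infty}U_\infty$'' requirement. The key observation is that we do not need to realize $r^{(n)}$ itself, but only some element $q\in (d_\Gamma\Poly_{\leq k}(\YY_{n+1}))^{U_{n+1},\nu}$ that solves the same relations; and this group decomposes as $d_\Gamma \Poly_{\leq k}(\YY) + \{\chi\circ(\rho_n,\sigma_n):\chi\in\widehat{U_{n+1}}\}$. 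The constraint $\vec m\cdot r^{(n)}\equiv b^{(n)}$ with $b^{(n)}$ being $U_n$-invariant forces $\vec m\cdot \partial_u r^{(n)}$ to be a lower-order coboundary for each $u\in U_n$; applying \Cref{cocycle-integ} (whose hypotheses of exactness and large spectrum are in place by construction) lets us integrate the $\partial_u r^{(n)}$ into a polynomial adjustment, thereby extracting a $U_n$-invariant base cocycle descending to $\YY$ (the sought $\sigma_n$, up to a $\Poly_{\leq k}(\YY_n)$-coboundary that can be absorbed using the inductive $k$-purity). This is the same mechanism of ``splitting the $U_n$-action off'' as the one underlying \Cref{cycliclinearization}, applied here to prepare the witness for realization via a genuine abelian extension of the base $\YY$.
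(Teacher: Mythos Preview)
Your overall architecture matches the paper's: build $\YY_\infty$ as an inverse limit of extensions $\YY_n=\YY\times_{\rho_n}U_n$ with $\rho_n$ exact polynomial, at each stage adjoin cocycles on $\YY$ to realize purity witnesses, apply \Cref{zimmer} and \Cref{exactisexact}, and verify relative $k$-purity by reducing any witness to a finite stage via \Cref{ppfacts}(vi). The main technical difference is how the cocycle on the base $\YY$ is produced. The paper enumerates only the $U_\ell$-invariant polynomials $Q$ (which live on $\YY$), tests whether $\iota_{y_0}(Q)=\vec m\cdot P$ is solvable in $\Poly_{\leq k}(\Gamma)$ via sampling, and then invokes \Cref{cocycle-root} (using the $(k-1)$-purity and large spectrum of $\YY$ from the outer induction) to obtain directly a tuple $p\in\Poly^1_{\leq k-1}(\Gamma,\YY)^\nu$ with $\vec m\cdot p=d_\Gamma Q$. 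Your route via \Cref{cocycle-integ}---using the filtered splitting of $\Poly^1_{\leq k-1}(\Gamma,\YY)\hookrightarrow\Poly^1_{\leq k-1}(\Gamma,\YY_n)$ to strip off the $U_n$-variation of $r^{(n)}$ and descend---is also valid and avoids the sampling detour, at the price of requiring an actual witness $r^{(n)}$ on $\YY_n$ rather than a virtual one on $\Gamma$.

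There is, however, a genuine gap in your enumeration. The claim that $\Poly^1_{\leq k-1}(\Gamma,\XX)$ is countable modulo constants is false: already for a Kronecker system $\XX$ and $k=2$, the linear part of such a cocycle is a homomorphism $\Gamma\to\hat Z$, and $\Hom((\Z/m\Z)^\omega,\hat Z)$ is uncountable for any nontrivial countable $\hat Z$. Hence you cannot enumerate triples $(\mathcal{R},r,b)$ with $r$ ranging over $\Poly^1_{\leq k-1}(\Gamma,\YY_n)^\nu$. The fix is to note that the solution you are building depends only on the pair $(\mathcal{R},b)$, not on $r$; and $(d_\Gamma\Poly_{\leq k}(\YY_n))^{U_n}=d_\Gamma\Poly_{\leq k}(\YY)+\{\chi\circ\rho_n:\chi\in\widehat{U_n}\}$ \emph{is} countable. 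So enumerate pairs $(\mathcal{R},b)$, and at each stage check whether some witness $r$ exists on the current $\YY_n$; if so, choose one and run your \Cref{cocycle-integ} descent. This is in effect what the paper does, except that its existence test (solvability of $\iota_{y_0}(Q)=\vec m\cdot P$ in $\Poly_{\leq k}(\Gamma)$) is formulated via sampling rather than via the existence of $r$ on $\YY_n$, which makes the countability of the enumeration automatic.
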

 
 \begin{proof}
     We construct via an induction on $\ell\geq 0$ a sequence of compact abelian groups $U_{\ell}$ and exact polynomial cocycles $\rho_{\ell}\in \Poly_{\leq k-1}^1(\Gamma,\YY,U_{\ell})$ which induce an increasing sequence of exact polynomial towers of the form $\YY_{\ell} \coloneqq \YY\times_{\rho_{\ell}} U_{\ell}$. Set $\YY_0 \coloneqq\YY\times_{\rho'} U'$, and suppose that we have already constructed $U_{\ell}$, $\rho_{\ell}$, and thus $\YY_{\ell}$. 
     
     By \Cref{ppfacts}(i), the group $\Poly_{\leq k}(\YY_{\ell})/\Poly_{\leq 0}(\YY_{\ell})$ is at most countable. From each such coset chose one element and for this element chose a measurable representative which is defined everywhere, and let $\{Q_i\}$ denote the countable collection obtained in this way. Suppose $Q=(Q_{i_1},\ldots,Q_{i_n})$ is a finite subset of these representatives in $(\Poly_{\leq k}(\YY_\ell))^{U_{\ell}}$. Check if there is a finite set of relations $\mathcal{R}$ in $n$-variables of type at most $k+1$ such that for almost every $y_0\in \YY_{\ell}$ there exists $P=(P_1,\ldots,P_n)\in \Poly_{\leq k}(\Gamma)$ such that for all $(\vec{m};i)\in \mathcal{R}$, 
     \begin{equation}\label{abstract-sol}
         \imath_{y_0}(Q) = \vec{m} \cdot P \mod \Poly_{\leq k-i}(\Gamma).
     \end{equation}
      If such a situation occurs, by \Cref{cocycle-root}, there is a tuple of cocycles $(p_1,\ldots,p_n)\in \Poly_{\leq k-1}^1(\Gamma,\YY)$ such that $\vec{m}\cdot (p_1,\ldots,p_n) = d_\Gamma Q$. Now organize all these tuples (while ranging over all $Q=(Q_{i_1},\ldots,Q_{i_n})$ and finite sets of relations $\mathcal{R}$ in $n$-variables of type at most $k+1$ for which such a situation occurs) into $p=(p_i)_{i\in\mathbb{N}}$. By \Cref{zimmer}, there is a minimal type $k$ cocycle $\rho_{\ell+1} \colon  \Gamma\times \YY\rightarrow U_{\ell+1}$ with image in some compact abelian group $U_{\ell+1}$ cohomologous to $p$. By \Cref{exactisexact}, we may assume that $\rho_{\ell+1}$ is an exact polynomial cocycle of degree $\leq k-1$. We then let $\YY_{\ell+1} = \YY\times_{\rho_{\ell+1}}U_{\ell+1}$. 

      We claim that $\YY_{\ell+1}$ extends $\YY_{\ell}$. It clearly extends $\YY$. Let $\xi\in \widehat{U_{\ell}}$. By construction, viewing $\xi$ as a function $\YY_\ell$, there exists some polynomial $Q_i$ from the list above such that $\xi-Q_i$ is constant. Considering the trivial relation $\vec{m} = (1;0)$, we see that $d_\Gamma \xi$ is one of the cocycles in $p$. By construction, there is a measurable map $F \colon \YY\rightarrow \mathbb{T}$ such that 
    $d_\Gamma \xi = \chi\circ\rho_{\ell+1} + d_\Gamma F$, where $\chi\in \widehat{U_{\ell+1}}$. Lifting everything to a generic ergodic joining of $\YY_{\ell+1}$ and $\YY_\ell$ (which in retrospect will be just $\YY_{\ell+1}$), we see that 
    $d_\Gamma (\xi - \chi-F) = 0$ and therefore $\xi = \chi+F+c$ for some $c\in \mathbb{T}$. We deduce that $\xi$ is measurable with respect to $\YY_{\ell+1}$. Since $L^2(\YY_\ell)$ is generated by $L^2(\YY)$ and $\xi\in \hat{U}_\ell$, we get a unitary map $\pi_{\ell+1,\ell} \colon  L^2(\YY_{\ell})\rightarrow L^2(\YY_{\ell+1})$ 
    such that if $f\in L^2(\YY_\ell)$ is measurable with respect to $\YY$, then so is $\pi_{\ell+1,\ell}(f)$ and for every $\xi\in \widehat{U_\ell}$, $\pi_{\ell+1,\ell}(\xi) = \chi_\xi + F_\xi$ for some $\chi_\xi \in \widehat{U_{\ell+1}}$ and a measurable map $F_\xi \colon  \YY\rightarrow \mathbb{T}$. We deduce that 
    $$\chi_{\xi+\xi'} - \chi_\xi - \chi_{\xi'} = F_{\xi+\xi'} - F_\xi - F_{\xi'}$$ for all $\xi,\xi'\in \widehat{U_\ell}$. Observe that the left hand side only depends on $u\in U_{\ell+1}$, while the right hand side only depends on $y\in \YY$, and therefore both sides are constants. Moreover, the left hand side can only be a constant if it is trivial so we conclude that $\xi\mapsto \chi_\xi$ and $\xi\mapsto F_\xi$ are homomorphisms. By Pontryagin duality, there exists a map $\sigma_{\ell+1,\ell}(y,u) = \chi_{\ell+1,\ell}(u) + F_{\ell+1,\ell}(y)$ where $\chi_{\ell+1,\ell} \colon U_{\ell+1}\rightarrow U_\ell$ is a homomorphism and $F_{\ell+1,\ell} \colon  \YY\rightarrow U_\ell$ is a measurable map such that $\pi_{\ell+1,\ell}(\xi) = \xi\circ \sigma_{\ell+1,\ell}$. In other words,  $\pi_{\ell+1,\ell}$ can be realized as a factor map 
    \begin{align*}
    \pi_{\ell+1,\ell}& \colon \YY_{\ell+1}\rightarrow \YY_\ell\\
        \pi_{\ell+1,\ell}&(y,u) = (y,\chi_{\ell+1,\ell}(u)+F_{\ell+1,\ell}(y)).
    \end{align*}
    Furthermore, the maps $\chi_{\ell+1,\ell}$ and the groups $U_\ell$ form an inverse limit system. Let $U_\infty$ denote the inverse limit of $(U_{\ell})_{\ell\geq 0}$ and let $\rho_{\infty}$ denote the inverse limit of the $\rho_\ell$. We obtain a system $\YY_{\infty} \coloneqq \YY\times_{\rho_{\infty}} U_{\infty}$ which is the inverse limit of the $\YY_\ell$.

    As an inverse limit of ergodic exact polynomial towers, $\YY_{\infty}$ is an ergodic exact polynomial tower of order $\leq k$ and height $j$. It is left to show that $\YY_\infty$ is a relatively $k$-pure extension of $\YY$. This follows from our construction. Indeed, let $\mathcal{R}$ be a finite set of relations in $n$-variables of type at most $k$ and let $q=(q_1,\ldots,q_n)\in \Poly^1_{\leq k-1}(\Gamma,\YY_\infty)$ be cocycles such that $$\vec{m}\cdot q = d_\Gamma Q_{(\vec{m};i)} \mod \Poly^1_{\leq k-i-1}(\Gamma,\YY_{\infty})$$ for some $Q_{(\vec{m};i)}\in (\Poly_{\leq k}(\YY_\infty))^{U_\infty}$ for all $(\vec{m};i)\in \mathcal{R}$. 

    Since there are at most finitely many polynomials involved, they are all measurable with respect to $\YY_{\ell}$ for some sufficiently large $\ell$. Then each of the polynomials $Q_{(\vec{m};i)}$ differs by a constant from a polynomial in the list $\{Q_i\}$ defined above, so we can ensure that all such polynomials are from that list by subtracting a constant if necessary. For almost every $y_0\in \YY_\ell$, we can define polynomials $P=(P_1,\ldots,P_n)\in \Poly_{\leq k}(\Gamma)$ by $P(\gamma) \coloneqq q(\gamma,y_0)$ since $d_{\gamma'}P(\gamma)=\partial_{\gamma'} q(\gamma,y_0)+q(\gamma',y_0)$. 
    
    By construction (see \eqref{abstract-sol}), there are cocycles $q'=(q'_1,\ldots,q'_n)\in \Poly_{\leq k-1}^1(\Gamma,\YY)$ such that $\vec{m}\cdot q' = \vec{m}\cdot q \mod \Poly_{\leq k-i-1}^1(\Gamma,\YY_\infty)$ and $q'$ are coboundaries in $\YY_{\ell+1}$. Hence, we can find polynomials $Q'=(Q'_1,\ldots,Q'_n)\in (\Poly_{\leq k}(\YY_\infty))^{U_\infty}$ such that $\vec{m}\cdot d_\Gamma Q' = d_\Gamma Q_{(\vec{m};i)} \mod \Poly^1_{\leq k-i-1}(\YY_\infty)$. This proves relative $k$-purity of $\YY_\infty$.       
 \end{proof}

To complete the proof of \Cref{technical} it remains to show that $\YY_{\infty}$ is $k$-pure. First, we reduce this task to the case where $\YY_{\infty}$ is an extension of $\YY$ by a finite group:   
    \begin{lemma}\label{finite-red}
        If for any decomposition $U_\infty = V\times W$ where $V\leq U_{\infty}$ is open and $W\leq U_{\infty}$ is finite, the system $\YY_{W} \coloneqq \YY\times_{\rho_{W}} W$, where $\rho_W = \rho_{\infty} \mod V$, is $k$-pure, then also $\YY_\infty$ is $k$-pure. 
    \end{lemma}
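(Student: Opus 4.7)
The plan is to reduce an arbitrary instance of the $k$-purity condition for $\YY_\infty$ to the same condition for a finite sub-extension $\YY_W$, where the hypothesis applies directly. Fix $y_0 \in Y_\infty$ outside an appropriate null set, a finite system of relations $\mathcal{R} \subset \Z^n \times \{1,\ldots,k+1\}$, a tuple $P = (P_1,\ldots,P_n) \in \Poly_{\leq k}(\Gamma)^n$, and polynomials $R_{(\vec m;j)} \in \Poly_{\leq k}(\YY_\infty)$ satisfying $\vec m \cdot P \equiv \imath_{y_0}(R_{(\vec m;j)}) \pmod{\Poly_{\leq k-j}(\Gamma)}$ for every $(\vec m,j) \in \mathcal{R}$. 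The goal is to produce a tuple $Q \in \Poly_{\leq k}(\YY_\infty)^n$ with $R_{(\vec m;j)} \equiv \vec m \cdot Q \pmod{\Poly_{\leq k-j}(\YY_\infty)}$.

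For each of the finitely many $R_{(\vec m;j)}$, I apply a Pettis-type argument in the style of \Cref{opensection}. Using the countability of $\Poly_{\leq k}(\YY_\infty)/\T$ from \Cref{ppfacts}(i) together with continuity of the $U_\infty$-action and the total disconnectedness of $U_\infty$ (from \cite[Theorem~1.4]{jst-tdsystems}), I obtain an open subgroup $V_{(\vec m;j)} \leq U_\infty$ and a continuous map $c_{(\vec m;j)} \colon V_{(\vec m;j)} \to \T$ with $V_u R_{(\vec m;j)} = R_{(\vec m;j)} + c_{(\vec m;j)}(u)$ for every $u \in V_{(\vec m;j)}$; the cocycle identity forces $c_{(\vec m;j)}$ to be a character. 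Since $U_\infty$ has bounded exponent, $c_{(\vec m;j)}$ extends by divisibility of $\T$ to a character $\chi_{(\vec m;j)} \in \widehat{U_\infty}$ of finite order, hence with open kernel. Interpreted as a function on $\YY_\infty$ via the vertical coordinate, $\chi_{(\vec m;j)}$ is itself a polynomial of degree $\leq k$: one computes $\partial_\gamma \chi_{(\vec m;j)}(y,u) = \chi_{(\vec m;j)}(\rho_\infty(\gamma,y))$, which is a polynomial of degree $\leq k-1$ in $y$ because $\rho_\infty$ is exact of degree $\leq k-1$. Consequently $\tilde R_{(\vec m;j)} \coloneqq R_{(\vec m;j)} - \chi_{(\vec m;j)}$ lies in $\Poly_{\leq k}(\YY_\infty)$ and is $V_{(\vec m;j)}$-invariant.

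Choosing a common open subgroup $V$ contained in every $V_{(\vec m;j)} \cap \ker\chi_{(\vec m;j)}$, and shrinking it further using the structure theorem for compact abelian groups of bounded exponent (as in \Cref{opensection}) so that $U_\infty = V \times W$ with $W$ finite, I ensure that every $\chi_{(\vec m;j)}$ (which factors through $W$) and every $\tilde R_{(\vec m;j)}$ (which is $V$-invariant) lies in $\Poly_{\leq k}(\YY_W)$, and hence so does $R_{(\vec m;j)} = \chi_{(\vec m;j)} + \tilde R_{(\vec m;j)}$. Since the factor map $\pi \colon \YY_\infty \to \YY_W$ is measure-preserving and $\Gamma$-equivariant, for $y_0$ outside a further null set the image $y_0^W \coloneqq \pi(y_0) \in Y_W$ is generic for the $k$-purity of $\YY_W$, and the relation $\vec m \cdot P \equiv \imath_{y_0^W}(R_{(\vec m;j)}) \pmod{\Poly_{\leq k-j}(\Gamma)}$ persists because $R_{(\vec m;j)}$ is now $\YY_W$-measurable.

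Applying the hypothesized $k$-purity of $\YY_W$ at $y_0^W$ yields $Q = (Q_1,\ldots,Q_n) \in \Poly_{\leq k}(\YY_W)^n$ with $R_{(\vec m;j)} \equiv \vec m \cdot Q \pmod{\Poly_{\leq k-j}(\YY_W)}$; lifting $Q$ to $\YY_\infty$ via $\pi$ and using the inclusion $\Poly_{\leq k-j}(\YY_W) \subseteq \Poly_{\leq k-j}(\YY_\infty)$ completes the proof. The main obstacle is the character-modification step: to land in $\Poly_{\leq k}(\YY_W)$ one needs genuine $V$-invariance (not merely $V$-invariance up to a constant), which requires extending the local character $c_{(\vec m;j)}$ to a global character of $U_\infty$ and simultaneously verifying that this global character is itself a polynomial of degree $\leq k$. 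The latter works precisely because $\rho_\infty$ was engineered (in \Cref{relativepure}) to be exact of degree $\leq k-1$, preventing any degree inflation; the remaining pieces (Pettis averaging, splitting $U_\infty = V \times W$, and transfer of $k$-purity to $y_0^W$) are routine given the tools already established in the paper.
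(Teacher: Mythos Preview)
Your proof is correct, but you take an unnecessary detour that the paper avoids. The paper's argument (following the proof of \Cref{j=1}) simply shrinks the open subgroup $V$ further so that it lies inside the kernel of each local character $c_{(\vec m;j)}$; since the kernel of a continuous character on a totally disconnected group of bounded exponent is open, this is immediate, and it yields $\partial_u R_{(\vec m;j)} = 0$ for all $u \in V$ directly. The polynomials $R_{(\vec m;j)}$ are then already $V$-invariant and hence $\YY_W$-measurable without any modification.

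Your route---extending $c_{(\vec m;j)}$ to a global character $\chi_{(\vec m;j)}$ of $U_\infty$, verifying via exactness of $\rho_\infty$ that $\chi_{(\vec m;j)}$ is itself a polynomial of degree $\leq k$, and subtracting it off---is valid but redundant: once you intersect with $\ker\chi_{(\vec m;j)}$ anyway (as you do), the original $R_{(\vec m;j)}$ is already $V$-invariant, so the subtraction accomplishes nothing. The ``main obstacle'' you identify is not actually an obstacle, and the appeal to exactness of $\rho_\infty$ (which you correctly note is needed to bound the degree of $\chi_{(\vec m;j)}$) can be dispensed with entirely. Both arguments arrive at the same reduction to $\YY_W$ and conclude identically.
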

    \begin{proof}
        Let $\mathcal{R}$ be a finite set of relations in $n$-variables of type at most $k+1$, let $P=(P_1,\ldots,P_n)\in \Poly_{\leq k}(\Gamma)$, and suppose that for almost every $y_0\in \YY_\infty$ and every $(\vec{m};i)\in\mathcal{R}$ there exists $Q_{(\vec{m};i)}\in \Poly_{\leq k}(\YY_\infty)$ such that $$\imath_{y_0}(Q_{(\vec{m};i)}) = \vec{m}\cdot P + \Poly_{\leq k-i}(\Gamma).$$
        Since $\mathcal{R}$ is finite, by the same argument as in the proof of \Cref{j=1}, there is a cylinder subgroup $V\leq U_{\infty}$ such that $\partial_u Q_{(\vec{m};i)}=0$ for all $u\in V$ and all $(\vec{m};i)\in \mathcal{R}$. Write $U_{\infty} = V\times W$. By construction, all $Q_{(\vec{m};i)}$ are measurable with respect to $\YY_{W}$. By $k$-purity of $\YY_W$, there exists $Q=(Q_1,\ldots,Q_n)\in \Poly_{\leq k}(\YY_{W})$ such that $\vec{m}\cdot (\imath_{x_0}(Q)-P)\in \Poly_{\leq k-i}(\Gamma)$. Lifting $Q$ from $\YY_W$ to $\YY_\infty$ (using the factor map) yields the claim.
    \end{proof}

In the remainder of this section, we fix a finite group $W$ as in \Cref{finite-red}, and will establish that $\YY_{W}$ is $k$-pure. We will accomplish this by showing that $\YY_W$ is $d$-pure for all $1\leq d\leq k$ by an induction on $d$. The base case $d=0$ is trivially true. Assume that $\YY_{W}$ is $(d-1)$-pure. 

Let $\mathcal{R}$ be a finite set of relations in $n$-variables of type at most $d+1$, let $P=(P_1,\ldots,P_n)\in \Poly_{\leq d}(\Gamma)$, and suppose that for almost every $y_0\in \YY_W$ and all $(\vec{m};i)\in \mathcal{R}$ there exists $Q_{(\vec{m};i)}\in \Poly_{\leq d}(\YY_W)$ such that
\begin{equation}\label{known}
\imath_{y_0}(Q_{(\vec{m};i)}) = \vec{m}\cdot P +\Poly_{\leq d-i}(\Gamma).
\end{equation}
We need to find $Q=(Q_1,\ldots,Q_n)\in \Poly_{\leq d}(\YY_W)$ such that 
\begin{equation}\label{needed}
    \vec{m}\cdot (P-\imath_{y_0}(Q))\in \Poly_{\leq d-i}(\Gamma).
\end{equation}
Or equivalently, by injectivity of $\imath_{y_0}$,  
\begin{equation}\label{needed2}
\vec{m}\cdot Q - Q_{(\vec{m};i)}\in \Poly_{\leq d-i}(\YY_{W}).
\end{equation}
First, we take advantage of the induction hypothesis to solve the following lower order equation.
\begin{lemma}\label{derivativesolution}
    There exists a polynomial cocycle $r=(r_1,...,r_n)\in \Poly_{\leq d-1}^1(\Gamma,\YY_W)$ such that
    \begin{equation}\label{cocycleversion}
    d_\Gamma Q_{(\vec{m};i)} =\vec{m}\cdot r \mod  \Poly^1_{\leq d-i-1}(\Gamma,\YY_W)
\end{equation}
for all $(\vec{m};i)\in\mathcal{R}$.  
\end{lemma}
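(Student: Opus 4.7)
The strategy is to take the $\Gamma$-derivative of the local solution provided by \eqref{known} and then globalize it using the purity splitting of \Cref{cocycle-root}.

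First, apply $d_\Gamma$ to \eqref{known}. Since $\iota_{y_0}$ is $\Gamma$-equivariant, we have $d_\Gamma \iota_{y_0}(Q_{(\vec{m};i)}) = \iota_{y_0}^{\oplus\Gamma}(d_\Gamma Q_{(\vec{m};i)})$. Setting $\tilde p \coloneqq d_\Gamma P \in \Poly^1_{\leq d-1}(\Gamma,\Gamma)$ and noting that $d_\Gamma$ sends $\Poly_{\leq d-i}(\Gamma)$ into $\Poly^1_{\leq d-i-1}(\Gamma,\Gamma)$, we obtain
\[
\iota_{y_0}^{\oplus\Gamma}(d_\Gamma Q_{(\vec{m};i)}) \;=\; \vec{m}\cdot \tilde p \;+\; \Poly^1_{\leq d-i-1}(\Gamma,\Gamma)
\]
for almost every $y_0\in \YY_W$ and every $(\vec{m};i)\in\mathcal{R}$.

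Next I verify that $\YY_W$ meets the hypotheses of \Cref{cocycle-root}. The system $\YY_W$ has large spectrum because it is an abelian extension of $\YY$, which has large spectrum by the outer induction hypothesis on $j$. Moreover, $\YY_W$ is $(d-1)$-pure by the inner induction hypothesis on $d$. Applying \Cref{cocycle-root} (with $k$ there playing the role of our $d$) yields that $\iota_{y_0}^{\oplus \Gamma}\bigl(\Poly^1_{\leq d-1}(\Gamma,\YY_W)\bigr)$ is a pure $(d-1)$-filtered subgroup of $\Poly^1_{\leq d-1}(\Gamma,\Gamma)$ for almost every $y_0$.

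Fix such a generic $y_0$ and apply this purity to the finite system of relations $\mathcal{R}$ (of type at most $d+1$, hence at most $d$ after differentiating), with the ``virtual'' solution $\tilde p$ and the ``witnesses'' $q_{(\vec{m};i)} \coloneqq d_\Gamma Q_{(\vec{m};i)} \in \Poly^1_{\leq d-1}(\Gamma,\YY_W)$. Purity produces a tuple $r = (r_1,\ldots,r_n) \in \Poly^1_{\leq d-1}(\Gamma,\YY_W)$ such that
\[
q_{(\vec{m};i)} - \vec{m}\cdot r \;\in\; \Poly^1_{\leq d-i-1}(\Gamma,\YY_W)
\]
for all $(\vec{m};i)\in\mathcal{R}$, which is precisely \eqref{cocycleversion}. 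The only nontrivial input is the $(d-1)$-purity of $\YY_W$, which is exactly what the inner induction on $d$ supplies; no additional obstacle arises at this step.
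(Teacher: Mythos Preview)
Your proof is correct and follows essentially the same route as the paper: differentiate \eqref{known}, then invoke \Cref{cocycle-root} (using the $(d-1)$-purity of $\YY_W$ from the inner induction and large spectrum inherited from $\YY$) to obtain purity of $\iota_{y_0}^{\oplus\Gamma}\bigl(\Poly^1_{\leq d-1}(\Gamma,\YY_W)\bigr)$ in $\Poly^1_{\leq d-1}(\Gamma,\Gamma)$, and finally apply that purity to the relations $\mathcal{R}$ to extract $r$. The paper phrases the last step as applying a filtration-preserving retraction $q\colon B\to \Poly^1_{\leq d-1}(\Gamma,\YY_W)$ to $d_\Gamma P$, which is equivalent to your direct appeal to purity via \Cref{puresplit}.
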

\begin{proof}
   Taking the $d_\Gamma$-derivative in \eqref{known}, we get
\begin{equation}
    \vec{m}\cdot (d_\Gamma P - \imath_{y_0}(d_\Gamma Q_{(\vec{m};i)})) \in \Poly_{\leq d-i-1}^1(\Gamma,\Gamma)
\end{equation} 
Let $B$ denote the subgroup of $\Poly_{\leq d-1}^1(\Gamma,\Gamma)$ generated by $\imath^{\oplus\Gamma}_{y_0}(\Poly^1_{\leq d-1}(\Gamma,\YY_W))$ and $d_\Gamma P_t$ for all $t=1,...,n$. By $(d-1)$-purity and \Cref{cocycle-root}, there is a retraction of filtered groups $q\colon B\rightarrow \Poly^1_{\leq d-1}(\Gamma,\YY_{W})$. Let $r=(r_1,...,r_n)\in \Poly^1_{\leq d-1}(\Gamma,\YY_W)$ denote the image of $(d_\Gamma P_1,\ldots,d_\Gamma P_n)$ under $q$. Since $q$ preserves the filtration, we obtain the claim. 
\end{proof}
We can enforce another reduction. By \Cref{ppfacts}(iii), all polynomials of degree $\leq d$ are $W_{>d}$-invariant, where $W_{>d}$ is from the type-filtration on $W$. 
Hence if we find a solution to \eqref{needed} on $\YY_{W/W_{>d}}$, we can lift it to a solution on $\YY_W$, and thus by quotienting out $W_{>d}$, we can assume that the filtration on $W$ is of degree $\leq d$. From now on, we will replace $\YY_{W}$ with $\YY_{W/W_{>d}}$ and $\YY_\infty$ with $\YY\times_{\rho_\infty \mod U_{>d}} U_\infty/U_{\infty,>d}$. 

We want to take advantage of the construction of $\YY_{\infty}$ as an inverse limit of the $\YY_{\ell} = \YY\times_{\rho_{\ell}} U_{\ell}$, since on $\YY_{\ell}$ we can find solutions to all equations involving only polynomials that are linear with respect to $U_{\ell}$. This gives us the following further reduction.
\begin{proposition}\label{linearizeQmj}
It suffices to show that $Q_{(\vec{m};i)}$ must take the form
\begin{equation}\label{wanted}
    Q_{(\vec{m};i)} = \xi_{(\vec{m};i)} + Q'_{(\vec{m};i)}+\vec{m}\cdot R \mod \Poly_{\leq d-i}(\YY_W).
\end{equation}
for some character $\xi_{(\vec{m};i)}\in \widehat{W}$, $W$-invariant polynomial $Q'_{(\vec{m};i)}\in \Poly_{\leq d}(\YY_W)$ (that is, measurable with respect to $\YY$), and $R=(R_1,...,R_n)\in \Poly_{\leq d}(\YY_W)$ for all $(\vec{m};i)\in\mathcal{R}$. 
\end{proposition}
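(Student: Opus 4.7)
The strategy is to construct $Q$ as an additive sum $Q = R + \eta + \tilde Q$, where $R$ is the tuple supplied by the decomposition \eqref{wanted}, $\eta = (\eta_1,\ldots,\eta_n) \in \widehat{W}^{\,n}$ consists of characters of the finite group $W$ lifted to $\YY_W$ as polynomials of degree $\leq d$, and $\tilde Q = (\tilde Q_1,\ldots,\tilde Q_n) \in \Poly_{\leq d}(\YY)^n$ is a tuple of $W$-invariant polynomials (equivalently, polynomials measurable with respect to $\YY$). The contribution $\vec m \cdot R$ already matches the corresponding summand of \eqref{wanted} term-by-term, so the task reduces to finding $\eta$ and $\tilde Q$ satisfying
\[
    \vec m \cdot \eta + \vec m \cdot \tilde Q \;\equiv\; \xi_{(\vec m;i)} + Q'_{(\vec m;i)} \pmod{\Poly_{\leq d-i}(\YY_W)}
\]
for every $(\vec m;i) \in \mathcal R$.

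I would then split this requirement into a $W$-invariant sub-equation $\vec m \cdot \tilde Q \equiv Q'_{(\vec m;i)} \pmod{\Poly_{\leq d-i}(\YY)}$, using that the $W$-invariant polynomials in $\Poly_{\leq d-i}(\YY_W)$ are exactly $\Poly_{\leq d-i}(\YY)$, and a residual character sub-equation $\vec m \cdot \eta \equiv \xi_{(\vec m;i)}$ modulo the finite subgroup of lower-degree characters. The first sub-equation lives entirely on $\YY$ and I would solve it by invoking the inductive hypothesis that $\YY$ is $k$-pure, hence $d$-pure. To verify the local hypotheses of $d$-purity at a generic $y \in \YY$, I would sample \eqref{known} at a lift $(y,w_0)\in \YY_W$ projecting to $y$, substitute \eqref{wanted}, and strip off the contributions from $\vec m \cdot R$ and from the lifted character $\imath_{(y,w_0)}(\xi_{(\vec m;i)})$, obtaining $\imath_y(Q'_{(\vec m;i)}) \equiv \vec m \cdot P'' \pmod{\Poly_{\leq d-i}(\Gamma)}$ for some $P'' \in \Poly_{\leq d}(\Gamma)^n$. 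The second sub-equation, being a finite linear system in the finite abelian group $\widehat W^{\,n}$ equipped with the filtration dual to the type filtration on $W$ (via \Cref{u-dual}), is handled by a direct algebraic consistency argument using the global relation \eqref{cocycleversion} from \Cref{derivativesolution}: applying $d_\Gamma$ to \eqref{wanted} and comparing with $\vec m \cdot r$ pins down $d_\Gamma \xi_{(\vec m;i)}$ modulo lower-degree polynomial cocycles, which in turn forces $\xi_{(\vec m;i)}$ to be $\vec m$-divisible in the appropriate finite quotient of $\widehat W$ and yields the required $\eta$.

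The main obstacle will be the clean separation between the character and $W$-invariant sub-problems, since in the additive, $\T$-valued category $\Poly_{\leq d}(\YY_W)$ does not split as a direct sum $\widehat W \oplus \Poly_{\leq d}(\YY)$: such a decomposition is valid only modulo lower-degree polynomials, and the two sub-problems remain entangled through the character contributions $\imath_{(y,w_0)}(\xi_{(\vec m;i)})$ that are non-trivial polynomials on $\Gamma$ of degree $\leq d$ inherited from the $W$-cocycle structure of $\rho_W$. Rigorously decoupling the two sub-systems and matching local data across them requires delicate bookkeeping with the interaction between the polynomial filtration on $\YY_W$ and the type filtration $W_\bullet$, and I expect this is where auxiliary appeals to the relative $k$-purity of $\YY_\infty$ over $\YY$ and the $(d-1)$-pure induction hypothesis on $\YY_W$ enter to close the circle.
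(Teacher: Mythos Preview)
Your approach differs substantially from the paper's, and the decoupling you propose has a real gap. The paper does not try to produce $Q$ in the restricted form $R+\eta+\tilde Q$ with $\eta\in\widehat W^{\,n}$ and $\tilde Q\in\Poly_{\leq d}(\YY)^n$. Instead it exploits the construction of $\YY_\infty$ in \Cref{relativepure}: once one rewrites \eqref{known} as $\imath_{y_0}(\xi_{(\vec m;i)}+Q'_{(\vec m;i)})=\vec m\cdot P'+\Poly_{\leq d-i}(\Gamma)$ with $P'=P-\imath_{y_0}(R)$, the polynomial $\xi_{(\vec m;i)}+Q'_{(\vec m;i)}$ lifted to $\YY_\infty$ is linear with respect to $U_\infty$, hence factors through some finite stage $U_\ell$. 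The very recursion defining $\YY_{\ell+1}$ was designed to adjoin solutions to exactly such systems, so one obtains $Q\in\Poly_{\leq d}(\YY_\infty)^n$ solving the linearized system there. One then applies \Cref{poly-integ} to the extension $\YY_\infty=\YY_W\times V$ to subtract a correction $Q'$ with $\vec m\cdot Q'\in\Poly_{\leq d-i}(\YY_\infty)$ so that $Q-Q'$ descends to $\YY_W$, and $Q-Q'+R$ solves \eqref{needed2}. Nothing guarantees this $Q$ decomposes as character plus $W$-invariant.

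Your character sub-problem is where the gap is concrete. You need $\eta\in\widehat W^{\,n}$ with $\vec m\cdot\eta\equiv\xi_{(\vec m;i)}$ in a suitable quotient for every $(\vec m;i)\in\mathcal R$, and claim this follows from \eqref{cocycleversion}. But applying $d_\Gamma$ to \eqref{wanted} and comparing only yields $d_\Gamma\xi_{(\vec m;i)}\equiv\vec m\cdot(r-d_\Gamma R)-d_\Gamma Q'_{(\vec m;i)}$ modulo lower-degree cocycles, with $r-d_\Gamma R$ living in $\Poly^1_{\leq d-1}(\Gamma,\YY_W)^n$, not in $\widehat W^{\,n}$. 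Nothing forces the relevant piece of $r-d_\Gamma R$ to be a $W$-character, let alone $\vec m$-divisible in $\widehat W$; indeed the counterexample in \Cref{sec-example} already shows that extracting roots within a fixed finite structure group can fail. This entanglement also contaminates your first sub-problem: to set up the local hypothesis for $d$-purity of $\YY$ you must subtract $\imath_{(y,w_0)}(\xi_{(\vec m;i)})$, which is not of the form $\vec m\cdot(\text{something independent of }(\vec m;i))$ until you have already found $\eta$. The paper sidesteps both issues by solving in the purpose-built larger system $\YY_\infty$ and only afterwards descending.
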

Assuming \Cref{linearizeQmj} for now, let us complete the proof that $\YY_W$ is $k$-pure. 
\begin{proof}[Completing the proof of \Cref{finite-red}]
Recall that we are assuming \eqref{known} and need to find a solution  $Q=(Q_1,...,Q_n)\in \Poly_{\leq d}(\YY_W)$ to \eqref{needed2}. Let $\xi_{(\vec{m};i)}$ be as in \Cref{linearizeQmj}, then we can rewrite \eqref{known} as 
\begin{equation}\label{linearizedeq}
    \imath_{y_0}(\xi_{(\vec{m};i)}+Q'_{(\vec{m};i)}) = \vec{m}\cdot P' + \Poly_{\leq d-i}(\Gamma)
\end{equation}
where $P' = P - \imath_{y_0}(R)$. We first find a solution $Q$ in $\Poly_{\leq d}(\YY_\infty)$. Let $\pi_W \colon \YY_\infty\rightarrow \YY_W$ be the factor map. Then $(\xi_{(\vec{m};i)}+Q'_{(\vec{m};i)})\circ \pi_W \colon \YY_\infty \rightarrow \mathbb{T}$ is a polynomial that is linear on $U_\infty$. By Pontryagin duality and finiteness of $\mathcal{R}$, there is some sufficiently large $\ell$  such that all the characters $\xi_{(\vec{m};i)}$ factor through $U_{\ell}$. By construction of $\YY_{\ell+1}$, we can solve \eqref{linearizedeq} on $\YY_{\ell+1}$, and lift the solution to $\YY_\infty$. In other words, we find $Q=(Q_1,...,Q_n)\in \Poly_{\leq d}(\YY_\infty)$ such that 
$$(\xi_{(\vec{m};i)}+Q'_{(\vec{m};i)})\circ\pi_W-\vec{m}\cdot Q\in \Poly_{\leq d-i}(\YY_\infty).$$
Our next goal is to reduce this solution back to $\YY_W$. Recall that from the construction of $W$ we have $U_\infty=W\times V$ for some open subgroup $V\leq U$. Since $(\xi_{(\vec{m};i)}+Q'_{(\vec{m};i)})\circ\pi_W$ is $V$-invariant and by \Cref{ppfacts}, for all $v\in V$, we have that $$\vec{m}\cdot \partial_v Q \in \Poly_{\leq d-i-\weight(v)}(\YY_\infty).$$ Moreover, since $Q$ is a polynomial of degree $\leq d$, for all $v\in V$, we have that $$\partial_v Q \in \Poly_{\leq d-\weight(v)}(\YY_\infty).$$ By \Cref{poly-integ} (where we view $\YY_\infty$ as an abelian extension of $\YY_W$ by $V$), there is a polynomial $Q'=(Q'_1,\ldots,Q'_n)$ such that $\partial_v (Q-Q')=0$ for all $v\in V$ and $\vec{m}\cdot Q'\in \Poly_{\leq d-i}(\YY_\infty)$. Hence $Q-Q'$ is a solution to \eqref{linearizedeq} in $\YY_W$, and thus $Q-Q'+R$ is a solution to \eqref{needed2}. 
\end{proof}

It remains to prove \Cref{linearizeQmj}.
\begin{proof}[Proof of \Cref{linearizeQmj}]
By \cite[Theorem 1.4]{jst-tdsystems}, $W$ is a finite $m$-torsion group for some $m\geq 1$, and thus isomorphic to $\prod_{i=0}^N \mathbb{Z}/m_i\mathbb{Z}$ for some integers $m_1,\ldots,m_N\in \mathbb{N}$ dividing $m$. 
For $0\leq \ell \leq N$, let $W^{(\ell)} \coloneqq \prod_{i=1}^{\ell} \mathbb{Z}/m_i\mathbb{Z}$. 
By induction, we prove that for all $0\leq \ell \leq N$, there are $\xi_{\ell,(\vec{m};i)}\in \widehat{W^{(\ell)}}$, $Q_{\ell, (\vec{m};i)}\in \Poly_{\leq d}(\YY_{W/W^{(\ell)}})$, and $R_\ell = (R_{\ell,1},\ldots,R_{\ell,n})\in \Poly_{\leq d}(\YY_{W})$ such that 
\begin{equation}\label{eql}
    Q_{(\vec{m};i)} = \xi_{\ell,(\vec{m};i)} + Q_{\ell,(\vec{m};i)}\circ \pi_\ell + \vec{m}\cdot R_\ell \mod \Poly_{\leq d-i}(\YY_{W})
\end{equation}
where $\pi_\ell \colon \YY_{W}\rightarrow \YY_{W/W^{(\ell)}}$ is the factor map and we view $\xi_{\ell,(\vec{m};i)}$ as characters on $W$ by assigning the value $0$ on the coordinates complement to $W^{(\ell)}$. 
Then the $\ell=N$ case will prove the claim in \eqref{wanted}.

If $\ell=0$, take $\xi_{\ell,(\vec{m};i)}=0$, $ Q_{\ell,(\vec{m};i)}=Q_{(\vec{m};i)}$, and $R_\ell=0$. Fix $\ell\geq 1$ and assume that we have already constructed $\xi_{\ell-1,(\vec{m};i)}$, $Q_{\ell-1,(\vec{m};i)}$, and $R_{\ell-1}$.

At this point we take advantage of the cocycles $r\in \Poly_{\leq d-1}^1(\Gamma,\YY_W)$ from \Cref{derivativesolution}. Observe that the solution $r$ to \eqref{cocycleversion} is not unique since we can replace $r$ with $r-q$ whenever $q$ is a polynomial cocycle of degree $\leq d-1$ satisfying $\vec{m}\cdot q \in \Poly_{\leq d-i-1}^1(\Gamma,\YY_W)$. We obtain the following reduction.
\begin{lemma}\label{Wl-1inv}
 Let $r_{\ell-1}\coloneqq r - d_\Gamma R_{\ell-1}$. 
 There exists a solution $r$ to \eqref{cocycleversion}, such that $r_{\ell-1}$ is $W^{(\ell-1)}$-invariant.
\end{lemma}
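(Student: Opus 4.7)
The plan is to find a correction $q \in \Poly^1_{\leq d-1}(\Gamma, \YY_W)^n$ satisfying (a) $\partial_u q = \partial_u r_{\ell-1}$ for all $u \in V \coloneqq W^{(\ell-1)}$, and (b) $\vec{m} \cdot q \in \Poly^1_{\leq d-i-1}(\Gamma, \YY_W)$ for every $(\vec{m},i) \in \mathcal{R}$. Setting $r \leftarrow r - q$ will then preserve \eqref{cocycleversion} via (b), while rendering $r - d_\Gamma R_{\ell-1}$ invariant under $V$ via (a). I will produce $q$ by viewing $p \colon u \mapsto \partial_u r_{\ell-1}$ as a $V$-cocycle valued in $Z^1(\Gamma, \YY_W)^n$ and integrating it with \Cref{cocycle-integ}.

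First I would record the relevant degree bounds on $p$. The weighted bound $p \in \Poly^1_{\leq d-1-\weight}[V; Z^1(\Gamma, \YY_W)]$ is immediate from $r_{\ell-1} \in \Poly^1_{\leq d-1}$ and the weight filtration. For the sharper bound on $\vec{m}\cdot p$, I apply $d_\Gamma$ to the inductive relation \eqref{eql} at level $\ell - 1$ and combine with \eqref{cocycleversion} to obtain
\[
\vec{m} \cdot r_{\ell-1} \;\equiv\; d_\Gamma\bigl(\xi_{\ell-1,(\vec{m};i)} + Q_{\ell-1,(\vec{m};i)} \circ \pi_{\ell-1}\bigr) \pmod{\Poly^1_{\leq d-i-1}(\Gamma, \YY_W)}.
\]
Since the right-hand side is measurable with respect to $\YY_{W/V}$ and therefore $V$-invariant, differentiating by any $u \in V$ yields $\vec{m} \cdot \partial_u r_{\ell-1} \in \Poly^1_{\leq d-i-1-\weight(u)}(\Gamma, \YY_W)$; that is, $\vec{m} \cdot p$ lies in the lower filtration stage $\Poly^1_{\leq d-i-1-\weight}[V; Z^1(\Gamma, \YY_W)]$.

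Next I would realize $\YY_W$ as an abelian extension $\YY_{W/V} \times_{\rho_V} V$, using the splitting $W \cong V \times (W/V)$ available after the earlier reduction that quotiented out $W_{>d}$. Exactness of $\rho_V$ descends from that of $\rho_\infty$ via \Cref{u-dual}, and $\YY_{W/V}$ inherits large spectrum from $\YY$. Applying \Cref{cocycle-integ} componentwise then delivers a filtration-preserving homomorphism section $s$ of the surjection $d_V$, guaranteed by the splitting of the short exact sequence in the category of filtered locally compact abelian groups. Setting $q \coloneqq s(p)$ yields property (a) by construction. For property (b), the facts that $s$ preserves the filtration and is a homomorphism give
\[
\vec{m} \cdot q \;=\; \vec{m} \cdot s(p) \;=\; s(\vec{m} \cdot p) \;\in\; \Poly^1_{\leq d-i-1}(\Gamma, \YY_W),
\]
as desired.

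The main technical point is verifying the hypotheses of \Cref{cocycle-integ} in this setting—namely, that $\rho_V$ is exact on $\YY_{W/V}$ and that the weight filtration on $V$ inherited from $W \leq U_\infty$ is the one governing the degree bounds established for $p$. Both reduce to routine compatibility checks between the type filtrations on $V \leq W \leq U_\infty$ arising from the construction of $\YY_W$ as a quotient of $\YY_\infty$.
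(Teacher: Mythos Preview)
Your proposal is correct and follows essentially the same approach as the paper's proof: both integrate the $V$-cocycle $u \mapsto \partial_u r_{\ell-1}$ via \Cref{cocycle-integ} (applied to the extension $\YY_W = \YY_{W/V} \times_{\rho_V} V$), and both exploit the filtration-preserving nature of the resulting section to transport the bound $\vec{m}\cdot p \in \Poly^1_{\leq d-i-1-\weight}$ to $\vec{m}\cdot q \in \Poly^1_{\leq d-i-1}$. The paper phrases the type-filtration compatibility check by pointing to the argument in \Cref{exactroots-descent} (via \Cref{u-dual} and \Cref{8.11}), which is exactly the ``routine compatibility check'' you flag at the end.
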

\begin{proof}
    Since $R_{\ell-1}\in \Poly_{\leq d}(\YY_W)$, we see that $r_{\ell-1}$ is a polynomial cocycle of degree $\leq d-1$.  
    By \Cref{typefiltration} and \Cref{ppfacts}(iii), we have that
$$\partial_w r_{\ell-1}\in \Poly_{\leq d-1-\weight(w)}^1(\Gamma,\YY_W)$$
for all $w\in W^{(\ell-1)}$.
    By a similar argument as in the proof of \Cref{exactroots-descent} (using \Cref{u-dual} and \Cref{8.11}), it follows from \Cref{cocycle-integ} that there is a polynomial cocycle $q_{\ell-1}$ of degree $\leq d-1$ such that $\partial_w q_{\ell-1} = \partial_w r_{\ell-1}$ for all $w\in W^{(\ell-1)}$. 
       
    Since by \eqref{cocycleversion} we have 
    $$
        \vec{m}\cdot r_{\ell-1} = d_\Gamma \left(\xi_{\ell-1,(\vec{m};i)} +Q_{\ell-1,(\vec{m};i)}\circ\pi_{\ell-1}\right) \mod \Poly^1_{\leq d-i-1}(\Gamma,\YY_{W}),
    $$
    it follows from the $W^{(\ell-1)}$-invariance of $d_\Gamma \left(\xi_{\ell-1,(\vec{m};i)} +Q_{\ell-1,(\vec{m};i)}\circ\pi_{\ell-1}\right)$ that  
$$\partial_w \vec{m}\cdot r_{\ell-1} \in \Poly^1_{\leq d-i-1-\weight(w)}(\Gamma,\YY_{W})$$ for all $w\in W^{(\ell-1)}$, and thus 
\[
\vec{m}\cdot q_{\ell-1}\in \Poly^1_{\leq d-i-1}(\Gamma,\YY_W).
\]
We can replace $r$ with $r-q_{\ell-1}$ and obtain the desired result.
\end{proof}
Using \Cref{Wl-1inv}, we can assume that $r_{\ell-1}$ is $W^{(\ell-1)}$-invariant. 
However, we still have by \eqref{cocycleversion} that   
\begin{equation}\label{nroot}
    \vec{m}\cdot r_{\ell-1}  = d_\Gamma(\xi_{\ell-1,(\vec{m};i)} + Q_{\ell-1,(\vec{m};i)}\circ\pi_{\ell-1}) \mod \Poly_{\leq d-i-1}^1(\Gamma,\YY_{W/W^{(\ell-1)}}).
\end{equation}

In order to complete the proof of \Cref{linearizeQmj}, we need to pause and prove a small lemma.
\begin{lemma}
    There exists $f=(f_1,\ldots,f_n) \in \mathcal{M}(\YY_{W/W^{(\ell-1)}})$ (not necessarily polynomials) such that 
    \begin{equation}\label{f}
        Q_{\ell-1,(\vec{m};i)} = \vec{m}\cdot f \mod \Poly_{\leq d-i}(\YY_{W/W^{(\ell-1)}})
    \end{equation}
    for all $(\vec{m},i)\in\mathcal{R}$.
\end{lemma}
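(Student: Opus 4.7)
The plan is to construct the tuple $f = (f_1, \ldots, f_n)$ by integrating the cocycle $r_{\ell-1}$ from \Cref{Wl-1inv} on the larger system $\YY_\infty$ and then descending to $\YY_{W/W^{(\ell-1)}}$.

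First, I would appeal to the inverse-limit construction of $\YY_\infty$ in \Cref{relativepure}: the structure cocycle $\rho_\infty$ was designed so that every cocycle of the form $r$ appearing in \Cref{derivativesolution} (which satisfies $\vec{m}\cdot r = d_\Gamma Q_{(\vec{m};i)}$ modulo $\Poly^1_{\leq d-i-1}(\Gamma,\YY_W)$) becomes cohomologous on $\YY_\infty$ to a cocycle taking values in $\widehat{U_\infty}$. Combined with the integration \Cref{HK-C8} for the vertical $U_\infty$-action, each component $r_{\ell-1,t}$ should admit a measurable antiderivative $F_t \in \mathcal{M}(\YY_\infty)$ satisfying $d_\Gamma F_t = r_{\ell-1,t}$ modulo a polynomial cocycle of degree $\leq 0$ (coming from the character piece).

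Next, let $V \leq U_\infty$ be the kernel of the natural projection onto $W/W^{(\ell-1)}$. Since $r_{\ell-1}$ is $V$-invariant, $\partial_v F_t$ will be $\Gamma$-invariant modulo a low-order cocycle for each $v \in V$, hence essentially constant by ergodicity, defining a character $\chi_t \in \widehat{V}$. Using divisibility of $\T$ (as in \Cref{splitsubgroup}-type arguments), I would extend $\chi_t$ to $\tilde\chi_t \in \widehat{U_\infty}$ and replace $F_t$ by $F_t-\tilde\chi_t$, which is now $V$-invariant and hence descends to the desired $f_t \in \mathcal{M}(\YY_{W/W^{(\ell-1)}})$. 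This modification only alters $d_\Gamma F_t$ by a polynomial cocycle of degree $\leq 0$, which can be absorbed into the lower-order error.

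Finally, combining $d_\Gamma f = r_{\ell-1}$ modulo $\Poly^1_{\leq 0}$ with the key equation \eqref{nroot}, I expect to obtain
\[
d_\Gamma(\vec{m}\cdot f - Q_{\ell-1,(\vec{m};i)}) \in \Poly^1_{\leq d-i-1}(\Gamma, \YY_{W/W^{(\ell-1)}}),
\]
forcing $\vec{m}\cdot f - Q_{\ell-1,(\vec{m};i)} \in \Poly_{\leq d-i}$ via the standard fact that a measurable function whose $d_\Gamma$-derivative is a polynomial cocycle of degree $\leq d-i-1$ must be a polynomial of degree $\leq d-i$. The hard part will be Step 1: ensuring that $r_{\ell-1,t}$ is actually integrable on $\YY_\infty$ in the sense described. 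This requires tracing through the inverse-limit construction of \Cref{relativepure} to confirm that the specific cocycle $r$ produced by the retract in \Cref{derivativesolution} falls into the class of cocycles realized as coboundaries on $\YY_\infty$, and that the character adjustment $\tilde\chi_t$ can be made without disrupting the $\vec{m}$-equations, especially at the boundary case $i = d+1$ where $\Poly_{\leq d-i} = 0$ leaves no room for lower-order error.
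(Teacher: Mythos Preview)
Your Step 1 has a genuine gap that I do not believe can be closed. The inverse-limit construction in \Cref{relativepure} only realizes, as coboundaries on $\YY_{\ell+1}$, very specific cocycles: those living on the base $\YY$ and produced by \Cref{cocycle-root} from polynomials $Q$ that are $U_\ell$-invariant (hence $\YY$-measurable). The cocycle $r$ from \Cref{derivativesolution}, by contrast, lives on $\YY_W$ and is tied to the $Q_{(\vec m;i)}$, which are \emph{not} $W$-invariant---indeed, the entire purpose of \Cref{linearizeQmj} (for which the present lemma is a step) is to show they can be reduced to a linear-in-$W$ form. So there is no mechanism in the construction of $\YY_\infty$ that forces $r_{\ell-1}$ to become cohomologous to a character on $\YY_\infty$; relative purity only guarantees the existence of \emph{some} $U_\infty$-invariant coboundary with the same $\vec m$-relations, not that the specific $r_{\ell-1}$ is itself such a coboundary. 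Your acknowledged boundary difficulty at $i=d+1$ is a symptom of the same problem: the degree-$0$ character error you pick up cannot be absorbed there.

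The paper takes a completely different route that avoids $\YY_\infty$. Rather than integrating $r_{\ell-1}$, it works directly with the abstract $\Gamma$-polynomial $P\in\Poly_{\le d}(\Gamma)^n$ from \eqref{known}. The $(d-1)$-purity of $\YY_W$ gives a filtration-preserving retraction $b\colon \Poly_{\le d-1}(\Gamma)\to\Poly_{\le d-1}(\YY_W)$ for $\iota_{y_0}$; the key trick is that, since $\mathcal{M}(\YY_W)$ is divisible (because $\T$ is), Zorn's lemma extends $b$ to an abelian-group retraction $b\colon\mathcal{M}(\Gamma)\to\mathcal{M}(\YY_W)$ (dropping filtration and continuity). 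Applying this extended $b$ to \eqref{known} yields $f=b(P)\in\mathcal{M}(\YY_W)^n$ with $Q_{(\vec m;i)}=\vec m\cdot f\bmod\Poly_{\le d-i}(\YY_W)$. The descent to $\YY_{W/W^{(\ell-1)}}$ then uses a second Zorn extension, this time of the section from \Cref{poly-integ}, to peel off the $W^{(\ell-1)}$-dependence of $f$ and the character $\xi_{\ell-1,(\vec m;i)}$ simultaneously. The point is that the measurable solution comes from retracting the already-existing $\Gamma$-solution $P$, not from integrating a cocycle on any dynamical extension.
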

\begin{proof}
We begin by deriving a similar equation for $Q_{(\vec{m};i)}$ in place of $Q_{\ell-1,(\vec{m};i)}$. By the induction hypothesis, there exists a filtration-preserving retraction $b \colon \Poly_{\leq d-1}(\Gamma)\rightarrow \Poly_{\leq d-1}(\YY_W)$ for the homomorphism $\iota_{y}\colon \Poly_{\leq d-1}(\YY_W)\to \Poly_{\leq d-1}(\Gamma)$ for almost all $y\in Y_W$. Since $\Poly_{\leq d-1}(\YY_{W})\subseteq \mathcal{M}(\YY_W)$ and $\mathcal{M}(\YY_W)$ is divisible (due to the divisibility of $\T$), we can use the Zorn's lemma to extend $b$ to a retraction $b\colon \mathcal{M}(\Gamma)\rightarrow \mathcal{M}(\YY_W)$ of (non-filtered) abelian groups for the homomorphism $\iota_{y}\colon \mathcal{M}(\YY_W)\to \mathcal{M}(\Gamma)$ for almost all $y\in Y_W$. Applying $b$ to every element in \eqref{known}, we can find a function $f\in \mathcal{M}(\YY_W)^n$ such that
$$Q_{(\vec{m};i)} = \vec{m}\cdot f \mod \Poly_{\leq d-i}(\YY_W).$$

We will now modify $f$ into a solution to \eqref{f}. From the inductive assumption (case $\ell-1$ in \eqref{eql}), we see that we may replace $f$ with $f-R_{\ell-1}$ such that without loss of generality we have
\begin{equation}\label{xi+Q}\xi_{\ell-1,(\vec{m},i)} + Q_{\ell-1,(\vec{m},i)}\circ\pi_{\ell-1} = \vec{m} \cdot f \mod \Poly_{\leq d-i}(\YY_W).
\end{equation}

Now taking the derivative with respect to $d_{W^{(\ell-1)}}$, and since $Q_{\ell-1,(\vec{m},i)}\circ\pi_{\ell-1}$ is $W^{(\ell-1)}$-invariant, we see that
\begin{equation}\label{xidiv}d_{W^{(\ell-1)}} \xi_{\ell-1,(\vec{m},i)} =\vec{m}\cdot d_{W^{(\ell-1)}} f \mod  \Poly_{\leq d-i-\weight}(\YY_W).\end{equation}
Theorem \ref{poly-integ} gives rise to a section 
$$s: d_{W^{(\ell-1)}}\Poly_{\leq d}(\YY_W)\rightarrow \Poly_{\leq d}(\YY_W)$$ mapping $d_{W^{(\ell-1)}} \xi_{\ell-1,(\vec{m},i)}$ to $\xi_{\ell-1,(\vec{m},i)} +c$ for some constant $c\in \mathbb{T}$.\footnote{Indeed, taking $r(y,u)=u-u_0$ and $q(y,u)=u_0-u$ as in the proof of Theorem \ref{poly-integ} gives $\partial_r \xi_{\ell-1,(\vec{m},i)}\circ q = \xi(u) - \xi(u_0)$.}

Consider the short exact sequence
$$0\rightarrow \mathcal{M}(\YY_{W/W^{(\ell-1)}}) \rightarrow \mathcal{M}(\YY_W)\rightarrow d_{W^{(\ell-1)}}\mathcal{M}(\YY_W)\rightarrow 0$$ of abelian groups (without topology or filtration). Since $\mathcal{M}(\YY_{W/W^{(\ell-1)}})$ is divisible, by Zorn's lemma, the section $s$ can be extended to a section
$$s:d_{W^{(\ell-1)}}\mathcal{M}(\YY_W)\rightarrow \mathcal{M}(\YY_W)$$ of abelian groups.
Applying $s$ to $d_{W^{(\ell-1)}}f$ produces a function $f' \colon \YY_W\rightarrow \mathbb{T}$ satisfying that $f-f'$ is $W^{(\ell-1)}$-invariant, yet from \eqref{xidiv} we have $$ \vec{m}\cdot f'= s(\vec{m}\cdot d_{W^{(\ell-1)}} f) = \xi_{\ell-1,(\vec{m};i)} +c\mod \Poly_{\leq d-i}(\YY_W). $$
Using the divisibility of $\mathbb{T}$ we can find some $c'\in \mathbb{T}^n$ such that $\vec{m}\cdot c'=c$. Letting $f''=f-f'-c'$, we see that $f''$ is measurable with respect to the factor $\YY_{W/W^{(\ell-1)}}$, and from \eqref{xi+Q} we deduce that
$$Q_{\ell-1,(\vec{m},i)}\circ\pi_{\ell-1} = \vec{m}\cdot f'' \mod \Poly_{\leq d-i}(\YY_{W/W^{(\ell-1)}}),$$
as required.

    
\end{proof}
We are set to complete the proof of \Cref{linearizeQmj}. The idea now is to work with the measurable solution $f$ and show that it corresponds to a polynomial solution. Let  
\begin{equation}\label{r'l-1}
    r'_{\ell-1}\coloneqq r_{\ell-1} - d_\Gamma f,
\end{equation} where $r_{\ell-1}$ is viewed as a function on $\YY_{W/W^{(\ell-1)}}$ by \Cref{Wl-1inv}. Let $e\coloneqq e_\ell\in W$ be the standard generator of the $\ell^{\text{th}}$-component of $W$. Let $t\in \left<e\right>$. By \Cref{type}(iv), $\partial_t r'_{\ell-1}$ is of type $d-\weight(t)$. Therefore, by  \eqref{r'l-1}, $\partial_t r'_{\ell-1}$ is cohomologous to a polynomial cocycle $p_t$ of degree $\leq d-1-\weight(t)$. Furthermore, since $\xi_{\ell-1,(\vec{m};i)}$ is $e$-invariant, it follows from \eqref{nroot}, \eqref{f}, and \eqref{r'l-1} that $\vec{m}\cdot \partial_{t} r'_{\ell-1}\in \Poly_{\leq d-i-1-\weight(t)}^1(\Gamma,\YY_{W/W^{(\ell-1)}})$. We conclude that $\vec{m}\cdot p_t$ is cohomologous to a polynomial cocycle $q_t'$ of degree $\leq d-i-1-\weight(t)$:  
\begin{equation}\label{mpi}
\vec{m}\cdot p_t = q_t' + d_\Gamma F_t' 
\end{equation}
for some $q_t'\in \Poly_{\leq d-i-1-\weight(t)}^1(\Gamma,\YY_{W/W^{(\ell-1)}}).$
Let $B$ denote the subgroup of $\Poly^1_{\leq d-2}(\Gamma,\YY_{W/W^{(\ell-1)}})$ generated by $d_\Gamma \Poly_{\leq d-1}(\YY_{W/W^{(\ell-1)}})$ and the polynomials in $p_t$ (note that since $\weight(t)\geq 1$, $p_t$ is of degree $\leq d-2$ for all $t\geq 0$). By $(d-1)$-purity and  \Cref{poly-root}, we can find a retraction of filtered groups $v:B\rightarrow d_\Gamma \Poly_{\leq d-1}(\YY_{W/W^{(\ell-1)}})$. Since $p_t$ is cohomologous to $p_t-v(p_t)$, we may assume that $v(p_t)=0$.  Applying $v$ to \eqref{mpi} yields $v(q_t')=d_\Gamma F_t'$ (note that it follows from \eqref{mpi} that $F_t'$ is polynomial of degree at most $d-w(t)$), and therefore $F_t'\in \Poly_{\leq d-i-\weight(t)}(\YY_{W/W^{\ell-1}})$. In particular, we see from \eqref{mpi} that 
\begin{equation}\label{mp}
\vec{m}\cdot p_t\in \Poly^1_{\leq d-i-1-\weight(t)}(\Gamma,\YY_{W/W^{(\ell-1)}}),
\end{equation}
and since we only subtracted a coboundary from $p_t$, we still have
\begin{equation}\label{CLr'}
\partial_{t}r_{\ell-1}'= p_t + d_\Gamma F_t
\end{equation}
for some measurable map $F_t \colon \YY_{W/W^{(\ell-1)}}\rightarrow \T$.
From \eqref{r'l-1} we deduce that
\begin{equation}\label{eQl-1}
    \partial_{t} f - F_t \in \Poly_{\leq d-\weight(t)}(\YY_{W/W^{(l-1)}}).
\end{equation}
We are in the setting of \Cref{cycliclinearization}: 
$$\partial_t r'_{\ell-1} \in \Poly^1_{\leq d-1-\weight(t)}(\Gamma,\YY_{W/W^{(\ell-1)}}) + d_\Gamma \mathcal{M}(\YY_{W/W^{(\ell-1)}}),$$
and by \eqref{mp} and the exact same reasoning we have
$$ \vec{m}\cdot \partial_t r'_{\ell-1} \in \Poly^1_{\leq d-i-1-\weight(t)}(\Gamma,\YY_{W/W^{(\ell-1)}})+d_\Gamma \mathcal{M}(\YY_{W/W^{(\ell-1)}}).$$
Applying \Cref{cycliclinearization}, we find a measurable map $F \colon \YY_{W/W^{(\ell-1)}}\rightarrow \mathbb{T}$ such that \begin{equation}\label{linear}\partial_t(r'_{\ell-1}-d_\Gamma F)\in \Poly^1_{\leq d-1-\weight(t)}(\Gamma,\YY_{W/W^{(\ell-1)}})
\end{equation}
and 
\begin{equation}\label{dF}\partial_t(\vec{m}\cdot r'_{\ell-1}-d_\Gamma \vec{m}\cdot F)\in \Poly^1_{\leq d-i-1-\weight(t)}(\Gamma,\YY_{W/W^{(\ell-1)}}).
\end{equation}

Combining these equations with \eqref{CLr'} and \eqref{mp} gives that  $$\partial_{t} F -F_t \in \Poly_{\leq d-\weight(t)}(\XX)$$ and from \eqref{eQl-1} it follows that 
\begin{equation}\label{R1}
    \partial_t(f-F)\in \Poly_{\leq d-\weight(t)}(\YY_{W/W^{(\ell-1)}})
\end{equation}
Next we wish to multiply this equation by $\vec{m}$, but we first need an observation. By \eqref{nroot}, \eqref{f}, and \eqref{r'l-1},  
$$\vec{m}\cdot r'_{\ell-1} = \vec{m}\cdot r_{\ell-1} -d_\Gamma Q_{\ell-1,(\vec{m};i)}\circ\pi_{\ell-1} = d_\Gamma \xi_{\ell-1,(\vec{m};i)} \mod \Poly^1_{\leq d-i-1}(\Gamma,\YY_{W/W^{(\ell-1)}}).$$
Since $d_\Gamma \xi_{\ell-1,(\vec{m};i)}$ is $e$-invariant, by Proposition \ref{ppfacts}(ii),
$$\partial_t \vec{m}\cdot r'_{\ell-1}\in \Poly^1_{\leq d-i-1-\weight(t)}(\Gamma,\YY_{W/W^{(\ell-1)}}).$$ Combining this with \eqref{dF} we deduce that
$$
    \partial_t  \vec{m} \cdot d_\Gamma F \in \Poly^1_{\leq d-i-1-\weight(t)}(\YY_{W/W^{(\ell-1)}}).
$$
Notice that $d-i-1-\weight(t)$ can be strictly smaller than $-1$ if $t\in W_{>d-i-2}\cap\left<e\right>$, in which case our conventions say that $\partial_t  \vec{m} \cdot d_\Gamma F=0$, and ergodicity implies that $\partial_t  \vec{m} \cdot F=\xi(t)$ is a constant. 
By the cocycle identity, the map $t\mapsto \xi(t)$ is a character on $W_{>d-i-1}\cap\left<e\right>$. We arbitrarily extend $\xi$ to $\left<e\right>$, and obtain that  
\begin{equation}\label{dF2}
      \partial_t  \vec{m} \cdot F  + \xi(t)\in \Poly_{\leq d-i-\weight(t)}(\YY_{W/W^{(\ell-1)}}).
\end{equation}
Thus, multiplying \eqref{R1} by $\vec{m}$, we see that
\begin{equation}\label{mR1}
    \vec{m}\cdot \partial_t(f-F) = \partial_t Q_{\ell-1,(\vec{m};i)}\circ \pi_{\ell-1} - \partial_t \xi \mod  \Poly_{\leq d-i-\weight(t)}(\YY_{W/W^{(\ell-1)}}).
\end{equation}
Applying \Cref{poly-integ} to \eqref{R1}, we find a polynomial $R \colon \YY_{W/W^{(\ell-1)}}\rightarrow \mathbb{T}$ of degree $\leq d$ such that $\partial_{t} R = \partial_{t} (f-F)$. Thus, by \eqref{mR1}, 
$$\partial_t (Q_{\ell-1,(\vec{m};i)}\circ \pi_{\ell-1}) - \partial_t \xi  - \partial_t \vec{m}\cdot R \in \Poly_{\leq d-i-\weight(t)}(\YY_{W/W^{(\ell-1)}}).$$
By another application of \Cref{poly-integ}, there is a polynomial $Q'\in \Poly_{\leq d-i}(\YY_{W/W^{(\ell-1)}})$ such that 
$$
    Q_{\ell,(m;i)} \coloneqq Q_{\ell-1,(\vec{m};i)}\circ \pi_{\ell-1} - \xi  - \vec{m} \cdot R - Q'
$$
is $e$-invariant. Setting $R_{\ell} = R_{\ell-1} + R$, $\xi_{\ell,(\vec{m};i)} = \xi_{\ell-1,(\vec{m};i)}+\xi $ (viewing $R$ and $\xi$ on $\YY_{W}$ by lifting by the factor map $\YY_{W}\rightarrow Y_{W/W^{(\ell-1)}}$), we see that \eqref{eql} holds. This completes the proof.
\end{proof}

\section{Exact polynomials towers are translational systems}\label{sec:tran}

In this section, we prove \Cref{thm:tran}. 

Our first result is that polynomials on exact polynomial towers can be taken to be continuous. 
\begin{lemma}[Continuous representatives for polynomials]\label{polycont}
Let $0\leq j\leq k$, and let $\XX$ be an exact polynomial tower of order $\leq k$ and height $j$. Then every polynomial on $\XX$ is equal $\mu$-almost everywhere to a continuous polynomial.
\end{lemma}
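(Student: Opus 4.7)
We proceed by induction on the height $j \geq 0$ of the tower, with the base case $j = 0$ being immediate since $\XX$ is a point. For the inductive step, write $\XX = \XX_{j-1} \times_{\rho_j} U_j$ where $\rho_j \in \Poly^1_{\leq k-1}(\Gamma, \XX_{j-1}, U_j)$ is exact. By the inductive hypothesis applied to $\XX_{j-1}$, every polynomial on $\XX_{j-1}$ has a continuous representative on some canonical topological model $\tilde X_{j-1}$. Applying this to the scalar polynomials $\xi \circ \rho_j(\gamma,\cdot)$ for each $\gamma \in \Gamma$ and $\xi \in \hat U_j$, and invoking Pontryagin duality on $U_j$, we obtain continuous representatives of $\rho_j(\gamma,\cdot) \colon \tilde X_{j-1} \to U_j$. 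This upgrades $\XX$ to a continuous topological $\Gamma$-space $\tilde X \coloneqq \tilde X_{j-1} \times U_j$ (product topology) with action $T^\gamma(y, u) = (T^\gamma y, u + \rho_j(\gamma, y))$.

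Given $P \in \Poly_{\leq k}(\XX)$, perform a secondary induction on the \emph{$U_j$-degree} $d_{U_j}(P)$, defined as the smallest $d$ such that $\partial_{v_1}^{U_j} \cdots \partial_{v_{d+1}}^{U_j} P = 0$ for all $v_i \in U_j$; since every non-zero $v \in U_j$ has weight $\geq 1$, iterating \eqref{diff-incl} gives $d_{U_j}(P) \leq k$. If $d_{U_j}(P) = 0$, then $P$ is $U_j$-invariant and hence a polynomial of degree $\leq k$ on $\XX_{j-1}$, so the outer induction supplies the continuous representative. Otherwise, choose a generic $u_0 \in U_j$ and apply \Cref{poly-calc} with $s = 1$, $r(y, u) = u - u_0$, $q(y, u) = u_0 - u$ (both having the required polynomial structure mod $U_{j, > i}$ by the exactness of $\rho_j$): the function
\[
F(y, u) \coloneqq P(y, u) - P(y, u_0)
\]
is a polynomial in $\Poly_{\leq k}(\XX)$, and the complementary $U_j$-invariant piece $G(y) \coloneqq P(y, u_0)$ lies in $\Poly_{\leq k}(\XX_{j-1})$ and therefore admits a continuous representative by the outer induction. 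It remains to produce a continuous representative for $F$.

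For each $v \in U_j$, the derivative $\partial_v F = \partial_v P$ has $d_{U_j}(\partial_v F) < d_{U_j}(F) = d_{U_j}(P)$, so by the secondary inductive hypothesis each $\partial_v F$ admits a continuous representative on $\tilde X$. We combine this with the fact that $U_j$ is a compact abelian group of bounded exponent (\cite[Theorem 1.4]{jst-tdsystems}), hence totally disconnected and isomorphic to $\prod_i \Z/m_i\Z$, and that the weight filtration $U_j \geq U_{j, > 1} \geq \dots \geq U_{j, > k} = 0$ has length at most $k$. A Fourier expansion $e(F(y, u)) = \sum_{\xi \in \hat U_j} c_\xi(y) \xi(u)$ combined with the degree bound $d_{U_j}(F) \leq k$ and the structure of the weight filtration (via an iterative coordinate-by-coordinate argument on $\prod_i \Z/m_i\Z$) restricts the nonzero Fourier support to a finite subset $S \subset \hat U_j$; each coefficient $c_\xi(y)$ is then shown to be a polynomial on $\XX_{j-1}$ by computing its $\Gamma$-derivatives in terms of $\rho_j$, at which point the outer inductive hypothesis makes each $c_\xi$ continuous. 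Summing the finitely many continuous summands gives a continuous representative of $F$ on $\tilde X$, and hence of $P = G + F$.

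The main obstacle is the last step: verifying that the Fourier support of $e(F(y,u))$ in $u$ is finite uniformly in $y$, and that each Fourier coefficient is genuinely a polynomial on $\XX_{j-1}$ (rather than merely a measurable cocycle-twist). This requires a careful bookkeeping with the type/weight filtration on $U_j$ together with the exactness of $\rho_j$, to ensure that the ``twist'' appearing in each coordinate of $U_j \cong \prod_i \Z/m_i\Z$ is of controlled polynomial degree; an alternative pathway, avoiding the explicit Fourier expansion, is to iterate the splitting of \Cref{poly-integ} down the weight filtration of $U_j$, writing $F$ as a telescoping sum of polynomials each supported on a single weighted layer, where the continuity follows from the fiberwise analysis combined with continuity of the (finitely many) layer-by-layer coefficient functions on $\tilde X_{j-1}$.
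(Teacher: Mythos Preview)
Your proof has a genuine gap at exactly the point you identify: you never establish that the Fourier support of $e(F(y,u))$ in $u$ is finite. The iterative coordinate-by-coordinate argument you sketch is not carried out, and the alternative via \Cref{poly-integ} is only gestured at. Without this, the argument does not close.

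The paper's proof short-circuits this obstacle entirely by invoking \Cref{ppfacts}(iv): for any polynomial $Q$ on $\XX$ there is an open neighbourhood of the identity in $U_j$ on which $\partial_u Q$ is constant; since $U_j$ is totally disconnected this neighbourhood contains an open subgroup $V$, and after passing to $\ker\xi$ one has $\partial_v Q = 0$ for all $v\in V$. Thus $Q$ factors through the \emph{finite} quotient $U_j/V$, which is precisely the finite Fourier support statement you were trying to prove. From there the paper uses \Cref{poly-calc} (with the same $r,q$ as you) to show each section $y\mapsto Q(y,u_0)$ is a polynomial on $\XX_{j-1}$, applies the outer induction, and glues the finitely many continuous sections together. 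No secondary induction on $U_j$-degree is needed.

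Your opening step of upgrading $\rho_j(\gamma,\cdot)$ to a continuous map by Pontryagin duality is also imprecise: the continuous representatives of $\xi\circ\rho_j(\gamma,\cdot)$ for varying $\xi\in\hat U_j$ are each only determined up to null sets, and there is no automatic mechanism making them cohere to a single continuous $U_j$-valued map. The paper avoids this issue as well, since it never needs the action itself to be continuous---only the finitely many scalar sections $y\mapsto Q(y,\bar u_0)$ for $\bar u_0\in U_j/V$.
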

\begin{proof}
We induct on $j$. When $j=0$, $\XX$ is trivial, and every polynomial is equal to a constant, and therefore continuous. Now, let $j\geq 1$ and assume that the claim holds for all exact polynomial towers of height at most $j-1$. Write
\[
\XX=\XX_{j-1}\times_\rho U
\]
where $\XX_{j-1}$ is an exact polynomial tower of order $\leq k$ and height $j-1$, $U$ is a compact abelian group, and $\rho$ is an exact cocycle.

Let $Q\in \Poly(\XX)$ be arbitrary. By \Cref{ppfacts}(iv), there exists an open neighbourhood $U'\subseteq U$ such that for every $u\in U'$ the derivative $\partial_u Q$ is $\mu$-almost everywhere constant; thus for each $u\in U'$ there exist some constant $\xi(u)\in \T$ such that
\[
\partial_u Q=\xi(u)\qquad \mu\text{-a.e.}
\]
Since $\Gamma$ is of bounded exponent, $U$ is totally disconnected (cf. \cite[Theorem 1.4]{jst-tdsystems}), hence $U'$ contains an open subgroup $V\leq U$.

By Fubini's theorem, we may modify $Q$ on a null set, such that the identity $\partial_v Q=\xi(v)$ holds for all $v\in V$ simultaneously. Using the cocycle identity for the $V$-action and the fact that each $\partial_v Q$ is a constant, we obtain
that $\xi \colon  V\to \T$ is a group homomorphism. By automatic continuity (see \cite{Rosendal}), $\xi$ is continuous; in particular $\ker\xi$ is an open subgroup of $V$. Replacing $V$ by $\ker\xi$, we may assume
\[
\partial_v Q=0\qquad \forall v\in V\ \text{a.e.}
\]
Thus $Q$ is $V$-invariant, hence factors through the quotient $U/V$: there exists a measurable
\[
Q'\colon X_{j-1}\times (U/V)\to \T
\]
such that
\[
Q(y,t)=Q'(y,t\bmod V)\qquad \mu\text{-a.e.}
\]
Since $V$ is open in the compact group $U$, the quotient $U/V$ is finite and therefore discrete.

Fix $u_0\in U$ and write $\bar u_0\coloneqq u_0\bmod V\in U/V$. It suffices to show that the section
\[
y\longmapsto Q'(y,\bar u_0)=Q(y,u_0)
\]
is equal a.e.\ to a continuous polynomial on $\XX_{j-1}$.

To see polynomiality of the section, we use \Cref{poly-calc}. Consider the maps
\[
r(y,u)\coloneqq u-u_0,\qquad q(y,u)\coloneqq u_0-u.
\]
Because $\rho$ is an exact polynomial cocycle, for every $i\ge 0$ we have
\[
\rho \bmod U_{>i}\ \in\ \Poly^1_{\le i-1}(\Gamma,\XX_{j-1},U/U_{>i}),
\]
hence the functions $r$ and $q$ satisfy
\[
r \bmod U_{>i}\in \Poly_{\le i}(\XX,U/U_{>i}),\qquad q \bmod U_{>i}\in \Poly_{\le i}(\XX,U/U_{>i}),
\]
since $\partial_\gamma r=\rho_\gamma$ and $\partial_\gamma q=-\rho_\gamma$.

Apply \Cref{poly-calc} with $F\coloneqq Q$, $s=1$, $\ell_1=0$, $r_1\coloneqq r$, and $q$ as above. We obtain that
\[
g(y,u)\coloneqq \partial_{u-u_0}Q\bigl(V_{u_0-u}(y,u)\bigr)
\]
is a polynomial on $\XX$ of degree $\le \deg(Q)$. But by the definition of $g$,
\[
g(y,u)=Q(y,u)-Q(y,u_0).
\]
Hence the function $(y,u)\mapsto Q(y,u_0)$ is a polynomial on $\XX$ (being the difference of two polynomials). Since it is $U$-invariant, it descends to a polynomial on the base $\XX_{j-1}$.

By the induction hypothesis, $y\mapsto Q(y,u_0)$ is equal a.e.\ to a continuous polynomial on $\XX_{j-1}$. As $\bar u_0\in U/V$ was arbitrary and $U/V$ is finite discrete, these continuous representatives glue to a continuous representative of $Q'$ on $\XX_{j-1}\times(U/V)$, and hence yield a continuous representative of $Q$ on $\XX$.
\end{proof}

For the remainder of this section, fix an exact polynomial tower $\XX=(\XX_i)_{i=0}^j$ of order $\leq k$ and hight $j$ with structure groups $U_1,\ldots,U_j$ and exact cocycles $\rho_i\colon \Gamma\times \XX_{i-1}\to U_i$ for $i=1,\ldots,j$.  
We write points of $\XX_i$ as $(x_{i-1},u_i)$ with $x_{i-1}\in X_{i-1}$ and
$u_i\in U_i$.  Identifying $\XX$ with the iterated skew product model, we may
write a typical point of $X=X_j$ as
\[
x=(u_1,\dots,u_j)\in U_1\times\cdots\times U_j. 
\]
For each structure group $U_i$, let $(U_{i,>\ell})_{\ell\ge0}$ denote its type
filtration.  We equip the product 
\[
\mathcal{U}\coloneqq \prod_{i=1}^j U_i
\]
with the product filtration
$\mathcal{U}_{>\ell}\coloneqq \prod_{i=1}^j U_{i,>\ell}$.

\begin{definition}[Exact polynomials along a tower]\label{def:orexactpolyheight}
Let $\XX$ be an exact polynomial tower of order $\le k$ and height $j$.

\begin{itemize}
\item[(i)] For $i=0$, we set $\Poly^0(\XX)\coloneqq U_1$ (viewing $U_1$ as the group of
continuous functions $X_0=\mathrm{pt}\to U_1$).

\item[(ii)] For $1\le i\le j-1$, an \emph{exact polynomial of height $i$} is a continuous
map
\[
P\colon X_i\to U_{i+1}
\]
such that, for every $\ell\ge0$,
\[
P \bmod U_{i+1,>\ell}\ \in\ \Poly_{\le \ell}(X_i,\,U_{i+1}/U_{i+1,>\ell}).
\]
We denote the group of such maps by $\Poly^i(\XX)$.

\item[(iii)] Each $\Poly^i(\XX)$ is equipped with the filtration
$(\Poly^i_{-d}(\XX))_{d\ge0}$ defined by
\[
P\in \Poly^i_{-d}(\XX)\quad \Longleftrightarrow\quad
P\bmod U_{i+1,>\ell}\ \in\ \Poly_{\le \ell-d}(X_i,\,U_{i+1}/U_{i+1,>\ell})
\ \ \forall \ell\ge0.
\]
\end{itemize}
\end{definition}

For each $d\ge0$, define
\begin{equation}\label{eq:Gd-new}
G_d(\XX)\ \coloneqq\ \Poly^0_{-d}(\XX)\rtimes \Poly^1_{-d}(\XX)\rtimes\cdots\rtimes \Poly^{j-1}_{-d}(\XX),
\end{equation}
and set $G(\XX)\coloneqq G_0(\XX)$, i.e.
\begin{equation}\label{eq:G-new}
G(\XX)\ \coloneqq\ \Poly^0(\XX)\rtimes \Poly^1(\XX)\rtimes\cdots\rtimes \Poly^{j-1}(\XX).
\end{equation}
At this stage $\rtimes$ is only a Cartesian product notation. We will show in \Cref{prop:G-group} that $G(X)$ has the structure of a group.

Every element $p=(p_0,\dots,p_{j-1})\in G(\XX)$ defines a transformation of $X$
by the rule
\begin{equation}\label{eq:action-new}
\begin{split}
V_{(p_0,\dots,p_{j-1})}&(u_1,\dots,u_j)
\coloneqq \\
&\bigl(u_1+p_0,\ u_2+p_1(u_1),\ \dots,\ u_j+p_{j-1}(u_1,\dots,u_{j-1})\bigr).
\end{split}
\end{equation}
In this sense, we understand the derivative $\partial_p F$ for $p\in G(\XX)$ and $F\in \mathcal{M}(\XX)$. 
We identify $\mathcal{U}=\prod_{i=1}^j U_i$ with the subset of $G(\XX)$ consisting of
constant functions (and once we show that $G(\XX)$ is a group, $\mathcal{U}$ is also a subgroup).

\begin{lemma}[Multi-level polynomial degree calculation]\label{lem:poly-calc-multi}
Let $\XX$ be an exact polynomial tower of order $\le k$ and height $j$.
Let $d,\ell_1,\dots,\ell_s\ge0$, let $F\in \mathcal{M}(\XX)$ satisfy
\begin{equation}\label{eq:dUF-new}
d_{\mathcal{U}}F\ \in\ \Poly^1_{\le d-\weight}\bigl[\mathcal{U};\mathcal{M}(\XX)\bigr],
\end{equation}
and let $r^1\in G_{\ell_1}(\XX),\dots,r^s\in G_{\ell_s}(\XX)$ and $q\in G(\XX)$.
Then the function
\[
g(x)\ \coloneqq\ \partial_{r^1}\cdots \partial_{r^s}F\bigl(V_q x\bigr)
\]
lies in $\Poly_{\le d-\sum_{t=1}^s \ell_t}(\XX)$.
\end{lemma}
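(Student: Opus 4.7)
The plan is to mimic the proof of \Cref{poly-calc} almost line by line, with the product group $\mathcal{U}=\prod_{i=1}^j U_i$ equipped with the product filtration playing the role of $U$, and to induct on $m\coloneqq d-\sum_{t=1}^s\ell_t$. The first step is to translate the hypotheses into the language of \Cref{poly-calc}. Since $r^t_{i-1}\in\Poly^{i-1}_{-\ell_t}(\XX)$, lifting each component along the factor map $\XX\to\XX_{i-1}$ shows that $r^t$, viewed as a measurable function $\XX\to\mathcal{U}$, satisfies $r^t\bmod\mathcal{U}_{>\ell}\in\Poly_{\le\ell-\ell_t}(\XX,\mathcal{U}/\mathcal{U}_{>\ell})$ for every $\ell\ge 0$; an analogous calculation gives $q\bmod\mathcal{U}_{>\ell}\in\Poly_{\le\ell}(\XX,\mathcal{U}/\mathcal{U}_{>\ell})$. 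Together with \eqref{eq:dUF-new}, these are precisely the three assumptions of \Cref{poly-calc}, with $A=\T$ and $U$ replaced by $\mathcal{U}$.

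For the base case $m<0$, I would fix an arbitrary $x_0\in X$ and observe that for each $t$, $r^t(x_0)\in\mathcal{U}_{>\ell_t-1}$: indeed, for $\ell<\ell_t$ the polynomial $r^t_{i-1}\bmod U_{i,>\ell}$ has negative degree and therefore vanishes, forcing $r^t_{i-1}(x_0)\in U_{i,>\ell_t-1}$ at every level. Hence $\weight(r^t(x_0))\ge \ell_t$ in the product filtration, and one application of \eqref{eq:dUF-new} combined with $s-1$ applications of the degree bound \eqref{diff-incl} shows that the iterated constant-shift derivative $\partial_{r^1(x_0)}\cdots\partial_{r^s(x_0)}F$ lies in $\Poly_{\le m}(\XX)=0$. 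Evaluating at $V_{q(x_0)}x_0$ yields $g(x_0)=0$.

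For the inductive step $m\ge 0$, the goal is to show $\partial_\gamma g\in\Poly_{\le m-1}(\XX)$ for every $\gamma\in\Gamma$ by the same ``discrete chain rule'' argument as in \Cref{poly-calc}. I would compute $T^\gamma g(x)$ by sliding $T^\gamma$ through each of the shifts via the cocycle identities $T^\gamma r^t(x)=r^t(x)+\partial_\gamma r^t(x)$, $T^\gamma q(x)=q(x)+\partial_\gamma q(x)$, and $T^\gamma F=F+\partial_\gamma F$. This expresses $\partial_\gamma g$ as a sum of $s+2$ terms, each of the same form as $g$ but with exactly one of the following modifications: $F$ is replaced by $\partial_\gamma F$ (which satisfies the hypothesis with $d$ lowered by one); a new function $r^{s+1}\coloneqq\partial_\gamma q$ is appended with $\ell_{s+1}=1$ (consistent because the hypothesis on $q$ implies $\partial_\gamma q\bmod\mathcal{U}_{>\ell}\in\Poly_{\le\ell-1}$); or one of the $r^t$ is replaced by $\partial_\gamma r^t$, raising the corresponding $\ell_t$ by one. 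In every case $m$ is effectively decreased by one, and the induction hypothesis applies.

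The main technical obstacle is the commutation identity $T^\gamma V_c=V_c T^\gamma$ for constants $c\in\mathcal{U}$, used in each step of the discrete chain rule. In the single-extension setting this is automatic, but on a tower of height $j\ge 2$ a naive coordinate-wise shift as in \eqref{eq:action-new} need not commute with $\Gamma$, because the cocycle $\rho_i$ at level $i$ may depend non-trivially on the vertical coordinates at the lower levels. I would address this by interpreting $V_p$ for $p\in G(\XX)$ via the semidirect-product group law implicit in \eqref{eq:G-new}, whose construction (to be carried out in \Cref{prop:G-group}) is designed precisely to realize $G(\XX)$ as a group of $\Gamma$-automorphisms of $\XX$; with that interpretation the required commutation relations hold identically, and the rest of the computation from \Cref{poly-calc} transfers verbatim.
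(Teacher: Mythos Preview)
Your inductive scheme and the base case are exactly what the paper does, and you are right to flag the commutation issue: for $j\ge 2$ the coordinate-wise shift $V_c$ by a constant $c\in\mathcal{U}$ does \emph{not} commute with $T^\gamma$, since $\rho_i(\gamma,\cdot)$ depends on the lower coordinates. But your proposed fix is both circular and based on a false premise. Circular: \Cref{prop:G-group} is proved \emph{using} this lemma, so you cannot invoke it here. False premise: $G(\XX)$ does not act by $\Gamma$-automorphisms. In fact $T^\gamma$ is itself $V_{\phi(\gamma)}$ for $\phi(\gamma)=(\rho_1(\gamma),\rho_2(\gamma,\cdot),\dots,\rho_j(\gamma,\cdot))\in G(\XX)$ (this is how \Cref{thm:tran} is proved), and since $G(\XX)$ is $k$-step nilpotent and nonabelian for $k\ge 2$, a generic $V_p$ does not commute with $V_{\phi(\gamma)}=T^\gamma$.

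What actually happens in the chain-rule expansion is that the defect of commutation appears as extra correction terms which are themselves handled by the induction. Concretely, for $c\in\mathcal{U}$ one has $V_c\,T^\gamma = T^\gamma\,V_{c'}$ where $c'-c$ is the tuple whose $i$-th component is built from derivatives $\partial_{c_1,\dots,c_{i-1}}\rho_i(\gamma,\cdot)$ of the cocycles along the already-determined lower-level shifts. Because each $\rho_i$ is exact, these corrections are exact polynomials of strictly higher filtration degree (they lie in $G_{\ge 1}(\XX)$), and hence enter the expansion of $\partial_\gamma g$ as additional terms of the same shape as $g$ with $m$ lowered by at least one; they are then absorbed by the inductive hypothesis alongside the three term types you listed. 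The paper's sketch glosses over this point, but this is precisely where exactness of the \emph{entire} tower (not just the top extension) is being used.
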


\begin{proof}
The argument is the same as in \Cref{poly-calc}, with the only change being that
derivatives are taken along the multi-level vertical group $\mathcal{U}$ and the induced
polynomial transformations $G(\XX)$.

Set $m\coloneqq d-\sum_{t=1}^s \ell_t$.  If $m\le -1$, then each $r^t\in G_{\ell_t}(\XX)$ is
trivial modulo $\mathcal{U}_{>\ell_t-1}$ at every level, hence takes values in
$\mathcal{U}_{>\ell_t-1}$.  Condition \eqref{eq:dUF-new} implies $F$ is invariant under such
translations to the required order, forcing $g\equiv 0$, as desired.

Assume now $m\ge0$ and induct on $m$.  It suffices to show $\partial_\gamma g\in \Poly_{\le m-1}(\XX)$
for all $\gamma\in\Gamma$.  Fix $\gamma$ and $x\in X$.  One expands $T^\gamma g(x)$ by a discrete
chain rule exactly as in the proof of \Cref{poly-calc}: writing
$T^\gamma q(x)=q(x)+\partial_\gamma q(x)$ and successively using
\[
\partial_{T^\gamma r^t(x)}=\partial_{\partial_\gamma r^t(x)}\,V_{r^t(x)}+\partial_{r^t(x)}
\qquad (t=1,\dots,s),
\]
one expresses $\partial_\gamma g$ as a sum of terms analogous to \eqref{term-1}, \eqref{term-2}, \eqref{term-3}
in \Cref{poly-calc}.  Each such term has the same form as $g$, but with either:
(i) $F$ replaced by $\partial_\gamma F$ (lowering $d$ by $1$), or
(ii) an extra derivative by $r^{s+1}\coloneqq \partial_\gamma q$ (which contributes $\ell_{s+1}=1$), or
(iii) one of the $r^t$ replaced by $\partial_\gamma r^t$ (raising its $\ell_t$ by $1$),
and with harmless translations of the remaining $r^{t'}$ and of $q$.
In all cases the value of $m$ drops by $1$, so the induction hypothesis yields that
each term lies in $\Poly_{\le m-1}(\XX)$, completing the proof.
\end{proof}

Now, we verify that $G(X)$ has the structure of a filtered group. 

\begin{proposition}[Group structure and filtration]\label{prop:G-group}
On the set $G(\XX)$, the operation 
\begin{equation}\label{eq-grouplaw on G}
p\cdot q
=
\bigl(q_0+p_0,\ q_1+p_1\circ V_{q_0},\ \dots,\ q_{j-1}+p_{j-1}\circ V_{(q_0,\dots,q_{j-2})}\bigr).
\end{equation}
defines a group law, and
$\bigl(G_d(\XX)\bigr)_{d\ge0}$ is a degree-$k$ filtration on $G(\XX)$.
\end{proposition}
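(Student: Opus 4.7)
The plan is to derive everything from the multi-level polynomial degree calculation of Lemma \ref{lem:poly-calc-multi}, which controls how composition with $V_q$ for $q \in G(\XX)$ interacts with polynomial degrees. Because the operation \eqref{eq-grouplaw on G} is precisely the formula forced by the requirement $V_p \circ V_q = V_{p \cdot q}$, associativity and the identity axiom reduce to the corresponding statements for composition of bijections of $X$. The substantive content is therefore: (a) closure of $G(\XX)$ (and of each $G_d(\XX)$) under the group law, (b) existence of inverses inside these sets, and (c) the filtration properties, in particular the commutator inclusion.

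For closure, I would fix $p \in G_d(\XX)$, $q \in G_e(\XX)$, and a height $i$. For each $\ell \geq 0$ and each character $\xi \in \widehat{U_{i+1}}$ annihilating $U_{i+1, >\ell}$, the function $\xi \circ p_i$, pulled back to $X$ along the projection $X \to X_i$, is polynomial of degree $\leq \ell - d$ whose $\mathcal{U}$-derivatives decrease the degree by the weight (a standard consequence of the type filtration on each $U_{i'}$ together with the decomposition $V_u = V_{u_1} \circ \cdots \circ V_{u_j}$ for $u \in \mathcal{U}$). Applying Lemma \ref{lem:poly-calc-multi} with $s = 0$ and $q_\ast \coloneqq (q_0, \ldots, q_{i-1}, 0, \ldots, 0) \in G(\XX)$ then shows $\xi \circ (p_i \circ V_{q_\ast})$ still has degree $\leq \ell - d$. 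Combining with $\xi \circ q_i$ of degree $\leq \ell - e$ puts $q_i + p_i \circ V_{q_\ast}$ in $\Poly^i_{-\min(d,e)}(\XX)$; specializing $d = e$ gives closure of $G_d(\XX)$ under multiplication. Inverses are built recursively by $q_0 \coloneqq -p_0$ and $q_i \coloneqq -p_i \circ V_{(q_0, \ldots, q_{i-1}, 0, \ldots, 0)}$, with the same argument placing each $q_i$ in $\Poly^i_{-d}(\XX)$, and a direct substitution showing $p \cdot q = e$; hence each $G_d(\XX)$ is a subgroup.

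The nesting $G_0(\XX) \supseteq G_1(\XX) \supseteq \cdots$ is immediate from Definition \ref{def:orexactpolyheight}. For $G_{k+1}(\XX) = \{e\}$: since each exact polynomial cocycle $\rho_i$ has degree $\leq k - 1$, hence type $\leq k$, Lemma \ref{u-dual} gives $U_{i, >k} = 0$ for every $i$; the defining condition of $\Poly^i_{-(k+1)}$ at $\ell = k$ then demands $p_i \bmod U_{i+1,>k} \in \Poly_{\leq -1} = 0$, forcing $p_i = 0$.

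The main obstacle is the commutator inclusion $[G_d(\XX), G_e(\XX)] \subseteq G_{d+e}(\XX)$. I would proceed by induction on the height $i$: the level-$0$ components of the group law commute, so $[p, q]_0 = 0$. For the inductive step, a direct (if slightly tedious) expansion of $(pq p^{-1} q^{-1})_i$ using \eqref{eq-grouplaw on G} yields, after the cancellations, an expression of the form $\pm \partial_{q_\ast} p_i \circ V_{\alpha} \pm \partial_{p_\ast} q_i \circ V_{\beta}$ plus terms involving components of $p$ and $q$ at heights strictly less than $i$, where $p_\ast \in G_d(\XX)$ and $q_\ast \in G_e(\XX)$ are the truncations of $p$ and $q$ to the first $i$ components, and $\alpha, \beta \in G(\XX)$ are translations assembled from $p$ and $q$. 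Each of the derivatives $\partial_{q_\ast} p_i$ and $\partial_{p_\ast} q_i$ drops the degree by $d + e$ — provided by one application of Lemma \ref{lem:poly-calc-multi} with $s = 1$ — and the outer compositions $\circ V_\alpha$, $\circ V_\beta$ are degree-neutral by another application with $s = 0$. The residual lower-height corrections from inverting $qp$ are handled by the inductive hypothesis. The bookkeeping involved in isolating the derivative expressions is therefore the technical heart of the proof, but once arranged, the degree bound follows from Lemma \ref{lem:poly-calc-multi} directly.
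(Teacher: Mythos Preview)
Your proposal is correct and follows essentially the same route as the paper: both arguments derive closure, inverses, and the commutator inclusion from Lemma~\ref{lem:poly-calc-multi}, and both deduce $G_{k+1}(\XX)=\{e\}$ from $U_{i,>k}=0$. Your write-up is somewhat more explicit (working character-by-character via $\xi$, and invoking Lemma~\ref{u-dual} to justify $U_{i,>k}=0$), whereas the paper's proof simply asserts that the commutator expands into terms of the form $\partial_{V_{(\cdots)}}(\cdot)$ and applies the lemma; but the content is the same.
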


\begin{proof}
To see that the operation above is well-defined, observe that each coordinate on the right-hand side of \eqref{eq-grouplaw on G} is obtained from the $p_i,q_i$ by addition and composition
with some $V_{(\cdots)}$, and \Cref{lem:poly-calc-multi} ensures it remains an exact polynomial
of the correct height and filtration degree.  Hence $p\cdot q\in G(\XX)$.

One can recursively verify that 
\[
p^{-1}=\bigl(-p_0,\ -p_1\circ V_{-p_0},\ \dots,\ -p_{j-1}\circ V_{(p_0,\dots,p_{j-2})}^{-1}\bigr),
\]
and again \Cref{lem:poly-calc-multi} implies $p^{-1}\in G(\XX)$.

To verify the filtration, computing the commutator
$[p,q]$ coordinatewise, one obtains expressions built from terms of the form
$\partial_{V_{(\cdots)}}(\cdot)$ applied to the various coordinates of $p$ and $q$.
If $p\in G_{\ell_1}(\XX)$ and $q\in G_{\ell_2}(\XX)$, then \Cref{lem:poly-calc-multi}
shows $[p,q]\in G_{\ell_1+\ell_2}(\XX)$.

Finally, since each type filtration on each $U_i$ has degree $\le k$, we have $U_{i,>k}=0$
and hence $G_{k+1}(\XX)=\{e\}$, so the filtration has degree $\le k$.
\end{proof}

\begin{theorem}[Structure theorem for polynomials on exact polynomial towers]\label{thm:poly-str-reindexed}
Let $\XX$ be an exact polynomial tower of order $\le k$ and height $j$.
For every $d\ge0$, the following are equivalent:
\begin{itemize}
\item[(i)] $P\in \Poly_{\le d}(\XX)$.
\item[(ii)] $d_{\mathcal{U}}P\in \Poly^1_{\le d-\weight}\bigl[\mathcal{U};\mathcal{M}(\XX)\bigr]$.
\item[(iii)] $d_{G(\XX)}P\in \Poly^1_{\le d-\weight}\bigl[G(\XX);\mathcal{M}(\XX)\bigr]$.
\end{itemize}
\end{theorem}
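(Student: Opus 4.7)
The plan is to establish the chain $(iii) \Rightarrow (ii) \Rightarrow (i)$ together with $(i) \Rightarrow (iii)$, with \Cref{lem:poly-calc-multi} serving as the main tool.

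The implication $(iii) \Rightarrow (ii)$ is immediate: $\mathcal{U}$ embeds into $G(\XX)$ as the subgroup of constant functions, and under this identification the filtrations agree, since by \Cref{def:orexactpolyheight} a constant $c \in U_{i+1}$ lies in $\Poly^i_{-d}(\XX)$ precisely when $\weight(c) \geq d$ in the type filtration on $U_{i+1}$. For $(ii) \Rightarrow (iii)$, I would apply \Cref{lem:poly-calc-multi} with $F = P$, $s = 1$, $r^1 = p \in G_\ell(\XX)$, $\ell_1 = \ell$, and $q$ the identity of $G(\XX)$. The hypothesis of the lemma is exactly $(ii)$, and the conclusion gives $\partial_p P \in \Poly_{\le d-\ell}(\XX)$. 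Combining this degree bound with the homomorphism property of $V \colon G(\XX) \to \Aut(X)$ recorded in \Cref{prop:G-group} establishes the cocycle condition in $(iii)$.

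For $(ii) \Rightarrow (i)$, I would mimic the strategy of \Cref{poly-integ}. Fix a basepoint $x_0 \in X$ and build a measurable map $p \colon X \to G(\XX)$ coordinatewise so that $V_{p(x)} x_0 = x$: set $p_0(x) = u_1(x) - u_1^0 \in U_1$, and inductively let $p_i(x) \in \Poly^i(\XX)$ be any element taking the prescribed value $u_{i+1}(x) - u_{i+1}^0$ at the basepoint $(u_1^0, \dots, u_i^0)$. Applying a parametrized variant of \Cref{lem:poly-calc-multi} (obtained by following the proof of \Cref{poly-calc} and allowing $r^t$ and $q$ to lie in $\mathcal{M}(X, G(\XX))$) with $s = 1$, $r^1 = p$, and $q = p^{-1}$ yields $g(x) = P(x) - P(x_0) \in \Poly_{\leq d}(\XX)$, so $P \in \Poly_{\leq d}(\XX)$ as well.

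The main obstacle is $(i) \Rightarrow (iii)$. I would proceed by induction on $d$, with the base case $d = 0$ (in which $P$ is constant and the claim is trivial) being immediate. For $P \in \Poly_{\leq d}(\XX)$ and $\gamma \in \Gamma$, each $\partial_\gamma P$ lies in $\Poly_{\leq d-1}(\XX)$ and so, by induction, satisfies $(iii)$. For $p \in G_\ell(\XX)$ the commutator $[\partial_\gamma, \partial_p] P = P \circ V_p T^\gamma - P \circ T^\gamma V_p$ measures the failure of $V_p$ to commute with $T^\gamma$; this failure is driven by the derivatives $\partial_p \rho_i$ of the exact polynomial cocycles at levels above that of $p$, which by exactness are themselves polynomials of degree at most $d-\ell-1$. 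Combining the inductive control on $\partial_p \partial_\gamma P$ with this commutator bound gives $\partial_\gamma \partial_p P \in \Poly_{\leq d-\ell-1}(\XX)$ for every $\gamma \in \Gamma$, and hence $\partial_p P \in \Poly_{\leq d-\ell}(\XX)$. The hardest point is the simultaneous bookkeeping of the correction terms across all levels of the tower; this is precisely the combinatorial content of the multi-level chain rule in \Cref{lem:poly-calc-multi}, which can be invoked as a black box once the base case of the induction is in place.
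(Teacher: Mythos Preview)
Your $(iii)\Rightarrow(ii)$ and $(ii)\Rightarrow(iii)$ are correct; the latter is a mild streamlining of the paper, which instead routes through $(i)$. For $(ii)\Rightarrow(i)$ you correctly note that a parametrized variant of \Cref{lem:poly-calc-multi} (with $r^t,q$ functions of $x$ rather than fixed elements of $G(\XX)$) is needed; the paper's own application of the lemma with ``$q(x)=-x$'' and coordinate projections relies on the same implicit extension, so you are on equal footing there.

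The substantive divergence is in $(i)\Rightarrow(iii)$. The paper's argument is: reduce (by induction on $d$) to showing that $P\in\Poly_{\le d}(\XX)$ is $G_{d+1}(\XX)$-invariant; since any $r\in G_{d+1}(\XX)$ takes values pointwise in $\mathcal{U}_{>d}$, this follows from $\mathcal{U}_{>d}$-invariance of $P$; and \emph{that} is proved by a downward induction on the tower level, at each stage quotienting out the higher-level subgroups already handled so that \Cref{ppfacts}(ii),(iii) can be applied to what is now the top structure group. This level-by-level descent is the key input your proposal does not supply.

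Your commutator route can be made to work, but not as a single induction on $d$. Since $T^\gamma=V_{\phi(\gamma)}$ with $\phi(\gamma)\in G_1(\XX)$ by exactness, one computes
\[
[\partial_\gamma,\partial_p]P \;=\; (\partial_c P)\circ T^\gamma V_p, \qquad c=[p,\phi(\gamma)]\in G_{\ell+1}(\XX)
\]
by the filtration property in \Cref{prop:G-group}. To bound $\partial_c P$ you therefore need exactly the statement $(iii)$ for $P$ at filtration level $\ell+1$, not at degree $d-1$. This forces a \emph{double} induction: outer on $d$, inner downward on $\ell$ from the base $\ell=k+1$ (where $G_{k+1}=\{e\}$). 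Your sentence about ``$\partial_p\rho_i$ \ldots of degree at most $d-\ell-1$'' is not the right bookkeeping --- the derivatives of the cocycles know nothing about $d$; what is actually used is $c\in G_{\ell+1}$ together with the inner inductive hypothesis, after which one still has to check that precomposition by $T^\gamma V_p$ preserves $\Poly_{\le d-\ell-1}$ (this uses the outer hypothesis at degree $d-\ell-1<d$). Once set up this way the argument closes, and it has the pleasant feature of avoiding the level-by-level Host--Kra reduction that the paper invokes; but as written, the single induction on $d$ is circular.
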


\begin{proof}
Since $\mathcal{U}\subseteq G(\XX)$, (iii)$\Rightarrow$(ii) is immediate.

(ii)$\Rightarrow$(i): assume $d_{\mathcal{U}}P\in \Poly^1_{\le d-\weight}\bigl[\mathcal{U};\mathcal{M}(\XX)\bigr]$.
Apply \Cref{lem:poly-calc-multi} with $F=P$, $q(x)=-x$ and with $r^1,\dots,r^{d+1}$
chosen to be the coordinate projections in $\mathcal{U}$. We see that $$g(u_1,...,u_j) \coloneqq  \partial_{u_1}...\partial_{u_j} P(0) = P(u_1,...,u_j)$$ is a polynomial of degree $\leq d$, thus $P\in  \Poly_{\le d}(\XX)$.

(i)$\Rightarrow$(iii): let $P\in\Poly_{\le d}(\XX)$.  Fix $\ell\ge0$ and $r\in G_\ell(\XX)$.
We must show $\partial_r P\in \Poly_{\le d-\ell}(\XX)$.
It suffices (by induction on $d$) to prove that $P$ is $G_{d+1}(\XX)$-invariant.
If $r\in G_{d+1}(\XX)$, then by definition each component of $r$ is trivial modulo
$U_{i,>d+1}$ at the relevant level, hence $r$ takes values in $\mathcal{U}_{>d+1}$.
Therefore it is enough to prove that $P$ is invariant under $\mathcal{U}_{>d+1}$.

We prove invariance under $\mathcal{U}_{>d+1}=\prod_{i=1}^j U_{i,>d+1}$ by downward induction
on the level $i=j,j-1,\dots,1$.  The top level $i=j$ follows from \Cref{ppfacts}(ii)--(iii):
elements of $U_{j,>d+1}$ have weight $>d+1$ and therefore annihilate $\Poly_{\le d}(\XX)$.

Assume $i<j$ and we already know invariance under $U_{i+1,>d+1},\dots,U_{j,>d+1}$.
Consider the factor obtained from $\XX$ by quotienting out these higher subgroups.
Exactness of the cocycles $\rho_{i+1},\dots,\rho_j$ implies that the induced extension at level $i$
remains an exact polynomial extension, and the action of $U_{i,>d+1}$ on this factor is free
(\Cref{ppfacts}(ii)).  Applying \Cref{ppfacts}(iii) on that factor yields $\partial_u P=0$
for all $u\in U_{i,>d+1}$, closing the downward induction.  Hence $P$ is $\mathcal{U}_{>d+1}$-invariant,
and thus $G_{d+1}(\XX)$-invariant, proving (iii).
\end{proof}

We are ready to prove \Cref{thm:tran}. 

\begin{proof}
Let $G(\XX)$ be as in \eqref{eq:G-new}.  
By construction $\mathcal{U}\subseteq G(\XX)$ acts by coordinatewise translations on
$X\cong U_1\times\cdots\times U_j$, hence is transitive.  Therefore $G(\XX)$ is also transitive.

Fix a basepoint $x_0\in X$ and define the stabilizer
\[
\Lambda\coloneqq \{g\in G(\XX): V_g x_0=x_0\}.
\]
The orbit map $\pi \colon G(\XX)\to X$, $\pi(g)=V_g x_0$, is surjective by transitivity and satisfies
$\pi(g)=\pi(g')$ iff $g^{-1}g'\in\Lambda$.  Hence $\pi$ factors through a bijection
\[
\iota: G(\XX)/\Lambda \to X,\qquad \iota(g\Lambda)=V_g x_0.
\]
Because $\mathcal{U}\subseteq G(\XX)$ acts transitively, for every $g\in G(\XX)$ there exists
$u\in\mathcal{U}$ with $V_u x_0=V_g x_0$, hence $u^{-1}g\in\Lambda$ and so $g\in \mathcal{U}\Lambda$.
Thus $G(\XX)=\mathcal{U}\Lambda$, and the quotient map $\mathcal{U}\to G(\XX)/\Lambda$ is surjective.
Since $\mathcal{U}$ is compact, it follows that $G(\XX)/\Lambda$ is compact; in particular $\Lambda$ is
co-compact.

Since by \Cref{polycont} the action $G(\XX)\times X\to X$, $(g,x)\mapsto V_g x$ is continuous, the orbit map $\pi$ is continuous, and $\Lambda=\pi^{-1}(\{x_0\})$
is closed.  Therefore $G(\XX)/\Lambda$ is compact Hausdorff, and $\iota$ is a homeomorphism.

By construction, the measure $\mu$ on $X$ is the iterated product of Haar measures on the structure groups,
and each $V_g$ acts by translations in the fibers at each level; hence $V_g$ preserves $\mu$ for all $g\in G(\XX)$. Thus $\mu_{G(\XX)/\Lambda}\coloneqq \iota^{-1}_*\mu$ defines a $G(\XX)$-invariant regular Borel probability measure on
$G(\XX)/\Lambda$. 

Finally, for each $\gamma\in\Gamma$, define $\phi(\gamma)\in G(\XX)$ by
\[
\phi(\gamma)\coloneqq \bigl(\rho_1(\gamma),\rho_2(\gamma),\dots,\rho_j(\gamma)\bigr).
\]
The cocycle identities for the $\rho_i$ imply that $\phi \colon  \Gamma\to G(\XX)$ is a homomorphism.
\end{proof}

\section{Inverse theorem for the Gowers norms for groups of bounded exponent}\label{proofgowers}

Our goal is to prove \Cref{inversegowers}.  When proving a similar result in \cite[Theorem 1.12]{jst-tdsystems}, our strategy could be briefly summarized as follows:
\begin{itemize}
    \item First assume that the statement failed, so that one could locate a sequence of increasingly bad counterexamples to the theorem, in which a sequence $f_n$ of functions have large Gowers norm but fail to correlate well with polynomials.
    \item By taking a suitable ultraproduct, and invoking the correspondence principle in \cite[Proposition 5.1]{jt21-1}, one can then associate an ergodic (factor of a) ``Loeb translational system'' to this sequence, involving a randomized sequence of shifts, and a limiting function $f$ that has a large Gowers--Host--Kra seminorm.
    \item By applying a structure theorem to this Loeb translational system, show that this limiting function $f$ correlates with an (ergodic-theoretic) polynomial.
    \item By carefully reversing the correspondence principle, and using some stability properties of polynomials, show that many of the original functions $f_n$ then correlate with a polynomial, giving a contradiction.
\end{itemize}

We will adopt a similar strategy here, but a new difficulty arises: our ergodic structure theorem requires one to perform a number of abelian extensions of the original system, thus theoretically losing the Loeb-type structure which is crucial for reversing the correspondence principle at the final step.  Fortunately, we will show (see \Cref{Gabelianextension} below) that such extensions can remain factors of the original Loeb system, so long as a certain non-degeneracy property of the action is satisfied.  Some simple counting arguments will show that this non-degeneracy property will hold for almost all of the random shifts used in the correspondence principle, thus allowing one to complete the arguments.

We turn to the details.  As mentioned above, a key tool will be the correspondence principle established in \cite[Proposition 5.1]{jt21-1}; in this section we will use the notation for ultrafilters and Loeb measures from that paper. We now give an alternative statement of that principle, relating the combinatorial Gowers norm on the ultraproduct of a finite abelian group as a Gowers--Host--Kra seminorm on a randomly generated action on that group.

\begin{definition}[Random action by translations]
    Let $G=(G,+)$ be an abelian group equipped with some $\sigma$-algebra $\mathcal{B}_G$ and probability measure $\mu_G$, let $(\Omega,P)$ be a probability space, and let $\bm{g} \colon  \Omega\rightarrow \Hom(\Z^\N, G)$ be a random homomorphism from $\Z^\N$ to $G$.
    \begin{itemize} 
    \item[(1)] {The \emph{random action by translations} induced by $\bm{g}$ is a $\Gamma_\omega$-action defined by $T_{\omega}^\gamma x = x + \bm{g}(\omega)(\gamma)$, for all $\omega\in \Omega$, where $\Gamma_{\omega} \coloneqq \Z^\N / \operatorname{ker}(\bm{g}(\omega))$, and by abuse of notation we view $\bm{g}(\omega)$ as a homomorphism from $\Gamma_\omega$ to $G$..}
    \item[(2)] {Let $F\subseteq L^\infty(G)$. The \emph{random $\sigma$-algebra generated by $F$ and $\bm{g}$}, $\mathcal{B}_\omega(F,\bm{g}) \leq \mathcal{B}_G$, is the minimal $\sigma$-algebra generated by $T_{\omega}^\gamma F$ for all $\gamma\in \Gamma_{\omega}$.}
    \item[(3)] {Similarly, the \emph{random factor generated by $F$ and $\bm{g}$} is the factor $X_{\omega} = (G,\mathcal{B}_{\omega}(F,\bm{g}), \mu_X, T_{\omega})$ associated with this $\sigma$-algebra and is equipped with the induced $\Gamma$-action and the induced measure (i.e. $\mu_X$ is the restriction of $\mu_G$ to $\mathcal{B}_{\omega}(F,\bm{g})$.}
    \end{itemize}
\end{definition}

\begin{proposition}[Correspondence Principle]\label{correspondence}
Let $\{G_n:n\in\mathbb{N}\}$ be a countable family of finite abelian groups, let $\alpha$ be a non-principal ultrafilter, let $G\coloneqq\prod_{n\rightarrow \alpha}G_n$ and $\Omega \coloneqq \prod_{n\rightarrow \alpha} (\Hom(\Z^\N, G_n))$ be the indicated ultraproducts, equipped both with the Loeb measure construction, let $F$ be an at most countable subset of $L^\infty(G)$, and let $\bm{g} \colon \Omega\rightarrow \Hom(\Z^\N,G)$ be the map 
$$\bm{g}(\lim_{n\rightarrow\alpha} \omega_n) (v) \coloneqq (\lim_{n\rightarrow \alpha} \omega_n(v))$$
for all $\omega_n \in \Hom(\Z^\N, G_n)$ and $v \in \Z^\N$.
Then for Loeb almost every $\omega\in \Omega$ we have $$\|f\|_{U^k(X_{\omega})} = \|f\|_{U^k(G)}$$ for any $f\in \mathcal{F}$, where $\XX_{\omega}$ is the random factor of $G$ generated by $F$ and $\bm{g}$ and $U^k(G)$ is the non-standard Gowers norm (cf. \cite[(4.21)]{jt21-1}).
\end{proposition}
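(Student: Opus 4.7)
The plan is to reduce this statement to \cite[Proposition 5.1]{jt21-1}, of which it is a specialization that makes the random translational action explicit through the map $\bm{g}$. First, I would verify that the construction is well-defined Loeb-almost surely. For each $v\in\Z^\N$, the coordinate function $\omega\mapsto \bm{g}(\omega)(v)$ is the ultralimit of the internal maps $\omega_n\mapsto \omega_n(v)$ on $\Hom(\Z^\N,G_n)\cong G_n^\N$; under the Loeb measure, these coordinates are independent and $\mu_G$-distributed (in the Loeb sense). Hence $\bm{g}(\omega)$ is a genuine homomorphism $\Z^\N\to G$ for Loeb-a.e.\ $\omega$, and the induced action $T_\omega^\gamma x = x + \bm{g}(\omega)(\gamma)$ of $\Gamma_\omega = \Z^\N/\ker\bm{g}(\omega)$ on $(G,\mu_G)$ is countable abelian and measure-preserving; by absorbing a Loeb-null set we may assume this simultaneously for all $\omega$ outside a fixed null set.

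Second, since the random factor $\XX_\omega$ is by construction the smallest sub-$\sigma$-algebra of $\mathcal{B}_G$ containing all $\Gamma_\omega$-translates of the (countable) family $F$, every $f\in F$ is $\XX_\omega$-measurable, and so \Cref{prop-functoriality}(ii) gives $\|f\|_{U^{k}(\XX_\omega)} = \|f\|_{U^{k}(G,\mathcal{B}_G,\mu_G,T_\omega)}$. It therefore suffices to establish
$$\|f\|_{U^{k}(G,\mathcal{B}_G,\mu_G,T_\omega)} = \|f\|_{U^{k}(G)}$$
for Loeb-a.e.\ $\omega$ and all $f$ in the countable family $F$ (a countable union of null sets remaining null).

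Third, I would compute both sides and match them. Writing out the nonstandard Gowers norm as the internal $2^{k}$-fold average over independent $G$-distributed shifts $h_1,\dots,h_{k}$, and writing the Host--Kra seminorm as the limit along a F\o lner sequence in $\Gamma_\omega$ of the corresponding $2^{k}$-fold average using shifts $\bm{g}(\omega)(\gamma_1),\dots,\bm{g}(\omega)(\gamma_{k})$, the identity reduces to the assertion that, Loeb-almost surely, the pushforward of the F\o lner averages on $\Gamma_\omega$ under $\bm{g}(\omega)$ equidistributes against $\mu_G$ in the sense required to evaluate the multilinear form defining the Gowers norm. Concretely, by choosing the F\o lner sequence on $\Gamma_\omega$ as the images of cubes $[-N,N]^{r}\cap \Z^{r}\subseteq \Z^{\N}$ (for $r,N\to\infty$), the Host--Kra averages become internal averages of $f$ evaluated at points of the form $x + \sum_i \omega_i\bm{g}(\omega)(\gamma_i)$, and the Loeb-independence and uniformity of the coordinates of $\bm{g}(\omega)$ allow the change of variables $h_i = \bm{g}(\omega)(\gamma_i)$ to be carried out, producing precisely the internal Gowers average defining $\|f\|_{U^{k}(G)}$.

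The main obstacle is the measure-theoretic subtlety that the Host--Kra seminorm is a standard limit over a F\o lner sequence indexed by $\N$, whereas the nonstandard Gowers norm is a single internal average; reconciling the two requires a Loeb--Fubini argument combined with a second-moment (or variance) estimate to show that the random F\o lner averages converge to the internal average for Loeb-a.e.\ $\omega$, with the Borel--Cantelli lemma handling the passage from expected smallness to almost-sure convergence. This is essentially the content of the proof of \cite[Proposition 5.1]{jt21-1}, from which the statement above follows by applying that principle uniformly to the countable family $F$.
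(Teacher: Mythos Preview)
Your approach matches the paper's: both reduce the statement to \cite[Proposition 5.1]{jt21-1}, which you explicitly invoke at the end. The paper simply cites that reference without further commentary, whereas you additionally sketch the mechanism behind it (Loeb--Fubini plus a second-moment/Borel--Cantelli argument to pass from internal Gowers averages to F\o lner averages). One small correction: the equality $\|f\|_{U^{k}(\XX_\omega)} = \|f\|_{U^{k}(G,\mathcal{B}_G,\mu_G,T_\omega)}$ does not follow from \Cref{prop-functoriality}(ii), which concerns factor maps between Host--Kra factors; the relevant fact is the standard observation that the Gowers--Host--Kra seminorm of $f$ depends only on the factor generated by the $\Gamma$-orbit of $f$, since the cubic measures restrict compatibly to factors.
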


\begin{proof} See  \cite[Proposition 5.1]{jt21-1}. 
\end{proof}

For $\omega \in \Omega$, the first isomorphism theorem gives the short exact sequence
\begin{equation}\label{gom-split}
0 \to \Gamma_\omega \stackrel{\bm{g}(\omega)}{\to} G \to G / \Gamma_\omega \to 0.
\end{equation}
In general, this sequence need not split.  Fortunately, this turns out to be the case when the $G_n$ are uniformly bounded-exponent, provided we make an additional non-degeneracy hypothesis.

\begin{lemma}[Existence of splitting]\label{Gsplit}
In the settings of \Cref{correspondence}, suppose that there is some $m\geq 1$ such that $G_n$ is an $m$-exponent abelian group for all $n\in \mathbb{N}$. Suppose furthermore that we have the non-degeneracy property that for every $m'<m$ which divides $m$, $\lim_{n\rightarrow \alpha} [G_n:m'G_n]$ is unbounded, where $[G_n:m'G_n]$ is the index of $m'G_n$ in $G_n$. Then for Loeb almost every $\omega\in \Omega$, the sequence \eqref{gom-split} admits a Loeb measurable retraction $r_{\omega}\colon G\rightarrow \Gamma_\omega$. In particular, $G = \Gamma_{\omega}\times G/\Gamma_{\omega}$ as measure spaces.
\end{lemma}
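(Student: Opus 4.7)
The plan is to reduce the existence of the measurable splitting to an algebraic purity statement via Kulikov's theorem, establish purity in a primary-decomposition-reduced setting using the non-degeneracy hypothesis, and finally upgrade the abstract splitting to a Loeb-measurable one.

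First I would reduce to purity. Both $G$ and $\Gamma_\omega$ have exponent dividing $m$, so by Kulikov's theorem (Pr\"ufer--Baer), $\Gamma_\omega$ is a direct summand of $G$ iff it is pure, i.e., $m'G \cap \Gamma_\omega = m'\Gamma_\omega$ for every divisor $m' \mid m$. This reduces the splitting to showing purity a.s., a countable conjunction of events over finitely-supported $v$ and divisors $m'$ with $1 < m' < m$. Writing $m = \prod_p p^{a_p}$ and $G = \bigoplus_p G^{(p)}$, a Chinese Remainder idempotent lifted to an integer shows that each primary part $x_i^{(p)}$ is an integer multiple of $x_i$ and hence already lies in $\Gamma_\omega$. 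This gives $\Gamma_\omega = \bigoplus_p \Gamma_\omega^{(p)}$ with each $\Gamma_\omega^{(p)}$ generated by iid uniform elements of $G^{(p)}$ (independent across $p$), and reduces both the purity statement and the non-degeneracy hypothesis to each prime-power module $G^{(p)}$.

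Now I would assume $m = p^a$ and represent $G_n$ internally as a direct sum of cyclic $p$-groups $\Z/p^j\Z$ ($j \le a$) with multiplicities depending on $n$. For $1 \le k \le a-1$ and a finite-support $v$ with $v_p(d) < k$ (where $d = p^{v_p(\gcd(v, m))}$), I would show that the event $\bm{g}(\omega)(v) \in p^kG \setminus p^k\Gamma_\omega$ has Loeb measure $0$ via a dichotomy. \emph{Case (a):} If $|dG_n|$ is unbounded along the ultrafilter, a direct computation gives $P(\bm{g}(\omega)(v) \in p^kG) = |p^kG|/|dG|$, and an exponent-bookkeeping argument (comparing $\max(0, j-k)$ with $\max(0, j - v_p(d))$ in the multiplicities), combined with non-degeneracy for the divisor $p^{v_p(d)+1} < p^a = m$, forces this probability to tend to $0$. \emph{Case (b):} If $|dG_n|$ is bounded, then since $p^kG \subseteq dG$ the group $p^kG$ is an internal finite group of bounded size; the elements $p^kx_i$ are iid uniform on it, so by a standard random-generation argument $p^k\Gamma_\omega = p^kG$ almost surely, and $\bm{g}(\omega)(v) \in p^kG = p^k\Gamma_\omega$ trivially.

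Finally, having established purity a.s., I would promote the abstract splitting from Kulikov to a Loeb-measurable one. Since $\Gamma_\omega$ is countable, the quotient $G/\Gamma_\omega$ carries a standard Borel structure; a Borel section of this quotient (via Kuratowski--Ryll-Nardzewski) together with the group-theoretic splitting yields a Loeb-measurable retraction $r_\omega$, and the factorization $G = \Gamma_\omega \times G/\Gamma_\omega$ as measure spaces follows from the direct-sum decomposition together with the countability of $\Gamma_\omega$. The principal technical difficulty will be Case (a), where one must carefully track the exponent structure of $|p^kG|/|dG|$ to confirm that the non-degeneracy hypothesis really does force the probability to $0$.
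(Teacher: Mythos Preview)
Your purity argument via the Case (a)/(b) dichotomy is a plausible alternative route, and the probabilistic estimates you sketch look sound. The genuine gap is in the last step: promoting the abstract Kulikov splitting to a \emph{Loeb-measurable homomorphic} retraction. Kulikov's theorem produces an abstract complement $H \le G$ with $G = \Gamma_\omega \oplus H$, but says nothing about the measurability of $H$, and the associated retraction is Loeb-measurable precisely when $H$ is. Your appeal to Kuratowski--Ryll-Nardzewski does not help: the Loeb space is not Polish (nor standard Borel), so selection theorems of that type are unavailable. Even granting a Loeb-measurable \emph{set-theoretic} section $s$ of $G \to G/\Gamma_\omega$, the map $g \mapsto g - s(g+\Gamma_\omega)$ is not a group homomorphism; yet the homomorphism property is exactly what the subsequent lemma consumes, via the identity $r_\omega(g + \bm{g}(\omega)(\gamma)) = r_\omega(g) + \gamma$.

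The paper sidesteps this by working \emph{internally}. It does not prove purity of $\Gamma_\omega$ in $G$ directly; instead it uses the non-degeneracy hypothesis to establish the stronger fact that $\ker \bm{g}(\omega) = m\Z^\N$ almost surely, so $\Gamma_\omega \cong (\Z/m\Z)^\N$. Once this is known, each finite truncation $\bm{g}(\omega)\bigl((\Z/m\Z)^k\bigr)$ embeds purely into the \emph{finite} group $G_n$ (a one-line order computation: if $\bm{g}(\omega)(w) \in dG_n$ then $(m/\gcd(d,m))w = 0$ in $(\Z/m\Z)^k$, forcing $w \in d(\Z/m\Z)^k$). One then chooses, for each $n$, a retraction $r_{\omega,n}\colon G_n \to (\Z/m\Z)^{k(n)} \subset \Gamma$ with $k(n) \to \infty$ along the ultrafilter, and the ultralimit of these internal maps is automatically a Loeb-measurable homomorphism. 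It is this descent to the finite groups $G_n$ --- not the bare purity of $\Gamma_\omega$ in $G$ --- that supplies the measurability you are missing.
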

\begin{proof}
By \L os's theorem, $G$ is an $m$-exponent group and therefore so is $\Gamma_{\omega}$, thanks to \eqref{gom-split}. We will now take advantage of the non-degeneracy assumption to prove that $\mathrm{ker} \bm{g}(\omega) = m \Z^\N$ for almost all $\omega$. The inclusion $m \Z^\N \leq \operatorname{ker} \bm{g}(\omega)$ is clear, let $\omega = \lim_{n\rightarrow \alpha} \omega_n$. We claim that for every $j\in \mathbb{N}$, and every $m'<m$ dividing $m$, one has
\begin{equation}\label{probability}
P\left(\left\{\omega\in \Omega : m'\bm{g}(\omega)(e_j) \in \left<\bm{g}(\omega)(e_1),\ldots,\bm{g}(\omega)(e_{j-1})\right>\right\}\right) = 0
\end{equation}
where $e_1,e_2,\dots$ is the standard basis for $\Z^\N$, where we use $P$ to denote the Loeb measure on $\Omega$ and $m'<m$. By definition the left hand side of \eqref{probability} is
\begin{align*}
\mathrm{st}\lim_{n\rightarrow \alpha}\mu_n&\left(\left\{\omega\in \Hom(\Z^\N, G_n) : m' \bm{g}_{n}(\omega)(e_j) \in \left<\bm{g}_{n}(\omega)(e_1),\ldots,\bm{g}_{n}(\omega)(e_{j-1})\right>\right\}\right)\\
&\leq \mathrm{st}\lim_{n\rightarrow \alpha} \frac{m^{j-1}}{[G_n:m'G_n]} \\
&= 0.
\end{align*}
where $\mu$ is the product measure on $\Hom(\Z^\N, G_n)$ and the inequality follows from the fact that $\left<\bm{g}_{n}(\omega)(e_1),\ldots,\bm{g}_{n}(\omega)(e_{j-1})\right>$ is of size at most $m^{j-1}$. We deduce that for every $j$, we have that almost surely $\left<\bm{g}(\omega)(e_1),\ldots,\bm{g}(\omega)(e_j)\right> \cong (\mathbb{Z}/m\mathbb{Z})^j$. This implies that the image of $\bm{g}(\omega)$ is isomorphic to $\Z^\N / m\Z^\N$ for $P$-a.e. $\omega\in \Omega$. From \eqref{gom-split}  we deduce that $\Gamma_{\omega} \cong \Z^\N / m\Z^\N$.

Let $k \geq 0$ be a natural number.  By the above analysis, we see that for almost every $\omega \in \Omega$, and for all $n$ in an $\alpha$-large set $A_{\omega,k}$, the homomorphism $\bm{g}(\omega)$ maps $\Z^k/m\Z^k$ (viewed as the subgroup of $\Z^\N/m\Z^\N$ generated by $e_1,\dots,e_k$) injectively into $G_n$, giving the short exact sequence
$$ 0 \to \Z^k/m\Z^k \stackrel{\bm{g}(\omega)}{\to} G_n \to G_n / (\Z^k/m\Z^k) \to 0.$$
We claim that this sequence splits.  By \Cref{splitsubgroup}, it suffices to show that for every natural number $d$ and any $g_n \in \bm{g}(\omega)(\Z^k/m\Z^k) \cap d G_n$, that $g_n = \bm{g}(\omega)(d v)$ for some $v \in \Z^k/m\Z^k$. If we let $m' \coloneqq m / (d,m)$ be the first natural number such that $dm'$ is a multiple of $m$, then $m' g_n = 0$, hence if we write $g_n = \bm{g}(\omega)(w)$ for some $w \in \Z^k/m\Z^k$ then $m' w = 0$. This implies that $w = d v$ for some $v \in \Z^k/m\Z^k$, and the claim follows.

By shrinking the $A_{\omega,k}$ as necessary, we may assume that the $A_{\omega,k}$ are decreasing in $k$ with empty intersection.  For $n$ in $A_{\omega,k} \backslash A_{\omega,k+1}$, let $r_{\omega,n} \colon G_n \to \Gamma$ denote the retraction homomorphism from $G_n$ to $\Z^k/m\Z^k$ (which is a subgroup of $\Gamma$.  Taking ultralimits, we obtain a Loeb-measurable homomorphisms $r_{\omega} \colon G \to \Gamma$, which one verifies to be a retraction for \eqref{gom-split}, as required.
\end{proof}


We can use this splitting to show that ergodic abelian extensions of factors of $G$ can also be viewed as factors of $G$.

\begin{lemma}\label{Gabelianextension}
Let the notation and hypotheses be as in \Cref{Gsplit}. Then for almost every $\omega \in \Omega$, the following statement holds: if $\XX_{\omega}$ is a factor of $(G,T_\omega)$, and $\rho \colon \Gamma_\omega\times \XX_{\omega}\rightarrow U$ be an ergodic cocycle taking values in some compact abelian group $U$, then $\XX_{\omega}\times_\rho U$ is a factor of $G$.
\end{lemma}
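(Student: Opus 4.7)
The plan is to construct an explicit Loeb-measurable, $\Gamma_\omega$-equivariant, measure-preserving map $\Phi \colon G \to \XX_\omega \times_\rho U$, with the measure-preservation being the crux.

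For Loeb-almost every $\omega$ I first invoke \Cref{Gsplit} to obtain a Loeb-measurable retraction $r_\omega \colon G \to \Gamma_\omega$ of the embedding $\bm{g}(\omega)$; this $r_\omega$ is chosen once for $\omega$ and then serves any choice of $\XX_\omega$ and $\rho$. Given such a factor map $\pi \colon G \to X_\omega$ and an ergodic cocycle $\rho$, define
\[
\phi(x) \;\coloneqq\; \rho_{r_\omega(x)}\bigl(\pi\bigl(x - \bm{g}(\omega)(r_\omega(x))\bigr)\bigr) \in U.
\]
Since $r_\omega(T_\omega^\gamma x) = r_\omega(x) + \gamma$ and the $\bm{g}(\omega)$-translated term cancels, the cocycle identity \eqref{cocycle-equation} for $\rho$ combined with the $\Gamma_\omega$-equivariance of $\pi$ yields $\partial_\gamma \phi = \rho_\gamma \circ \pi$ for every $\gamma \in \Gamma_\omega$. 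Consequently
\[
\Phi \;\coloneqq\; (\pi,\phi) \colon G \longrightarrow X_\omega \times U
\]
is Loeb-measurable and $\Gamma_\omega$-equivariant for the skew product action $T^\gamma(y,u) = (T^\gamma y,\,u + \rho_\gamma(y))$ that defines $\XX_\omega \times_\rho U$.

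It remains to verify that $\Phi_* \mu_G$ equals the skew product measure $\mu_{X_\omega} \otimes m_U$, where $m_U$ is the normalized Haar measure on $U$. Since $\pi_*\mu_G = \mu_{X_\omega}$, by Pontryagin duality on $U$ and Fubini this reduces to checking that for every non-trivial character $\xi \in \hat U$ the conditional expectation
\[
f_\xi \;\coloneqq\; \mathbb{E}\bigl[e(\xi \circ \phi) \,\big|\, \XX_\omega\bigr] \;\in\; L^\infty(\XX_\omega)
\]
vanishes $\mu_{X_\omega}$-almost everywhere. The derivative identity $\partial_\gamma \phi = \rho_\gamma \circ \pi$ passes through conditional expectation to the twisted eigenfunction equation
\[
f_\xi \circ T^\gamma_\omega \;=\; e(\xi \circ \rho_\gamma) \cdot f_\xi, \qquad \gamma \in \Gamma_\omega.
\]
Hence $|f_\xi|$ is $\Gamma_\omega$-invariant and a.e.\ equal to a constant $c \in [0,1]$ by ergodicity of $\XX_\omega$. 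If $c > 0$, normalizing gives a unit-modulus $g \coloneqq f_\xi/c$ satisfying $g \circ T^\gamma_\omega / g = e(\xi \circ \rho_\gamma)$, and writing $g = e(h)$ produces $h \in \mathcal{M}(\XX_\omega,\T)$ with $d_\Gamma h = \xi \circ \rho$. This exhibits $\xi \circ \rho$ as a $\T$-valued coboundary, and Mackey--Zimmer theory then places $\rho$ inside the proper closed subgroup $\ker \xi \lneq U$ up to cohomology, contradicting the ergodicity hypothesis on $\rho$. Therefore $f_\xi = 0$ and $\Phi$ is the desired factor map.

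The main obstacle is this last step: constructing $\phi$ and checking equivariance are essentially formal manipulations, but showing that $\phi$ is uniformly distributed on $U$ conditionally on $\XX_\omega$ requires the ergodicity of the cocycle $\rho$ in an essential way, through the Mackey--Zimmer dichotomy. A secondary technical point is to confirm that the $r_\omega$ furnished by \Cref{Gsplit} is genuinely Loeb-measurable and that the composite defining $\phi$ inherits Loeb-measurability for every choice of factor $\XX_\omega$, which is immediate once one observes that $r_\omega$ depends only on $\omega$ and not on the factor.
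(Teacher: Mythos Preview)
Your proof is correct and follows essentially the same approach as the paper: you construct the identical map $\Phi = (\pi,\phi)$ using the retraction $r_\omega$ from \Cref{Gsplit}, and your equivariance check $\partial_\gamma \phi = \rho_\gamma \circ \pi$ matches the paper's verbatim. The only variation is in the measure-preservation step: the paper shows $\int_G e(\xi\circ\phi)\cdot f(\pi(g))\,d\mu_G = 0$ by invoking the mean ergodic theorem on the ergodic extension $\XX_\omega\times_\rho U$, whereas you derive the twisted eigenfunction equation for $f_\xi = \mathbb{E}[e(\xi\circ\phi)\mid \XX_\omega]$ and argue directly that a nonzero $f_\xi$ would make $\xi\circ\rho$ a coboundary, contradicting ergodicity of the extension. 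These are two standard and equivalent formulations of the same obstruction.
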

\begin{proof}
Let $\pi^\XX_{\omega}\colon  G\rightarrow \XX_{\omega}$ be the factor map and let $r_{\omega}\colon G \to \Gamma_\omega$ be the retraction homomorphism from \Cref{Gsplit}. Define the map $\pi\colon  G\rightarrow \XX_{\omega}\times_{\rho} U$ by $\pi(g) \coloneqq (\pi^\XX(g),u+P(g))$ where $P(g) \coloneqq \rho(r_{\omega}(g),g-\bm{g}(\omega) r_{\omega} g)$.
We claim that $\pi$ is a factor map. First, a direct computation gives that 
$$\partial_\gamma P(g) = \rho(r_{\omega}(g)+\gamma, g-\bm{g}(\omega) r_{\omega} g) - \rho(r_{\omega}(g),g-\bm{g}(\omega) r_{\omega} g) = \rho(\gamma, \pi(g)).$$
Then,
\begin{align*}
\pi (T_{\omega}^{\gamma} g) &= (T_\XX^{\gamma}(\pi^\XX(g)), u+ \partial_\gamma P(g) +P(g))\\&= (T_\XX^{\gamma} (\pi^{\XX}(g)), u + P(g) + \rho(\gamma,\pi(g))) = T_\rho^\gamma \pi(g),
\end{align*}
giving the required intertwining relation.  Now, we claim that the product measure on $\XX_\omega\times_\rho U$ is the push-forward of the Loeb measure on $G$. By Fourier analysis it suffices to show that if $1\not = \xi\in\widehat{U}$ and $f\in L^2(\XX)$, then $\int_G \xi\circ P(g)\cdot f(\pi(g)) d\mu(g)=0$. Indeed, for every $\gamma\in \Gamma$ we have
\begin{align*}\int_G \xi\circ P(g)\cdot f(\pi(g)) d\mu(g) &= \int_G \xi \circ P(T^\gamma_\omega g)\cdot f(T^\gamma \pi(g))d\mu(g) \\&= \int_G \xi \circ \rho(\gamma,\pi(g))\cdot T^\gamma f(\pi(g))\cdot \xi\circ P(g) d\mu(g).
\end{align*}
Thus, if $\Gamma^{(n)}$ is a F\o lner sequence for $\Gamma_\omega$, one has
\begin{equation}\label{limit}
 \int_G \xi\circ P(g)\cdot f(\pi(g)) d\mu(g) = \lim_{n \to \infty} \int_G \E_{\gamma \in \Gamma_n} \xi \circ \rho(\gamma,\pi(g))\cdot T^\gamma f(\pi(g))\cdot \xi\circ P(g) d\mu(g).
 \end{equation}
The function $(x,u) \mapsto \xi(u) f(x)$ has mean zero on the ergodic system $\XX_\omega \times_\rho U$, hence by the ergodic theorem we see that
$$ (x,u) \mapsto \E_{\gamma \in \Gamma_n} \xi(u) \cdot \xi \circ \rho(\gamma,x)\cdot T^\gamma f(x)$$
converges to zero in mean on $\XX_\omega \times_\rho U$, hence
$$ x \mapsto \E_{\gamma \in \Gamma_n} \xi \circ \rho(\gamma,x)\cdot T^\gamma f(x)$$
converges to zero in mean on $\XX_\omega$.  Pulling back to $G$ and using Cauchy--Schwarz, we obtain the vanishing of \eqref{limit} as required.
\end{proof}


We are finally ready to prove \Cref{inversegowers}. We follow the argument in \cite[Section 8]{jst-tdsystems}. The case $k=1$ of \Cref{inversegowers} is well known. We shall assume from now on that $k\geq 2$.  Assume for contradiction that the claim fails for some $m,k\geq 1$ and $\delta>0$. Then for every $n\geq 1$, we can find an $m$-exponent finite abelian group $G_n$ and a $1$-bounded function $f_n\colon G_n\rightarrow \mathbb{C}$ with $\|f\|_{U^{k+1}(G_n)}>\delta$, yet there is no polynomial $P\in \Poly_{\leq k}(G_n)$ such that
\begin{equation}\label{nocorrelation}
    \left|\mathbb{E}_{x\in G_n} f_n(x)e(-P(x))\right|\geq \frac{1}{n}.
\end{equation}
Our next goal is to reduce matters to the non-degenerate case by proving the theorem by induction on $m$. If $m=1$, then all the groups are trivial and all functions are constants and the claim follows. Let $m > 1$ and assume that \Cref{inversegowers} was already established for all smaller values of $m$. Let $m'<m$ which divides $m$. If $[G_n:m'G_n]$ is bounded then by the structure theorem for finite abelian groups we have $G_n = \bigoplus_{t|m}(\mathbb{Z}/t\mathbb{Z})^{a_t}$. Choose such a representation for which $a_m$ is maximal. By assumption, $a_m$ is bounded. We therefore have that $G_n = G'_n\times H_n$ where $H_n =(\mathbb{Z}/t\mathbb{Z})^{a_m} $. Since $a_m$ is maximal, $G'_n$ is a $\tilde{m}$-exponent group for some $\tilde{m}<m$. By Fourier analysis
$f_n(g',h))= \sum_{\xi\in \widehat{H_n}} f_\xi^{(n)}(g')\cdot e(\xi(h))$. We have $\|f_{\xi}^{(n)}\cdot e(\xi(h))\|_{U^{k+1}(G_n)} = \|f_{\xi}^{(n)}\|_{U^{k+1}(G'_n)}$ and so by the triangle inequlity for the Gowers norms, there exists some $\xi\in \widehat{H_n}$ such that $\|f_{\xi}^{(n)}\|_{U^{k+1}(G'_n)}>\delta/(\max_n |H_n|)$. By the induction hypothesis, there exists $\varepsilon>0$, and a polynomial $P_n\in \Poly_{\leq k}(G'_n)$ such that  $|\mathbb{E}_{x\in G_n} f_{\xi}^{(n)}(x) e(-P(x))|>\varepsilon$. We deduce that $f_{\xi}^{(n)}\cdot e(\xi)$ correlates with $P\cdot e(-\xi)$, and since the latter is orthogonal to all $f_{\tau}^{(n)}\cdot e(\tau)$ for all $\tau\not = \xi$, we deduce that 
$$|\mathbb{E}_{(g',h)\in G_n} f^{(n)}(g,h) e(-P(g')+\xi(h))|>\varepsilon$$ which contradicts \eqref{nocorrelation}. 

We may therefore assume that $[G_n:m'G_n]$ is an unbounded sequence for all $m'<m$ dividing $m$. Choose a non-principal ultrafilter $\alpha$ with the property that $\lim_{n\rightarrow \alpha} [G_n:m'G_n]$ is unbounded for all $m'<m$ dividing $m$.
 
Let $G \coloneqq \prod_{n\rightarrow \alpha} G_n$ and let $f\coloneqq\lim_{n\rightarrow \alpha} f_n$. Then we can endow $G$ with the Loeb measure construction and we have $\|f\|_{U^{k+1}(G)}\geq \delta$, where $\|\cdot\|_{U^{k+1}(G)}$ is the non-standard Gowers norm. By \Cref{correspondence} we can find a random action $\omega\mapsto T_{\omega}$ and random factors $\omega\mapsto \XX_{\omega}$ such that $f$ is measurable with respect to $\XX_{\omega}$ for all $\omega\in \Omega$, and for Loeb almost every $\omega\in \Omega$ we have $\|f\|_{U^{k+1}(\XX_{\omega})} = \|f\|_{U^{k+1}(G)}\geq \delta$. Choose some $\omega_0$ satisfying this property and the conclusion of \Cref{Gabelianextension} and let $\XX=\XX_{\omega_0}$.  By \Cref{equiv}, we have an ergodic extension $\YY$ of $\XX$ that is of the form
$\YY = U_1\times_{\rho_1} U_2\times\ldots\times_{\rho_{j-1}} U_j$ for some compact abelian groups $U_1,\ldots,U_j$ and polynomial cocycles $\rho_1,\ldots,\rho_{j-1}$ of degree $\leq k-1$.  An inspection of the proof of \Cref{equiv} reveals that $\YY$ is obtained from $\XX$ by performing a finite sequence of abelian extensions.  Applying \Cref{Gabelianextension} repeatedly, we conclude that $\YY$ is a factor of $G$.

Let $\pi_\XX \colon G \to \XX$, $\pi_\YY \colon G \rightarrow \YY$, $\pi \colon \YY \rightarrow \XX$ be the factor maps.
By \Cref{factorsnorms}, we can find a measurable map $F \colon Z^k(\XX)\rightarrow \C$ such that 
    $$\int_G f(x) \overline{F(\pi_{\XX}(x))} d\mu_G(x) \not = 0$$
where $\pi_{\XX} \colon G\rightarrow \XX$ is the factor map.  By Fourier analysis, $F\circ\pi_{\YY}$ is a linear combination of characters $\xi$ of $U_1\times\ldots\times U_j$. Therefore, we can find some $\xi_1\in \widehat{U_1},\ldots,\xi_j\in \widehat{U_j}$ such that $$\int_G f(x) e\left(-\sum_{i=1}^j \xi_i\circ \pi_{\YY}(x))\right) d\mu_G(x) \not = 0.$$

We think of $\YY$ as a compact abelian group where each one of the structure groups $U_1,\ldots,U_j$ is equipped with the polynomial filtration. This gives rise to a nilspace $\YY=(\YY,C^n(\YY))$. We claim that the map $\pi_{\YY}$ is an \emph{almost polynomial map} in the sense that $\pi(x_\omega)_{\omega\in \{0,1\}^n} \in C^n(\YY)$ for all standard $n$ and $\mu_{\mathrm{HK}^n(G)}$-almost every $(x_\omega)_{\omega\in \{0,1\}^n} \in \mathrm{HK}^n(G)$. where $\mu_{\mathrm{HK}^n(G)}$ denotes the Loeb measure on $\mathrm{HK}^n(G)$. Since $\mathrm{HK}^n(\YY)$ is second countable, it suffices (as in the proof of \cite[Lemma 7.2]{jt21-1} to show that

$$\int_{\mathrm{HK}^n(G)}\prod_{\omega\in \{0,1\}^k} 1_{\pi^{-1}}(U_\omega)(x_\omega) d\mu_{\mathrm{HK}^n(G)}((x_\omega)_{\omega\in \{0,1\}^n})=0$$
whenever $U_\omega$ are open subsets of $\YY$ such that $\prod_{\omega\in \{0,1\}^n} U_\omega$ is disjoint from $C^n(\YY)$. Repeating the proof of \cite[Lemma 7.2]{jt21-1}, the integral above can be
re-expressed as a Gowers–Host–Kra inner product
\begin{equation}\label{innerproduct}
    \left<(1_{U_\omega})_{\omega\in \{0,1\}^n}\right>_{U^n(\YY)}
\end{equation}

For every $1\leq i \leq j$ we have that $\sigma_i \mod (U_i)_t$ is a polynomial of degree $\leq t-2$ for all $t\geq 0$, where $(U_i)_{\bullet}$ is the degree filtration. In particular $d^{t-1}\sigma_i$ takes values in $(U_i)_t$. We deduce that $\left(T^{\sum_{i=1}^n \omega_i h_i} y\right)_{\omega\in \{0,1\}^n} \in C^n(\YY)$ for almost every $y\in \YY$ and all $h_1,\ldots,h_n\in \Gamma$. Therefore, since $\prod_{\omega\in\{0,1\}^n} U_\omega$ avoids $\mathrm{HK}^n(\YY)$ we have
$$\prod_{\omega\in \{0,1\}^n} 1_{U_\omega}(T^{\sum_{i=1}^n \omega_i h_i y}) = 0$$
Taking multiple ergodic averages along
F\o lner sequences we conclude that \eqref{innerproduct} vanishes as claimed. Thus $\pi$ is an almost polynomial. Since $\xi\coloneqq \xi_1+\ldots+\xi_j$ is a polynomial of degree $\leq k$, we see that $\xi\circ\pi$ is also an almost polynomial where $\mathbb{T}$ is equipped with the degree $\leq k$ filtration $\mathcal{D}^k(\mathbb{T})$. By \cite[Lemma 7.3]{jt21-1} we can find an internal nilspace morphism $g:G\rightarrow {}^*\YY$ such that $\xi\circ \pi = \mathrm{st}(g)$. Writing $g=\lim_{n\rightarrow \alpha} g_n$ where $g_n:G_n\rightarrow \YY$ we conclude that
$$\mathrm{st}\lim_{n\rightarrow \alpha} \left|\mathbb{E}_{x\in G_n} f_n(x)\cdot e(-g_n(x))\right| \not = 0.$$
By definition, the map $g_n:G_n\rightarrow \mathbb{T}$ maps $\mathrm{HK}^k(G_n)$ to $\mathrm{HK}^k(\T)$ and is therefore a polynomial of degree $\leq k$. Therefore, we obtain a contradiction for \eqref{nocorrelation} for an $\alpha$-large set of $n\in \mathbb{N}$. This completes the proof. 

\appendix
\section{The Host--Kra factors}\label{host-kra-theory}
In \cite{host2005nonconventional} Host and Kra introduced cubic systems associated with ergodic dynamical systems (see also \cite{hk-book}). We generalize their definition here for arbitrary $\Gamma$-actions.
\begin{definition}[Cubic systems]
Let $\Gamma$ be a countable abelian group and $k\geq 0$. Let $\XX = (X,\X,\mu,T)$ be a $\Gamma$-system. The systems $\XX^{[k]} = (X^{[k]},\X^{[k]},\mu^{[k]},T^{[k]})$ are defined recursively as follows. When $k=0$ we define $\XX^{[0]}=\XX$, assuming that $\XX^{[k]}$ is defined let $X^{[k+1]} = X^{[k]}\times X^{[k]}$, $\X^{[k+1]} = \X^{[k]}\otimes \X^{[k]}$, $T^{[k+1]} = T^{[k]}\times T^{[k]}$ and for every $f,g:X^{[k]}\rightarrow \mathbb{C}$ define $$\int_{X^{[k+1]}} f\otimes g d\mu^{[k+1]} \coloneqq \int_{X^{[k]}}E(f|\mathcal{I}^{[k]})\cdot E(g|\mathcal{I}^{[k]})d\mu^{[k]}$$ where $\mathcal{I}^{[k]}$ is the $\sigma$-algebra of the $T^{[k]}$-invariant functions.
\end{definition}
\begin{proposition}\label{factorsnorms}
    Let $k\geq 1$, let $\Gamma$ be a countable abelian group and let $\XX = (X,\X,\mu,T)$ be an ergodic $\Gamma$-system. There exists a unique (up to isomorphism) factor $\ZZ^{\leq k}(\XX)$ of $\XX$ with the property that for $f\in L^\infty(\XX)$, $$\int_{X^{[k]}}\bigotimes_{\omega\in\{0,1\}^k} \mathcal{C}^{\mathrm{sgn}(\omega)} f d\mu^{[k]}=0\iff E(f|\ZZ^{\le k-1}(\XX))=0$$ where $\mathcal{C}$ is the complex conjugation and $\mathrm{sgn}(\omega)\coloneqq\sum_{i=1}^k \omega_i$.
\end{proposition}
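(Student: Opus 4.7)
The argument is Host and Kra's original construction \cite{host2005nonconventional}, written there for $\Gamma=\Z$ and extending without essential change to general countable abelian $\Gamma$ (as in \cite{gutman-lian} for $\Z^k$). The only features of $\Gamma$ that enter are ergodicity of the action on $\XX$ and the availability of conditional expectations onto $\Gamma$-invariant sub-$\sigma$-algebras. The plan is to realize $\ZZ^{\le k}(\XX)$ as the $L^2$-dual of the null space of a seminorm built from $\mu^{[k]}$, and then exploit the symmetries of $\mu^{[k]}$ to verify that this dual is in fact the $L^2$-space of a factor.

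First I would introduce the Gowers--Host--Kra seminorm
$$\|f\|_{U^k}^{2^k} \;\coloneqq\; \int_{X^{[k]}}\bigotimes_{\omega\in\{0,1\}^k} \mathcal{C}^{\mathrm{sgn}(\omega)} f\, d\mu^{[k]}$$
and show that it is a non-negative real-valued seminorm on $L^\infty(\XX)$. Non-negativity follows from the recursive identity $\mu^{[k]}=\mu^{[k-1]}\times_{\mathcal{I}^{[k-1]}}\mu^{[k-1]}$, which rewrites the integral as $\int_{X^{[k-1]}} |E(F \mid \mathcal{I}^{[k-1]})|^2 \, d\mu^{[k-1]}$, where $F$ is the tensor product of $\mathcal{C}^{\mathrm{sgn}(\omega)}f$ over $\omega\in\{0,1\}^{k-1}$. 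Iterating and applying Cauchy--Schwarz at each stage yields the Cauchy--Schwarz--Gowers inequality, and hence the triangle inequality and the monotonicity $\|f\|_{U^k}\le \|f\|_{U^{k+1}}$.

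Next, introduce the dual function $\mathcal{D}_k f$ obtained by disintegrating $\mu^{[k]}$ along its zero-coordinate and integrating $\bigotimes_{\omega\neq 0} \mathcal{C}^{\mathrm{sgn}(\omega)}f$ against the resulting conditional measure; this gives the pairing identity $\|f\|_{U^k}^{2^k} = \langle f, \mathcal{D}_k f\rangle_{L^2(\XX)}$. The null space $\mathcal{N}_k \coloneqq \{f \in L^\infty(\XX) : \|f\|_{U^k} = 0\}$ is then the $L^2$-orthogonal complement of $\mathcal{H}_k \coloneqq \overline{\operatorname{span}}\{\mathcal{D}_k g : g \in L^\infty(\XX)\}$. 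Since $\mu^{[k]}$ is $T^{[k]}$-invariant, $\mathcal{H}_k$ is automatically $\Gamma$-invariant. The crux of the argument is to show that $\mathcal{H}_k$ is also closed under multiplication, for then it is the $L^2$-space of a $\Gamma$-invariant sub-$\sigma$-algebra, which I would define to be $\ZZ^{\le k-1}(\XX)$ (with the indexing shift as in the proposition). Uniqueness is then immediate: any two factors with the same characterization have the same $L^2$-space, namely $\mathcal{H}_k^{\perp\perp}=\mathcal{H}_k$, and hence coincide as factors up to isomorphism.

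The technical heart, and the main obstacle, is verifying that $\mathcal{H}_k$ is closed under multiplication. This is accomplished by exploiting the rich symmetry group of $\mu^{[k]}$: the coordinate permutations fixing the zero-vertex and the ``Euclidean'' hypercube reflections all preserve $\mu^{[k]}$, which one checks inductively from the recursive definition together with the observation that each conditioning step is symmetric in its two factors. Using these symmetries one proves an approximate ``gluing'' identity realizing $(\mathcal{D}_k f)(\mathcal{D}_k g)$ as a weak limit of finite sums $\sum_i \mathcal{D}_k h_{n,i}$. I would carry this out by following \cite[Sections 4--5]{host2005nonconventional} essentially verbatim with $\Z$ replaced by $\Gamma$; no new ideas are required since the symmetry arguments only use the tensor structure of $\mu^{[k]}$ and ergodicity of the $\Gamma$-action.
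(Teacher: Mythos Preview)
Your proposal is correct and follows exactly the approach the paper invokes: the paper's own proof is simply a citation to Host--Kra \cite[\S4]{host2005nonconventional} and \cite[Appendix A]{btz}, noting that the $\Gamma=\Z$ argument extends verbatim to general countable abelian $\Gamma$. Your sketch of that argument (seminorm, dual functions, closure of their span under multiplication via the hypercube symmetries of $\mu^{[k]}$) is an accurate expansion of what those references contain.
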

\begin{proof} 
The case when $\Gamma=\mathbb{Z}$ was established by Host and Kra cf. \cite[\S 4]{host2005nonconventional}, \cite[Theorem 7, Chapter 9]{hk-book}. The general case follows the same argument, see e.g. \cite[Appendix A]{btz}.
\end{proof}
\begin{proposition}[Functoriality properties of $\ZZ^{\leq k}$]\label{prop-functoriality}
(cf. \cite[\S 4]{host2005nonconventional}, \cite[Propositions 11, 17, 21, and Theorem 20; Chapter 9]{hk-book}, \cite[Lemma A.22]{btz}) 
Let $k\geq 1$. 
\begin{itemize}
\item[(i)]  A factor  of an ergodic $\Gamma$-system  of order $\leq k$ is of order $\leq k$.
\item[(ii)]  Let $\pi \colon \YY \to \XX$ be a factor map between two ergodic $\Gamma$-systems. Let $\pi_k^\YY \colon \YY\rightarrow \ZZ^{\leq k}(\YY)$, $\pi_k^\XX \colon \XX\rightarrow \ZZ^{\leq k}(\XX)$ be the factor maps onto the $k$-th Host--Kra factors respectively. Then there exists a factor map $\pi_{k} \colon  \ZZ^{\leq k}(\YY)\rightarrow \ZZ^{\leq k}(\XX)$ such that $\pi^\XX_k\circ \pi = \pi_k \circ \pi^\YY_k$. 
\item[(iii)] An inverse limit of ergodic $\Gamma$-systems of order $\leq k$ is an ergodic $\Gamma$-system of order $\leq k$. 
\item[(iv)] If $\XX$ is an inverse limit of ergodic $\Gamma$-systems $\XX_i,i\in I$, then $\ZZ^{\leq k}(\XX)$ is an inverse limit of $\ZZ^{\leq k}(\XX_i),i\in I$.  
\item[(v)] If an ergodic $\Gamma$-system $\XX$ is of order $\leq k$, then it is of order $\leq k'$ for any $k' \geq k$.
\end{itemize}
\end{proposition}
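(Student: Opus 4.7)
My plan is to base all five parts on a single functorial property of the cubic measures: for any factor map $\pi\colon \YY \to \XX$, the iterated map $\pi^{[k]}\colon Y^{[k]}\to X^{[k]}$ pushes $\mu_\YY^{[k]}$ forward to $\mu_\XX^{[k]}$ for every $k\geq 0$. I would prove this by induction on $k$: the base $k=0$ is the measure-preserving property of $\pi$, while the inductive step uses that pullback via $\pi^{[k]}$ intertwines the conditional expectations onto the invariant $\sigma$-algebras $\mathcal{I}_\XX^{[k]}$ and $\mathcal{I}_\YY^{[k]}$ (since $\pi^{[k]}$ is measure-preserving and sends $T_\YY^{[k]}$-orbits to $T_\XX^{[k]}$-orbits). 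From this pushforward identity one immediately obtains, for any $f\in L^\infty(\XX)$, the equality $\|f\circ\pi\|_{U^{n}(\YY)}=\|f\|_{U^{n}(\XX)}$ for all $n\geq 1$, where the GHK seminorms are defined via the cubic integrals appearing in Proposition \ref{factorsnorms}.

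Part (v) I would deduce from the standard Cauchy--Schwarz nesting $\|\cdot\|_{U^{k}(\XX)}\leq \|\cdot\|_{U^{k+1}(\XX)}$, combined with Proposition \ref{factorsnorms}: vanishing of the higher seminorm is a stronger condition and therefore cuts out a smaller orthogonal complement, yielding the nested inclusion $\ZZ^{\leq k-1}(\XX)\leq \ZZ^{\leq k}(\XX)$. For part (i), suppose $\XX=\ZZ^{\leq k}(\XX)$ and $\pi\colon\XX\to\YY'$ is a factor map. Given $f\in L^\infty(\YY')$ with $E(f\mid \ZZ^{\leq k}(\YY'))=0$, Proposition \ref{factorsnorms} gives $\|f\|_{U^{k+1}(\YY')}=0$; by the pushforward identity this means $\|f\circ\pi\|_{U^{k+1}(\XX)}=0$, so $E(f\circ\pi\mid \ZZ^{\leq k}(\XX))=0$. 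But $\ZZ^{\leq k}(\XX)=\XX$, so $f\circ\pi=0$ and hence $f=0$, as $\pi$ is measure-preserving.

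For part (ii), I aim to show that the pullback under $\pi$ of the $\ZZ^{\leq k}(\XX)$-measurable $\sigma$-algebra sits inside the $\ZZ^{\leq k}(\YY)$-measurable $\sigma$-algebra; the factor map $\pi_k$ then arises by universality and commutativity of the diagram is automatic. Equivalently, for $g$ that is $\ZZ^{\leq k}(\XX)$-measurable and $h\in L^\infty(\YY)$ with $E(h\mid \ZZ^{\leq k}(\YY))=0$, I must show $\int (g\circ\pi)\cdot \bar h\, d\mu_\YY = 0$. By definition of conditional expectation this equals $\int g\cdot \overline{E(h\mid \XX)}\, d\mu_\XX$ (identifying $E(h\mid \pi^{-1}\X)$ with a function on $\XX$). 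By Proposition \ref{factorsnorms}, $\|h\|_{U^{k+1}(\YY)}=0$; by the standard monotonicity of GHK seminorms under $T$-invariant conditional expectation the same is true for $E(h\mid \pi^{-1}\X)$, and the pushforward identity then gives $\|E(h\mid \XX)\|_{U^{k+1}(\XX)}=0$, so $E(h\mid \XX)\perp \ZZ^{\leq k}(\XX)$. The desired pairing therefore vanishes.

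Parts (iii) and (iv) I would handle by martingale approximation combined with a second functorial identity that twins the first: for an inverse limit $\XX=\varprojlim \XX_i$, the cubic measure $\mu_\XX^{[k]}$ is itself the inverse limit of the $\mu_{\XX_i}^{[k]}$. This again goes by induction on $k$, using continuity of the conditional expectations onto the invariant $\sigma$-algebras along the inverse system. Combined with $L^2$-convergence $E(f\mid \XX_i)\to f$ from the martingale theorem, it yields $\|f\|_{U^{n}(\XX)}=\lim_i \|E(f\mid \XX_i)\|_{U^{n}(\XX_i)}$. Part (iii) then follows by applying Proposition \ref{factorsnorms} on each $\XX_i$, and part (iv) by identifying the $\ZZ^{\leq k}(\XX)$-measurable $\sigma$-algebra with the $L^2$-closure of the union of the pullbacks $\pi_i^{-1}(\ZZ^{\leq k}(\XX_i))$, with part (ii) providing each inclusion. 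The main technical obstacle throughout is the careful measure-theoretic verification of the two identities (pushforward under factors, continuity under inverse limits) for the iteratively defined $\mu^{[k]}$; once those are in hand, each of the five parts reduces to a clean application of the characterization in Proposition \ref{factorsnorms}.
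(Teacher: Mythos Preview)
The paper does not supply its own proof of this proposition; it simply records the statement and cites the standard references (Host--Kra \cite{host2005nonconventional}, \cite{hk-book}, and \cite[Lemma A.22]{btz}), treating the result as known. Your proposal is correct and is precisely the approach taken in those references: everything is reduced to the functoriality of the cubic measures $\mu^{[k]}$ under factor maps and inverse limits, combined with the seminorm characterization of $\ZZ^{\le k}$ from Proposition~\ref{factorsnorms}.

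One remark on part~(ii): the step you label ``standard monotonicity of GHK seminorms under $T$-invariant conditional expectation'' (that $\|h\|_{U^{k+1}(\YY)}=0$ forces $\|E(h\mid\XX)\|_{U^{k+1}(\XX)}=0$) is indeed the crux, and it is exactly what \cite[Lemma A.22]{btz} establishes. It does \emph{not} follow immediately from the pushforward identity alone---the naive attempt to show that conditional expectation onto $(\pi^{-1}\X)^{\otimes[k+1]}$ acts coordinatewise on $\mu_\YY^{[k+1]}$ is false in general---so be sure your write-up either quotes this lemma directly or reproduces its inductive proof via the recursive structure of $\mu^{[k+1]}$ as a relative product over $\mathcal{I}^{[k]}$. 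With that lemma in hand, your arguments for (i)--(v) are complete and match the literature.
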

\subsection{Cocycles and extensions
}
Using the cubic systems introduced above we can define a notion of type for cocycles.
\begin{definition}[cocycles of type $k$] \label{type:def} (cf. \cite[Definition 4.1]{btz})
Let $\Gamma$ be a countable abelian group, let $\XX$ be a $\Gamma$-system, let $U=(U,+)$ be a compact metrizable abelian group and let $k\geq 0$.
\begin{itemize}
\item For a measurable $f \colon X\rightarrow U$, we define $\Delta^{[k]}f \colon \XX^{[k]}\rightarrow U$ by
$$\Delta^{[k]}f((x_\omega)_{\omega\in \{0,1\}^k})\coloneqq\sum_{\omega\in \{0,1\}^k} (-1)^{\mathrm{sgn}(\omega)}f(x_\omega)$$ where $\mathrm{sgn}(\omega)\coloneqq\sum_{i=1}^k\omega_i.$
\item A cocycle $\rho \colon \Gamma\times \XX\rightarrow U$ is said to be \emph{of type $\leq k$} if $\Delta^{[k]}\rho$ is a coboundary on $\XX^{[k]}$. Equivalently, there exists a measurable map $F \colon \XX^{[k]}\rightarrow U$ such that $\Delta^{[k]}\rho(\gamma,(x_\omega)_{\omega\in \{0,1\}^k}) = F((T^\gamma x_\omega)_{\omega\in \{0,1\}^k}) - F((x_\omega)_{\omega\in \{0,1\}^{[k]}}$ for all $\gamma\in \Gamma$.
\end{itemize}
We adopt the convention that only the zero cocycle has type $\leq k$ for $k$ negative; in particular, $Z^1_{\leq -1}(\Gamma,\XX,U) = 0$. 
\end{definition}
We have the following properties about type of cocycles.  
\begin{proposition}\label{type}
Let $\XX$ be an ergodic $\Gamma$-system, let $U$ be a metrizable compact abelian group, let $\rho \colon \Gamma\times \XX\rightarrow U$ be cocycle, and let $k\geq 1$.
\begin{itemize}
    \item[(i)] (Moore--Schmidt theorem) $\rho$ is a coboundary if and only if $\xi\circ \rho$ is a coboundary as a cocycle on $\XX$ with values in $\T$ for all Fourier characters $\xi$ in the Pontryagin dual $\hat U$ of $U$. 
    \item[(ii)] $\rho$ is of type $k$ if and only if for every $\xi\in\hat U$, $\xi\circ\rho$ is of type $k$.
    \item[(iii)] If $\XX$ is of order $\leq k$, then $\XX\times_\rho U$ is of order $\leq k$ if and only if $\rho$ is of type $k$.
    \item[(iv)] If $\rho$ is of type $m\geq 0$ and $S\in \Aut(\XX)$ is an automorphism\footnote{An automorphism of a $\Gamma$-system $\XX$ is a measure-preserving isomorphism  $S$ of $(X,\mu)$ commuting (up to almost everywhere equivalence) with the $T$-action.} of the $\Gamma$-system $\XX$ that fixes the $\sigma$-algebra of the Host--Kra factor $\ZZ^{\leq k}(\XX)$, then $\partial_S \rho$ is a cocycle of type $\max(m-k-1,0)$.
    \item[(v)] Suppose $\YY=(Y,\mu,T)$ is an ergodic $\Gamma$-extension of $\XX$ with factor map $\pi \colon Y\rightarrow X$. If $\rho$ is of type $k$, then $\rho\circ\pi$ is of type $k$ as well. 
    \item[(vi)] Suppose that $U=\T$ is the torus and $\rho$ is a cocycle of type $1$. Then $\rho$ is a cohomologous to polynomial cocycle of degree $\leq 0$, i.e., a homomorphism $\Gamma\to \T$. 
    \item[(vii)] If $\XX$ is not ergodic, then $\rho$ is a coboundary if and only if $\rho$ is a coboundary on every ergodic component of $\XX$. 
\end{itemize}
\end{proposition}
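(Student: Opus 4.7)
The plan is to handle the seven parts in an order that lets each build on the previous ones, starting with the Moore--Schmidt theorem (i) as the central technical input, and treating (ii)--(vi) as consequences obtained either by applying (i) on an auxiliary cubic system or by direct computation with the recursive definition of $\mu^{[k]}$. First I would establish (i) by the standard Pontryagin-duality route: for each $\xi\in\hat U$ pick a measurable $F_\xi\in\mathcal{M}(\XX,\T)$ with $d_\Gamma F_\xi=\xi\circ\rho$, normalize so that $F_0=0$, use ergodicity to upgrade the near-equation $F_{\xi+\eta}-F_\xi-F_\eta=\text{const}$ to an honest homomorphism after adjusting each $F_\xi$ by a constant (this is where a measurable selection argument and separability of $\hat U$ enter), and then invoke Pontryagin duality to recover $F\in\mathcal{M}(\XX,U)$ with $\xi\circ F=F_\xi$ and hence $d_\Gamma F=\rho$. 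Part (vii) I would get at the same level by disintegrating $\mu$ over the invariant $\sigma$-algebra and observing that the coboundary equation is pointwise in the base.

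The plan for (ii) is to apply (i) on each ergodic component of the cubic system $\XX^{[k]}$ to the cocycle $\Delta^{[k]}\rho$, noting that $\xi\circ\Delta^{[k]}\rho=\Delta^{[k]}(\xi\circ\rho)$, so that $\Delta^{[k]}\rho$ is a coboundary in $\mathcal{M}(\XX^{[k]},U)$ iff each Fourier component is. For (iii), I would combine this with the cubic measure identity
\[
\mu^{[k+1]}_{\XX\times_\rho U}
= \mu^{[k+1]}_{\XX}\otimes\lambda^{[k+1]}_{U,\Delta^{[k+1]}\rho},
\]
so that the condition $\ZZ^{\le k}(\XX\times_\rho U)=\XX\times_\rho U$ translates (via the Gowers--Host--Kra seminorm characterization from \Cref{factorsnorms}) exactly into $\Delta^{[k+1]}\rho$ being a coboundary on $\XX^{[k+1]}$, which is the type $\le k$ condition. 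Part (v) is then immediate: the factor map $\pi\colon\YY\to\XX$ induces a factor $\pi^{[k]}\colon\YY^{[k]}\to\XX^{[k]}$, and any antiderivative of $\Delta^{[k]}\rho$ on $\XX^{[k]}$ pulls back to one of $\Delta^{[k]}(\rho\circ\pi)$ on $\YY^{[k]}$. For (vi), I would unpack the type $1$ condition to obtain $F\in\mathcal{M}(\XX\times\XX)$ with $\rho(\gamma,x)-\rho(\gamma,y)=F(T^\gamma x,T^\gamma y)-F(x,y)$, integrate in $y$ (character-by-character, to avoid ill-defined $\T$-integration) to produce $G\in\mathcal{M}(\XX,\T)$ with $d_\Gamma G=\rho-c$ for $c(\gamma)\coloneqq\int\rho(\gamma,y)\,d\mu(y)$, and observe that $c$ is forced to be a homomorphism by the cocycle equation.

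The hard part will be (iv). The idea is that when $S$ fixes $\ZZ^{\le k}(\XX)$ as a measure algebra, the diagonal automorphism $S^{\times 2^m}$ of $\XX^{[m]}$ leaves each ``$(m-k-1)$-fold face'' invariant in a controlled way, because $S$ acts trivially modulo $\ZZ^{\le k}(\XX)$ and the last $k+1$ cube directions only see this factor. Concretely, I plan to express $\Delta^{[m-k-1]}(\partial_S\rho)$ on $\XX^{[m-k-1]}$ as an appropriate face-restriction of $\Delta^{[m]}\rho$ evaluated after the $S$-insertion in specific coordinates, and then exhibit its antiderivative by combining the given antiderivative of $\Delta^{[m]}\rho$ on $\XX^{[m]}$ with a ``descent'' using the fact that $S$ is trivial on $\ZZ^{\le k}(\XX)$ and hence acts as an automorphism of $\XX^{[k+1]}$ agreeing with the identity on a full-measure set of $k+1$-cubes. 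The main obstacle is bookkeeping: organizing the cube coordinates so that the $S$-shift only touches directions that collapse under the $\ZZ^{\le k}$-assumption, and verifying that the resulting antiderivative is genuinely measurable on $\XX^{[m-k-1]}$ rather than merely on some auxiliary extension. Once this is in place, the remaining cases $m\le k+1$ of (iv) collapse to (vii) applied to $\partial_S\rho$ on each ergodic component of $\XX$.
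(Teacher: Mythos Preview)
The paper does not prove this proposition at all; it simply cites \cite[Proposition A.10]{jst-tdsystems} (a previous paper by the same authors) and moves on. So there is no ``paper's own proof'' to compare against, and your proposal is effectively supplying an argument where the paper defers to the literature.

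On the merits of your outline: parts (i), (ii), (v), (vii) are fine and follow the standard route (Moore--Schmidt via the $\mathrm{Ext}(\hat U,\T)=0$ adjustment, then (i)+(vii) applied on $\XX^{[k]}$, pullback along $\pi^{[k]}$, disintegration). For (vi), the phrase ``integrate in $y$ character-by-character'' is misleading since $U=\T$ has $\hat U=\Z$; what actually works is to fix a Fubini-generic $y_0$, set $G(x)=F(x,y_0)$ and $c(\gamma)=\rho(\gamma,y_0)$, and then read off $c\in\Hom(\Gamma,\T)$ from the cocycle equation for $\rho$ at a generic $x$ (not at $y_0$). For (iii), your displayed formula for $\mu^{[k+1]}_{\XX\times_\rho U}$ is not literally correct as a product; the honest argument computes $\|\xi\|_{U^{k+1}(\XX\times_\rho U)}$ for each vertical character $\xi\in\hat U$ and shows it is nonzero iff $\Delta^{[k]}(\xi\circ\rho)$ is a $\Gamma$-coboundary on $\XX^{[k]}$.

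The real gap is (iv). Your plan (``express $\Delta^{[m-k-1]}(\partial_S\rho)$ as a face-restriction of $\Delta^{[m]}\rho$ after an $S$-insertion'') is pointing at the right object but is not yet an argument. The standard proof hinges on a specific measure-preservation fact: if $S$ fixes $\ZZ^{\le k}(\XX)$, then for each coordinate direction $i\in\{1,\dots,k+1\}$ the \emph{one-sided} face map $(x_\omega)\mapsto (S^{\omega_i}x_\omega)$ preserves $\mu^{[k+1]}$ (this is the content behind \cite[Lemma~5.5]{host2005nonconventional}). Iterating over $k+1$ directions lets you replace $\Delta^{[m]}\rho$ by $\Delta^{[m-k-1]}(\partial_S\rho)$ tensored with something already a coboundary, and the antiderivative descends. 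Your write-up does not isolate this face-map invariance, and without it the ``bookkeeping'' you mention cannot be carried out; I would make that lemma explicit before claiming (iv).
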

\begin{proof}
    For $(i)$, see \cite{moore1980coboundaries}. The claim $(ii)$ follows immediately from $(i)$. The proof of $(iii),(v)$, and $(vii)$ can be found in \cite[Corollary 7.7]{host2005nonconventional}, \cite[Corollary 7.8]{host2005nonconventional}, and  \cite[Lemma 9.1]{host2005nonconventional} for $\Gamma=\Z$ respectively (see also \cite[Propositions 5, 8 and Corollary 9, Chapter 18]{hk-book} and \cite[Lemma 11, 
Chapter 5]{hk-book}) but the arguments extend without difficulty to arbitrary discrete countable abelian groups $\Gamma$. The proof of $(iv)$ was established in \cite[Lemma 5.3]{btz} for automorphisms of a specific type, but the same proof holds for arbitrary automorphisms. Claim $(vi)$ was established in this generality in \cite[Proposition A.10]{jst-tdsystems} (results of this type have appeared in the literature before \cite{moore1980coboundaries}, \cite[Lemma 10.3]{fw}, \cite[Chapter 5, Lemma 13]{hk-book}, \cite[Proposition 2.4(vi)]{jt21-1}). 
\end{proof}
Zimmer \cite{zimmer} studied when a cocycle extension is ergodic. This leads to the definition of image and minimality of cocycles.
\begin{definition}\label{image:def}
    Let $\Gamma$ be a countable abelian group, let $\XX$ be a $\Gamma$-system, and let $\rho \colon \Gamma\times \XX\rightarrow U$ be a cocycle taking values in a compact abelian group $U$. The \emph{image} of $\rho$ is the smallest closed subgroup $U_\rho\leq U$ containing $\rho(\gamma,x)$ for all $\gamma\in \Gamma$ and for almost every $x\in \XX$. The cocycle $\rho$ with image $U_\rho$ is called \emph{minimal} if there is no cocycle $\rho'$ cohomologous to $\rho$ with image $U_{\rho'}\lneqq U_{\rho}$. 
\end{definition}
The following propositions are due to Zimmer \cite[Corollary 3.8]{zimmer}.
\begin{proposition}\label{zimmer}
    Let $\Gamma$ be a countable abelian group, let $\XX$ be an ergodic $\Gamma$-system, and let $\rho \colon \Gamma\times \XX\rightarrow U$ be a cocycle taking values in a compact abelian group $U$. Then 
    \begin{itemize}
    \item[(1)] $\rho$ is cohomologous to a minimal cocycle. 
        \item[(2)] The abelian extension $\XX\times_\rho U$ is ergodic if and only if $\rho$ is minimal with image $U$. 
    \end{itemize}
\end{proposition}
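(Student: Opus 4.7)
The plan is to establish part (2) first via Fourier analysis on the compact abelian group $U$, and then to deduce part (1) from part (2) via the ergodic decomposition of the skew product (the so-called Mackey range construction).

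For (2), I would prove the converse direction (minimality with image $U$ implies ergodicity) by contradiction. If $\XX \times_\rho U$ were not ergodic, then $L^2(X \times U)$ would contain a non-constant $\Gamma$-invariant function; expanding in the Fourier basis $\{u \mapsto \xi(u): \xi \in \hat U\}$ and using ergodicity of $\XX$, one finds a non-trivial character $\xi \in \hat U$ and a unimodular function $f \in L^2(\XX)$ satisfying $f(T^\gamma x)\,\xi(\rho(\gamma,x)) = f(x)$ for all $\gamma \in \Gamma$. Writing $f = e(g)$ for some $g \in \mathcal{M}(\XX, \T)$, this becomes $\xi \circ \rho = -d_\Gamma g$. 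Using a Borel section for the continuous surjection $\xi \colon U \to \xi(U) \leq \T$, I would then lift $g$ (after subtracting a suitable constant, which one justifies using that $g \bmod \xi(U)$ is $\Gamma$-invariant and hence constant by ergodicity) to a measurable $F \colon X \to U$ with $\xi \circ F = g$; the adjusted cocycle $\rho + d_\Gamma F$ then takes values in the proper closed subgroup $\ker \xi \lneq U$, contradicting minimality of $\rho$ with image $U$. Conversely, any coboundary adjustment $\rho + d_\Gamma F$ landing in a proper closed subgroup $H \lneq U$ produces a non-trivial $\Gamma$-invariant factor $U/H$ of $\XX \times_{\rho + d_\Gamma F} U \cong \XX \times_\rho U$, so ergodicity of the skew product forces $\rho$ to be both minimal and of image $U$.

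For (1), I would consider the ergodic decomposition of $X \times U$ equipped with the $\Gamma$-skew-product action and the product of $\mu$ with Haar measure on $U$. The vertical $U$-action commutes with the $\Gamma$-action, and hence permutes ergodic components. Fix a typical ergodic component $Y \subseteq X \times U$ and let $H \leq U$ be its $U$-stabilizer, which one verifies is a closed subgroup via Pettis-lemma style arguments in the Polish setting. Since $X$ is ergodic, the projection $Y \to X$ is surjective modulo null sets, and the fibers are single $H$-cosets in $U$; a Borel section $s \colon X \to U$ of this fiber map then yields an isomorphism of $\Gamma$-systems $Y \cong \XX \times_{\rho'} H$, where $\rho'(\gamma, x) \coloneqq \rho(\gamma,x) + s(T^\gamma x) - s(x)$ takes values in $H$. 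By construction, $\rho'$ viewed as $U$-valued differs from $\rho$ by the coboundary $d_\Gamma s$. Since $Y$ is ergodic, applying part (2) to $\rho'$ viewed as an $H$-valued cocycle shows that $\rho'$ is minimal with image $H$; consequently, $\rho$ is cohomologous to the minimal cocycle $\rho'$.

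The main obstacles will be the measurable-selection arguments: constructing the Borel section $s$ and certifying that $H$ is closed both rely on descriptive set theory and on the Polish structure of $U$ and the ergodic decomposition of $X \times U$. A subtler technical point in the proof of (2) is the lifting step when the character $\xi$ is not surjective onto $\T$: there, $\xi(U)$ is a proper finite subgroup of $\T$, and one must invoke ergodicity of $\XX$ to arrange that $g$ itself (modulo a constant) takes values in $\xi(U)$ before applying a Borel section of $\xi$ to obtain the required $F$.
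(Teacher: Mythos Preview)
The paper does not give a proof of this proposition; it is simply attributed to Zimmer \cite[Corollary 3.8]{zimmer}. Your outline is the standard argument for this result and is essentially correct.

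One point in part (1) deserves a sentence of justification. After you show, via part (2), that $\rho'$ is minimal \emph{as an $H$-valued cocycle}, you conclude that $\rho'$ is minimal (implicitly, as a $U$-valued cocycle). This step is not automatic: a priori there could be some $F \in \mathcal{M}(\XX,U)$ with $\rho' + d_\Gamma F$ landing in a proper closed subgroup $H'' \lneq H$, even though no $H$-valued $F$ achieves this. To close the gap, observe that if such an $F$ exists then both $\rho'$ and $\rho' + d_\Gamma F$ take values in $H$, so $d_\Gamma(F \bmod H) = 0$; by ergodicity of $\XX$, $F \bmod H$ is a constant $c + H$, and replacing $F$ by $F - c$ gives an $H$-valued coboundary adjustment contradicting $H$-minimality of $\rho'$. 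With this remark added, your argument is complete.
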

We have the following weak structure result for the Host--Kra factors.
\begin{proposition}\label{abelext}
Let $\Gamma$ be a countable abelian group and let $\XX$ be an ergodic $\Gamma$-system. Then for every $k\geq 1$, the Host--Kra factor $\ZZ^{\leq k}(\XX)$ of order $\leq k$ is (isomorphic to) an abelian group skew-product extension $\ZZ^{\le k-1}(\XX)\times_\rho U$ of the Host--Kra factor $\ZZ^{\le k-1}(\XX)$ of order $\leq k-1$ by a compact metrizable abelian group $U$ and a cocycle $\rho$ of type $k$. 
\end{proposition}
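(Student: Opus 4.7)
The plan is to decompose the statement into four steps: (i) verify that $\ZZ^{\leq k-1}(\XX)$ is a factor of $\ZZ^{\leq k}(\XX)$, (ii) show this extension is a compact abelian group extension, (iii) realize it as a skew product with a cocycle, and (iv) identify the type of that cocycle. For (i), I would invoke the monotonicity of the Gowers--Host--Kra seminorms $\|f\|_{U^k(\XX)}\le \|f\|_{U^{k+1}(\XX)}$ (a Cauchy--Schwarz inequality on the cubic measure $\mu^{[k+1]}$ applied to two consecutive coordinates of the cube), combined with the characterization in \Cref{factorsnorms}: any $f$ with $E(f|\ZZ^{\leq k}(\XX))=0$ has $\|f\|_{U^{k+1}}=0$, hence $\|f\|_{U^k}=0$, hence $E(f|\ZZ^{\leq k-1}(\XX))=0$, so the $\sigma$-algebra of $\ZZ^{\leq k-1}$ is contained in that of $\ZZ^{\leq k}$.

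Step (ii) is the heart of the proposition, and the place I expect the main obstacle to lie. I would not reprove it from scratch, but cite the Host--Kra structure theorem, established for $\Gamma=\Z$ in \cite[Ch.~9]{hk-book} and adapted to arbitrary countable abelian $\Gamma$ in, e.g., \cite[Appendix~A]{btz}. The idea (buried in that reference) is to analyse the conditional $L^2$-decomposition of functions on $\ZZ^{\leq k}(\XX)$ orthogonal to $\ZZ^{\leq k-1}(\XX)$ using the cubic measure $\mu^{[k+1]}$ and the relative product structure of $\XX^{[k+1]}$ over $\XX^{[k]}$. One shows that every such function is a sum of fibrewise ``eigenfunctions'' for a natural system of side shifts on the cube, which forces the extension to be isometric in the Furstenberg sense, and then the Mackey--Zimmer dichotomy upgrades an ergodic isometric extension to a homogeneous extension; in the abelian setting this is automatically a compact abelian group extension with some structure group $U$ (metrizable, since $\ZZ^{\leq k}(\XX)$ is Polish).

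Given step (ii), step (iii) is immediate from Mackey--Zimmer theory (\Cref{zimmer}): up to isomorphism one may write $\ZZ^{\leq k}(\XX)\cong \ZZ^{\leq k-1}(\XX)\times_\rho U$ for some minimal cocycle $\rho\in Z^1(\Gamma,\ZZ^{\leq k-1}(\XX),U)$, with the ergodicity of $\ZZ^{\leq k}(\XX)$ translating into minimality of $\rho$. For step (iv), both $\ZZ^{\leq k}(\XX)$ and $\ZZ^{\leq k-1}(\XX)$ are of order $\leq k$ (the latter by \Cref{prop-functoriality}(v)), so the equivalence in \Cref{type}(iii), applied to the skew product $\ZZ^{\leq k-1}(\XX)\times_\rho U$ of order $\leq k$ over a base of order $\leq k$, forces $\rho$ to be of type $k$. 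This combines to give the desired representation.
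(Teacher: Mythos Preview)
Your proposal is correct and takes essentially the same approach as the paper: the paper's own proof is simply a citation to \cite[Proposition 6.3]{host2005nonconventional}, \cite[Proposition 3, Chapter 18]{hk-book}, and \cite[Proposition 3.4]{btz}, noting that the $\Z$-arguments extend to general countable abelian $\Gamma$. Your decomposition into steps (i)--(iv) and your invocation of \Cref{type}(iii) for the type claim are accurate elaborations of what those references contain, but the core content (step (ii)) is the same cited structure theorem.
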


\begin{proof} See \cite[Proposition 6.3]{host2005nonconventional}, \cite[Proposition 3, Chapter 18]{hk-book}, or \cite[Proposition 3.4]{btz}.  The arguments in \cite{host2005nonconventional}, \cite{hk-book} are formulated for $\Z$-systems, but (as observed in \cite{btz}) extend without difficulty to more general $\Gamma$-systems.
\end{proof}
We need the following descent result for cocycles from \cite[Proposition 8.11]{btz}. 
\begin{proposition}[Exact descent]\label{8.11}
    Let $\Gamma$ be a countable abelian group and let $k\geq 1$. Let $\XX$ be an ergodic $\Gamma$-system of order $\leq k$. Let $\mathrm{Y}$ be a factor of $\XX$ with factor map $\pi \colon \XX\rightarrow \mathrm{Y}$. Suppose that $\rho \colon \Gamma\times \mathrm{Y}\rightarrow \mathbb{T}$ is a cocycle. If $\rho\circ \pi$ is of type $k$, then $\rho$ is of type $k$. 
\end{proposition}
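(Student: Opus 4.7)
My plan is to translate the hypothesis into a cohomological descent statement for the $T^{[k]}$-action on the cubic system $X^{[k]}$, and then use the Host--Kra structure of $X^{[k]}$ for systems of order $\leq k$ to carry out the descent to $Y^{[k]}$. To begin, I would set $\sigma \coloneqq \rho \circ \pi$ and unpack the hypothesis: there exists a measurable $F \colon X^{[k]} \to \T$ with $\Delta^{[k]} \sigma_\gamma = \partial_\gamma F$ for every $\gamma \in \Gamma$, where $\partial_\gamma$ denotes the diagonal $T^{[k]}$-action. Since $\Delta^{[k]} \sigma = (\Delta^{[k]} \rho) \circ \pi^{[k]}$, the right-hand side is automatically measurable with respect to $\mathcal{B} \coloneqq (\pi^{[k]})^{-1}(\mathcal{Y}^{[k]})$. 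The standard Host--Kra fact that $\pi^{[k]}_\ast \mu_X^{[k]} = \mu_Y^{[k]}$ (proved inductively using compatibility of conditional expectation with factor maps) means any $\mathcal{B}$-measurable function descends uniquely to a measurable function on $Y^{[k]}$.

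The core of the argument is then to show that $F$ is cohomologous, via a $T^{[k]}$-invariant correction, to a $\mathcal{B}$-measurable function. If I can write $F = F' + H$ with $F' \in \mathcal{M}(\mathcal{B}, \T)$ and $H$ being $T^{[k]}$-invariant, then $\partial_\gamma F = \partial_\gamma F'$, and pushing this identity forward along $\pi^{[k]}$ yields some $G \in \mathcal{M}(Y^{[k]}, \T)$ with $\Delta^{[k]} \rho_\gamma = \partial_\gamma G$, which is exactly the claim that $\rho$ is of type $\leq k$. The condition that $\partial_\gamma F \in \mathcal{M}(\mathcal{B}, \T)$ for all $\gamma$ says precisely that the image of $F$ in the quotient group $\mathcal{M}(X^{[k]}, \T)/\mathcal{M}(\mathcal{B}, \T)$ is $\Gamma$-invariant under $T^{[k]}$. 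To lift this to an invariant representative upstairs, I would invoke the order $\leq k$ hypothesis on $\XX$: the Host--Kra analysis of cubic systems shows that the $T^{[k]}$-invariant $\sigma$-algebra on $X^{[k]}$ is generated by pullbacks along the face projections $X^{[k]} \to X^{[k-1]}$, and each such projection is equivariant with respect to $\pi$, so these invariants all land inside $\mathcal{B}$. This forces the relevant connecting map in the long exact sequence of $\Gamma$-cohomology attached to $0 \to \mathcal{M}(\mathcal{B},\T) \to \mathcal{M}(X^{[k]},\T) \to \mathcal{M}(X^{[k]},\T)/\mathcal{M}(\mathcal{B},\T) \to 0$ to annihilate the class of $F$, producing the required invariant correction $H$.

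The main obstacle is implementing the invariant-lifting step rigorously: pinning down the $T^{[k]}$-invariant $\sigma$-algebra of $X^{[k]}$ relies on the full Host--Kra structure theorem (see \cite[Proposition 6.3]{host2005nonconventional} and \Cref{abelext}), and passing from a $\Gamma$-invariant class in $\mathcal{M}(X^{[k]}, \T)/\mathcal{M}(\mathcal{B}, \T)$ to an honest invariant representative is subtle because the target $\T$ has non-trivial topology. I would handle this via a Moore--Schmidt reduction (\Cref{type}(i), (ii)): it suffices to verify the descent after composing with each character $n \in \widehat{\T} \cong \Z$, and for each such character the Furstenberg--Zimmer machinery on the extension $\pi^{[k]} \colon X^{[k]} \to Y^{[k]}$ applies cleanly. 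An alternative route, which I would develop in parallel as a safety net, is an induction on $k$: the base case $k = 1$ follows because a type $\leq 1$ cocycle is cohomologous to a homomorphism $\Gamma \to \T$ by \Cref{type}(vi), which descends trivially, and the inductive step uses the tower representation of $\XX$ from \Cref{abelext} to peel off the top abelian extension, treating the descent at that level using the cocycle integration tools of \Cref{int1-sec} combined with \Cref{type}(iv)--(v).
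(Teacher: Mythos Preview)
The paper does not give its own proof of this statement; it simply records the result and cites \cite[Proposition~8.11]{btz}. So there is no paper-proof to compare against line by line, but one can still ask whether your proposal would succeed.

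Your main line of argument has a genuine gap. The step ``the $T^{[k]}$-invariant $\sigma$-algebra on $X^{[k]}$ is generated by pullbacks along the face projections $X^{[k]} \to X^{[k-1]}$, and each such projection is equivariant with respect to $\pi$, so these invariants all land inside $\mathcal{B}$'' does not follow. Equivariance of the face projections with $\pi$ only gives a commutative square; the invariants you obtain this way are $X^{[k-1]}$-measurable, not $Y^{[k-1]}$-measurable, and there is no reason for them to lie in $\mathcal{B}=(\pi^{[k]})^{-1}(\mathcal{Y}^{[k]})$. For a concrete obstruction, take $\XX=\YY\times_\sigma U$ a genuine ergodic abelian extension: the $T^{[k]}$-invariant $\sigma$-algebra of $X^{[k]}$ carries information about $\sigma$ and $U$ that is simply not present in $Y^{[k]}$. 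Consequently the connecting-map vanishing you need in the long exact sequence of $\Gamma$-cohomology is unjustified, and the descent of $F$ to $\mathcal{B}$ modulo an invariant does not go through.

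Your two fallback routes are also incomplete as written. The Moore--Schmidt reduction is fine as a first move, but ``Furstenberg--Zimmer machinery on $\pi^{[k]}$'' is a placeholder, not an argument; the actual mechanism in \cite{btz} uses the relative-independence structure of the cubic measure $\mu_X^{[k]}$ when $\XX$ has order $\le k$, decomposing the coboundary equation along ergodic components in a way compatible with the projection to $Y^{[k]}$ and then reassembling via measurable selection. Your inductive alternative has two problems: peeling off a top abelian extension via \Cref{abelext} gives $\XX$ as an extension of $\ZZ^{\le k-1}(\XX)$, not of the given factor $\YY$, so the induction does not feed back into the statement you want; and the base case $k=1$ already requires a coboundary-descent (type~$\le 0$) step that is precisely the content you are trying to establish.
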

The following result  justifies the definition of a type filtration. It was established by Host and Kra for $\mathbb{Z}$-systems but the same proof extends to the action of all countable abelian groups (cf. \cite[Lemma 5.2 and Proposition 7.6]{host2005nonconventional}). 
\begin{proposition}\label{typefiltration}
Let $\Gamma$ be a countable abelian group, let $\XX$ be an ergodic $\Gamma$-system. Let $\mathrm{Y}=\XX\times_\rho U$ be an ergodic abelian extension of $\XX$. Let $i\geq 0$. Then every $u\in U$ induces a measure-preserving map $p_i u \colon\ZZ^{\leq i}(\XX)\rightarrow \ZZ^{\leq i}(\XX)$ such that letting $U_i=\{u\in U : p_i u = \id\}$ the following properties hold:
\begin{itemize}
    \item [(i)] $\ZZ^{\leq i}(\mathrm{Y})$ is isomorphic to an extension of $\ZZ^{\leq i}(\XX)$ by the compact abelian group $U/U_{>i}$ and a cocycle $\rho' \colon \Gamma \times \ZZ^{\leq i}(\XX)\rightarrow U/U_i$ such that $\rho'\circ \pi_i$ is cohomologous to $\rho \mod U_{>i}$, where $\pi_i \colon\XX\rightarrow \ZZ^{\leq i}(\XX)$ is the factor map.
    \item [(ii)] The cocycle $\rho \mod U_{>i}$ is of type $i$.
     \item [(iii)] $U_\bullet = (U_{>i})_{i=0}^\infty$ is the type filtration on $U$. 
\end{itemize}
\end{proposition}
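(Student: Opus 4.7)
\medskip

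The plan is to exploit that the vertical translations $V_u$ on $\mathrm{Y}=\XX\times_\rho U$ are automorphisms of the ergodic $\Gamma$-system $\mathrm{Y}$, commuting with $T^\gamma$, and to push this structure down through the Host--Kra tower. Concretely, for each $u\in U$ the map $V_u$ permutes cubes in $\mathrm{Y}^{[i]}$ and hence, by the functoriality of the cubic construction and of the $\sigma$-algebras $\ZZ^{\leq i}(\mathrm{Y})$ (\Cref{factorsnorms} and \Cref{prop-functoriality}(ii)), induces a measure-preserving automorphism $p_i u$ on $\ZZ^{\leq i}(\mathrm{Y})$ covering the identity on $\ZZ^{\leq i}(\XX)$ (since $V_u$ fixes the $\XX$-factor). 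This gives a continuous $U$-action on $\ZZ^{\leq i}(\mathrm{Y})$ by \emph{fibered} automorphisms over $\ZZ^{\leq i}(\XX)$, with kernel $U_{>i}\coloneqq\{u:p_i u=\id\}$ — a closed subgroup of $U$. (The stated action ``on $\ZZ^{\leq i}(\XX)$'' is interpreted in this natural fibered way.)

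For part (i), I would apply Mackey--Zimmer/Glasner fiber theory (\cite[Theorem~3.29]{glasner2015ergodic}) to the free $U/U_{>i}$-action on $\ZZ^{\leq i}(\mathrm{Y})$ over $\ZZ^{\leq i}(\XX)$: this produces a cocycle $\rho'\colon\Gamma\times\ZZ^{\leq i}(\XX)\to U/U_{>i}$ with $\ZZ^{\leq i}(\mathrm{Y})\cong \ZZ^{\leq i}(\XX)\times_{\rho'}U/U_{>i}$. To compare $\rho'$ with $\rho\bmod U_{>i}$, I would lift both cocycles through $\pi_i\colon\XX\to\ZZ^{\leq i}(\XX)$: each gives a cocycle presentation, on $\XX$, of the free $U/U_{>i}$-action on the factor $\ZZ^{\leq i}(\mathrm{Y})\leq \mathrm{Y}$. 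Two cocycles on $\XX$ presenting the same free compact group action with the same image group must be cohomologous (by uniqueness in Mackey--Zimmer theory), giving $\rho'\circ\pi_i \sim \rho\bmod U_{>i}$.

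For part (ii), by the Moore--Schmidt theorem and \Cref{type}(ii), it suffices to show $\chi\circ(\rho\bmod U_{>i})$ is of type $\leq i$ for every $\chi\in\widehat{U/U_{>i}}$. Now the character $\chi$, viewed as a map on the vertical coordinate, is $U_{>i}$-invariant and therefore defines a function on $\ZZ^{\leq i}(\mathrm{Y})$ (since $U_{>i}$ acts trivially there). Hence $\XX\times_{\chi\circ\rho}\T$ is isomorphic to a factor of $\ZZ^{\leq i}(\mathrm{Y})$ extending $\XX$, and in particular is of order $\leq i$; by \Cref{type}(iii) its defining cocycle is of type $\leq i$, which gives the claim. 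Finally, (iii) follows by combining (ii) with the dual characterization in \Cref{u-dual}: that lemma identifies the type filtration as the annihilator of characters $\xi\in\widehat U$ for which $\xi\circ\rho$ is of type $\leq i$, which is exactly the kernel $U_{>i}$ we constructed.

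The main obstacle will be the cocycle matching step in part (i): showing that the \emph{canonical} cocycle produced by the Mackey--Zimmer representation of $\ZZ^{\leq i}(\mathrm{Y})$ as an extension of $\ZZ^{\leq i}(\XX)$ is indeed cohomologous, after lifting, to the truncated cocycle $\rho\bmod U_{>i}$, rather than merely to some cocycle with the same image group. This requires tracking measurable cross-sections through two simultaneous factor maps (the projection $\mathrm{Y}\to\ZZ^{\leq i}(\mathrm{Y})$ and the projection $\mathrm{Y}\to\XX\to\ZZ^{\leq i}(\XX)$), which is routine but delicate; the argument is the natural generalization of Host--Kra's \cite[Proposition~7.6]{host2005nonconventional} from $\Gamma=\Z$ to arbitrary countable abelian $\Gamma$, since the cubic construction and its functoriality properties used throughout extend verbatim.
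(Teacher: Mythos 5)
The statement has a typo (it should be $\ZZ^{\leq i}(\mathrm{Y})\to \ZZ^{\leq i}(\mathrm{Y})$ and $U_{>i}$), which you correctly flag and interpret. Your outline of part (i) — functoriality of the cubic construction to get a $U$-action on $\ZZ^{\leq i}(\mathrm{Y})$ covering the identity on $\ZZ^{\leq i}(\XX)$, followed by Mackey--Zimmer to produce $\rho'$, followed by cocycle matching — is the right route, and it is the one Host--Kra take in \cite[Lemma 5.2, Proposition 7.6]{host2005nonconventional}, which is what the paper cites.

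There is, however, a genuine gap in your argument for (ii). You assert that $\chi$, viewed as a function of the vertical coordinate on $\mathrm{Y}$, ``defines a function on $\ZZ^{\leq i}(\mathrm{Y})$ (since $U_{>i}$ acts trivially there).'' This implication runs backwards. By definition of $U_{>i}$, every $\ZZ^{\leq i}(\mathrm{Y})$-measurable function is $U_{>i}$-invariant; but the full $U_{>i}$-invariant sub-$\sigma$-algebra of $\mathrm{Y}$ is the factor $\mathrm{Y}/U_{>i}\cong\XX\times_{\rho\bmod U_{>i}}(U/U_{>i})$, which properly contains $\ZZ^{\leq i}(\mathrm{Y})$ whenever $\ZZ^{\leq i}(\XX)\neq\XX$. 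Concretely, for $i=1$ the function $\chi$ on the vertical coordinate is generically not an eigenfunction of $\mathrm{Y}$ (since $\chi\circ\rho$ need not be a constant), and so it is not $\ZZ^{\leq 1}(\mathrm{Y})$-measurable even if $\chi$ annihilates $U_{>1}$. Your conclusion that $\XX\times_{\chi\circ\rho}\T$ is a factor of $\ZZ^{\leq i}(\mathrm{Y})$ therefore fails; indeed it cannot even extend $\XX$ as a factor of $\ZZ^{\leq i}(\mathrm{Y})$ unless $\XX$ is already of order $\leq i$.

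The repair is cheap once you have (i): you have already produced a cocycle $\rho'\colon\Gamma\times\ZZ^{\leq i}(\XX)\to U/U_{>i}$ such that $\ZZ^{\leq i}(\XX)\times_{\rho'}(U/U_{>i})\cong\ZZ^{\leq i}(\mathrm{Y})$ is ergodic of order $\leq i$, so by \Cref{type}(iii) (applied to the base $\ZZ^{\leq i}(\XX)$, which is of order $\leq i$) the cocycle $\rho'$ is of type $\leq i$; its lift $\rho'\circ\pi_i$ is of type $\leq i$ by \Cref{type}(v); and $\rho\bmod U_{>i}$ is cohomologous to it, hence of type $\leq i$. You should derive (ii) this way rather than through the invariance argument. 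Finally, invoking \Cref{u-dual} for (iii) is borderline circular since that lemma is itself a restatement of the same Host--Kra proposition being cited here; but (iii) is really just the observation that $(U_{>i})$ is a decreasing chain of closed subgroups with $U_{>0}=U$, which is immediate from the monotonicity of the Host--Kra factors and continuity of $u\mapsto p_iu$, so no external lemma is needed.
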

We also have a non-ergodic version of claim $(i)$ from the previous proposition.
\begin{proposition}\label{7.9}
   Let $\Gamma$ be a countable abelian group, let $\XX$ be an ergodic $\Gamma$-system, and let $U=(U,U_\bullet)$ be a filtered group. 
   Let $\rho \colon \Gamma\times X\rightarrow U$ be a (not necessarily ergodic) exact cocycle, that is, $\rho \mod U_{>i}$ is of type $i$ for all $i\geq 0$.
   Then $\rho \mod U_{>i}$ is cohomologous to a cocycle measurable with respect to $\ZZ^i(\XX)$ for all $i\geq 0$.
\end{proposition}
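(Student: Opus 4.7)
The plan is to reduce the non-ergodic statement to the ergodic version already packaged in Proposition \ref{typefiltration}, using Zimmer's theorem to pass to a minimal cohomologous cocycle whose skew product is ergodic.

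Fix $i\geq 0$ and set $\rho_i\coloneqq \rho \bmod U_{>i}$, a cocycle of type $\leq i$ with values in the compact abelian group $V_i\coloneqq U/U_{>i}$. The goal is to show that $\rho_i$ is cohomologous to a cocycle measurable with respect to $\ZZ^{\leq i}(\XX)$. The abelian extension $\XX\times_{\rho_i}V_i$ need not be ergodic, so Proposition \ref{typefiltration} cannot be applied to $\rho_i$ directly; this is the only real obstacle, and I would circumvent it by a Zimmer reduction.

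By Proposition \ref{zimmer}(1), choose a minimal cocycle $\rho_i'$ cohomologous to $\rho_i$ with image a closed subgroup $W_i\leq V_i$; then by Proposition \ref{zimmer}(2) the extension $\mathrm{Y}_i\coloneqq \XX\times_{\rho_i'}W_i$ is ergodic. I would first observe that being of type $\leq i$ is invariant under cohomology: if $\rho_i'=\rho_i+d_\Gamma F$ for some $F\colon X\to V_i$, then $\Delta^{[i]}\rho_i'-\Delta^{[i]}\rho_i=d_\Gamma(\Delta^{[i]}F)$ is a coboundary on $\XX^{[i]}$, and $\Delta^{[i]}\rho_i$ is a coboundary by assumption, so $\Delta^{[i]}\rho_i'$ is also a coboundary. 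Thus $\rho_i'$ is a $W_i$-valued cocycle of type $\leq i$. Applying Proposition \ref{u-dual} to the ergodic extension $\mathrm{Y}_i$ then gives $W_{i,>i}=0$ in the type filtration of $W_i$, since $W_{i,>i}$ is the smallest closed subgroup modulo which $\rho_i'$ becomes of type $\leq i$ and $\rho_i'$ is already of that type.

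Now Proposition \ref{typefiltration}(i), applied to the ergodic extension $\mathrm{Y}_i=\XX\times_{\rho_i'}W_i$ at level $i$, produces a cocycle
\[
\rho_i''\colon \Gamma\times \ZZ^{\leq i}(\XX)\longrightarrow W_i/W_{i,>i}=W_i
\]
such that $\rho_i''\circ \pi_i$ is cohomologous to $\rho_i'\bmod W_{i,>i}=\rho_i'$, where $\pi_i\colon \XX\to \ZZ^{\leq i}(\XX)$ is the factor map. Composing the two homotopies yields that $\rho_i$ is cohomologous to $\rho_i''\circ \pi_i$ on $\XX$, and the latter is by construction measurable with respect to $\ZZ^{\leq i}(\XX)$ (viewed as a factor of $\XX$, and included into $V_i$ via $W_i\hookrightarrow V_i$). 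As $i$ was arbitrary, this completes the proof. The only nontrivial point in the argument is the cohomology invariance of type, which follows immediately from the definition of $\Delta^{[i]}$; everything else is direct bookkeeping combining Zimmer's decomposition with the ergodic Host--Kra machinery already recalled in the appendix.
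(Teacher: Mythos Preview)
Your argument is correct and, since the paper gives no proof at all here (it simply cites \cite[Corollary 7.9]{host2005nonconventional} and asserts that the $\Z$-case extends verbatim), your Zimmer-reduction followed by Proposition~\ref{typefiltration}(i) is a clean self-contained derivation from the appendix machinery rather than a comparison target.

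One step deserves an extra sentence. After replacing $\rho_i$ by the minimal cocycle $\rho_i'$ with image $W_i\leq V_i$, you assert that $\rho_i'$ is of type $\leq i$ \emph{as a $W_i$-valued cocycle}, which is what Lemma~\ref{u-dual} needs in order to conclude $W_{i,>i}=0$. But your cohomology-invariance calculation only shows that $\Delta^{[i]}\rho_i'$ is a \emph{$V_i$}-coboundary, since the transfer function $F$ lives in $\mathcal{M}(\XX,V_i)$, not $\mathcal{M}(\XX,W_i)$. The passage to $W_i$ requires one more line: by Proposition~\ref{type}(ii) it suffices that $\xi\circ\rho_i'$ be of type $\leq i$ for every $\xi\in\widehat{W_i}$; since $W_i$ is a closed subgroup of the compact abelian group $V_i$, any such $\xi$ extends to some $\tilde\xi\in\widehat{V_i}$, and then $\xi\circ\rho_i'=\tilde\xi\circ\rho_i'$ is of type $\leq i$ because $\rho_i'$ is of type $\leq i$ as a $V_i$-valued cocycle. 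With this remark the proof is complete.
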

\begin{proof}
    The proof of \cite[Corollary 7.9]{host2005nonconventional} directly extends from $\Z$-systems to $\Gamma$-systems for arbitrary countable abelian groups $\Gamma$.
\end{proof}


\section{Polynomials in groups of bounded exponent }

The following properties of polynomials are well known (see, e.g., \cite[Proposition A.12]{jst-tdsystems} and the references mentioned therein).

\begin{proposition}[Properties of polynomials]\label{ppfacts}
 Let $\XX$ be an ergodic $\Gamma$-system.
	\begin{itemize}
	\item[(i)] Let $k\geq 1$, and suppose that $P,Q\in \Poly_{\leq k}(\XX)$ and $P-Q$ is non-constant. Then $$\|e(P)-e(Q)\|_{L^2(\XX)} \geq \sqrt{2}/2^{k-2}$$ where $e(y)=e^{2\pi i y}$.  In particular, there are only countably many elements of $\Poly_{\leq k}(\XX)$ up to constants.
    \item[(ii)]  For any $m \geq 0$, a polynomial in $\Poly_{\leq m}(\XX)$ is measurable in $\ZZ^m(\XX)$.
	\item[(iii)] Let $m\geq 0, k\geq 1$ and let $P\in \Poly_{\leq m}(X)$. If $t\in \Aut(\XX)$ fixes the $\sigma$-algebra $\ZZ^{\leq k}(\XX)$, then $\partial_{t} P$ is a polynomial of degree $\leq m-k-1$. 
	\item[(iv)] Let $m\geq 0$, let $P\in \Poly_{\leq m}(X)$, and let $K\leq \Aut(\XX)$ be a compact subgroup. Then there is an open neighborhood of the identity $V\leq K$ such that $\partial_{u} P$ is a constant for every $u\in V$. 
	\item[(v)] Let $m\geq 0$ and let $f \in \mathcal{M}(X,\T)$. Then $f$ is a polynomial of degree at most $m-1$ if and only if $\Delta^{[m]}f(x)\equiv 0$ for $\mu^{[m]}$-almost every $x\in \XX^{[m]}$. 
    \item[(vi)]  If $\XX$ is an inverse limit of $(\XX_\alpha)_{\alpha \in A}$ is a directed set of ergodic $\Gamma$-systems (with compatible factor maps) and $k \geq 1$, then $\Poly_{\leq k}(\XX)$ is the union of the $\Poly_{\leq k}(\XX_\alpha)$ (where we embed the latter groups in the former in the obvious fashion).
	\end{itemize}
\end{proposition}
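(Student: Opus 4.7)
My plan is to prove the six assertions one at a time, largely following the classical Host--Kra and Bergelson--Tao--Ziegler arguments; the proof will chiefly consist of carefully assembling known ingredients, with (i), (iv), and (v) requiring the most care.

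For (v), I would proceed by induction on $m$, exploiting the recursive definition of the cubic measure $\mu^{[m]}$ and the fact that $\mu^{[m]} = \mu^{[m-1]} \otimes_{\mathcal{I}^{[m-1]}} \mu^{[m-1]}$. The base case $m=0$ reduces to $f \equiv 0$ being constant zero. For the inductive step, one uses that $\Delta^{[m]} f = \Delta^{[m-1]} f \otimes 1 - 1 \otimes \Delta^{[m-1]} f$ in an appropriate sense, and vanishing a.e. on $\XX^{[m]}$ is equivalent to $\Delta^{[m-1]} f$ being an a.e. function of the invariant factor $\mathcal{I}^{[m-1]}$; unwinding using the description of $\mathcal{I}^{[m-1]}$ via $\Gamma$-shifts then identifies this with the condition $\partial_{\gamma_1} \cdots \partial_{\gamma_m} f = 0$ for all shifts. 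Claim (ii) will then follow from (v) by examining the Gowers--Host--Kra seminorm characterization: if $f$ is polynomial of degree $\leq m$ then $\Delta^{[m+1]} f = 0$ implies that $\|f - \E(f \mid \ZZ^{\leq m})\|_{U^{m+1}} = 0$, forcing $f$ to be $\ZZ^{\leq m}$-measurable by \cite[Lemma A.32]{btz}.

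For (i), I would induct on $k$. The base case $k=0$ is immediate: by ergodicity both $P$ and $Q$ are constants, so $P-Q$ is constant, contradicting the hypothesis. For $k \geq 1$, set $R \coloneqq P - Q \in \Poly_{\leq k}(\XX)$ non-constant, and note $\|e(P) - e(Q)\|_{L^2}^2 = 2(1 - \Re \int e(R)\, d\mu)$. The key estimate is $|\int e(R)\, d\mu|^{2^{k+1}} \leq \int \Delta^{[k+1]} e(R)\, d\mu^{[k+1]}$ via the Gowers--Cauchy--Schwarz inequality. Since $R$ has degree $\leq k$, $\Delta^{[k+1]} R = 0$ by (v), so the inner integral equals $1$; thus $|\int e(R)| \leq 1$ always, but one needs the strict bound. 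The refinement: use that $R$ is non-constant to produce a $\gamma$ with $\partial_\gamma R$ non-constant of degree $\leq k-1$, apply the inductive hypothesis to $\partial_\gamma R$, and use a van der Corput style inequality $|\int e(R)|^2 \leq \E_\gamma |\int e(\partial_\gamma R)|$ to propagate the bound. The explicit constant $\sqrt{2}/2^{k-2}$ arises from $k-1$ applications of Cauchy--Schwarz. Countability then follows because $\mathcal{M}(\XX,\T)$ is separable in measure and the cosets are uniformly separated.

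For (iii), I would induct on $m$. If $m \leq k$, then $P \in \Poly_{\leq m}(\XX) \leq \Poly_{\leq k}(\XX)$ is $\ZZ^{\leq k}(\XX)$-measurable by (ii), and hence $t$-invariant, giving $\partial_t P = 0 \in \Poly_{\leq -1}(\XX)$. For $m > k$, observe that $\partial_t$ and $\partial_\gamma$ commute, so for any $\gamma_1,\dots,\gamma_{m-k-1} \in \Gamma$, $\partial_{\gamma_1} \cdots \partial_{\gamma_{m-k-1}} \partial_t P = \partial_t (\partial_{\gamma_1} \cdots \partial_{\gamma_{m-k-1}} P)$, where the inner expression lies in $\Poly_{\leq k}(\XX)$ and is therefore $t$-invariant, so the whole thing vanishes. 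Hence $\partial_t P \in \Poly_{\leq m-k-1}(\XX)$ as claimed. Claim (iv) combines (i) with a Pettis-lemma argument: the map $u \mapsto \partial_u P$, thought of modulo constants, lands in the countable set $\Poly_{\leq m-1}(\XX)/\T$ (using (iii) applied to $t = u$, which fixes $\ZZ^{\leq 0}(\XX)$ trivially; actually one needs $u$ to preserve all $\ZZ^{\leq k}$, which is built into the $K$-action hypothesis when $K$ consists of measure-preserving automorphisms commuting with $T$). By separability of $\mathcal{M}(\XX,\T)$ in measure together with the continuity of the $K$-action, the preimage of some coset is non-meager, and Pettis's lemma then produces an open subgroup $V \leq K$ on which $u \mapsto \partial_u P$ is constant in that coset; combined with $\partial_0 P = 0$, this makes $\partial_u P$ a constant on $V$. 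Finally (vi) is a soft observation: polynomials of degree $\leq k$ in $\XX = \varprojlim \XX_\alpha$ are determined by $L^2$-limits of polynomials in the $\XX_\alpha$, and by (i) the space $\Poly_{\leq k}(\XX)$ modulo constants is countable and discrete, so any $L^2$-convergent sequence of polynomials is eventually constant, forcing $P \in \Poly_{\leq k}(\XX_\alpha)$ for some $\alpha$. The main obstacle is obtaining the explicit constant in (i) with the correct power of $2$; all other claims reduce to careful bookkeeping of known Host--Kra machinery.
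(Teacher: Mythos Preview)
The paper does not actually prove this proposition; it simply cites \cite[Proposition~A.12]{jst-tdsystems} and related references as known. Your outline supplies the standard arguments behind that citation, and the overall strategy for each part is the correct one: (v) via induction on $m$ through the recursive structure of $\mu^{[m]}$, (ii) from (v) and the seminorm characterisation of $\ZZ^{\leq m}$, (iii) by induction using (ii), (iv) from (i) via continuity of $u\mapsto \partial_u P$ into a discrete countable target, and (vi) from discreteness of $\Poly_{\leq k}(\XX)/\T$. So in spirit your proposal matches what the cited literature does.

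Two small points deserve attention. In (iii) you have an off-by-one: to show $\partial_t P\in\Poly_{\leq m-k-1}(\XX)$ you must kill it with $m-k$ derivatives, not $m-k-1$; after $m-k$ derivatives the inner expression has degree $\leq k$ (not $k+1$), and only then does (ii) make it $\ZZ^{\leq k}$-measurable and hence $t$-invariant. In (iv) the Pettis-lemma invocation is heavier than needed: since by (i) the quotient $\Poly_{\leq m}(\XX)/\T$ is discrete in the $L^2$ metric and $u\mapsto e(\partial_u P)$ is continuous, the map $u\mapsto \partial_u P\bmod\T$ is automatically locally constant, so an open neighbourhood of the identity already maps to the coset of $0=\partial_0 P$. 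Finally, your instinct that the explicit constant in (i) is the delicate point is well-founded: as written the bound $\sqrt{2}/2^{k-2}$ exceeds $2$ at $k=1$ and so cannot be sharp there; the qualitative separation (and hence countability) is what is actually used throughout the paper, and your van der Corput induction delivers that.
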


\begin{lemma}[On multiplication by $m$]\label{mtimes}
Let $k,m\geq 1$ be integers and let $\Gamma$ be an $m$-exponent group. If $P\in \Poly_{\leq k}(\Gamma)$, then $m\cdot P\in \Poly_{\leq k-1}(\Gamma)$. In particular, $\Poly_{\leq k}(\Gamma)/\Poly_{\leq 0}(\Gamma)$ is an $m^k$-exponent group.
\end{lemma}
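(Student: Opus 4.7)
The plan is to proceed by induction on $k$, with the base case $k=1$ handling the (easy) linear case and the inductive step reducing degree $k$ to degree $k-1$ by taking a single derivative.

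For the base case $k=1$, I would use the fact that if $P \in \Poly_{\leq 1}(\Gamma)$, then the second-order finite difference equation $\partial_{\gamma_1}\partial_{\gamma_2}P = 0$ (evaluated at $x=0$) forces $Q(\gamma) \coloneqq P(\gamma) - P(0)$ to satisfy $Q(\gamma_1+\gamma_2) = Q(\gamma_1) + Q(\gamma_2)$, i.e., $Q$ is a homomorphism $\Gamma \to \T$. Since $\Gamma$ has exponent $m$, we get $mQ(\gamma) = Q(m\gamma) = Q(0) = 0$, so $mP(\gamma) = mP(0)$ is a constant, placing $mP$ in $\Poly_{\leq 0}(\Gamma) = \T$.

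For the inductive step, assume the claim for $k-1$ and take $P \in \Poly_{\leq k}(\Gamma)$. For every $\gamma \in \Gamma$, the function $\partial_\gamma P$ lies in $\Poly_{\leq k-1}(\Gamma)$, so the inductive hypothesis gives $\partial_\gamma(mP) = m\partial_\gamma P \in \Poly_{\leq k-2}(\Gamma)$. Since this holds for every $\gamma \in \Gamma$, we conclude $mP \in \Poly_{\leq k-1}(\Gamma)$, as required. The second part then follows by iterating: $k$-fold application of the first part yields $m^k P \in \Poly_{\leq 0}(\Gamma)$ for every $P \in \Poly_{\leq k}(\Gamma)$, so the quotient $\Poly_{\leq k}(\Gamma)/\Poly_{\leq 0}(\Gamma)$ is annihilated by $m^k$.

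There is no real obstacle here: the argument is a clean induction whose only non-trivial input is that polynomials of degree $\leq 1$ on the translational $\Gamma$-system are affine homomorphisms, which is immediate from unpacking the finite difference definition. The bounded-exponent assumption is used only at the base case to kill the homomorphism $Q$, after which the inductive machinery propagates the cancellation one degree at a time.
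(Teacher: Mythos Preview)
Your proof is correct and takes essentially the same approach as the paper's---induction on $k$ with the base case that degree-$\leq 1$ polynomials on $\Gamma$ are affine homomorphisms $\Gamma \to \T$. A minor organizational difference: you prove the degree-reduction claim $mP \in \Poly_{\leq k-1}$ directly and then iterate to obtain the $m^k$-exponent statement, whereas the paper inducts directly on the latter (showing $m^{k-1}P \in \Poly_{\leq 1}$ from the inductive hypothesis and then applying the base case once more); your organization has the small advantage of explicitly establishing the first assertion of the lemma.
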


\begin{proof}  For $k=1$, we can identify $\Poly_{\leq 1}(\Gamma)/\T$ with the Pontryagin dual $\hat \Gamma$ of $\Gamma$, which will be of exponent $m$ by Pr\"ufer's first theorem.  Now suppose $k>1$ and $\Poly_{\leq k-1}(\Gamma)/\T$ has already been shown to have exponent $m^{k-1}$.  If $P \in \Poly_{\leq k}(\Gamma)$, we conclude that $m^{k-1} \partial_\gamma P$ is constant for every $\gamma \in \Gamma$, hence $m^{k-1} P \in \Poly_{\leq 1}(\Gamma)$, hence $m^k P$ is constant, giving the claim.

To then obtain the same conclusion for $\Poly_{\leq k}(\XX)/\T$, use a sampling map $\iota_{x_0}$ defined in \eqref{sampling} for $x_0 \in \XX$ chosen outside of a null set (to make it an injective $\Gamma$-equivariant homomorphism).
\end{proof}

\section{The category of filtered locally compact abelian groups}\label{filteredcategory}

The category of filtered abelian groups plays an important role in our analysis.

\begin{definition}
Let $k \ge 0$.
\begin{itemize}
    \item[(i)] A \emph{$k$-filtered locally compact abelian group} is a pair $\AA = (A, A_\bullet)$ where $A$ is a locally compact abelian group and $A_\bullet = (A_i)_{i \ge 0}$ is a filtration of degree $\leq k$, that is,
    \[
    A = A_0 \ge A_1 \ge \cdots \ge A_{k+1} = \{0_A\},
    \]
    consisting of closed subgroups.  In particular, we can identify locally compact abelian groups with $1$-filtered locally compact abelian groups in the obvious fashion.

    \item[(ii)] We say that a $k$-filtered group $\AA$ is a \emph{subgroup} of a $k$-filtered group $\BB$ if $A_i$ is a closed subgroup of $B_i$ for all $i \ge 0$ and
    \[
    A_i = A \cap B_i \quad \text{for all } i.
    \]
    In this case we define the \emph{quotient} $\CC = \BB / \AA$ as the abelian group $B / A$ equipped with the filtration $(B_i / A_i)_{i \ge 0}$ and the quotient topology.

    \item[(iii)] The \emph{direct sum} $\AA \oplus \BB$ of two filtered groups $\AA = (A, A_\bullet)$ and $\BB = (B, B_\bullet)$ is the group $A \oplus B$ equipped with the filtration $(A_i \oplus B_i)_{i \ge 0}$ and the product topology.

    \item[(iv)] A map $\phi \colon \AA \to \BB$ between two $k$-filtered groups is called a \emph{morphism} if $\phi$ is a continuous homomorphism and $\phi(A_i) \subseteq B_i$ for all $i \ge 0$.  We write $\phi = (\phi_i)_{i \ge 0}$, where $\phi_i \colon A_i \to B_i$ is the restriction.  The morphism $\phi$ is \emph{injective} (resp. \emph{surjective}) if each $\phi_i$ is injective (resp. surjective).  We say that $\AA$ and $\BB$ are \emph{isomorphic} if there exists a bijective morphism $\phi \colon \AA \to \BB$.

    \item[(v)] A \emph{short exact sequence} of $k$-filtered locally compact abelian groups is a sequence
    \begin{equation}\label{ABC}
        0 \longrightarrow \AA \overset{\imath}{\longrightarrow} \BB \overset{\pi}{\longrightarrow} \CC \longrightarrow 0
    \end{equation}
    where $\imath$ is an injective morphism with closed image, $\pi$ is a surjective open morphism, and for each $i \ge 0$ the image of $\imath_i$ equals the kernel of $\pi_i$.  The sequence \eqref{ABC} \emph{splits} if there exists an isomorphism $\varphi \colon \BB \to \AA \oplus \CC$ such that $\varphi \circ \imath(a) = (a, 0_C)$ and $\pi \circ \varphi^{-1}(a, c) = c$ for all $a \in A$, $c \in C$.  

    We say that $\imath$ \emph{admits a retraction} if there exists a surjective morphism $r \colon \BB \to \AA$ with $r \circ \imath = \mathrm{Id}_\AA$, and that $\pi$ \emph{admits a cross-section} if there exists an injective morphism $s \colon \CC \to \BB$ with $\pi \circ s = \mathrm{Id}_\CC$.  The filtration is compatible with exactness at every level, i.e.
    \[
    0 \longrightarrow \AA_i \overset{\imath_i}{\longrightarrow} \BB_i \overset{\pi_i}{\longrightarrow} \CC_i \longrightarrow 0
    \]
    is a short exact sequence in the category of locally compact abelian groups.
\end{itemize}
\end{definition}

\begin{example}[Polynomial filtration]\label{polynomialfiltration}
Let $\XX$ be an ergodic $\Gamma$-system and $k \ge 1$.  The group $A = \Poly_{\le k}(\XX)$ with the filtration $A_i = \Poly_{\le k-i}(\XX)$ is a $k$-filtered group, called the \emph{polynomial filtration}.  If $\YY$ is an extension of $\XX$ with factor map $\pi \colon \YY \to \XX$ and $B = \Poly_{\le k}(\YY)$ carries its polynomial filtration, then $\pi$ induces an injective morphism with closed range
\[
\pi^* \colon \AA \to \BB, \quad P \mapsto P \circ \pi.
\]
\end{example}

\begin{lemma}[Equivalent conditions for splitting]\label{equivalenceses}
Let
\[
0 \to \AA \overset{\imath}{\longrightarrow} \BB \overset{\pi}{\longrightarrow} \CC \to 0
\]
be a short exact sequence of $k$-filtered locally compact abelian groups.  The following are equivalent:
\begin{itemize}
    \item[(i)] The sequence \eqref{ABC} splits.
    \item[(ii)] $\pi$ admits a cross-section.
    \item[(iii)] $\imath$ admits a retraction.
\end{itemize}
\end{lemma}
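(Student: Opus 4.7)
The plan is to establish the equivalences by the standard splitting-lemma argument, adapted to the category of $k$-filtered locally compact abelian groups where one must track the filtration and the topology in tandem throughout. The metrizability assumption on locally compact abelian groups in force in this paper (see the footnote after \eqref{gamma-basis-dual}) will let me invoke the open mapping theorem for Polish groups whenever continuous bijective morphisms arise, so I do not need to construct inverses by hand.

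First I would deduce (ii) and (iii) from (i): given an isomorphism $\varphi \colon \BB \to \AA \oplus \CC$ realizing the splitting, set $s(c) \coloneqq \varphi^{-1}(0_A,c)$ and $r(b) \coloneqq \pi_{\AA}\varphi(b)$, where $\pi_{\AA}$ is the projection onto the first factor. Both are morphisms because $\varphi$ and $\varphi^{-1}$ respect the product filtration and topology on $\AA \oplus \CC$; the identities $\pi\circ s=\mathrm{Id}_\CC$ and $r\circ\imath=\mathrm{Id}_{\AA}$ are immediate from $\varphi\circ\imath(a)=(a,0)$ and $\pi\circ\varphi^{-1}(a,c)=c$, giving injectivity of $s$ and surjectivity of $r$.

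Second, for (ii)$\Rightarrow$(i), given a cross-section $s$ I would define $\psi \colon \AA \oplus \CC \to \BB$ by $\psi(a,c)\coloneqq \imath(a)+s(c)$, which is a continuous morphism preserving the product filtration. Algebraic bijectivity follows from exactness: if $\psi(a,c)=0$, applying $\pi$ gives $c=0$, hence $\imath(a)=0$, hence $a=0$ by injectivity of $\imath$; conversely for $b\in B$ the element $b-s(\pi(b))$ lies in $\ker\pi=\imath(A)$, so there is a unique $a$ with $\imath(a)=b-s(\pi(b))$ and $\psi(a,\pi(b))=b$. Invoking the open mapping theorem for Polish groups promotes continuous bijectivity to a topological isomorphism, and it remains to check that $\psi^{-1}$ preserves the filtration: if $b\in B_i$, then $\pi(b)\in C_i$ so $s(\pi(b))\in B_i$, hence $b-s(\pi(b))\in B_i\cap\imath(A)=\imath(A_i)$ by the filtration-compatible exactness of the sequence, giving $\imath^{-1}(b-s(\pi(b)))\in A_i$ as required.

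Third, for (iii)$\Rightarrow$(i), given a retraction $r$ I would define $\varphi \colon \BB \to \AA \oplus \CC$ by $\varphi(b)\coloneqq (r(b),\pi(b))$. Injectivity: if $r(b)=\pi(b)=0$, then $b=\imath(a)$ for some $a$, and $a=r(\imath(a))=r(b)=0$. Surjectivity: for $(a,c)\in A\oplus C$, pick any lift $b_0\in B$ with $\pi(b_0)=c$ (using the surjectivity of $\pi$) and set $b\coloneqq \imath(a-r(b_0))+b_0$, so that $r(b)=a$ and $\pi(b)=c$. The same construction at level $i$ (using level-wise surjectivity $\pi_i(B_i)=C_i$ and the fact that $r(b_0)\in A_i$ when $b_0\in B_i$) shows that the level $i$ piece of $\varphi$ is surjective onto $A_i\oplus C_i$, so $\varphi^{-1}$ preserves the filtration. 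A second application of the open mapping theorem completes the proof.

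The main obstacle I anticipate is simply the bookkeeping needed to verify filtration preservation at each stage: surjectivity and injectivity in the underlying category of abelian groups are easy, but one must repeatedly use the exactness of the short exact sequence at every level $i$ to lift and match degrees. The topological subtleties, by contrast, all reduce to a clean invocation of Banach's isomorphism theorem for Polish groups.
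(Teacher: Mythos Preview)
Your proof is correct and follows the standard splitting-lemma argument, just as the paper does. The only differences are minor: the paper routes (ii)$\Rightarrow$(iii) via the one-line construction $r(b)\coloneqq b-s(\pi(b))$ and then invokes (iii)$\Rightarrow$(i), whereas you prove (ii)$\Rightarrow$(i) directly by building $\psi(a,c)=\imath(a)+s(c)$; and you are more explicit than the paper about the topological step (open mapping for Polish groups) and the level-wise filtration checks, which the paper leaves implicit.
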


\begin{proof}
(i)$\Rightarrow$(ii),(iii): let $\varphi \colon \BB \to \AA \oplus \CC$ be an isomorphism such that $\varphi \circ \imath(a) = (a, 0)$ and $\pi \circ \varphi^{-1}(a, c) = c$.  Then $\pi$ admits the cross-section $s(c) = \varphi^{-1}(0, c)$ and $\imath$ admits the retraction $r(b)$ given by the first coordinate of $\varphi(b)$.

(ii)$\Rightarrow$(iii): if $s \colon \CC \to \BB$ is a cross-section for $\pi$, define $r(b) = b - s(\pi(b))$.  Then $\pi(r(b)) = 0$, so $r(b) \in A$ and $r \circ \imath = \mathrm{Id}_\AA$.

(iii)$\Rightarrow$(i): if $r \colon \BB \to \AA$ is a retraction, the map $\varphi \colon \BB \to \AA \oplus \CC$ given by $\varphi(b) = (r(b), \pi(b))$ is an isomorphism of $k$-filtered locally compact abelian groups.
\end{proof}

The existence of retractions and cross-sections preserving the filtration is particularly important in our analysis.  To understand when such splittings occur, we recall that an object $\AA$ in a category is called \emph{injective}, and an object $\CC$ is called \emph{projective}, if all exact sequences of the form~\eqref{ABC} split independently of the other variables.  For example, in the category of locally compact abelian groups, $\mathbb{T}$ and $\mathbb{R}$ are injective, and by Pontryagin duality $\mathbb{Z}$ and $\mathbb{R}$ are projective.  In the category of discrete abelian groups, divisible groups are injective.  Many of the groups we study are neither injective nor projective, yet still yield splittings.  For instance, if $A,B,C$ are discrete abelian groups of bounded exponent (hence $1$-filtered), the short exact sequence~\eqref{ABC} splits if and only if $\imath(A)$ is \emph{pure} in $B$: for every $n \in \mathbb{N}$ and $a \in A$, whenever $n x = \imath(a)$ has a solution $x \in B$, it also has a solution $x \in \imath(A)$.

\begin{lemma}[Splitting criterion]\label{splitsubgroup}
Let
\begin{equation}\label{ses}
0 \to A \stackrel{\iota}{\longrightarrow} B \stackrel{\pi}{\longrightarrow} C \to 0
\end{equation}
be a short exact sequence of (1-filtered) abelian groups with $C$ discrete of bounded exponent, and suppose that $\iota(A) \cap nB = n\,\iota(A)$ for all $n \in \mathbb{N}$.  Then~\eqref{ses} splits.
\end{lemma}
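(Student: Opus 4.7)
The plan is to build a cross-section $s\colon C\to B$ for $\pi$ by hand and then invoke \Cref{equivalenceses}. The argument is essentially the classical fact that a pure subgroup with bounded-exponent quotient is a direct summand; the purity hypothesis $\iota(A)\cap nB=n\,\iota(A)$ is exactly what makes this work.

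First I would apply Pr\"ufer's first theorem to the bounded-exponent discrete abelian group $C$ to write
\[
C=\bigoplus_{i\in I}\langle c_i\rangle,
\]
where each generator $c_i$ has finite order $m_i$ dividing the exponent of $C$. Next, for each $i\in I$, I would choose an arbitrary preimage $b_i\in B$ of $c_i$. Since $\pi(m_i b_i)=m_i c_i=0$, we have $m_i b_i\in \iota(A)$, and of course $m_i b_i\in m_i B$. The purity assumption therefore produces some $a_i\in A$ with $m_i b_i=m_i\,\iota(a_i)$. Setting $b_i':=b_i-\iota(a_i)$, I obtain a lift of $c_i$ satisfying the additional normalization $m_i b_i'=0$ in $B$.

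Then I would define
\[
s\colon C\longrightarrow B,\qquad s\Bigl(\sum_{i\in I}n_ic_i\Bigr):=\sum_{i\in I}n_i b_i',
\]
on finitely supported sums. The relations $m_i b_i'=0$ guarantee that $s$ is a well-defined group homomorphism, and by construction $\pi\circ s=\mathrm{id}_C$. Since $C$ is discrete, $s$ is automatically continuous and hence a morphism in the category of $1$-filtered locally compact abelian groups. Applying \Cref{equivalenceses} then yields the desired splitting.

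The only non-algebraic issue is ensuring that the resulting isomorphism $\varphi\colon A\oplus C\to B,\ (a,c)\mapsto \iota(a)+s(c)$ is a topological isomorphism, not merely a continuous bijection; this is the step I expect to require the most care. The key observation is that because $C$ is discrete, the kernel $\iota(A)=\pi^{-1}(\{0\})$ is an \emph{open} subgroup of $B$, so $B$ is the disjoint topological union of its cosets $\iota(A)+s(c)$, each homeomorphic to $\iota(A)$ via translation. Combined with the fact that $\iota$ is a topological embedding onto its (now open and closed) image — which follows from the first isomorphism theorem applied to the open surjection $\pi$ — one concludes that $\varphi$ is open, hence an isomorphism in the category. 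This completes the proof.
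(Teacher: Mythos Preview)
Your proof is correct and follows essentially the same approach as the paper: decompose $C$ via Pr\"ufer's theorem, lift each cyclic generator, and use the purity hypothesis to adjust each lift so it has the correct order. Your additional discussion of the topological issues (openness of $\iota(A)$ in $B$ and the resulting openness of $\varphi$) goes beyond what the paper writes out explicitly, but is a welcome clarification rather than a deviation.
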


\begin{proof}
By Pr\"ufer's first theorem we may write $C$ as a direct sum of cyclic groups $\Z/m_i\Z$.  It suffices to find a section for each summand.  For each generator $c_i$ of such a cyclic group, pick $b_i' \in \pi^{-1}(c_i)$; then $m_i b_i' \in \iota(A)$, say $m_i b_i' = \iota(a_i)$.  By purity, there exists $a_i' \in A$ with $m_i a_i' = a_i$.  Setting $b_i = b_i' - \iota(a_i')$ yields $m_i b_i = 0$ and $\pi(b_i) = c_i$, giving the desired section.
\end{proof}

\subsection{Systems of relations and purity}

To extend Lemma~\ref{splitsubgroup} to the filtered setting, one must consider linear relations that hold only modulo lower levels of the filtration.

\begin{definition}[Systems of relations]
Let $\alpha$ be a cardinal\footnote{Since all locally compact abelian groups relevant here are second countable, it suffices to take $\alpha$ countable.}, and define $\Z^\alpha \coloneqq  \bigoplus_{i \in \alpha} \Z$.  For $k \ge 0$, a \emph{system of relations in $\alpha$-variables} (of type at most $k+1$) is a subset $\mathcal{R} \subseteq \Z^\alpha \times \{1, \ldots, k+1\}$.  Given a $k$-filtered locally compact abelian group $\AA = (A, A_\bullet)$, a family $a = (a_i)_{i \in \alpha}$ in $A$ \emph{satisfies} $\mathcal{R}$ if $\vec m \cdot a \in A_j$ whenever $(\vec m; j) \in \mathcal{R}$, where $\vec m \cdot a \coloneqq  \sum_{i \in \alpha} m_i a_i$ (a finite sum).  For any family $a$, let $\mathcal{R}_a$ denote the set of all relations of type at most $k+1$ satisfied by $a$.
\end{definition}

\begin{definition}[Purity]\label{pure:def}
Let $\alpha$ be a cardinal, and let $\AA = (A, A_\bullet)$ be a subgroup of a $k$-filtered locally compact abelian group $\BB = (B, B_\bullet)$.  We say that $\AA$ is \emph{pure up to length $\alpha$} in $\BB$ if for every system $\mathcal{R}$ of relations of type at most $k+1$ in $\alpha$-variables the following holds: for every family $b = (b_i)_{i \in \alpha}$ in $B$ satisfying
\[
\vec m \cdot b \bmod B_j \in \mathrm{im}(A/A_j \to B/B_j)
\]
for all $(\vec m; j) \in \mathcal{R}$, there exists a family $a = (a_i)_{i \in \alpha}$ in $A$ such that $\vec m \cdot (b - a) \in B_j$ for all $(\vec m; j) \in \mathcal{R}$.  We say that $\AA$ is \emph{pure} in $\BB$ if it is pure up to length $\alpha$ for every finite~$\alpha$.
\end{definition}

\begin{remark}
    The condition $\vec m \cdot b \bmod B_j \in \mathrm{im}(A/A_j \to B/B_j)$ means equivalently that there exists $a \in A$ with $\vec m \cdot b - a \in B_j$.
\end{remark}

\begin{theorem}[Splitting of pure subgroups]\label{puresplit}
Let $\AA = (A, A_\bullet)$ be a closed subgroup of a $k$-filtered locally compact abelian group $\BB = (B, B_\bullet)$ that is pure up to length $\alpha$, and let $\CC = (C, C_\bullet)$ be the discrete quotient admitting a generating set of cardinality at most~$\alpha$.  Then the short exact sequence~\eqref{ABC} splits.
\end{theorem}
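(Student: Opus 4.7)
The plan is to invoke Lemma~\ref{equivalenceses} and produce a filtration-preserving cross-section $s \colon \CC \to \BB$ of $\pi$. Since $C$ is discrete, continuity of any such homomorphism is automatic, so the entire content reduces to constructing an abstract group homomorphism $s$ that splits $\pi$ and sends $C_j$ into $B_j$ for every $j$.

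First I fix a generating set $(c_i)_{i \in \alpha}$ of $C$ and choose arbitrary preimages $b'_i \in B$ with $\pi(b'_i) = c_i$. Let
\[
\mathcal{R}_c \coloneqq \{(\vec m; j) \in \Z^\alpha \times \{1, \ldots, k+1\} : \vec m \cdot c \in C_j\}
\]
record all relations of type at most $k+1$ satisfied by $c$. Exactness of~\eqref{ABC} at each level $j$ gives $\pi_j(B_j) = C_j$, so $\vec m \cdot b' \in \pi^{-1}(C_j) = \imath(A) + B_j$ for every $(\vec m; j) \in \mathcal{R}_c$; equivalently, $\vec m \cdot b' \bmod B_j$ lies in the image of $A/A_j \to B/B_j$. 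The purity hypothesis, applied to $b'$ with the system $\mathcal{R}_c$, then produces $(a_i)_{i \in \alpha}$ in $A$ such that the adjusted lifts $b_i \coloneqq b'_i - \imath(a_i)$ satisfy $\pi(b_i) = c_i$ and $\vec m \cdot b \in B_j$ for every $(\vec m; j) \in \mathcal{R}_c$.

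Now I define $s \colon C \to B$ by $s(\vec m \cdot c) \coloneqq \vec m \cdot b$ on finitely supported $\vec m \in \Z^\alpha$. Well-definedness is exactly the case $j = k+1$: if $\vec m \cdot c = 0$ in $C$, then $(\vec m; k+1) \in \mathcal{R}_c$ since $C_{k+1} = 0$, and hence $\vec m \cdot b \in B_{k+1} = 0$, so $s$ extends to a homomorphism on all of $C$. If $c \in C_j$, writing $c = \vec m \cdot c$ gives $(\vec m; j) \in \mathcal{R}_c$ and therefore $s(c) = \vec m \cdot b \in B_j$, so $s$ preserves the filtration; and $\pi \circ s = \mathrm{Id}_C$ by construction. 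Thus $s$ is the desired cross-section and Lemma~\ref{equivalenceses} concludes the proof. The only potential subtlety is that the system $\mathcal{R}_c$ may be infinite, but purity up to length $\alpha$ imposes no finiteness restriction on the system of relations; the argument is therefore a filtered upgrade of the classical fact that a pure bounded-exponent subgroup splits (Lemma~\ref{splitsubgroup}), with $\mathcal{R}_c$ capturing all ambient relations among the chosen generators.
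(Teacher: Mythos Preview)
Your proof is correct and follows essentially the same approach as the paper's: choose generators of $C$, lift them arbitrarily to $B$, use purity with the full system of relations $\mathcal{R}_c$ to correct the lifts by elements of $A$, and then define $s$ on these corrected lifts. The paper's proof differs only in notation (it writes $e$ for the generators and $b$, $b'$ in the opposite roles) and in citing Lemma~\ref{splitsubgroup} rather than Lemma~\ref{equivalenceses} for the reduction to building a section.
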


\begin{proof}
As in Lemma~\ref{splitsubgroup}, it suffices to construct a morphism $s \colon \CC \to \BB$ with $\pi \circ s = \mathrm{Id}_\CC$.  Since $C$ carries the discrete topology, $s$ is automatically continuous.

Let $e = (e_i)_{i \in \alpha}$ be a generating family for $C$, and let $\mathcal{R}_e$ be the system of relations satisfied by $e$.  Choose $b = (b_i)_{i \in \alpha}$ in $B$ such that $\pi(b_i) = e_i$.  For each $(\vec m; j) \in \mathcal{R}_e$ we have $\vec m \cdot e \in C_j$, hence $\vec m \cdot b \bmod B_j \in \mathrm{im}(A/A_j \to B/B_j)$.  By purity, there exists $a = (a_i)_{i \in \alpha}$ in $A$ with $\vec m \cdot (b - a) \in B_j$ for all $(\vec m; j) \in \mathcal{R}_e$.

Set $b_i' \coloneqq  b_i - a_i$ and define
\[
s(\vec{\ell} \cdot e) \coloneqq  \vec{\ell} \cdot b', \qquad \vec{\ell} \in \Z^\alpha.
\]
This is well-defined: if $\vec{\ell} \cdot e = 0$ in $C$, then $(\vec{\ell}; k+1) \in \mathcal{R}_e$, hence $\vec{\ell} \cdot b' \in B_{k+1} = \{0\}$.  If $\vec{\ell} \cdot e \in C_j$, then $(\vec{\ell}; j) \in \mathcal{R}_e$ and $\vec{\ell} \cdot b' \in B_j$.  Finally, $\pi(b_i') = e_i$ implies $\pi \circ s = \mathrm{Id}_\CC$.
\end{proof}

\begin{definition}[Finite splitting]\label{finite-split}
Let $k \ge 1$, and let $\AA, \BB, \CC$ be discrete $k$-filtered abelian groups forming a short exact sequence~\eqref{ABC}.  We say that the sequence \emph{finitely splits} if for every subgroup $\imath(\AA) \le \BB' \le \BB$ with $\imath(\AA)$ of finite index in $\BB'$, the induced sequence
\[
0 \longrightarrow \AA \overset{\imath}{\longrightarrow} \BB' \overset{\pi|_{\BB'}}{\longrightarrow} \pi(\BB') \longrightarrow 0
\]
of $k$-filtered abelian groups splits.
\end{definition}

\bibliographystyle{abbrv}
\bibliography{bibliography}

@misc{furstenbergexample,
 author = {Furstenberg, Hillel},
 title = {Nonconventional ergodic averages},
 year = {1990},
 language = {English},
 howpublished = {The legacy of {John} von {Neumann}, {Proc}. {Summer} {Res}. {Inst}., {Hempstead}/{NY} ({USA}) 1988, {Proc}. {Symp}. {Pure} {Math}. 50, 43-56 (1990).},
 keywords = {28D05,11B25},
 zbMATH = {4168983},
 Zbl = {0711.28006}
}

@article {shalom1,
    AUTHOR = {Shalom, Or},
     TITLE = {Host-{K}ra factors for {$\bigoplus_{p\in P}\mathbb{Z}/p\mathbb{Z}$}
              actions and finite-dimensional nilpotent systems},
   JOURNAL = {Anal. PDE},
  FJOURNAL = {Analysis \& PDE},
    VOLUME = {17},
      YEAR = {2024},
    NUMBER = {7},
     PAGES = {2379--2449},
      ISSN = {2157-5045,1948-206X},
       DOI = {10.2140/apde.2024.17.2379},
       URL = {https://doi.org/10.2140/apde.2024.17.2379},
}

@article{shalom2,
AUTHOR = {Shalom, Or}, 
TITLE= {Multiple ergodic averages in abelian groups and Khintchine type recurrence},
Journal = {Trans. Amer. Math. Soc. 375 (2022), 2729-2761}, 
Year = 2021
}

@incollection {fw,
    AUTHOR = {Furstenberg, Hillel and Weiss, Benjamin},
     TITLE = {A mean ergodic theorem for
              {$(1/N)\sum^N_{n=1}f(T^nx)g(T^{n^2}x)$}},
 BOOKTITLE = {Convergence in ergodic theory and probability ({C}olumbus,
              {OH}, 1993)},
    SERIES = {Ohio State Univ. Math. Res. Inst. Publ.},
    VOLUME = {5},
     PAGES = {193--227},
 PUBLISHER = {de Gruyter, Berlin},
      YEAR = {1996},
   MRCLASS = {28D05},
  MRNUMBER = {1412607},
MRREVIEWER = {Idris Assani},
}

@book {hk-book,
    AUTHOR = {Host, Bernard and Kra, Bryna},
     TITLE = {Nilpotent structures in ergodic theory},
    SERIES = {Mathematical Surveys and Monographs},
    VOLUME = {236},
 PUBLISHER = {American Mathematical Society, Providence, RI},
      YEAR = {2018},
     PAGES = {X+427},
      ISBN = {978-1-4704-4780-9},
   MRCLASS = {37-02 (11B30 28D05 37Axx 37B05 47A35)},
  MRNUMBER = {3839640},
MRREVIEWER = {Yonatan Gutman},
       DOI = {10.1090/surv/236},
       URL = {https://doi.org/10.1090/surv/236},
}

@article{jstcounterexample,
 author = {Jamneshan, Asgar and Shalom, Or and Tao, Terence},
 title = {A {Host}-{Kra} {{\(\mathbb{F}_2^\omega\)}}-system of order 5 that is not abramov of order 5, and non-measurability of the inverse theorem for the {{\(U^6(\mathbb{F}_2^n)\)}} norm},
 fjournal = {Mathematische Annalen},
 journal = {Math. Ann.},
 issn = {0025-5831},
 volume = {394},
 number = {1},
 pages = {65},
 note = {Id/No 11},
 year = {2026},
 language = {English},
 doi = {10.1007/s00208-026-03344-5},
 keywords = {11B30,11L07,43A70,37A05,37A44,28D05},
 zbMATH = {8159585}
}

@misc{milicevicquasi,
 author = {Luka Mili{\'c}evi{\'c}},
 title = {Quasipolynomial inverse theorem for the {$\mathsf{U}^4(\mathbb{F}_p^n)$} norm},
 year = {2024},
 howpublished = {Preprint, {arXiv}:2410.08966 [math.{CO}] (2024)},
 url = {https://arxiv.org/abs/2410.08966},
 arXiv = {arXiv:2410.08966}
}

@misc{lukau4general,
 author = {Luka Mili{\'c}evi{\'c}},
 title = {General inverse theory for the {$\mathsf{U}^4$} norm},
 year = {2026},
 howpublished = {Preprint, {arXiv}:2601.01682 [math.{CO}] (2026)},
 url = {https://arxiv.org/abs/2601.01682},
 arXiv = {arXiv:2601.01682}
}

@article{jst-tdsystems,
author = {Jamneshan, Asgar and Shalom, Or and Tao, Terence},
title = {The structure of totally disconnected {Host}--{Kra}--{Ziegler} factors, and the inverse theorem for the ${U}^k$ {Gowers} uniformity norms on finite abelian groups of bounded torsion},
journal = {Ann. Fac. Sci. Toulouse Math. (6)},
note = {To appear},
year = {},
eprint = {2303.04860},
archivePrefix = {arXiv},
primaryClass = {math.DS},
url = {https://arxiv.org/abs/2303.04860}
}

@article{jt21-1,
 author = {Jamneshan, Asgar and Tao, Terence},
 title = {The inverse theorem for the {{\(U^3\)}} {Gowers} uniformity norm on arbitrary finite abelian groups: {Fourier}-analytic and ergodic approaches},
 fjournal = {Discrete Analysis},
 journal = {Discrete Anal.},
 issn = {2397-3129},
 volume = {2023},
 pages = {48},
 note = {Id/No 11},
 year = {2023},
 language = {English},
 doi = {10.19086/da.84268},
 keywords = {11B30,28E05,37A15},
 zbMATH = {8061055}
}

@article {cl1,
    AUTHOR = {Conze, Jean-Pierre and Lesigne, Emmanuel},
     TITLE = {Th\'{e}or\`emes ergodiques pour des mesures diagonales},
   JOURNAL = {Bull. Soc. Math. France},
  FJOURNAL = {Bulletin de la Soci\'{e}t\'{e} Math\'{e}matique de France},
    VOLUME = {112},
      YEAR = {1984},
    NUMBER = {2},
     PAGES = {143--175},
      ISSN = {0037-9484},
   MRCLASS = {28D05 (22D40)},
  MRNUMBER = {788966},
MRREVIEWER = {Karl David},
       URL = {http://www.numdam.org/item?id=BSMF_1984__112__143_0},
}

@article {cl2,
    AUTHOR = {Conze, Jean-Pierre and Lesigne, Emmanuel},
     TITLE = {Sur un th\'{e}or\`eme ergodique pour des mesures diagonales},
   JOURNAL = {C. R. Acad. Sci. Paris S\'{e}r. I Math.},
  FJOURNAL = {Comptes Rendus des S\'{e}ances de l'Acad\'{e}mie des Sciences. S\'{e}rie
              I. Math\'{e}matique},
    VOLUME = {306},
      YEAR = {1988},
    NUMBER = {12},
     PAGES = {491--493},
      ISSN = {0249-6291},
   MRCLASS = {22D40 (28D05)},
  MRNUMBER = {939438},
MRREVIEWER = {Pierre Michel},
}

@incollection {cl3,
    AUTHOR = {Conze, Jean-Pierre and Lesigne, Emmanuel},
     TITLE = {Sur un th\'{e}or\`eme ergodique pour des mesures diagonales},
 BOOKTITLE = {Probabilit\'{e}s},
    SERIES = {Publ. Inst. Rech. Math. Rennes},
    VOLUME = {1987},
     PAGES = {1--31},
 PUBLISHER = {Univ. Rennes I, Rennes},
      YEAR = {1988},
   MRCLASS = {28D05},
  MRNUMBER = {989141},
MRREVIEWER = {Nathaniel F. G. Martin},
}

@article {jt20,
    AUTHOR = {Jamneshan, Asgar and Tao, Terence},
     TITLE = {An uncountable {M}ackey-{Z}immer theorem},
   JOURNAL = {Studia Math.},
  FJOURNAL = {Studia Mathematica},
    VOLUME = {266},
      YEAR = {2022},
    NUMBER = {3},
     PAGES = {241--289},
      ISSN = {0039-3223},
   MRCLASS = {37A15 (37A20)},
  MRNUMBER = {4450792},
       DOI = {10.4064/sm201125-1-5},
       URL = {https://doi.org/10.4064/sm201125-1-5},
}

@article{jt19,
 author = {Jamneshan, Asgar and Tao, Terence},
 title = {An uncountable {Moore}-{Schmidt} theorem},
 fjournal = {Ergodic Theory and Dynamical Systems},
 journal = {Ergodic Theory Dyn. Syst.},
 issn = {0143-3857},
 volume = {43},
 number = {7},
 pages = {2376--2403},
 year = {2023},
 language = {English},
 doi = {10.1017/etds.2022.36},
 keywords = {37A20,37A05},
 zbMATH = {7702430},
 Zbl = {1521.37002}
}

@Article{austin2015pleasant,
  Title                    = {{Pleasant extensions retaining algebraic structure, I}},
  Author                   = {T.~Austin},
  Journal                  = {J. Anal. Math.},
  Year                     = {2015},
  Pages                    = {1-36},
  Volume                   = {125},

  Owner                    = {asgar},
  Timestamp                = {2017.10.20}
}

@Book{glasner2015ergodic,
  Title                    = {Ergodic Theory via Joinings},
  Author                   = {Glasner, E.},
  Publisher                = {American Mathematical Society},
  Year                     = {2015},
  Series                   = {Mathematical Surveys and Monographs},

  ISBN                     = {9781470419516},
  Url                      = {https://books.google.de/books?id=FBL-CAAAQBAJ}
}

@Article{host2005nonconventional,
  Title                    = {{Nonconventional ergodic averages and nilmanifolds}},
  Author                   = {B.~Host and B.~Kra},
  Journal                  = {Ann.~Math.},
  Year                     = {2005},
  Number                   = {1},
  Pages                    = {397-488},
  Volume                   = {161},

  Owner                    = {asgar},
  Timestamp                = {2017.10.20}
}

@Article{ziegler2007universal,
  Title                    = {{Universal characteristic factors and Furstenberg averages}},
  Author                   = {T.~Ziegler},
  Journal                  = {J.~Amer.~Math.~Soc.},
  Year                     = {2007},
  Pages                    = {53-97},
  Volume                   = {20},

  Owner                    = {asgar},
  Timestamp                = {2017.10.21}
}

@Article{moore1980coboundaries,
  author  = {Calvin C. Moore and Klaus Schmidt},
  title   = {{Coboundaries and Homomorphisms for Non--Singular Actions and a Problem of H. Helson}},
  journal = {Proc. Lond. Math. Soc.},
  year    = {1980},
  volume  = {40},
  issue   = {3},
  pages   = {443-475},
}

@misc{milicevic,
 author = {Gowers, W. T. and Mili{\'c}evi{\'c}, Luka},
 title = {A quantitative inverse theorem for the ${U}^4$ norm over finite fields},
 year = {2017},
 howpublished = {Preprint, {arXiv}:1712.00241},
 keywords = {05D99,11T99},
 url = {https://arxiv.org/abs/1712.00241},
 arXiv = {arXiv:1712.00241}
}

@article{milicevic-u56,
 author = {Mili{\'c}evi{\'c}, Luka},
 title = {Quantitative inverse theorem for {Gowers} uniformity norms {{\(\mathsf{U}^5\)}} and {{\(\mathsf{U}^6\)}} in {{\(\mathbb{F}_2^n\)}}},
 fjournal = {Canadian Journal of Mathematics},
 journal = {Can. J. Math.},
 issn = {0008-414X},
 volume = {76},
 number = {4},
 pages = {1289--1338},
 year = {2024},
 language = {English},
 doi = {10.4153/S0008414X23000391},
 keywords = {11P70,05E16,05E18},
 zbMATH = {7930307},
 Zbl = {1559.11102}
}

@misc{milicevic-2,
 author = {Gowers, W. T. and Mili{\'c}evi{\'c}, L.},
 title = {An inverse theorem for {Freiman} multi-homomorphisms},
 year = {2020},
 howpublished = {Preprint, {arXiv}:2002.11667},
 url = {https://arxiv.org/abs/2002.11667},
 arXiv = {arXiv:2002.11667}
}

@article {btz,
    AUTHOR = {Bergelson, Vitaly and Tao, Terence and Ziegler, Tamar},
     TITLE = {An inverse theorem for the uniformity seminorms associated
              with the action of {$\mathbb{F}^\infty_p$}},
   JOURNAL = {Geom. Funct. Anal.},
  FJOURNAL = {Geometric and Functional Analysis},
    VOLUME = {19},
      YEAR = {2010},
    NUMBER = {6},
     PAGES = {1539--1596},
      ISSN = {1016-443X},
   MRCLASS = {37A35 (28D05)},
  MRNUMBER = {2594614},
MRREVIEWER = {Randall McCutcheon},
       DOI = {10.1007/s00039-010-0051-1},
       URL = {https://doi.org/10.1007/s00039-010-0051-1},
}

@article {tz-lowchar,
    AUTHOR = {Tao, Terence and Ziegler, Tamar},
     TITLE = {The inverse conjecture for the {G}owers norm over finite
              fields in low characteristic},
   JOURNAL = {Ann. Comb.},
  FJOURNAL = {Annals of Combinatorics},
    VOLUME = {16},
      YEAR = {2012},
    NUMBER = {1},
     PAGES = {121--188},
      ISSN = {0218-0006},
   MRCLASS = {11B30 (11T06)},
  MRNUMBER = {2948765},
MRREVIEWER = {Tom Sanders},
       DOI = {10.1007/s00026-011-0124-3},
       URL = {https://doi.org/10.1007/s00026-011-0124-3},
}

@article {gmv,
    AUTHOR = {Gutman, Yonatan and Manners, Freddie and Varj\'{u}, P\'{e}ter P.},
     TITLE = {The structure theory of nilspaces {I}},
   JOURNAL = {J. Anal. Math.},
  FJOURNAL = {Journal d'Analyse Math\'{e}matique},
    VOLUME = {140},
      YEAR = {2020},
    NUMBER = {1},
     PAGES = {299--369},
      ISSN = {0021-7670},
   MRCLASS = {37C99 (11B30 22A05)},
  MRNUMBER = {4094466},
MRREVIEWER = {Jonas Der\'{e}},
       DOI = {10.1007/s11854-020-0093-8},
       URL = {https://doi.org/10.1007/s11854-020-0093-8},
}

@article{BSST,
 author = {Berger, Aaron and Sah, Ashwin and Sawhney, Mehtaab and Tidor, Jonathan},
 title = {Non-classical polynomials and the inverse theorem},
 fjournal = {Mathematical Proceedings of the Cambridge Philosophical Society},
 journal = {Math. Proc. Camb. Philos. Soc.},
 issn = {0305-0041},
 volume = {173},
 number = {3},
 pages = {525--537},
 year = {2022},
 language = {English},
 doi = {10.1017/S0305004121000682},
 keywords = {11B30,11T99},
 zbMATH = {7605292},
 Zbl = {1498.11038}
}

@article{CGSS,
 author = {Candela, Pablo and Gonz{\'a}lez-S{\'a}nchez, Diego and Szegedy, Bal{\'a}zs},
 title = {On higher-order {Fourier} analysis in characteristic {{\(p\)}}},
 fjournal = {Ergodic Theory and Dynamical Systems},
 journal = {Ergodic Theory Dyn. Syst.},
 issn = {0143-3857},
 volume = {43},
 number = {12},
 pages = {3971--4040},
 year = {2023},
 language = {English},
 doi = {10.1017/etds.2022.119},
 keywords = {37A46,37A17,37A44,11S31,20D15},
 url = {real.mtak.hu/163271/1/2109.15281v2.pdf},
 zbMATH = {7779005},
 Zbl = {1533.37020}
}

@book {morris,
    AUTHOR = {Morris, Sidney A.},
     TITLE = {Pontryagin duality and the structure of locally compact
              abelian groups},
    SERIES = {London Mathematical Society Lecture Note Series, No. 29},
 PUBLISHER = {Cambridge University Press, Cambridge-New York-Melbourne},
      YEAR = {1977},
     PAGES = {viii+128},
   MRCLASS = {22B05 (43A40)},
  MRNUMBER = {0442141},
MRREVIEWER = {K. A. Ross},
}

@article{gutman-lian,
 author = {Gutman, Yonatan and Lian, Zhengxing},
 title = {Strictly ergodic distal models and a new approach to the {Host}-{Kra} factors},
 fjournal = {Journal of Functional Analysis},
 journal = {J. Funct. Anal.},
 issn = {0022-1236},
 volume = {284},
 number = {4},
 pages = {54},
 note = {Id/No 109779},
 year = {2023},
 language = {English},
 doi = {10.1016/j.jfa.2022.109779},
 keywords = {37A30,37A20,22D40},
 zbMATH = {7634000},
 Zbl = {1511.37006}
}

@article{jst,
 author = {Jamneshan, Asgar and Shalom, Or and Tao, Terence},
 title = {The structure of arbitrary {Conze}-{Lesigne} systems},
 fjournal = {Communications of the American Mathematical Society},
 journal = {Commun. Am. Math. Soc.},
 issn = {2692-3688},
 volume = {4},
 pages = {182--229},
 year = {2024},
 language = {English},
 doi = {10.1090/cams/31},
 keywords = {37A15,37A44,37A46,37A30,11B30,28D15,37A35,22D40},
 zbMATH = {7856620},
 Zbl = {1545.37003}
}

@article{zimmer,
    AUTHOR = {Zimmer, Robert J.},
     TITLE = {Extensions of ergodic group actions},
   JOURNAL = {Illinois J. Math.},
  FJOURNAL = {Illinois Journal of Mathematics},
    VOLUME = {20},
      YEAR = {1976},
    NUMBER = {3},
     PAGES = {373--409},
      ISSN = {0019-2082},
   MRCLASS = {28A65},
  MRNUMBER = {409770},
MRREVIEWER = {Joel J. Westman},
       URL = {http://projecteuclid.org/euclid.ijm/1256049780},
}

@article{taozieglerhigh,
AUTHOR = {Tao, Terence and Ziegler, Tamar}, 
TITLE= {The inverse conjecture for the Gowers norm over finite fields via the correspondence principle
},
Journal = {Anal. PDE},
VOLUME = {3},
NUMBER = {1},
YEAR = {2010},
PAGES = {1-20},
DOI = {10.2140/apde.2010.3.1},
URL = {https://msp.org/apde/2010/3-1/p01.xhtml}

}

@book{candela-szegedy-cubic,
 author = {Candela, Pablo and Szegedy, Bal{\'a}zs},
 title = {Nilspace factors for general uniformity seminorms, cubic exchangeability and limits},
 fseries = {Memoirs of the American Mathematical Society},
 series = {Mem. Am. Math. Soc.},
 issn = {0065-9266},
 volume = {1425},
 isbn = {978-1-4704-6548-3; 978-1-4704-7541-3},
 year = {2023},
 publisher = {Providence, RI: American Mathematical Society (AMS)},
 language = {English},
 doi = {10.1090/memo/1425},
 keywords = {37-02,37A15,37A30,37A46,37A44,37A17,28D15},
 zbMATH = {7704124},
 Zbl = {1523.37001}
}

@misc{CGSS-squarefree,
 author = {Candela, Pablo and Gonz{\'a}lez-S{\'a}nchez, Diego and Szegedy, Bal{\'a}zs},
 title = {On the inverse theorem for {Gowers} norms in abelian groups of bounded torsion},
 year = {2023},
 howpublished = {Preprint, {arXiv}:2311.13899 [math.{CO}] (2023)},
 url = {https://arxiv.org/abs/2311.13899},
 arXiv = {arXiv:2311.13899}
}

@misc{CGSS-proj,
 author = {Candela, Pablo and Gonz{\'a}lez-S{\'a}nchez, Diego and Szegedy, Bal{\'a}zs},
 title = {An inverse theorem for all finite abelian groups via nilmanifolds},
 year = {2025},
 howpublished = {Preprint, {arXiv}:2512.17468 [math.{DS}] (2025)},
 url = {https://arxiv.org/abs/2512.17468},
 arXiv = {arXiv:2512.17468}
}

@article {gmvII,
    AUTHOR = {Gutman, Yonatan and Manners, Freddie and Varj\'u, P\'eter P.},
     TITLE = {The structure theory of nilspaces {II}: {R}epresentation as
              nilmanifolds},
   JOURNAL = {Trans. Amer. Math. Soc.},
  FJOURNAL = {Transactions of the American Mathematical Society},
    VOLUME = {371},
      YEAR = {2019},
    NUMBER = {7},
     PAGES = {4951--4992},
      ISSN = {0002-9947,1088-6850},
   MRCLASS = {37B05 (11B30 22D40)},
  MRNUMBER = {3934474},
MRREVIEWER = {Jonas\ Der\'e},
       DOI = {10.1090/tran/7503},
       URL = {https://doi.org/10.1090/tran/7503},
}

@article{CGSSextensions,
 author = {Candela, Pablo and Gonz{\'a}lez-S{\'a}nchez, Diego and Szegedy, Bal{\'a}zs},
 title = {On measure-preserving {{\(\mathbb{F}_p^{\omega}\)}}-systems of order {{\(k\)}}},
 fjournal = {Journal d'Analyse Math{\'e}matique},
 journal = {J. Anal. Math.},
 issn = {0021-7670},
 volume = {156},
 number = {1},
 pages = {281--298},
 year = {2025},
 language = {English},
 doi = {10.1007/s11854-025-0381-4},
 keywords = {37-XX,68-XX},
 zbMATH = {8103494}
}

@article {gmvIII,
    AUTHOR = {Gutman, Yonatan and Manners, Freddie and Varj\'u, P\'eter P.},
     TITLE = {The structure theory of nilspaces {III}: {I}nverse limit
              representations and topological dynamics},
   JOURNAL = {Adv. Math.},
  FJOURNAL = {Advances in Mathematics},
    VOLUME = {365},
      YEAR = {2020},
     PAGES = {107059, 53},
      ISSN = {0001-8708,1090-2082},
   MRCLASS = {37B05 (11B30)},
  MRNUMBER = {4068503},
MRREVIEWER = {Jonas\ Der\'e},
       DOI = {10.1016/j.aim.2020.107059},
       URL = {https://doi.org/10.1016/j.aim.2020.107059},
}

@article {Rosendal,
    AUTHOR = {Rosendal, Christian},
     TITLE = {Automatic continuity of group homomorphisms},
   JOURNAL = {Bull. Symbolic Logic},
  FJOURNAL = {The Bulletin of Symbolic Logic},
    VOLUME = {15},
      YEAR = {2009},
    NUMBER = {2},
     PAGES = {184--214},
      ISSN = {1079-8986},
   MRCLASS = {03E15 (43A05)},
  MRNUMBER = {2535429},
MRREVIEWER = {\'{E}tienne Matheron},
       DOI = {10.2178/bsl/1243948486},
       URL = {https://doi.org/10.2178/bsl/1243948486},
}

@article {austin-norm,
    AUTHOR = {Austin, Tim},
     TITLE = {On the norm convergence of non-conventional ergodic averages},
   JOURNAL = {Ergodic Theory Dynam. Systems},
  FJOURNAL = {Ergodic Theory and Dynamical Systems},
    VOLUME = {30},
      YEAR = {2010},
    NUMBER = {2},
     PAGES = {321--338},
      ISSN = {0143-3857},
   MRCLASS = {37A30 (28D05 47A35)},
  MRNUMBER = {2599882},
MRREVIEWER = {Artur Siemaszko},
       DOI = {10.1017/S014338570900011X},
       URL = {https://doi.org/10.1017/S014338570900011X},
}

\end{document}